\renewcommand*{\backref}[1]{}
\renewcommand*{\backrefalt}[4]{\quad \tiny 
    \ifcase #1 (Not cited.)%
    \or        (Cited on page~#2.)%
    \else      (Cited on pages~#2.)%
    \fi}
\def\myMRbibitem{\@ifnextchar[\my@lbibitem\my@bibitem}
\def\mybiblabel#1#2{\@biblabel{{\hyperref{http://www.ams.org/mathscinet-getitem?mr=#1}{}{}{#2}}}}
\def\myhyperanchor#1{\Hy@raisedlink{\hyper@anchorstart{cite.#1}\hyper@anchorend}}
\def\my@lbibitem[#1]#2#3#4\par{%
    \item[\mybiblabel{#2}{#1}\myhyperanchor{#3}\hfill]#4%
    \@ifundefined{ifbackrefparscan}{}{\BR@backref{#3}}%
    \if@filesw{\let\protect\noexpand\immediate
       \write\@auxout{\string\bibcite{#3}{#1}}}\fi\ignorespaces%
}
\def\my@bibitem#1#2#3\par{%
    \refstepcounter\@listctr
    \item[\mybiblabel{#1}{\the\value\@listctr}\myhyperanchor{#2}\hfill]#3%
    \@ifundefined{ifbackrefparscan}{}{\BR@backref{#2}}%
    \if@filesw\immediate\write\@auxout
        {\string\bibcite{#2}{\the\value\@listctr}}\fi\ignorespaces%
}
\newtheoremstyle{note}
  {5pt}
  {5pt}
  {\small}
  {}
  {\bfseries}
  {.}
  {.5em}
  {}
\theoremstyle{plain}
    \newtheorem{lemma}{Lemma}[section]
    \newtheorem{thm}[lemma]{Theorem}
    \newtheorem{prop}[lemma]{Proposition}
    \newtheorem{corol}[lemma]{Corollary}
    \newtheorem{otherthm}[lemma]{Theorem}
    \newtheorem{scho}[lemma]{Scholium}
	\newtheorem{claim}[lemma]{Claim}
	\newtheorem*{repeatedthm}{Theorem \ref{t.schubert}}
\theoremstyle{definition}
\theoremstyle{remark}
    \newtheorem*{ack}{Acknowledgements}
\theoremstyle{note}  
    \newtheorem{rem}[lemma]{Remark}
	\newtheorem{example}[lemma]{Example}
\crefname{thm}{Theorem}{Theorems} 
\Crefname{thm}{Theorem}{Theorems} 
\crefname{otherthm}{Theorem}{Theorems}  
\Crefname{otherthm}{Theorem}{Theorems} 
\crefname{prop}{Proposition}{Propositions} 
\Crefname{prop}{Proposition}{Propositions}
\numberwithin{equation}{section}         
\crefname{subsection}{\S}{\S\S} 
\Crefname{subsection}{\S}{\S\S} 
\newcommand{\comment}[1]{}
\newcommand{\C}{\mathbb{C}}
\newcommand{\R}{\mathbb{R}}
\newcommand{\Q}{\mathbb{Q}}
\newcommand{\Z}{\mathbb{Z}}
\newcommand{\N}{\mathbb{N}}
\newcommand{\K}{\mathbb{K}}
\renewcommand{\P}{\mathbb{P}}
\newcommand{\Si}{\Sigma}
\renewcommand{\cD}{\mathcal{D}}
\newcommand{\cE}{\mathcal{E}}\newcommand{\cF}{\mathcal{F}}
\newcommand{\cG}{\mathcal{G}}
\newcommand{\cN}{\mathcal{N}}\newcommand{\cO}{\mathcal{O}}
\newcommand{\cP}{\mathcal{P}}
\newcommand{\cS}{\mathcal{S}}\newcommand{\cU}{\mathcal{U}}
\newcommand{\cX}{\mathcal{X}}
\newcommand{\cZ}{\mathcal{Z}}
\newcommand{\bA}{\mathbf{A}}
\newcommand{\bJ}{\mathbf{J}}
\newcommand{\tC}{\mathtt{C}}
\newcommand{\tE}{\mathtt{E}}
\newcommand{\tJ}{\mathtt{J}}
\newcommand{\tR}{\mathtt{R}}
\renewcommand{\setminus}{\smallsetminus}
\renewcommand{\emptyset}{\varnothing}
\newcommand{\GL}{\mathrm{GL}}
\newcommand{\gl}{\mathfrak{gl}}
\newcommand{\Id}{\mathrm{Id}}
\newcommand{\Mat}{\mathrm{Mat}}
\newcommand{\Diag}{\mathrm{Diag}}
\newcommand{\Ad}{\mathrm{Ad}}
\newcommand{\id}{\mathrm{id}}
\newcommand{\RP}{\mathbb{R}\mathrm{P}}
\newcommand{\CP}{\mathbb{C}\mathrm{P}}
\newcommand{\KP}{\mathbb{K}\mathrm{P}}
\renewcommand{\Im}{\mathrm{Im}\;}
\DeclareMathOperator{\codim}{codim}
\DeclareMathOperator{\Ker}{Ker}
\DeclareMathOperator*{\spa}{span}
\newcommand{\sorb}{\mathfrak{R}}
\DeclareMathOperator{\pop}{wgt}
\DeclareMathOperator{\rig}{rig}
\DeclareMathOperator{\rank}{rank}
\DeclareMathOperator{\acyc}{acyc}
\DeclareMathOperator{\col}{col}
\newcommand{\prow}{\pi_\mathrm{r}}
\newcommand{\pcol}{\pi_\mathrm{c}}
\renewcommand{\pitchfork}{\;\;\makebox[0pt]{$\top$}\makebox[0pt]{\small $\cap$}\;\;} 
\newcommand{\cupro}{\smallsmile}
\newcommand{\brickpattern}{\drawline(.25,0)(.25,.25)\drawline(.75,.25)(.75,.5)\drawline(.25,.5)(.25,.75)\drawline(.75,.75)(.75,1)\put(0,0){\grid(1,1)(1,.25)}}
\newcommand{\diagonalpattern}{\drawline(.75,0)(1,.25)\drawline(.5,0)(1,.5)\drawline(.25,0)(1,.75)\drawline(0,0)(1,1)\drawline(0,.25)(.75,1)\drawline(0,.5)(.5,1)\drawline(0,.75)(.25,1)}
\newcommand{\lightdiagonalpattern}{\drawline(.5,0)(1,.5)\drawline(0,0)(1,1)\drawline(0,.5)(.5,1)}
\title[Universal Regular Control]{Universal Regular Control for Generic Semilinear Systems}
\author[Bochi]{Jairo Bochi}
\address{Pontif\'icia Universidade Cat\'olica do Rio de Janeiro (PUC--Rio)}
\email{jairo@mat.puc-rio.br}
\author[Gourmelon]{Nicolas Gourmelon}
\address{Institut de Math\'{e}matiques de Bordeaux, Universit\'{e} Bordeaux I}
\email{ngourmel@math.u-bordeaux1.fr}
\date{\today}
\thanks{J.B.\ was partially supported by CNPq, Universit\'{e} Bordeaux I, Rede Franco--Brasileira em Matem\'atica, and FAPERJ. N.G.\ was partially supported by FAPERJ}
\keywords{Discrete-time systems; semilinear systems; bilinear systems; universal regular control}
\subjclass[2010]{93C10; 93B05, 93C55}
\begin{document}

\begin{abstract} 
We consider discrete-time projective semilinear control systems $\xi_{t+1} = A(u_t) \cdot \xi_t$, where the states $\xi_t$ are in projective space $\RP^{d-1}$, inputs $u_t$ are in a manifold $\cU$ of arbitrary finite dimension, and $A \colon \cU \to \GL(d,\R)$ is a differentiable mapping.

An input sequence $(u_0,\ldots,u_{N-1})$ is called universally regular if for any initial state $\xi_0 \in \RP^{d-1}$, the derivative of the time-$N$ state with respect to the inputs is onto.

In this paper we deal with the universal regularity of constant input sequences $(u_0, \dots, u_0)$. Our main result states that generically in the space of such systems, for sufficiently large $N$, all constant inputs of length $N$ are universally regular, with the exception of a discrete set. More precisely, the conclusion holds for a $C^2$-open and $C^\infty$-dense set of maps $A$, and $N$ only depends on $d$ and on the dimension of $\cU$. We also show that the inputs on that discrete set are nearly universally regular; indeed there is a unique non-regular initial state, and its corank is $1$.

In order to establish the result, we study the spaces of bilinear control systems. We show that the codimension of the set of systems for which the zero input is not universally regular coincides with the dimension of the control space. The proof is based on careful matrix analysis and some elementary algebraic geometry. Then the main result follows by applying standard transversality theorems.
\end{abstract}

\maketitle

\section{Introduction}\label{s.intro}

\subsection{Basic definitions and some questions}

Consider discrete-time control systems of the form:
\begin{equation}\label{e.general CS}
x_{t+1} = F(x_t,u_t), \qquad (t = 0,1,2, \dots)
\end{equation}
where $F \colon \cX \times \cU \to \cX$ is a map.
We will always assume that the space $\cX$ of states and the space $\cU$ of controls are manifolds, and that the map $F$ is continuously differentiable.

A sequence $(x_0, \dots, x_N ; u_0, \dots, u_{N-1})$ satisfying \eqref{e.general CS} is called a trajectory of length $N$; it is uniquely determined by the initial state $x_0$ and the 
input 
$(u_0,\dots,u_{N-1})$.
Let $\phi_N$ denote the time-$N$ transition map,
which gives the final state as a function of the initial state and the input:
\begin{equation}\label{e.final state}
x_N = \phi_N(x_0; u_0, \dots, u_{N-1}).
\end{equation}

We say that the system \eqref{e.general CS} is \emph{accessible} from $x_0$ in time $N$ if 
the set $\phi_N(\{x_0\} \times \cU^N)$ 
of final states that can be reached from the initial state $x_0$ 
has nonempty interior.

The implicit function theorem gives a sufficient condition for accessibility.
If the derivative of the map $\phi_N(x_0; \cdot)$ at input $(u_0,\dots,u_{N-1})$
is an onto linear map 
then we say that the trajectory determined by $(x_0; u_0, \dots, u_{N-1})$ is \emph{regular}.
So the existence of such a regular trajectory implies that the system
is accessible from $x_0$ in time $N$.

\medskip

Let us call an input $(u_0, \dots, u_{N-1})$ \emph{universally regular}
if for every $x_0 \in \cX$, the trajectory determined by $(x_0; u_0, \dots, u_{N-1})$ is regular;
otherwise the input is called \emph{singular}.

The concept of universal regularity is central in this paper; 
it was introduced by Sontag 
in \cite{Sontag_92} in the context of continuous-time control systems. The discrete-time analogue was considered by Sontag and Wirth in \cite{Sontag_Wirth_98}.
They showed that if the system \eqref{e.general CS} is accessible from every initial condition $x_0$ 
in uniform time $N$ then universally regular inputs do exist, provided one assumes the map $F$ to be analytic. In fact, under those hypotheses they showed that universally regular inputs are abundant: in the space of inputs of sufficiently large length, singular ones form a set of positive codimension.

\medskip

In this paper, we are interested in control systems \eqref{e.general CS} where
the next state $x_{t+1}$ depends linearly on the previous state $x_t$ 
(but non-linearly on $u_t$, in general). 
This means that the state space is $\K^d$,
where $\K$ is either $\R$ or $\C$,
and that \eqref{e.general CS} now takes the form: 
\begin{equation}\label{e.semilin CS}
x_{t+1} = A(u_t) \cdot x_t, \qquad \text{where } A \colon \cU \to \Mat_{d \times d}(\K).
\end{equation}
Following \cite{CK_93}, we call this a \emph{semilinear control system}.

In the case that the map $A$ above takes values in the set $\GL(d,\K)$ of invertible matrices
of size $d \ge 2$, we consider the corresponding projectivized control system:
\begin{equation}\label{e.proj semilin CS}
\xi_{t+1} = A(u_t) \cdot \xi_t, 
\end{equation}
where the states $\xi_t$ take value in the projective space $\KP^{d-1} = \K^d_* / \K_*$.
We call this a \emph{projective semilinear control system}.
The projectivized system is also a useful tool
for the study of the original system \eqref{e.semilin CS}:
see e.g.\ \cite{Wirth_98,CK_book}.



Universally regular inputs for projective semilinear control systems
were first considered by Wirth in \cite{Wirth_98}.
Under his working hypotheses, the existence and abundance of such inputs 
is guaranteed by the aforementioned result of \cite{Sontag_Wirth_98};
then he uses universally regular inputs to obtain global controllability properties.

\medskip

The purpose of this paper is to establish results on the existence and abundance of 
universally regular inputs for projective semilinear control systems. 
Differently from \cite{Sontag_Wirth_98, Wirth_98}, we will not necessarily assume our systems to be analytic.
Let us consider systems \eqref{e.proj semilin CS} with $\K=\R$ 
and $A \colon \cU \to \GL(d,\R)$ a map of class $C^r$, for some fixed $r\ge 1$.
To compensate for less rigidity, we do not try to obtain results that work for all $C^r$ maps $A$, 
but only for \emph{generic} ones, i.e., those maps in a residual (dense $G_\delta$) 
subset, or, even better, in an open dense subset.

To make things more precise, assume $\cU$ is a $C^\infty$ (real) manifold without boundary.
All manifolds are assumed to be Hausdorff paracompact with a countable base of open sets, and of finite dimension.
We will always consider the space $C^r(\cU, \GL(d,\R))$ endowed with the strong $C^r$ topology
(which coincides with the usual uniform $C^r$ topology in the case that $\cU$ is compact).

Hence the first question we pose is this: 
\begin{quote}
Taking $N$ sufficiently large, is it true that for $C^r$-generic maps $A$, the set of universally regular inputs in $\cU^N$ is itself generic?
\end{quote}
It turns out that this question has a positive answer.
Actually, in a work in preparation we show that for generic maps $A$, 
all inputs in $\cU^N$ are universally regular, except for those in 
a stratified closed set of positive codimension.
So another natural question is this:
\begin{quote}
Fixed parameters $d$, $\dim \cU$, $N$, and $r$,
what is the minimum codimension of the set of singular inputs in $\cU^N$
that can occur for $C^r$-generic maps $A \colon \cU \to \GL(d,\R)$?
\end{quote}
In full generality, this question seems to be very difficult.
A simpler setting would be to restrict to \emph{non-resonant inputs}, 
namely those inputs $(u_0,\dots,u_{N-1})$ such that $u_i \neq u_j$ whenever $i \neq j$.  
In this paper we consider the most resonant case.
Define a \emph{constant} input of length $N$ 
as an element of $\cU^N$ of the form $(u_0, u_0, \dots, u_0)$.
We propose ourselves to study universal regularity of inputs of this form.

\subsection{The main result}\label{ss.main_statements}

We prove that generically the singular constant inputs form a very small set:
 
\begin{thm}\label{t.main}
Given $d\ge 2$ and $m \ge 1$, there exists an integer $N$ with $1 \le N \le d^2$ 
such that the following properties hold.
Let $\cU$ be a smooth $m$-dimensional manifold without boundary.
Then there exists a $C^2$-open $C^\infty$-dense subset $\cO$ of $C^2(\cU, \GL(d,\R))$ 
such that for every system \eqref{e.proj semilin CS} with $A \in \cO$,
all constant inputs of length $N$ are universally regular, except for 
those in a zero-dimensional (i.e., discrete) set.
\end{thm}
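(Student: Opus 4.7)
The strategy has three steps: reduce universal regularity of a constant input $(u_0,\dots,u_0)$ to a property of the $1$-jet of $A$ at $u_0$, read that property through the bilinear-systems codimension result advertised in the abstract, and invoke jet transversality.

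First, I would compute the derivative of $\phi_N(\xi_0;\,\cdot\,)$ at a constant input. Writing $A_0 := A(u_0)$, lifting $\xi_0 \in \RP^{d-1}$ to $\tilde\xi_0 \in \R^d$, and varying each coordinate in a direction $v_t \in T_{u_0}\cU$, the product rule yields
\[
d\phi_N(\xi_0;\,\cdot\,)(v_0,\dots,v_{N-1}) \;=\; \sum_{t=0}^{N-1} A_0^{\,N-1-t}\bigl[DA(u_0)\cdot v_t\bigr]\, A_0^{\,t}\,\tilde\xi_0 \pmod{\R\, A_0^{\,N}\tilde\xi_0}.
\]
Hence universal regularity of the constant input at $u_0$ is equivalent to universal regularity of the zero input of the bilinear system whose drift is $A_0$ and whose control matrices span $\mathrm{Image}\,DA(u_0) \subseteq \Mat_{d\times d}(\R)$. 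In particular, the property depends only on the $1$-jet $j^1_{u_0}A$.

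Next, I would transfer the bilinear-systems result to the $1$-jet bundle $J^1(\cU,\GL(d,\R))$. Set
\[
\Sigma \;:=\; \bigl\{\,(u_0; A_0, D) \in J^1(\cU,\GL(d,\R)) : \text{the associated bilinear system has non-regular zero input}\,\bigr\},
\]
for the length $N \le d^2$ provided by the bilinear theorem. Since the fibrewise description of $\Sigma$ is given by the simultaneous vanishing of explicit polynomial minors of the map above, together with an existential quantifier over $\tilde\xi_0 \in \RP^{d-1}$, $\Sigma$ is a closed semi-algebraic subset of $J^1(\cU,\GL(d,\R))$ of codimension exactly $m = \dim\cU$, and it admits a Whitney stratification into finitely many smooth strata of codimension at least $m$. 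Thom's jet transversality theorem then gives a $C^\infty$-dense set of $A \in C^2(\cU,\GL(d,\R))$ whose $1$-jet extension $j^1 A \colon \cU \to J^1(\cU,\GL(d,\R))$ is transverse to every stratum of $\Sigma$. Since $\dim\cU = m = \codim\Sigma$, the preimage $(j^1 A)^{-1}(\Sigma)$ is zero-dimensional: it meets only the top stratum, transversely, and misses every lower stratum. This is the desired discrete exceptional set.

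For $C^2$-openness of the resulting set $\cO$, I would argue that transversality of the $C^1$-section $j^1 A$ to finitely many strata of a closed semi-algebraic set is preserved under small $C^1$-perturbations of the section, hence under small $C^2$-perturbations of $A$ in the strong $C^2$ topology; the strong topology provides the uniformity needed when $\cU$ is non-compact. The step I expect to require the most care --- granted the bilinear codimension theorem, which the abstract presents as the core of the paper --- is the structural claim on $\Sigma$: a Whitney stratification in which all proper strata have codimension strictly greater than $m$. This structural fact is what both forces the preimage to be zero-dimensional and underpins the $C^2$-openness argument.
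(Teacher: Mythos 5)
Your proposal takes essentially the same route as the paper: reduce universal regularity of a constant input to a property of the $1$-jet (via the bilinear system at $u_0$), pull the codimension-$m$, closed, semialgebraic structure of the set of ``poor'' data into the jet bundle (the paper uses normalized derivatives $B_j = C_j A^{-1}$ to make this identification coordinate-free, but the content is the same), invoke the canonical Whitney stratification of a semialgebraic set, and apply jet transversality to get a $C^2$-open, $C^\infty$-dense set of maps whose $1$-jet meets the stratification in a discrete set. The one point you rightly flag as requiring care --- that all strata have codimension at least $m$ and that $C^2$-openness follows from Whitney-regularity of the stratification --- is exactly what the paper handles via its Appendix~C (openness of transversality to a Whitney stratification and the saturation/local-coordinate argument ensuring the poor-jet set is well defined and Whitney-stratified of codimension $m$).
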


By saying that a subset $\cO$ of $C^2(\cU, \GL(d,\R))$ is  $C^\infty$-dense, 
we mean that for all $r \ge 2$, the intersection of $\cO$ with $C^r(\cU, \GL(d,\R))$ 
is dense in $C^r(\cU, \GL(d,\R))$.

It is remarkable that the generic dimension of the set of singular constant inputs (namely, $0$) does not depend on the dimension $m$ of the control space $\cU$, neither on the dimension $d-1$ of the state space. A partial explanation for this phenomenon is the following: 
First, the obstruction to universal regularity of the input $(u,u,\dots,u)$ is the combined degeneracy of the matrix $A(u)$ and of the derivatives of $A$ at $u$.
If $m$ is small then the image of the generic map $A$ will avoid too degenerate matrices, which increases the chances of obtaining universal regularity. If $m$ is large then more degenerate matrices $A(u)$ will inevitably appear; however the large number of control parameters compensates, so universal control is still likely.

\medskip

The singular inputs that appear in \cref{t.main} are not only rare;
we also show that they are ``almost'' universally regular:

\begin{thm}[Addendum to \cref{t.main}]\label{t.addendum}
The set $\cO \subset C^2(\cU,\GL(d,\R))$ in \cref{t.main} 
can be taken with the following additional properties:
If $A \in \cO$ and a constant input $(u,\dots,u)$ of length $N$ is singular then:
\begin{enumerate}
\item \label{i.addendum_1} 
There is a single direction $\xi_0 \in \RP^{d-1}$ for which
the corresponding trajectory of system \eqref{e.proj semilin CS} is not regular.
\item \label{i.addendum_2}
The derivative of the map $\phi_N(\xi_0; \cdot)$ at input $(u,\dots,u)$
has corank $1$.
\end{enumerate}
\end{thm}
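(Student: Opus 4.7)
The plan is to refine the codimension estimates from the proof of \cref{t.main} and then apply jet transversality once more to rule out the two degenerate situations in (1) and~(2). Recall that \cref{t.main} is proved by linearizing $A$ at a point $u_0 \in \cU$ to obtain a bilinear control system, and establishing that the stratum $\Si \subset J$ of $1$-jets $(A_0, A_1, \dots, A_m)$ for which the zero input is non-regular has codimension exactly $m = \dim\cU$ in the ambient jet space $J = \GL(d,\R) \times \Mat_{d\times d}(\R)^m$. Jet transversality then yields that for generic $A$ the $1$-jet extension $j^1 A \colon \cU \to J$ meets $\Si$ only along a discrete subset of $\cU$.

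I would then introduce two further loci in $J$: the set $\Si^\flat$ of jets for which some non-regular initial direction $\xi_0 \in \RP^{d-1}$ has corank at least~$2$, and the set $\Si^\sharp$ of jets admitting two distinct non-regular directions $\xi_0 \neq \xi_0'$. If one proves that $\codim_J \Si^\flat \geq m+1$ and $\codim_J \Si^\sharp \geq m+1$, then jet transversality delivers a $C^2$-open, $C^\infty$-dense subset of maps $A$ whose $1$-jet extension entirely avoids $\Si^\flat \cup \Si^\sharp$; intersecting this subset with the set $\cO$ from \cref{t.main} provides the addendum. Conclusion (1) follows from avoidance of $\Si^\sharp$ (at most one non-regular direction, combined with membership in $\Si$ forcing exactly one), and (2) follows from avoidance of $\Si^\flat$ (ruling out corank $\geq 2$, so the corank is exactly $1$).

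The heart of the matter lies in these two codimension bounds, which I would derive by adapting the matrix-analytic and algebraic-geometric arguments that yielded $\codim_J \Si = m$. The non-regularity at a fixed $\xi_0$ is characterized by the existence of a nonzero covector $\eta \in (\R^d)^*$ vanishing on $A_0^N \xi_0$ and on every vector $A_0^{N-1-t} A_j A_0^t \xi_0$ with $0 \leq t \leq N-1$ and $1 \leq j \leq m$. For $\Si^\flat$ one further demands a covector $\eta'$ linearly independent of $\eta$ satisfying the same equations; the resulting incidence variety, whose fiber over $J$ parametrizes triples $(\xi_0, \eta, \eta')$ with $\eta$ and $\eta'$ independent, is cut out by strictly more conditions, and a transverse dimension count supplies the extra codimension. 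For $\Si^\sharp$ one works on the incidence variety over $(\RP^{d-1})^2 \setminus \Delta$, where the additional $d-1$ parameters from $\xi_0'$ are compensated by the annihilation conditions for the second trajectory, with at least one genuinely new condition left after projection to $J$. The principal obstacle will be verifying rigorously that these new conditions remain independent of the old ones over the whole of $\Si$, i.e.\ controlling the determinantal degenerations on which several constraints could collapse; this is the place where the algebraic-geometric input from the proof of \cref{t.main} must be reused and suitably extended.
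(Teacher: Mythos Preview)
Your overall architecture is correct and matches the paper: isolate a ``bad'' subset of poor $1$-jets having codimension at least $m+1$, apply \cref{p.stratifiedtransversality} to it, and intersect with the set $\cO$ from \cref{t.main}. Where your proposal and the paper diverge is in \emph{how} one establishes that extra unit of codimension.

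You propose to work directly with the incidence varieties for $\Si^\flat$ (corank $\ge 2$) and $\Si^\sharp$ (two non-regular directions), and you correctly flag the real obstacle: showing that the additional linear conditions are genuinely independent of those already cutting out $\Si$. You do not, however, indicate any mechanism for resolving this. A naive parameter count is not enough here, because $\Si$ itself is highly non-transversal --- its codimension-$m$ bound already required all the rigidity machinery of \cref{s.rig,s.rig proof} --- and nothing prevents the extra conditions from being absorbed on large components of $\Si$ (for instance when $A$ is constrained, where the non-transitive space $\Lambda(\bA)$ can be much more degenerate). So as written the proposal has a real gap at exactly the point you identify.

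The paper takes a different route that sidesteps this independence verification entirely. Rather than estimating $\codim \Si^\flat$ and $\codim \Si^\sharp$ directly, it gives an \emph{explicit description} of the generic poor datum: after a change of basis, $A$ is unconstrained diagonal and the $B_k$ share a \emph{unique} off-diagonal zero entry (\cref{l.good_poor}). For such data one reads off by hand that the non-transitive direction is unique and the corank is $1$. The complementary ``non-generic'' poor data are then shown to form a set $\cS_m$ of codimension $\ge m+1$ (\cref{l.bad_set}); the crucial input is \cref{scholium}, which says that poor data lying over matrices $A$ with fat poor fibers already have codimension $\ge m+1$. This scholium is not a separate computation but a free byproduct of the rigidity estimate \eqref{e.ultraimportant}, which happens to be \emph{strict} for $k\ge 1$. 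In effect the paper converts the delicate ``independence of conditions'' problem into the concrete combinatorial statement that unconstrained $A$ with a double off-diagonal zero in the $B_k$'s cost $2m$ conditions rather than $m$, and that constrained $A$ are handled by the scholium.

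So: your high-level plan is sound, but to complete it along your lines you would still need something equivalent to \cref{scholium} to control the degenerate-$A$ strata. The paper's explicit ``conspicuously poor'' description is what makes the argument go through without further incidence-variety analysis.
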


To sum up, for generic systems \eqref{e.proj semilin CS}, the universal regularity of constant inputs 
can fail only in the weakest possible way: there is at most one non-regular state, 
which can be moved in all directions but one.

We actually describe precisely in \cref{a.generic singular} the singular inputs that appear in \cref{t.addendum}. 
We show that these singular inputs can be unremovable by perturbations,
and therefore \cref{t.main} is optimal in the sense that there are $C^2$-open (actually even $C^1$-open) sets of maps $A$ for which the set of singular constant inputs is nonempty.
Also, by $C^1$-perturbing any $A$ in those $C^2$-open sets,
one can obtain an infinite number of singular constant inputs.
In particular, the set $\cO$ in the statement of the \cref{t.main} is not $C^1$-open in general.


\subsection{Reduction to the study of the set of poor data}

The bulk of the proof of \cref{t.main} consists on the computation 
of the dimension of certain canonical sets,
as we now explain.

\medskip


We fix $A \colon \cU \to \GL(d,\K)$ and consider
the projective semilinear system \eqref{e.proj semilin CS}.
By the chain rule, the universal regularity of an input 
$(u_0, u_1, \ldots, u_{N-1})$
depends only on the $1$-jets of $A$ at points $u_0$, \ldots, $u_{N-1}$,
i.e., on the first order Taylor approximations of $A$ around those points.


Let us discuss the case of constant inputs $(u_0, \ldots, u_0)$.
If we take local coordinates such that $u_0 = 0$
and replace the matrix map $A \colon \cU \to \GL(d,\K)$
by its linear approximation, system \eqref{e.proj semilin CS} becomes:
\begin{equation}\label{e.proj bilin CS}
\xi_{t+1} = \left(A + \sum_{j=1}^m u_{t,i} C_j \right) \xi_t  
\, , \quad (t = 0, 1, 2, \dots) ,
\end{equation}
where $A = A(u_0)$ 
and $C_1$, \dots, $C_m$ are the partial derivatives at $u_0=0$.
This is the projectivization of a \emph{bilinear control system} (see \cite{Elliott}).
For these systems, the zero input is a distinguished one and the focus of more attention.

To study system \eqref{e.proj bilin CS} it is actually more convenient to consider
\emph{normalized derivatives} $B_j = C_j A^{-1}$,
which intrinsically take values in the Lie algebra $\gl(d,\K)$.
Consider the matrix datum $\bA = (A, B_{1}, \ldots, B_{m})$.
We will explain how the universal regularity of the zero input
is expressed in linear algebraic terms.
Recall that the \emph{adjoint operator} of $A$ acts on $\gl(d,\K)$
by the formula $\Ad_A(B) = A B A^{-1}$.
Consider the linear subspace $\Lambda_N(\bA)$ of $\gl(d,\K)$ spanned by the matrices
$$
\Id \quad \text{and} \quad
(\Ad_A)^i(B_j), \quad (i=0,\ldots,n-1, \ j=1,\ldots, m).
$$
(The identity matrix appears because of the projectivization.)
This is nothing but the reachable set from $0$ 
for the linear control system $(\Ad_A, \Id, B_1, \dots, B_m)$.
Then:

\begin{prop}\label{p.reg transitivity}
The constant input $(0,\dots,0)$ of length $N$ is universally regular 
for system \eqref{e.proj bilin CS}
if and only if the space $\Lambda_N(\bA)$ is transitive.
\end{prop}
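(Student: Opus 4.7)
The plan is a direct derivative computation, lifting the projective system to a linear one in $\R^d$, then quotienting at the end.

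First I would work with the unprojectivized system $x_{t+1} = (A + \sum_j u_{t,j} C_j) x_t$ in $\R^d$, whose time-$N$ map I call $\hat\phi_N$. Choose a lift $x_0 \in \R^d \setminus \{0\}$ of $\xi_0$; then $\phi_N(\xi_0;u)$ is the projective class of $\hat\phi_N(x_0;u)$, and at the zero input the derivative of $\phi_N(\xi_0;\cdot)$ is exactly the derivative of $\hat\phi_N(x_0;\cdot)$ composed with the quotient projection $\R^d \twoheadrightarrow \R^d/\R\eta$, where $\eta := A^N x_0$ is the final state.

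Next, by the standard variation-of-constants formula for discrete linear systems, the partial derivative with respect to $u_{t,j}$ at the zero input is obtained by inserting the infinitesimal matrix $C_j$ at time $t$ and composing with $A$ on both sides, giving
\[
\frac{\partial \hat\phi_N}{\partial u_{t,j}}(x_0;0) \;=\; A^{N-1-t} C_j A^t x_0.
\]
Substituting $C_j = B_j A$ and rewriting $A^{N-1-t} B_j A^{t+1} = (A^{N-1-t} B_j A^{-(N-1-t)}) \cdot A^N = \Ad_{A^{N-1-t}}(B_j)\cdot \eta$, I identify the image of the derivative in $\R^d$ as
\[
W(\eta) \;:=\; \spa\bigl\{(\Ad_A)^i(B_j)\,\eta \,:\, 0 \le i \le N-1,\ 1 \le j \le m\bigr\}.
\]

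The third step is to pass to the projectivization. The tangent space $T_{[\eta]}\RP^{d-1}$ is canonically $\R^d/\R\eta$, so $d\phi_N(\xi_0;\cdot)|_0$ is onto if and only if $W(\eta) + \R\eta = \R^d$. Since $\Id\cdot\eta = \eta$, this is the same as $\Lambda_N(\bA)\,\eta = \R^d$, where $\Lambda_N(\bA)$ is the span defined in the statement (the identity generator is there precisely to absorb the $\R\eta$ from the quotient).

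Finally, as $\xi_0$ ranges over $\RP^{d-1}$, the point $\eta = A^N x_0$ ranges over all nonzero vectors of $\R^d$ up to scalar (because $A \in \GL(d,\R)$), and the condition $\Lambda_N(\bA)\eta = \R^d$ is scale-invariant in $\eta$. Hence universal regularity of the constant zero input is equivalent to $\Lambda_N(\bA)\eta = \R^d$ for all $\eta \ne 0$, which is the definition of transitivity of $\Lambda_N(\bA)$. There is no real obstacle here beyond careful bookkeeping; the only subtle point is keeping track of the $\Id$ summand, which is not a derivative in the $u$-direction but comes from quotienting by $\R\eta$ in the projectivization.
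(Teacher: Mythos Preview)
Your proof is correct and follows essentially the same route as the paper: lift to the linear system in $\R^d$, compute the image of the derivative at the zero input via the variation-of-constants formula as $\spa\{(\Ad_A)^i(B_j)\cdot A^N x_0\}$, then quotient by $\R\eta$ to land in $T_{[\eta]}\RP^{d-1}$, picking up the $\Id$ generator exactly as you note. The paper actually packages this computation as the slightly more general rank identity $\rank \partial_2\phi_N(\xi_0,\hat u) = \dim[\Lambda_N(\bA)\cdot A^N x_0] - 1$ (\cref{p.ranks}), from which \cref{p.reg transitivity} is read off, but the argument is the same.
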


Here we say that a subspace of $d \times d$ matrices with entries in the field $\K$
is \emph{transitive} if it acts transitively in the set $\K^d_*$ of nonzero vectors.

Clearly, the spaces $\Lambda_N(\bA)$ form a nested sequence that stabilizes 
to a space $\Lambda(\bA)$ at some time $N \le d^2$.
If $\Lambda(\bA)$ is transitive then the datum $\bA$ is called \emph{rich}; 
otherwise it is called \emph{poor}.
Let $\cP_m^{(\K)} = \cP_{m,d}^{(\K)}$ denote the set of poor data.
A major part of our work is to study these sets.
We prove:

\begin{thm}\label{t.cod_data_R} 
The set $\cP_{m}^{(\R)}$ is closed and semialgebraic,
and its codimension in $\GL(d,\R) \times (\gl(d,\R))^m$
is $m$.
\end{thm}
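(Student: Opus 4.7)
The plan is to characterize poor data via a ``witness'' pair $([v],[w^*])\in\RP^{d-1}\times(\RP^{d-1})^*$ certifying the failure of transitivity of $\Lambda(\bA)$, build the associated incidence variety, and analyze its projection to $\bA$-space. By \cref{p.reg transitivity}, $\bA=(A,B_1,\ldots,B_m)$ is poor if and only if there exist nonzero $v\in\R^d$ and $w^*\in(\R^d)^*$ with $w^*v=0$ and $w^*(\Ad_A)^i(B_j)v=0$ for all $i\geq 0$ and $j=1,\ldots,m$; Cayley--Hamilton applied to $\Ad_A$ on $\gl(d,\R)$ restricts the indices to $0\leq i\leq d^2-1$. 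Hence the incidence set
\[
\Omega:=\bigl\{(\bA,[v],[w^*]):w^*v=0,\ w^*(\Ad_A)^i(B_j)v=0,\ 0\leq i\leq d^2-1,\ 1\leq j\leq m\bigr\}
\]
is closed semialgebraic in $(\GL(d,\R)\times\gl(d,\R)^m)\times\RP^{d-1}\times(\RP^{d-1})^*$, and $\cP_m^{(\R)}$ is its image under projection onto the first factor. Closedness follows from compactness of the projective factors and semialgebraicity from Tarski--Seidenberg.

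For the upper bound $\codim\cP_m^{(\R)}\leq m$, I would exhibit an explicit $m$-codimensional subset of $\cP_m^{(\R)}$. Let $A$ range over the open set of matrices with $d$ distinct real eigenvalues, and take a right eigenvector $v$ of $A$ with eigenvalue $\lambda$ together with a left eigenvector $w^*$ with distinct eigenvalue $\mu$, so that $w^*v=0$ automatically. Since $w^*(\Ad_A)^i(B_j)v=(\mu/\lambda)^i w^*B_jv$, the whole system of equations collapses to the $m$ linear conditions $w^*B_jv=0$, cutting out an $m$-codimensional subvariety of $\GL(d,\R)\times\gl(d,\R)^m$ contained in $\cP_m^{(\R)}$.

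For the lower bound $\codim\cP_m^{(\R)}\geq m$, I would stratify $\Omega$ by the integer
\[
\rho(A,v,w^*):=\dim\spa\{(\Ad_A)^i(vw^*):i\geq 0\},
\]
which equals the codimension in $\gl(d,\R)$ of the linear subspace $H(A,v,w^*):=\{B:w^*(\Ad_A)^i(B)v=0\ \forall i\geq 0\}$, since the constraints on $B$ are the trace pairings against the matrices $(\Ad_A)^i(vw^*)$. On the stratum $T_\rho:=\{(A,[v],[w^*]):w^*v=0,\ \rho(A,v,w^*)=\rho\}$ the $\bB$-fiber of $\Omega$ is $H(A,v,w^*)^m$, of dimension $m(d^2-\rho)$, so
\[
\dim\cP_m^{(\R)}\leq\max_{\rho\geq 1}\bigl(\dim T_\rho+m(d^2-\rho)\bigr).
\]
Granting the key estimate
\[
\dim T_\rho\leq d^2+\rho-1\qquad\text{for every }\rho\geq 1,\qquad(\star)
\]
one obtains $\dim\cP_m^{(\R)}\leq(m+1)d^2-1-(m-1)\rho\leq(m+1)d^2-m$, i.e.\ codimension at least $m$.

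The main obstacle is establishing $(\star)$. Over the Zariski-open subset of $A\in\GL(d,\R)$ whose eigenvalues admit no multiplicative relations, $\Ad_A$ is diagonalizable on $\gl(d,\R)\otimes\C$ with exactly $d^2-d+1$ distinct eigenvalues (the value $1$ with multiplicity $d$ on $\spa(E_{kk})$, and the values $\lambda_k/\lambda_l$ on $E_{kl}$ for $k\neq l$). Consequently $\rho(A,v,w^*)$ becomes a combinatorial count in terms of the supports $I:=\supp v$ and $J:=\supp w^*$ in the $A$-eigenbasis: $\rho=|I|\cdot|J|$ when $I\cap J=\emptyset$ and $\rho=1+|I|\cdot|J|-|I\cap J|$ when $|I\cap J|\geq 2$, while the case $|I\cap J|=1$ is excluded by $w^*v=0$. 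Combined with the elementary inequalities $(|I|-1)(|J|-1)\geq 0$ and $(|I|-1)(|J|-1)\geq|I\cap J|-1$, this yields $(\star)$ over the generic locus, with equality attained exactly in the eigenvector case $|I|=|J|=1$ exploited for the upper bound. Extending $(\star)$ to all of $\GL(d,\R)$ is the delicate part: one must offset the codimension of each degenerate $A$-locus (non-diagonalizable blocks, scalars, multiplicatively related spectra, and so on) against the possible enlargement of $T_\rho(A)$ there, through a case-by-case Jordan-form analysis of $\Ad_A$, and I expect this is where the bulk of the technical calculations lie.
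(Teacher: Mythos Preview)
Your treatment of closedness, semialgebraicity, and the upper bound $\codim \cP_m^{(\R)} \le m$ is correct and essentially matches the paper's: both project an incidence variety along compact projective factors and invoke Tarski--Seidenberg, and both exhibit the same conspicuously poor data (a diagonalizable $A$ with simple spectrum together with $B_j$'s sharing a common off-diagonal zero in the eigenbasis).

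For the lower bound your route is genuinely different. The paper does not work with your witness variety $\Omega$ or your cyclic quantity $\rho(A,v,w^*)$. Instead it first passes to $\C$, fibers $\cP_m^{(\C)}$ over $A$ alone, and bounds each fiber $\cP_m(A)$ via an intersection-theoretic fact about column-invariant subvarieties of matrix space (\cref{t.schubert}, proved by Schubert calculus on Grassmannians): if some $r$-dimensional space of $B$'s makes $(A,B_1,\ldots,B_r)$ rich, then $\codim\cP_m(A)\ge m+1-r$. The least such $r$, namely $r(A)=\rig_+\Ad_A-1$, is then bounded through a Jordan-form and root-of-unity analysis (\cref{t.rig}), and the set of $A$'s with $r(A)\ge k$ is shown to have codimension at least $k$ (\cref{l.cod rig}). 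Finally the real statement follows by intersecting with real points (\cref{p.real complex dimension}). Your approach trades the complex detour and the Grassmannian machinery for a direct real stratification by $\rho$; in exchange, your single estimate $(\star)$ now carries the entire weight of the lower bound.

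The genuine gap is exactly where you place it: $(\star)$ over the degenerate locus is asserted but not proved. Your argument establishes $(\star)$ only over the Zariski-open set of unconstrained $A$, and your sketch for the rest --- offsetting the codimension of each degenerate $A$-stratum against the enlargement of the $T_\rho$-fiber there --- is precisely the content that occupies \cref{s.rig,s.rig proof} of the paper. There is no indication that your formulation makes this shorter: one still needs, for every Jordan type and every pattern of multiplicative relations among the eigenvalues, a quantitative tradeoff. Moreover, the paper's detour through $\C$ is not gratuitous: it buys irreducibility and the intersection theory behind \cref{t.schubert}, which converts the \emph{existence} of a single rich $r$-tuple into a codimension bound on \emph{all} poor data over $A$ --- a step your approach replaces by the bare linear-algebra fiber count $m\rho$. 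So your proposal is a coherent alternative strategy and correctly locates the difficulty, but as written it postpones rather than resolves the hard core of the theorem.
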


\begin{thm}\label{t.cod_data_C} 
The set $\cP_{m}^{(\C)}$ is algebraic,
and its (complex) codimension in $\GL(d,\C) \times (\gl(d,\C))^m$
is $m$.
\end{thm}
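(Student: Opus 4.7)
The plan is to describe the poor locus $\cP_{m}^{(\C)}$ as the projection of an explicit algebraic witness variety and then bound its codimension from both sides; the upper bound will come from an explicit construction, while the lower bound requires a careful stratified dimension count.

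Because $\Id \in \Lambda(\bA)$, the subspace $\Lambda(\bA)$ fails to be transitive iff there exist nonzero $v \in \C^d$ and $\phi \in (\C^d)^*$ with $\phi(v) = 0$ and $\phi((\Ad_A)^i B_j v) = 0$ for every $i \geq 0$ and $j = 1, \ldots, m$. Cayley--Hamilton applied to $\Ad_A$ restricts the indices to $i \leq d^2 - 1$, yielding a finite polynomial system cutting out an algebraic variety $W \subset (\GL(d,\C) \times \gl(d,\C)^m) \times \P^{d-1} \times \P^{d-1}$. Since $\P^{d-1} \times \P^{d-1}$ is complete, the projection $\cP_{m}^{(\C)} = \pi(W)$ to the data space is Zariski-closed, establishing algebraicity. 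For the upper bound $\codim \leq m$, on the Zariski-open stratum where $A$ has distinct eigenvalues, fix indices $k \neq l$ and impose $e_k^*(B_j e_l) = 0$ for all $j$ in a local eigenbasis of $A$; taking $v := e_l$ and $\phi := e_k^*$ gives $\phi(v) = 0$ and $\phi((\Ad_A)^i B_j v) = (\lambda_k/\lambda_l)^i e_k^*(B_j e_l) = 0$, witnessing poorness on this codim-$m$ subvariety.

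For the lower bound $\codim \geq m$, stratify $W$ by the pair $(p,q) := (\dim W_v, \dim W_\phi)$, where $W_v := \spa\{A^{-i}v\}_{i \geq 0}$ and $W_\phi := \spa\{(A^*)^i\phi\}_{i \geq 0}$ are the smallest $A$- and $A^*$-invariant subspaces containing $v$ and $\phi$. The linear conditions on $B \in \gl(d,\C)$ correspond to covectors $c^i := (A^*)^i \phi \otimes A^{-i} v \in W_\phi \otimes W_v$; by a linear algebra lemma involving the Jordan structure of $A|_{W_v}$ and $A^*|_{W_\phi}$, the rank $\rho := \dim \spa\{c^i\}_{i \geq 0}$ equals, on a generic substratum, the number of distinct products between the spectra of $A^{-1}|_{W_v}$ and $A^*|_{W_\phi}$, and in particular $\rho \leq pq$. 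Thus on each $(p, q)$-stratum the conditions on $(B_1, \ldots, B_m)$ have total rank $m\rho$, the moduli of $(v, \phi)$ inside the invariant subspaces contribute at most $(p - 1) + (q - 1) - \epsilon$ dimensions ($\epsilon \in \{0, 1\}$ according as $\phi(v) = 0$ is automatic or imposes a genuine constraint), and the dimension count yields $\codim \geq m\rho - (p - 1) - (q - 1) + \epsilon$. A case-by-case check — separating according to whether the eigen-supports of $v$ and $\phi$ are disjoint or overlap — reduces this to the elementary inequality $m(\rho - 1) \geq (p - 1) + (q - 1) - \epsilon$, which always holds, with equality exactly in the $p = q = 1$ disjoint-spectrum stratum already realized above.

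The main obstacle is controlling $\rho$ uniformly across all $A$: when $A$ has nontrivial Jordan blocks or when the eigenvalue products on $W_\phi \otimes W_v$ collide, $\rho$ drops below the generic value $pq$, threatening the codimension bound; one must then verify that the codimension of such degenerate $A$'s inside $\GL(d,\C)$ compensates for the weakened constraints on $B_j$. Carrying out this bookkeeping uniformly in the Jordan type of $A$ and in the spectral-support pattern of $(v, \phi)$ is the ``careful matrix analysis'' alluded to in the abstract, and constitutes the main technical content of the proof.
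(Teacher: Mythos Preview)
Your treatment of algebraicity and of the upper bound $\codim \le m$ is correct and essentially coincides with the paper's (Proposition~2.5 and Proposition~2.11).

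The lower bound strategy, however, has a genuine gap. Your moduli count ``$(p-1)+(q-1)-\epsilon$'' for the witness pair $([v],[\phi])$ only records the freedom of $v$ inside a \emph{fixed} cyclic space $W_v$ (and similarly for $\phi$); it omits the moduli of the invariant subspaces $W_v$, $W_\phi$ themselves. For a generic $A$ with simple spectrum these subspaces are determined by the discrete eigen-support of $v$, so the omission is harmless; but for degenerate $A$ the spaces $W_v$ vary continuously. The extreme case $A=\Id$ already breaks your formula: there $W_v=\C v$ for every $v$, so $p=q=\rho=1$, and your count gives $\codim \ge m\cdot 1 - 0 - 0 + \epsilon \ge m$, whereas in fact $\cP_m(\Id)=\gl(d,\C)^m$ whenever $m\le 2d-3$ (any $(m{+}1)$-dimensional space containing $\Id$ is then too small to be transitive). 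The ``elementary inequality'' $m(\rho-1)\ge (p-1)+(q-1)-\epsilon$ is thus vacuously true on this stratum but irrelevant, because the dimension formula feeding into it is wrong. You acknowledge in your last paragraph that the degenerate-$A$ bookkeeping is the crux, but the proposal does not actually carry it out, and the stratification by $(p,q)$ alone is not fine enough to do so.

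The paper's route is rather different and avoids tracking individual witnesses $(v,\phi)$. It introduces an invariant $r(A)=\rig_+\Ad_A-1$ depending only on $A$, and uses an intersection-theoretic result on Grassmannians (Theorem~B.1, via Schubert calculus) to obtain the fibrewise bound $\codim\cP_m(A)\ge m+1-r(A)$. The compensating estimate you allude to --- that degenerate $A$'s are themselves rare --- is then the statement $\codim\{A:r(A)\ge k\}\ge k$ for $k\ge 2$, which is proved by the rigidity theorem (Theorem~3.3): a bound on $r(A)$ in terms of $\acyc\Ad_A$ and the number $c(A)$ of eigenvalue classes modulo roots of unity, established by an intricate block analysis of $\Ad_A$ (Sections~3--4). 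Combining the two via the fibre-dimension formula (Lemma~5.1) gives $\codim\cP_m\ge m$. So the missing ingredient in your sketch --- a uniform control of the degenerate strata --- is exactly what occupies the bulk of the paper, and it is organised around $A$ rather than around $(v,\phi)$.
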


So \cref{t.cod_data_R,t.cod_data_C} 
say how frequent universal regularity of the zero input is 
in the space of projective bilinear control systems \eqref{e.proj bilin CS}.

\subsection{Overview of the proofs}\label{ss.overview}

\Cref{t.main} follows rather directly from \cref{t.cod_data_R} 
by applying standard results from transversality theory.
More precisely, the fact that the set $\cP_m^{(\R)}$ is semialgebraic
implies that it has a canonical stratification.
This permits us to apply Thom's jet transversality theorem and obtain \cref{t.main}.

On the other hand, \Cref{t.cod_data_R}
follows from its complex version \cref{t.cod_data_C} 
by simple abstract arguments.

Thus everything is based on \cref{t.cod_data_C}.
One part of the result is easily obtained:
we give examples of small disks of codimension $m$ formed by poor data,
so concluding that the codimension of $\cP_{m}^{(\C)}$ is at most $m$.

To prove the other inequality, 
one could try to exhibit an explicit codimension~$m$ set containing all poor data.
For $m=1$ this task is feasible
(and we actually perform it, because with these conditions we can 
actually check universal regularity in concrete examples).
However, for $m=2$ already the task would be very laborious,
and to expect to find a general solution seems unrealistic.

Our actual approach to prove the lower bound on the codimension of $\cP_{m}^{(\C)}$ is indirect.
Crudely speaking, after careful matrix computations,
we find some sets in the complement of $\cP_{m}^{(\C)}$
that are reasonably ``large'' (basically in terms of dimension).
Then, by using some abstract results of algebraic geometry,
we are able to show that $\cP_{m}^{(\C)}$ is ``small'', 
thus proving the other half of  \cref{t.cod_data_C}.

Let us give more detail about this strategy.
We decompose the set $\cP_m = \cP_{m}^{(\C)}$ into fibers: 
$$
\cP_m =\bigcup_{A \in \GL(d,\C)} \{A\} \times \cP_m(A) , \qquad \cP_m(A) \subset [\gl(d,\C)]^m.
$$
It is not very difficult to show that for generic $A$ in $\GL(d,\C)$, 
the fiber $\cP_m(A)$ has precisely the wanted codimension $m$.
However, for degenerate matrices $A$, the fiber $\cP_m(A)$ may be much bigger.
(For example, one can show that if $A$ is an homothecy and $m \le 2d-3$ then $\cP_m(A)$ is the whole $[\gl(d,\C)]^m$.)
In order to show that $\codim \cP_m \ge m$, we need to make sure 
that those degenerate matrices do not form a large set.
More precisely, we show that:
\begin{equation}\label{e.everything}
\forall k\in\{0,\ldots,m\}, \ 
\codim \big\{ A \in \GL(d,\C) ; \;  \codim \cP_m(A) \le m-k  \big\} \ge k. 
\end{equation}

Let us explain how we prove~\eqref{e.everything}.
In order to estimate the dimension of $\cP_m(A)$ for any matrix $A \in \GL(d,\C)$,
we consider a quantity $r = r(A)$ which is the least number such that
a rich datum of the form $(A,C_1,\ldots,C_r)$ exists.
In particular, if $r = r(A) \le m$ 
then the following affine space 
\begin{equation}\label{e.affine}
\big\{ (C_1, C_2, \dots, C_r , B_{r+1}, \dots, B_m ) ; \; B_j \in \gl(d,\C) \big\}
\end{equation}
is contained in the complement of $\cP_m(A)$.

In certain situations,  if two algebraic subsets have large enough dimensions 
then they necessarily intersect;
for example, two algebraic curves in the complex projective plane $\CP^2$ always intersect.
This kind of phenomenon happens here:
the dimension of the affine space \eqref{e.affine} 
forces a lower bound for the codimension of $\cP_m(A)$, namely:
\begin{equation}\label{e.algebraic}
\codim \cP_m(A) \ge m+1-r(A).
\end{equation}

So 
we need to show that matrices $A$ with large $r(A)$ are rare.
A careful matrix analysis provides
an upper bound to $r(A)$ based on the numbers and sizes of the Jordan blocks of $A$,
and on the occasional algebraic relations between the eigenvalues.
This bound together with \eqref{e.algebraic}
implies \eqref{e.everything} and therefore concludes the proof of \cref{t.cod_data_C}.

In fact, the results of this analysis are even better,
and we conclude that the codimension inequality \eqref{e.everything}
is strict when $k \ge 1$.
This implies that poor data $(A, B_1, \dots, B_m)$ 
for which the matrix $A$ is degenerate form a subset of $\cP_{m}^{(\C)}$ 
with strictly bigger codimension.
Thus we can show that the poor data that appear generically are well-behaved,
which leads to \cref{t.addendum}.

\subsection{Holomorphic setting}

In the case of complex matrices (i.e., $\K = \C$), we have a 
corresponding version of \cref{t.main} where the maps $A$ are holomorphic.
Given an open subset $\cU \subset \C^m$, we denote by 
$\mathcal{H}(\cU, \GL(d,\C))$ 
the set of holomorphic mappings $A \colon \cU\to \GL(d,\C)$ endowed with the usual topology of uniform convergence on compact sets.

\begin{thm}\label{t.main_C}
Given integers $d\ge 2$ and $m \ge 1$, there exists an integer $N\ge 1$ with the following properties.
Let $\cU\subset \C^m$ be open, and let $K\subset \cU$ be compact.
Then there exists an open and dense subset $\cO$ of $\mathcal{H}(\cU, \GL(d,\C))$
such that for any $A \in \cO$ the constant inputs in $K^N$ are all universally regular for the  system~\eqref{e.proj semilin CS}, except for a finite subset.
\end{thm}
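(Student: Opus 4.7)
The plan is to deduce \cref{t.main_C} from \cref{t.cod_data_C} by a holomorphic transversality argument parallel to the derivation of \cref{t.main} from \cref{t.cod_data_R}. Choose $N \le d^2$ large enough that the filtration $\Lambda_N(\bA)$ stabilizes for every datum, so that the holomorphic analogue of \cref{p.reg transitivity} applies. For $A \in \mathcal{H}(\cU, \GL(d,\C))$ and $u \in \cU$, set
\[
\bA(u) := \big(A(u),\; \partial_1 A(u) \cdot A(u)^{-1},\; \dots,\; \partial_m A(u) \cdot A(u)^{-1}\big) \in \GL(d,\C)\times\gl(d,\C)^m.
\]
Linearizing $A$ at $u$ and invoking \cref{p.reg transitivity}, the constant input $(u,\dots,u)$ of length $N$ is universally regular for~\eqref{e.proj semilin CS} if and only if $\bA(u) \notin \cP_m^{(\C)}$. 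The theorem thus reduces to exhibiting an open dense $\cO \subset \mathcal{H}(\cU, \GL(d,\C))$ such that for every $A \in \cO$ the set $S(A) := \bA^{-1}(\cP_m^{(\C)}) \cap K$ is finite.

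The next step is stratification. As $\cP_m^{(\C)}$ is algebraic, it decomposes canonically as $\cP_m^{(\C)} = \cP^{\mathrm{top}} \sqcup \cP^{\mathrm{sing}}$, where $\cP^{\mathrm{top}}$ is a smooth complex submanifold of codimension $m$ (by \cref{t.cod_data_C}) and $\cP^{\mathrm{sing}}$ is a closed algebraic subset of codimension $\ge m+1$. Since $\dim_\C \cU = m$, any holomorphic map $\bA$ that is transverse to $\cP^{\mathrm{top}}$ on $K$ and avoids $\cP^{\mathrm{sing}}$ on $K$ has $S(A)$ a discrete subset of the compact $K$, hence finite.

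For density, fix $A_0 \in \mathcal{H}(\cU, \GL(d,\C))$ and a precompact open $K \Subset K' \Subset \cU$. By Oka--Weil approximation, $A_0$ can be uniformly approximated on $K'$ together with its first derivatives by polynomial maps, so one may assume $A_0$ is polynomial. I would then consider the finite-dimensional family $A_\epsilon := A_0 \cdot \exp(\epsilon P)$ with $P$ ranging over matrix-valued polynomials of degree $\le D$. For $D$ large enough the evaluation map $(u, P) \mapsto \bA_\epsilon(u)$ is a submersion at every $u \in K'$, because the $1$-jet of such polynomials at any fixed point can be prescribed arbitrarily. Standard parametric transversality then yields a dense set of parameters $P$ for which the perturbed $\bA$ is transverse to $\cP^{\mathrm{top}}$ on $K$; the same genericity forces avoidance of $\cP^{\mathrm{sing}}$, whose codimension strictly exceeds $m$. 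Openness of $\cO$ follows from the compactness of $K$: transversality to a smooth submanifold and disjointness from a closed set are both stable under small $C^1$-perturbations on a compact set, and convergence in $\mathcal{H}(\cU, \GL(d,\C))$ implies uniform $C^1$-convergence on $K$.

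The main obstacle I expect is the holomorphic parametric transversality step: unlike in the smooth real category one cannot localize perturbations via bump functions, so the polynomial family must be engineered to realize transversality simultaneously at every point of $K$. This amounts to a bookkeeping exercise once one verifies submersivity of the evaluation map, but it requires a uniform choice of polynomial degree $D$ across $K$ and a careful handling of the singular stratum $\cP^{\mathrm{sing}}$, whose strictly higher codimension is precisely what rules it out for generic perturbations.
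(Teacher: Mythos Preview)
Your approach matches the paper's: stratify $\cP_m^{(\C)}$, build a finite-dimensional holomorphic perturbation family whose evaluation map into the $1$-jet space is a submersion, invoke parametric transversality for density, and obtain openness via Cauchy's formula (compact-open convergence in $\mathcal{H}(\cU,\GL(d,\C))$ implies $C^1$-convergence on $K$).

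Two remarks on the execution. First, the Oka--Weil detour is unnecessary and introduces a hazard: a polynomial approximant to $A_0$ need not map all of $\cU$ into $\GL(d,\C)$, so the perturbed map may fall outside $\mathcal{H}(\cU,\GL(d,\C))$; your multiplicative family $A_0\cdot\exp(\epsilon P)$ already works with the original holomorphic $A_0$ and stays invertible automatically, so simply drop that step. Second, $D=1$ suffices: the paper uses the additive family $\Phi_{\mathtt{v}}=A_0+P_{\mathtt{v}}$ with $P_{\mathtt{v}}$ affine in $u$ (parameters $\mathtt{v}\in(\C^{m+1})^{d^2}$), and checks directly that $(\mathtt{v},u)\mapsto j^1\Phi_{\mathtt{v}}(u)$ is a submersion, since the $1$-jet of an affine matrix-valued map at any fixed point can be prescribed arbitrarily. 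Your anticipated ``main obstacle'' (no bump functions, uniform choice of $D$) therefore dissolves: a single global affine family already achieves submersivity at every point of $\cU$ simultaneously.
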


We have the straightforward corollary:

\begin{corol}
Given integers $d\ge 2$ and $m \ge 1$, there exists an integer $N\ge 1$ with the following properties.
Let $\cU\subset \C^m$ be an open subset. There exists a residual subset $\mathcal{R}$ of $\mathcal{H}(\cU, \GL(d,\C))$
such that for any $A \in \mathcal{R}$ the constant inputs in $\cU^N$ are all universally regular for  the  system~\eqref{e.proj semilin CS}, except for a discrete subset.
\end{corol}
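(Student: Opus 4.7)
The plan is to deduce this corollary from \cref{t.main_C} by a standard Baire category argument applied to a compact exhaustion of $\cU$.

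First, since $\cU$ is an open subset of $\C^m$, I would fix a sequence of compact subsets $K_1 \subset K_2 \subset \cdots$ with $K_n \subset \interior(K_{n+1})$ and $\bigcup_{n \ge 1} K_n = \cU$; such an exhaustion always exists for an open subset of $\C^m$. Applying \cref{t.main_C} to each $K_n$ produces an open and dense subset $\cO_n \subset \mathcal{H}(\cU, \GL(d,\C))$ such that, for any $A \in \cO_n$, the constant inputs $(u,\dots,u) \in K_n^N$ are universally regular for \eqref{e.proj semilin CS} except for those $u$ in some finite set $F_n(A) \subset K_n$. Crucially, the integer $N$ does not depend on $n$ or on $\cU$, since \cref{t.main_C} provides an $N$ depending only on $d$ and $m$.

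Next, I would set $\mathcal{R} := \bigcap_{n \ge 1} \cO_n$. The space $\mathcal{H}(\cU,\GL(d,\C))$ with the topology of uniform convergence on compact subsets is an open subset of the Fr\'echet space $\mathcal{H}(\cU,\Mat_{d\times d}(\C))$; being a metrizable open subset of a completely metrizable space, it is itself completely metrizable, hence Baire. Thus the countable intersection $\mathcal{R}$ of dense open sets is a dense $G_\delta$, i.e.\ residual.

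Finally, fix $A \in \mathcal{R}$ and let $S(A) \subset \cU$ denote the set of all $u \in \cU$ such that the constant input $(u,\dots,u) \in \cU^N$ is singular for \eqref{e.proj semilin CS}. Given any point $u^* \in \cU$, choose $n$ with $u^* \in \interior(K_n)$; then $S(A) \cap \interior(K_n) \subseteq F_n(A)$ is finite, so some neighborhood of $u^*$ meets $S(A)$ in a finite set. It follows that $S(A)$ is closed and has no accumulation point in $\cU$, that is, discrete, which completes the proof. No genuine obstacle is anticipated; the only mild care needed is in verifying the Baire property of $\mathcal{H}(\cU,\GL(d,\C))$, but this is standard once one observes the open embedding into a Fr\'echet space.
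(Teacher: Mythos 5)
Your proposal is correct in outline and is the standard argument the paper has in mind when it calls this a ``straightforward corollary'' of \cref{t.main_C}: apply the theorem along a compact exhaustion, intersect the resulting open dense sets, and use local finiteness to conclude discreteness. All of the steps except one are carried out correctly, and the uniformity of $N$ across compacts is correctly noted.

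The one incorrect claim is that $\mathcal{H}(\cU,\GL(d,\C))$ is an \emph{open} subset of the Fr\'echet space $\mathcal{H}(\cU,\Mat_{d\times d}(\C))$. This is false whenever $\cU$ is noncompact: in the topology of uniform convergence on compacts, a sequence $A_n \to A$ may have $\det A_n$ vanishing at points that escape every compact. Concretely, with $\cU = \C$, $d=1$, $A \equiv 1$ and $A_n(u) = 1 - u/n$, one has $A_n \to A$ uniformly on compacts yet $A_n(n) = 0$, so no neighborhood of $A$ is contained in $\mathcal{H}(\C,\GL(1,\C))$. What \emph{is} true is that $\mathcal{H}(\cU,\GL(d,\C))$ is a $G_\delta$ subset: writing $\cU = \bigcup_n K_n$, it equals $\bigcap_n \{A ;\ \det A(u) \neq 0 \text{ for all } u \in K_n\}$, and each of these sets is open. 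By Alexandrov's theorem, a $G_\delta$ subset of a completely metrizable space is completely metrizable, hence Baire, so your conclusion stands after replacing ``open subset'' by ``$G_\delta$ subset.'' The rest of the argument (the countable intersection being residual, and the local finiteness argument yielding discreteness) is fine.
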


\subsection{Directions for future research}

One can also study uniform regularity of periodic inputs of higher period.
Using our results for constant inputs, it is not difficult 
to derive some (non-sharp) codimension bounds for singular periodic inputs for generic systems.
However, for highly resonant non-periodic inputs, we have no idea on how to obtain reasonable dimension estimates. 

To obtain good estimates for the codimension of \emph{non-resonant}
singular inputs for generic systems is relatively simpler from the point of view of matrix computations,
but needs more sophisticated transversality theorems (e.g., multijet transversality).
Since highly resonant inputs have large codimension themselves,
it seems possible to obtain reasonably good codimension estimates for general inputs for generic systems.

Another interesting direction of research is to consider other Lie groups of matrices.

%

\subsection{Organization of the paper}\label{ss.organization}

\Cref{s.prelim_poor} contains some basic results about 
transitivity of spaces of matrices and its relation with universal regularity.
We also obtain the easy parts of \cref{t.cod_data_R,t.cod_data_C}, namely
(semi)algebraicity and the upper codimension inequalities.

In \cref{s.rig} we introduce the concept of rigidity, which is related to the quantity $r(A)$
mentioned above. We state the central rigidity estimates (\cref{t.rig}), which
consist into two parts. The first and easier part is proved in the same \cref{s.rig},
while the whole \cref{s.rig proof} is devoted to the proof of the second part.

\Cref{s.cod proof} starts with some preliminaries in elementary algebraic geometry.
Then we use the rigidity estimates to prove \cref{t.cod_data_C}, following the strategy 
outlined above (\cref{ss.overview}).
\Cref{t.cod_data_R} follows easily.
We also obtain a lemma that is needed for the proof of \cref{t.addendum}.

In \cref{s.main proof} we deduce \cref{t.main}
from  previous results and standard theorems stratifications and transversality.

The paper also has some appendices:

\Cref{a.dim 1} basically reobtains the major results in the special case $m=1$,
where we actually gain additional information of practical value:
as mentioned in \cref{ss.overview}, it is possible to describe explicitly what $1$-jets
the map $A$ should avoid in order to satisfy the conclusions of \cref{t.main,t.addendum}.
The arguments necessary for the $m=1$ case are much simpler and more elementary
than those in \cref{s.rig,s.rig proof,s.cod proof}.
Therefore the \lcnamecref{a.dim 1} is also useful to give the reader some intuition about the
general problem, and as a source of examples.
\Cref{a.dim 1} is written in a slightly informal way, and it can be read after \cref{s.prelim_poor}
(though the final part requires \cref{l.sum}).

\Cref{a.algebraic} contains the proofs of necessary algebraic-geometric results,
especially the one that allows us to obtain estimate \eqref{e.algebraic}.

\Cref{a.strat_trans} reviews the necessary concepts and results on stratifications,
and proves a prerequisite transversality proposition.

In \cref{a.complex} we apply \cref{t.cod_data_C} 
to prove a version of \cref{t.main} for holomorphic mappings.

In \cref{a.generic singular} we study the singular constant inputs of generic type,
proving \cref{t.addendum} and the other assertions made at the end of \S~\ref{ss.main_statements}
concerning the sharpness of \cref{t.main}.
We also discuss the generic validity of some control-theoretic properties 
related to accessibility and regularity.

\section{Preliminary facts on the poor data}\label{s.prelim_poor}

In this section, we review some basic properties 
related to poorness, and prove the easy inequalities in \cref{t.cod_data_R,t.cod_data_C}.

\subsection{Transitive spaces}\label{ss.transitivity}

Let $E$ and $F$ be finite-dimensional vector spaces over the field $\K$.
Let $\mathcal{L}(E,F)$ be the space of linear maps from $E$ to $F$.
A vector subspace $\Lambda$ of $\mathcal{L}(E,F)$ is called \emph{transitive}
if for every $v \in E \setminus \{0\}$, we have $\Lambda \cdot v = F$,
where $\Lambda \cdot v = \{ L(v) ; \; L \in \Lambda \}$.

Under the identification $\mathcal{L}(\K^n, \K^m) = \Mat_{m \times n}(\K)$,
we may also speak of transitive spaces of matrices.

The following examples illustrate the concept; they will also be needed in later considerations.

\begin{example}\label{ex.toeplitz}
Recall that a \emph{Toeplitz matrix}, resp.\ a \emph{Hankel matrix}, is a matrix of the form
$$
\left(
\vcenter{
\xymatrix @-2pc {
t_0    \ar@{.}[rrrddd] & t_1 \ar@{.}[rrdd] & \cdots & t_{d-1} \\
t_{-1} \ar@{.}[rrdd]   &                   &        & \vdots  \\
\vdots                 &                   &        & t_1     \\
t_{-d+1}               & \cdots            & t_{-1} & t_0
}}
\right) ,
\quad\text{resp.}\quad
\left(
\vcenter{
\xymatrix @-2pc {
h_1       & \cdots  & h_{d-1} \ar@{.}[lldd] & h_d     \ar@{.}[lllddd] \\
\vdots    &         &                       & h_{d+1} \ar@{.}[lldd]   \\
h_{d-1}   &         &                       & \vdots                  \\
h_d       & h_{d+1} & \cdots                & h_{2d-1}
}}
\right),
$$
The set of Toeplitz matrices 
and the set of complex Hankel matrices 
constitute examples transitive subspaces of $\gl(d,\K)$.
Transitivity of the Toeplitz space is a particular case of \cref{ex.gen Toeplitz},
and transitivity of Hankel space follows from \cref{r.transitivity trick}.
For $\K = \C$, these spaces are optimal, in the sense that they have the least possible dimension; see \cite{Azoff}.
\end{example}

\begin{example}\label{ex.gen Toeplitz}
A \emph{generalized Toeplitz space} is a subspace $\Lambda$ of $\Mat_{d\times d}(\K)$ 
(where $d \ge 2$)
with the following property:
For any two matrix entries $(i_1,j_1)$ and $(i_2,j_2)$ which are not in the same diagonal
(i.e., $i_1-j_1 \neq i_2-j_2$), 
the linear map $(b_{i,j})_{i,j} \in \Lambda \mapsto (b_{i_1,j_1},b_{i_2,j_2}) \in \C^2$ is onto.
Equivalently, a space is generalized Toeplitz if 
it can be defined by a number of linear relations between the matrix coefficients
so that each relation involves only the entries on a same diagonal,
and so that the relations do not force any matrix entry to be zero.
We will prove later (see \cref{ss.sudoku})
that \emph{every generalized Toeplitz space is transitive}.
\end{example}

\begin{rem}\label{r.transitivity trick}
If $\Lambda$ is a transitive subspace of $\mathcal{L}(E,F)$
and $P \in \mathcal{L}(E,E)$, $Q \in \mathcal{L}(F,F)$ are invertible operators then
$P \cdot \Lambda \cdot Q := \{PLQ ; \;  L \in \Lambda\}$ 
is a transitive subspace of $\mathcal{L}(E,F)$.
\end{rem}

\medskip

Let us see that transitivity is a semialgebraic or algebraic property, according to the field.
Recall that:
\begin{itemize}
\item A subset of $\K^n$ is called \emph{algebraic} if it is expressed by 
polynomial equations with coefficients in $\K$.
\item A subset of $\R^n$ is called \emph{semialgebraic} if it is 
the union of finitely many sets, each of them defined by finitely many  real polynomial equations
and inequalities (see \cite{BR,BCR}).
\end{itemize}

\begin{prop}\label{p.NT algebraicity}
Let $\cN_{m,n,k}^{(\K)}$ be the set of $(B_1, \dots, B_k) \in [\Mat_{m \times n}(\K)]^k=\K^{mnk}$ such that
$\spa\{B_1, \dots, B_k\}$ is not transitive.
Then:
\begin{enumerate}
\item\label{i.semialgebraic}
The set $\cN_{m,n,k}^{(\R)}$ is semialgebraic.
\item\label{i.algebraic} 
The set $\cN_{m,n,k}^{(\C)}$ is algebraic.
\end{enumerate}
\end{prop}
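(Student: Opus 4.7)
The plan is to reformulate non-transitivity as an existential first-order statement involving polynomial equations, and then to eliminate the quantifier: via the Tarski--Seidenberg theorem in the real case, and via the fundamental theorem of elimination theory in the complex case.

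Setup: a span $\Lambda = \spa\{B_1,\ldots,B_k\}$ fails to be transitive iff there exists $v \in \K^n \setminus \{0\}$ with $\Lambda \cdot v \neq \K^m$; by finite-dimensional duality this is equivalent to the existence of $v \in \K^n \setminus \{0\}$ and a nonzero row vector $w \in \K^m \setminus \{0\}$ satisfying $w B_i v = 0$ for every $i = 1,\ldots,k$. Each relation $w B_i v = 0$ is a polynomial expression (bilinear in $(v,w)$, linear in $B_i$) in the coordinates of $B_1,\ldots,B_k,v,w$, which makes the $\exists$-statement amenable to quantifier elimination.

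For part (1), let
\[
Z = \bigl\{(B_1,\ldots,B_k,v,w) ; \; v \ne 0,\ w \ne 0,\ w B_i v = 0 \text{ for } i=1,\ldots,k \bigr\}
\]
inside $[\Mat_{m\times n}(\R)]^k \times \R^n \times \R^m$. This is a semialgebraic set, and $\cN_{m,n,k}^{(\R)}$ is its image under the canonical projection to $[\Mat_{m\times n}(\R)]^k$. The Tarski--Seidenberg theorem asserts that the image of a semialgebraic set under a polynomial map is semialgebraic, which gives (1). One can additionally ensure closedness by substituting the open conditions $v \ne 0$, $w \ne 0$ with the closed conditions $\|v\|=\|w\|=1$; this yields a compact semialgebraic $Z'$ with the same projection, which is therefore semialgebraic and closed.

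For part (2), Tarski--Seidenberg is unavailable over $\C$, and the naive affine projection of an algebraic set need not be algebraic. The remedy is to exploit homogeneity: since $w B_i v$ is bihomogeneous in $(v,w)$, the relations cut out a Zariski-closed subset
\[
\tilde Z \subset [\Mat_{m\times n}(\C)]^k \times \CP^{n-1} \times \CP^{m-1}.
\]
Because $\CP^{n-1}\times\CP^{m-1}$ is a complete variety, the projection onto $[\Mat_{m\times n}(\C)]^k$ is Zariski closed (the fundamental theorem of elimination theory / main theorem on proper morphisms); its image equals $\cN_{m,n,k}^{(\C)}$, which is therefore algebraic. The only real obstacle is this conceptual one: recognizing that the complex case must be recast projectively in order to invoke properness; past that, the argument is routine.
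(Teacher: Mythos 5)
Your proof is correct and the overall strategy coincides with the paper's: express non-transitivity by a polynomial condition and then eliminate the existential quantifier via Tarski--Seidenberg in the real case and via completeness of projective space in the complex case. The one point of divergence is the encoding of ``$\spa\{B_1,\ldots,B_k\}\cdot v \neq \K^m$'': you invoke duality to rewrite it as the existence of a nonzero covector $w$ with $w B_i v = 0$ for all $i$, and consequently you projectivize in \emph{both} $v$ and $w$, working over $\CP^{n-1}\times\CP^{m-1}$. The paper instead encodes the same condition directly as the vanishing of all maximal minors of the $m\times k$ matrix with columns $B_1 v,\ldots,B_k v$, and so only needs to projectivize the single factor $\CP^{n-1}$. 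Your version has the advantage that the defining equations are bilinear (no determinants to expand), at the cost of a second projective factor; the paper's version keeps the ambient variety smaller. Incidentally, the duality reformulation you use is exactly the criterion the paper records as Lemma~\ref{l.duality}, so the two encodings are in no tension. Your remark about forcing closedness by restricting to unit spheres is also correct and is an honest addition, since the statement here does not assert closedness (that observation is used downstream, in \S~\ref{ss.poor_set}).
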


\begin{proof} 
Consider the set of $(B_1, \dots, B_k, v) \in [\Mat_{m \times n}(\K)]^k \times \K^n_*$ 
such that 
$$
\spa\{B_1, \dots, B_k\}\cdot v \neq \K^m \, .
$$
For $\K = \R$, this is a semialgebraic set,
because it is expressed by the vanishing of certain determinants
plus the condition $v \neq 0$.
Projecting this set along the $\R^n_*$ fiber we obtain $\cN_{m,n,k}^{(\R)}$;
so, by the Tarski--Seidenberg theorem (see \cite[p.~60]{BR} or \cite[p.~26]{BCR}),
this set is semialgebraic, proving part~\ref{i.semialgebraic}.

To see part~\ref{i.algebraic},
we take $\K=\C$ and projectivize the $\C^n_*$ fiber, obtaining an
algebraic subset $[\Mat_{m \times n}(\C)]^k \times \CP^{n-1}$
whose projection along the $\CP^{n-1}$ fiber is  $\cN_{m,n,k}^{(\C)}$.
So part~\ref{i.algebraic} follows from the fact that projections 
along projective fibers take algebraic sets to algebraic sets (see \cite[p.~58]{Shafa}).
\end{proof}

Complex transitivity of real matrices is a stronger property than real transitivity:

\begin{prop}\label{p.NT inclusion}
The real part of $\cN_{m,n,k}^{(\C)}$ (that is, its intersection with $[\Mat_{m \times n}(\R)]^k$)
contains $\cN_{m,n,k}^{(\R)}$.
\end{prop}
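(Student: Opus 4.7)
The plan is to take a witness of non-transitivity over $\R$ and use the same witness to exhibit non-transitivity over $\C$. The content is essentially the elementary observation that for real vectors, $\R$-spanning and $\C$-spanning into $\R^m\subset\C^m$ are equivalent notions.

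More concretely, suppose $(B_1,\dots,B_k)\in \cN_{m,n,k}^{(\R)}$, meaning that $\spa_{\R}\{B_1,\dots,B_k\}$ is not transitive on $\R^n\setminus\{0\}$. Then there exists a nonzero $v\in\R^n$ such that the real vectors $B_1 v,\dots,B_k v\in\R^m$ do not $\R$-span $\R^m$. I would view the same $v$ as a nonzero element of $\C^n$ and consider
\[
\spa_{\C}\{B_1,\dots,B_k\}\cdot v \;=\; \spa_{\C}\{B_1 v,\dots,B_k v\}\subset\C^m .
\]
The key observation is that for real vectors $w_1,\dots,w_k\in\R^m$, one has $\spa_{\C}\{w_1,\dots,w_k\}\cap\R^m=\spa_{\R}\{w_1,\dots,w_k\}$; in particular the complex span equals $\C^m$ if and only if the real span equals $\R^m$. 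This is seen by writing an arbitrary complex combination $\sum c_i w_i$ that lies in $\R^m$ as $\tfrac12\bigl(\sum c_i w_i+\overline{\sum c_i w_i}\bigr)=\sum\mathrm{Re}(c_i)w_i$. Applied to $w_i=B_i v$, the hypothesis gives $\spa_{\C}\{B_1 v,\dots,B_k v\}\neq\C^m$, so the same $v\in\C^n\setminus\{0\}$ certifies $(B_1,\dots,B_k)\in\cN_{m,n,k}^{(\C)}$, as required.

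There is no genuine obstacle here; the only thing to be careful about is distinguishing the real and complex spans and noting that the natural candidate witness $v\in\R^n$ is automatically a valid nonzero vector in $\C^n$. Everything else is a one-line complex-conjugation argument.
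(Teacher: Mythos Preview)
Your proof is correct and is precisely the natural argument; the paper itself omits the proof as ``an easy exercise,'' and what you wrote is the intended elementary verification.
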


The proof is an easy exercise.



\subsection{Universal regularity for constant inputs and richness}\label{ss.characterization}

In this subsection we prove \cref{p.reg transitivity};
in fact we prove a more precise result, and also fix some notation.

Given a linear operator $H \colon E \to E$, 
where $E$ is a finite-dimensional vector space over the field $\K$,
and vectors $v_1$, \dots, $v_m \in E$, 
we denote by $\sorb^N_H(v_1,\dots,v_m)$ the space spanned by 
the family of vectors $H^t(v_i)$, where $1\le i \le m$ and $0 \le t < N$.
In other words,  $\sorb^N_H(v_1,\dots,v_m)$ is the reachable set from $0$ of the linear
control system 
$$
\xi_{t+1} = H\xi_t + \sum_i u_{t,i} v_i \, .
$$

The sequence of spaces $\sorb^N_H(v_1,\dots,v_m)$ is nested nondecreasing, and
thus stabilize to a space $\sorb_H(v_1,\dots,v_m)$ 
after $N \le \dim H$ steps.


%

\medskip


If $A \colon \cU \to \GL(d,\C)$ is a differentiable map
then the \emph{normalized derivative} of $A$ at a point $u$
is the linear map $T_u \cU \to \gl(d,\R)$
given by 
$h \mapsto (DA(u)\cdot h)\circ A^{-1}(u)$.

Let $\phi_N(\xi_0,\hat{u})$ be the state $\xi_N\in \KP^d$ of the system~\eqref{e.proj semilin CS} 
determined by the initial state $\xi_0$ and the input sequence $\hat{u}\in \cU^N$. 
Let $\partial_2 \phi_N(\xi_0,\hat{u})$ be the derivative of the map $\phi_N(\xi_0, \cdot)$ at $\hat{u}$.

Fix a constant input $\hat{u}=(u,\ldots, u)\in \cU^N$, 
and local coordinates on $\cU$ around~$u$. 
Let $B_j$ be the normalized partial derivatives of the map $A$ at $u$ with respect to the $i^\mathrm{th}$ coordinate. 
Consider the datum $\bA=(A,B_1,\ldots, B_m)$, where $A = A(u)$.
Define the following subspace of $\gl(d,\K)$:
\begin{equation}\label{e.Lambda_N}
\Lambda_N(\bA) =  \sorb_{\Ad_A}^N (\Id, B_1, \dots, B_m) \, ,
\end{equation}
where $\Ad_A(B) = ABA^{-1}$.

\begin{prop}\label{p.ranks}
For all $\xi_0\in \KP^{d-1}$ and any $x_0\in \K^d\setminus \{0\}$ representing $\xi_0$, 
$$
\rank \partial_2 \phi_N (\xi_0,\hat{u}) = \dim\left[\Lambda_N(\bA)\cdot (A^N x_0)\right]-1.
$$
\end{prop}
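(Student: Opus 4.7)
The plan is to lift the dynamics to $\K^d$, compute the unprojected derivative by a direct chain-rule calculation, and then account for projectivization at the very end to produce the $-1$ on the right-hand side.

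First I would work with a representative $x_0\in \K^d\setminus\{0\}$ of $\xi_0$ and the (unprojectivized) time-$N$ map $\Phi_N(x_0;u_0,\ldots,u_{N-1}) = A(u_{N-1}) \cdots A(u_0) x_0$. At the constant input $\hat u = (u,\ldots,u)$ one has $\Phi_N = A^N x_0$. Differentiating with respect to the $t$-th slot and applying a tangent vector $h_t \in T_u\cU$ gives
$$
\partial_{u_t} \Phi_N \cdot h_t \;=\; A^{N-1-t}\,[DA(u)\cdot h_t]\,A^t x_0.
$$
In the local coordinates that define the $B_j$, the normalized derivative is $B(h) := [DA(u)\cdot h]\,A^{-1}$, so $DA(u)\cdot h = B(h)\,A$, and the individual summand above becomes $A^{N-1-t}\,B(h_t)\,A^{t+1}x_0$.

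Next I would rewrite each summand in a form that isolates $A^N x_0$. Using the identity $A^{N-1-t} B(h_t) A^{-(N-1-t)} = (\Ad_A)^{N-1-t}(B(h_t))$ one obtains
$$
A^{N-1-t}B(h_t) A^{t+1}x_0 \;=\; (\Ad_A)^{N-1-t}(B(h_t))\cdot A^N x_0.
$$
Summing over $t$ and letting $h_0,\ldots,h_{N-1}$ range over $T_u\cU$, the image of the total derivative $\partial_2\Phi_N(x_0,\hat u)$ in $T_{A^Nx_0}\K^d \cong \K^d$ is exactly $\Lambda'_N(\bA)\cdot (A^N x_0)$, where
$$
\Lambda'_N(\bA) \;:=\; \spa\bigl\{(\Ad_A)^s(B_j): 0\le s\le N-1,\ 1\le j\le m\bigr\}.
$$
So far the identity matrix has not appeared; it will enter only through the projectivization.

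Finally I would pass from $\K^d$ to $\KP^{d-1}$. Let $y = A^N x_0 \neq 0$ and let $\pi\colon \K^d\setminus\{0\}\to\KP^{d-1}$ be the canonical projection. Since $\ker D\pi_y = \langle y\rangle$, chain-rule gives
$$
\rank \partial_2 \phi_N(\xi_0,\hat u) \;=\; \dim\bigl(\Lambda'_N(\bA)\cdot y \,+\, \langle y\rangle\bigr)\;-\;1.
$$
Because $\Lambda_N(\bA) = \Lambda'_N(\bA) + \langle \Id\rangle$ by the definition~\eqref{e.Lambda_N}, acting on $y$ yields $\Lambda_N(\bA)\cdot y = \Lambda'_N(\bA)\cdot y + \langle y\rangle$, which produces the required formula. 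The whole argument is essentially a bookkeeping exercise; the one place where attention is needed is the passage to the projective quotient, which explains both the appearance of $\Id$ in the definition of $\Lambda_N(\bA)$ and the $-1$ in the statement. Once this is understood, independence of the formula from the choice of lift $x_0$ is automatic, since rescaling $x_0$ rescales $y$ and leaves the dimension of $\Lambda_N(\bA)\cdot y$ unchanged.
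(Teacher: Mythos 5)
Your argument is correct and follows essentially the same route as the paper: lift to $\K^d$, differentiate the time-$N$ product to get the image $\Lambda'_N(\bA)\cdot (A^N x_0)$ (the paper's $V$), then push forward under $\pi$ using $\ker D\pi_{x_N} = \K x_N$ to pick up the identity matrix and the $-1$. The only cosmetic difference is that the paper first writes the first-order expansion of $\psi_N$ and reads off the image, while you differentiate slot by slot; the bookkeeping with $\Ad_A$ and the final projectivization step are identical.
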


In particular (since $A =A(u)$ is invertible), 
the input $\hat{u}$ is universally regular 
if and only if $\Lambda_N(\bA)$ is a transitive space,
which is the statement of \cref{p.reg transitivity}.

\begin{proof}[Proof of \cref{p.ranks}]
Let $\xi_0 = [x_0]$, where $x_0\in \K^d_*$.
Let $\psi_N(x_0,\hat{u})$ be the final state 
of the non projectivized system~\eqref{e.semilin CS}  determined by the initial state $x_0$ 
and by the sequence of controls $\hat{u}\in \cU^N$.
Using local coordinates with $u$ in the origin,
we have the following first order approximation for $\hat{u}\simeq 0$:
\begin{align*}
\psi_N(x_0,\hat{u}) &\simeq A^N x_0 + 
\sum_{1\leq j\leq m \atop 0 \leq t<N}u_{t,j}A^{N-t-1}B_{j} A^{t+1} x_0 \\
&= 
\left(\Id + \sum_{1\leq j\leq m \atop 0 \leq n < N} u_{N-1-n,j}\Ad_A^n(B_{j})\right) x_N \, ,
\end{align*}
where $x_N = \psi_N(x_0,0) = A^N x_0$.  
Therefore the image of 
$\partial_2 \psi_N(x_0,\hat{u})$
is the following subspace of $T_{A^N x_0} \K^d$:
\begin{align*}
V = \left(\spa_{1\leq j\leq m\atop 0\leq n<N}\Ad_A^n B_{j} \right) \cdot x_N ,
\end{align*}

The image of $\partial_2 \phi_N(\xi_0,\hat{u})$
equals $D\pi(x_N)(V)$, where $\pi: \K^d_* \to \KP^{d-1}$ is the canonical projection.
Notice that $\Ker D\pi(x) = \K x$ for any $x \in \K^d_*$.
It follows that 
\begin{multline*}
\rank \partial_2 \phi_N(\xi_0,\hat{u})
 = \dim \left[ D\pi(x_N) (V) \right]
\\
 = \dim \left[ D\pi(x_N) \big(\K x_N + V\big) \right]
 = \dim [\K x_N + V] - 1
\end{multline*}
Since $\K x_N + V = \Lambda_N(\bA)\cdot x_N$,
the \lcnamecref{p.ranks} is proved.
\end{proof}

\subsection{The sets of poor data}\label{ss.poor_set}

For emphasis, we repeat the definition already given in the introduction:
The datum $\bA = (A, B_1, \dots, B_m) \in \GL(d,\K) \times [\gl(d,\K)]^m$
is \emph{rich} if the space $\Lambda(\bA) = \Lambda_{d^2}(\bA)$  is transitive,
and \emph{poor} otherwise.
The concept in fact depends on the field under consideration.
The set of such poor data is denoted by $\cP_{m,d}^{(\K)}$.

It follows immediately from \cref{p.NT algebraicity} that 
$\cP_{m,d}^{(\R)}$ is a closed and semialgebraic subset of $\GL(d,\R) \times [\gl(d,\R)]^m$
and $\cP_{m,d}^{(\C)}$ is an algebraic subset of $\GL(d,\C) \times [\gl(d,\C)]^m$.
This proves part of \cref{t.cod_data_R,t.cod_data_C}.

Also, by \cref{p.NT inclusion} the real poor data are contained in the real part of the complex poor data, i.e.,
\begin{equation}\label{e.RC_inclusion}
\cP_{m,d}^{(\R)} \subset \cP_{m,d}^{(\C)} \cap \big[ \GL(d,\R) \times [\gl(d,\R)]^m \big] \, .
\end{equation}

\medskip

For later use, we note that the sets of poor data
are saturated in the sense of the following definition: 
A set $\cZ \subset [\Mat_{d\times d}(\K)]^{1+m}$
will be called \emph{saturated}
if $(A, B_1, \dots, B_m) \in \cZ$ implies that:
$(A, B_1, \dots, B_m) \in \cZ$ implies that:
\begin{itemize}
\item 
for all $P \in \GL(d,\K)$ we have $(P^{-1}AP, P^{-1}B_1 P, \dots, P^{-1}B_m P) \in \cZ$;
\item 
for all $Q = (q_{ij}) \in \GL(m,\K)$, letting $B'_i = \sum_j q_{ij} B_j$,
we have $(A, B_1', \dots, B_m') \in \cZ$.
\end{itemize}


\subsection{The easy codimension inequality of \cref{t.cod_data_R,t.cod_data_C}}
\label{ss.cod_data_easy_half}

Here we will discuss the simplest examples of poor data.

To begin, notice that if $A \in \GL(d,\C)$ is diagonalizable
then so is $\Ad_A$.
Indeed, assume without loss of generality that $A = \Diag(\lambda_1, \dots, \lambda_d)$.
Consider the basis $\{E_{i,j} ; \; i,j \in \{1,\dots,d\}\} $ of $\gl(d,\C)$,
where 
\begin{equation}\label{e.Eij}
\text{$E_{i,j}$ is the matrix whose only nonzero entry is a $1$ in the $(i,j)$ position.}
\end{equation}
Then $\Ad_A (E_{i,j}) = \lambda_i \lambda_j^{-1} E_{i,j}$.
So if $f$ is a polynomial and $B = (b_{ij})$ then 
\begin{equation}\label{e.poly entry}
\text{the $(i,j)$-entry of the matrix $(f(\Ad_A))(B)$ is  
$f(\lambda_i\lambda_j^{-1})b_{ij}$.}
\end{equation}

\medskip

The datum $\bA = (A, B_1, \dots, B_m) \in \GL(d,\K) \times \gl(d,\K)^m$
is called \emph{conspicuously poor}
if there exists a change of bases $P \in \GL(d,\K)$ such that:
\begin{itemize}
\item 
the matrix $P^{-1} A P$ is diagonal;
\item 
the matrices $P^{-1} B_k P$ have a zero entry in a common off-diagonal position;
more precisely, 
there are indices $i_0$, $j_0 \in \{1,\dots,d\}$ with $i_0 \neq j_0$ 
such that for each $k \in \{1,\dots,m\}$, the $(i_0,j_0)$ entry of the matrix $P^{-1} B_k P$ vanishes.
\end{itemize}
(As in the definition of poorness, the concept depends on the field $\K$.)

\begin{lemma}\label{l.easy_poor_data}
Conspicuously poor data are poor.
\end{lemma}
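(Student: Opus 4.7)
The plan is to exhibit an explicit obstruction to transitivity of $\Lambda(\bA)$ coming from a common vanishing entry that is preserved under the relevant operations. First, since the set of poor data is saturated (see \S\ref{ss.poor_set}) and the definition of conspicuous poorness is invariant under simultaneous conjugation, I may assume without loss of generality that $A$ itself is diagonal, say $A = \Diag(\lambda_1,\dots,\lambda_d)$, and that each $B_k$ has a zero entry in some common off-diagonal position $(i_0,j_0)$, with $i_0 \neq j_0$.

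Next, I would use the identity \eqref{e.poly entry}: for any polynomial $f$ and any matrix $B = (b_{ij})$, the $(i,j)$-entry of $f(\Ad_A)(B)$ equals $f(\lambda_i \lambda_j^{-1}) b_{ij}$. Applied with $B = B_k$ and $(i,j) = (i_0, j_0)$, this shows that the $(i_0,j_0)$-entry of every matrix $f(\Ad_A)(B_k)$ vanishes. Since the identity matrix also has a zero $(i_0,j_0)$-entry (as $i_0 \neq j_0$), I conclude that \emph{every} matrix lying in the span $\Lambda(\bA)$ of $\Id$ and the iterates $\Ad_A^t(B_k)$ has a zero $(i_0,j_0)$-entry.

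Finally, to get non-transitivity, I evaluate at the standard basis vector $e_{j_0}$: for every $M \in \Lambda(\bA)$, the $i_0$-th coordinate of $M e_{j_0}$ equals $M_{i_0,j_0} = 0$. Therefore $\Lambda(\bA) \cdot e_{j_0}$ is contained in the hyperplane $\{x \in \K^d : x_{i_0} = 0\}$, and in particular is a proper subspace of $\K^d$. Hence $\Lambda(\bA)$ is not transitive, i.e., $\bA$ is poor. The argument is essentially mechanical once the diagonalization and the entrywise formula \eqref{e.poly entry} are in place; there is no real obstacle.
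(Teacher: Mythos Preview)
Your proof is correct and follows essentially the same approach as the paper's own proof: reduce by conjugation to diagonal $A$, use \eqref{e.poly entry} to see that the common zero $(i_0,j_0)$-entry persists throughout $\Lambda(\bA)$, and conclude non-transitivity by evaluating at $e_{j_0}$. The only difference is that you are slightly more explicit about why the reduction to diagonal $A$ is justified and why $\Lambda(\bA)\cdot e_{j_0}$ misses $e_{i_0}$.
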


\begin{proof}
Let $\bA = (A, B_1, \dots, B_m)$ be conspicuously poor.
With a change of basis we can assume that $A$ is diagonal.
Let $(e_1,\dots,e_d)$ be the canonical basis of $\K^d$.
Let $(i,j)$ be the entry position where all $B_i$'s have a zero entry. 
By \eqref{e.poly entry}, all matrices in the space 
$\Lambda(\bA) = \sorb_{\Ad_A}(\Id, B_1, \dots, B_m)$ have a zero entry in the $(i_0,j_0)$ position.
In particular, there is no $L \in \Lambda(\bA)$ such that $L \cdot e_{j_0} = e_{i_0}$,
showing that this space is not transitive.
\end{proof}

The converse of this \lcnamecref{l.easy_poor_data} is certainly false.
(Many examples appear in \cref{a.dim 1}; see also \cref{ex.toroidal toeplitz}.)
However, we will see in \cref{ss.unconstrained} that the converse holds for generic $A$.

\medskip

We will use \cref{l.easy_poor_data} to prove the easy codimension inequalities for \cref{t.cod_data_R,t.cod_data_C};
first we need to recall the following:

\begin{prop}\label{p.eigen_smooth}
Suppose $A \in \Mat_{d\times d}(\K)$ is diagonalizable over $\K$ and with simple eigenvalues only.	
Then there is a neighborhood of $A$ where the eigenvalues vary smoothly, 
and where the eigenvectors can be chosen to vary smoothly.
\end{prop}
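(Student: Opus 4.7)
The plan is to handle the eigenvalues and the eigenvectors separately, applying the implicit function theorem in both steps.

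First, I would establish smooth dependence of the eigenvalues. The characteristic polynomial $\chi_{A'}(z) = \det(z\,\Id - A')$ has coefficients that are polynomials in the entries of $A'$, hence depends smoothly on $A'$. Let $\lambda_1,\dots,\lambda_d \in \K$ be the (distinct) eigenvalues of $A$. Since each $\lambda_i$ is a simple root, $\chi_A'(\lambda_i) \neq 0$, so the equation $\chi_{A'}(z) = 0$ satisfies the hypotheses of the implicit function theorem at $(A,\lambda_i)$. This produces a smooth function $A' \mapsto \lambda_i(A')$, defined on a neighborhood $\cV_i$ of $A$, with $\lambda_i(A) = \lambda_i$. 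In the real case $\K = \R$, simplicity guarantees that the perturbed root stays real. Shrinking to the intersection $\cV = \bigcap_i \cV_i$, all $d$ eigenvalue functions are simultaneously smooth and remain pairwise distinct.

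Next I would produce smoothly varying eigenvectors. Fix $i$ and consider the matrix-valued map $M_i(A') := A' - \lambda_i(A')\,\Id$, which is smooth on $\cV$ and satisfies $\rank M_i(A) = d-1$. Pick row index $k$ and column index $\ell$ such that the $(d-1)\times(d-1)$ submatrix $\widetilde M_i(A)$ obtained by deleting row $k$ and column $\ell$ is invertible; this exists because $M_i(A)$ has rank $d-1$. By continuity of the determinant, $\widetilde M_i(A')$ remains invertible on a smaller neighborhood $\cV' \subset \cV$. I would then seek an eigenvector of the form $v_i(A') \in \K^d$ whose $\ell$-th coordinate equals $1$; the remaining $d-1$ coordinates are uniquely determined by the linear system obtained from the equation $M_i(A') v_i(A') = 0$ with row $k$ deleted, namely
\begin{equation*}
\widetilde M_i(A')\, \widetilde v = - (\text{$\ell$-th column of $M_i(A')$ with row $k$ deleted}).
\end{equation*}
Cramer's rule expresses $\widetilde v$ as a smooth function of $A'$, so $v_i(A')$ is smooth on $\cV'$. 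One checks that the deleted row $k$ equation is automatically satisfied: the vector $M_i(A')v_i(A')$ lies in the $k$-th coordinate axis, but on the other hand $\lambda_i(A')$ is an eigenvalue of $A'$, so $M_i(A')$ has a nontrivial kernel, forcing this vector to vanish.

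The main (minor) obstacle is the rank continuity argument in the second step and the bookkeeping required to ensure all $d$ eigenvector functions are simultaneously smooth on a common neighborhood; this is handled by taking the intersection of the $d$ individual neighborhoods, which is still open.
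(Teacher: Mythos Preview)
The paper does not give a proof of this proposition; it simply states it as a well-known fact to be recalled before proving \cref{p.cod_data_easy_half}. Your proposed argument via the implicit function theorem is the standard one and is essentially correct.

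One small point: in the eigenvector step, the justification that the deleted $k$-th equation is automatically satisfied is stated too loosely. Saying that $M_i(A')$ has a nontrivial kernel is not by itself enough to conclude that $M_i(A')v_i(A')=0$. What you actually need is that $\rank M_i(A') = d-1$ on your neighborhood (this follows since the chosen $(d-1)\times(d-1)$ minor is invertible while $\det M_i(A')=0$), so the image of $M_i(A')$ is the span of the $d-1$ columns with index $\neq \ell$; these columns remain independent after deleting the $k$-th entry, hence $e_k$ cannot lie in the image, and therefore $M_i(A')v_i(A')\in\K e_k$ forces it to vanish. With this clarification the argument is complete.
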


\begin{prop}[Easy half of \cref{t.cod_data_R,t.cod_data_C}]\label{p.cod_data_easy_half}
For both $\K =\R$ or $\C$, we have 
$\codim_{\K} \cP^{(\K)}_m \le m$.
\end{prop}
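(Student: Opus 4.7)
The plan is to exhibit, near a well-chosen matrix, a smooth codimension-$m$ submanifold of $\GL(d,\K)\times [\gl(d,\K)]^m$ whose every element is conspicuously poor, and then invoke \cref{l.easy_poor_data}. The statement is the easy half of \cref{t.cod_data_R,t.cod_data_C} and there is no serious obstacle: the proof reduces to imposing one linear constraint per $B_k$ on a common off-diagonal entry, in a basis that diagonalises $A$.

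First, I would fix the base point $A_0 = \Diag(1, 2, \dots, d) \in \GL(d,\K)$, which has $d$ pairwise distinct $\K$-valued eigenvalues. By \cref{p.eigen_smooth} there is an open neighborhood $\cV$ of $A_0$ in $\GL(d,\K)$, and a $C^\infty$ map $P \colon \cV \to \GL(d,\K)$ (holomorphic when $\K=\C$), such that $P(A)^{-1} A\, P(A)$ is diagonal for every $A \in \cV$. Then I would pick two distinct indices, say $(i_0,j_0)=(1,2)$, and consider
$$
\cZ \;=\; \bigl\{ (A, B_1, \dots, B_m) \in \cV \times [\gl(d,\K)]^m \;:\; \bigl(P(A)^{-1} B_k\, P(A)\bigr)_{i_0,j_0} = 0 \ \text{for every } k \bigr\}.
$$
By construction, each datum in $\cZ$ is conspicuously poor (with the basis change $P(A)$), so \cref{l.easy_poor_data} gives $\cZ \subset \cP_{m,d}^{(\K)}$.

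It remains to compute the codimension of $\cZ$. For each fixed $A \in \cV$, the $k$-th defining condition is a nontrivial $\K$-linear equation on $B_k$ only, and the $m$ equations therefore act on disjoint blocks of variables; they are jointly independent. Hence the fiber of $\cZ$ over $A$ is a linear subspace of codimension exactly $m$ in $[\gl(d,\K)]^m$, and $\cZ$ itself is a smooth (resp.\ holomorphic, when $\K=\C$) submanifold of $\cV \times [\gl(d,\K)]^m$ of codimension $m$. This directly yields $\codim_\R \cP_{m,d}^{(\R)} \le m$ in the real case, and in the complex case it exhibits a complex $m$-codimensional holomorphic submanifold inside the algebraic set $\cP_{m,d}^{(\C)}$, forcing $\codim_\C \cP_{m,d}^{(\C)} \le m$ as well. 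This completes the easy half; the matching lower bound is of course the substantive part of \cref{t.cod_data_R,t.cod_data_C} and requires the rigidity analysis referred to in \cref{ss.overview}.
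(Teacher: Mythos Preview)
Your proof is correct and follows exactly the approach the paper sketches: using \cref{p.eigen_smooth} to build a smoothly embedded codimension-$m$ disk of conspicuously poor data, then invoking \cref{l.easy_poor_data}. The paper's own proof is a single sentence to this effect; you have simply spelled out the details.
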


\begin{proof}
Using \cref{p.eigen_smooth},
we can exhibit smoothly embedded disks of codimension $m$ 
inside $\GL(d,\K) \times \gl(d,\K)^m$
formed by conspicuously poor data.
\end{proof}

\section{Rigidity}\label{s.rig}

The aim of this section is to state \cref{t.rig} and prove its first part.
Along the way we will establish several lemmas which will be reused 
in the proof of the second part of the theorem in \cref{s.rig proof}.

\subsection{Acyclicity}\label{ss.acyclicity}

Consider a linear operator $H \colon  E \to E$, 
where $E$ is a finite-dimensional complex vector space.
The \emph{acyclicity} of $H$ is defined as the least number 
$n$ of vectors $v_1$, \dots, $v_n \in E$
such that $\sorb_H(v_1, \ldots, v_n) = E$.
We denote $n = \acyc H$.
If $n = 1$ then $H$ is called a \emph{cyclic operator},
and $v_1$ is called a \emph{cyclic vector}.

\begin{lemma}\label{l.sum}
Let $E$ be a finite-dimensional complex vector space and let 
$H \colon E \to E$ be a linear operator.
Assume that $E_1$, \ldots, $E_k \subset E$ are $H$-invariant subspaces
and that the spectra of $A|E_i$ ($1 \le i \le k$) are pairwise disjoint.
If $v_1 \in E_1$, \ldots, $v_k \in E_k$ then  
$$
\sorb_H(v_1, \ldots, v_k) = \sorb_H(v_1 + \cdots + v_k) \, .
$$
\end{lemma}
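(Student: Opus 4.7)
The plan is to establish the nontrivial inclusion $\sorb_H(v_1, \ldots, v_k) \subseteq \sorb_H(v_1 + \cdots + v_k)$; the reverse inclusion is immediate, since $v_1+\cdots+v_k$ lies in the left-hand side and the left-hand side is $H$-invariant. To obtain the nontrivial inclusion it suffices to prove that each summand $v_i$ lies in $\sorb_H(v_1 + \cdots + v_k)$, because that space is also $H$-invariant and hence contains $\sorb_H(v_i)$ for each~$i$.

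The key tool will be a polynomial calculus argument based on the disjointness of the spectra. For each $i$, let $\chi_i(x)$ be the characteristic polynomial of the restriction $H|E_i$. By assumption the roots of $\chi_1, \ldots, \chi_k$ are pairwise disjoint, so these polynomials are pairwise coprime in $\C[x]$. The Chinese Remainder Theorem for $\C[x]$ then produces polynomials $p_1, \ldots, p_k$ such that, for every pair $(i,j)$,
$$
p_i \equiv \delta_{ij} \pmod{\chi_j}.
$$
By Cayley--Hamilton, $\chi_j(H)$ annihilates $E_j$, so $p_i(H) v_j = \delta_{ij} v_j$ for every $j$. Applying $p_i(H)$ to $v_1 + \cdots + v_k$ therefore extracts exactly $v_i$, which shows $v_i \in \sorb_H(v_1+\cdots+v_k)$ since the latter is stable under polynomials in $H$.

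I expect no real obstacle; the only point requiring care is justifying the CRT step, which is standard given that $\C[x]$ is a PID and the $\chi_i$ are pairwise coprime (being polynomials with disjoint root sets). An alternative, essentially equivalent route would be to set $q_i(x) = \prod_{j\ne i}\chi_j(x)$, observe that $q_i(H)$ kills every $v_j$ with $j\ne i$ while inducing an invertible operator on $E_i$ (since $\chi_j(H|E_i)$ is invertible whenever the spectra are disjoint), and then invert that induced operator by a polynomial in $H|E_i$ to recover $v_i$ from $q_i(H)(v_1+\cdots+v_k)$. Either formulation yields the desired equality in a few lines.
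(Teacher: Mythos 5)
Your proof is correct. The paper dispatches the lemma in one line by viewing $E$ as a $\C[x]$-module (via $x\cdot v = H(v)$) and citing a standard decomposition theorem from Roman's \emph{Advanced Linear Algebra}; what you have done is unpack that citation into a self-contained argument. The Chinese Remainder Theorem step is exactly the engine behind the cited result: the annihilators (characteristic polynomials) of the summands are pairwise coprime because the spectra are disjoint, so one can construct polynomial ``projections'' $p_i(H)$ that recover each $v_i$ from the sum, and Cayley--Hamilton guarantees $\chi_j(H)v_j=0$ so that the congruences mod $\chi_j$ translate into the identities $p_i(H)v_j=\delta_{ij}v_j$. Your alternative route via $q_i=\prod_{j\ne i}\chi_j$ and inverting $q_i(H)|_{E_i}$ by a polynomial is equally valid. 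So the content is the same as the paper's; the trade-off is purely expository — the paper's version is shorter but relies on an external reference, yours is a bit longer but fully self-contained and arguably more transparent for a reader unfamiliar with the module-theoretic formulation.
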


\begin{proof}
View $E$ as a module over the ring of polynomials $\C[x]$ by defining $xv=H(v)$ for $v \in E$.
Then the lemma follows from \cite[Theorem~6.4]{Roman}.
\end{proof}

\medskip

The \emph{geometric multiplicity} of an eigenvalue $\lambda$ of $H$ is the 
dimension of the kernel of $H - \lambda \Id$
(or, equivalently, the number of corresponding Jordan blocks).

\begin{prop}\label{p.acyc}   
The acyclicity of an operator equals the maximum of the geometric multiplicities of its eigenvalues.
\end{prop}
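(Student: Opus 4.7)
The plan is to decompose $E$ into generalized eigenspaces of $H$ and to read off both inequalities from the Jordan structure. Write $g = \max_\lambda \dim \Ker(H - \lambda \Id)$ for the maximum geometric multiplicity, and split the proof into the two matching bounds $\acyc H \ge g$ and $\acyc H \le g$.

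For the lower bound $\acyc H \ge g$, I would pick an eigenvalue $\lambda$ realizing the maximum and project orbits onto the generalized eigenspace $E_\lambda$. The associated projector $\pi \colon E \to E_\lambda$ commutes with $H$, so any spanning family $v_1, \dots, v_n$ of $E$ under the orbit operation yields a spanning family $\pi v_1, \dots, \pi v_n$ of $E_\lambda$. Writing $H|E_\lambda = \lambda\Id + N$ with $N$ nilpotent, one has $\sorb_{H|E_\lambda}(v) = \sorb_N(v)$ (each set of powers is a polynomial combination of the other). In the quotient $E_\lambda / N(E_\lambda)$, which has dimension exactly $g$, each orbit $\sorb_N(v)$ projects onto at most a line, since $Nv, N^2v, \dots$ all vanish modulo $N(E_\lambda)$. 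Hence $n \ge g$.

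For the upper bound, I would first handle each eigenvalue $\mu$ separately: choose the tops of the $g_\mu$ Jordan blocks of $H|E_\mu$ as vectors $v_1^{(\mu)}, \dots, v_{g_\mu}^{(\mu)}$, so that each block is exactly the $H$-orbit of its top and their combined orbits span $E_\mu$. After padding with zeros so each list has length $g = \max_\mu g_\mu$, I would set $w_j = \sum_\mu v_j^{(\mu)}$ for $j = 1, \dots, g$. Since distinct generalized eigenspaces carry disjoint spectra, \cref{l.sum} applies and yields $\sorb_H(w_j) = \sorb_H\bigl(\{v_j^{(\mu)}\}_\mu\bigr)$; summing over $j$ gives $\sorb_H(w_1, \dots, w_g) = \bigoplus_\mu E_\mu = E$, so $\acyc H \le g$.

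There is no serious obstacle here: the structural work is done by the primary decomposition together with \cref{l.sum}, and the Jordan-block identifications are routine. The one point that deserves careful articulation is the passage $\sorb_{H|E_\lambda}(v) = \sorb_N(v)$ used in the lower bound, which boils down to mutual polynomial expressibility of $(H|E_\lambda)^k$ and $N^k$; once this is in hand the quotient argument gives the codimension $g$ obstruction cleanly.
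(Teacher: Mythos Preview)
Your argument is correct and is essentially a fleshed-out version of the paper's own proof, which simply cites the Primary Cyclic Decomposition Theorem together with \cref{l.sum}. Your upper bound is exactly the construction that theorem encodes (Jordan-block tops, combined across eigenspaces via \cref{l.sum}), and your lower bound via the quotient $E_\lambda/N(E_\lambda)$ is the standard way to see that the number of cyclic summands cannot be smaller than the maximal geometric multiplicity.
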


\begin{proof} 
This follows from the Primary Cyclic Decomposition Theorem 
together with \cref{l.sum}.
%
%
\end{proof}

\begin{rem}\label{r.conjugacy_class}
The operators which interest us most are $H = \Ad_A$, where $A \in \GL(d,\C)$.
It is useful to observe that \emph{the geometric multiplicity of $1$ 
as an eigenvalue of $\Ad_A$ equals the codimension of the conjugacy class of $A$ inside $\GL(d,\C)$}.
To prove this, consider the map $\Psi_A \colon \GL(d,\C) \to \GL(d,\C)$ given by 
$\Psi_A(X) = \Ad_X(A)$.
The derivative at $X=\Id$ is $H \mapsto HA-AH$; so
$\Ker D\Psi_A (\Id) = \Ker (\Ad_A - \id)$.
Therefore when $X=\Id$, the rank of $D\Psi_A (X)$ equals 
the geometric multiplicity of $1$ as an eigenvalue of $\Ad_A$.
To see that this is true for any $X$, notice that $\Psi_A = \Psi_{\Ad_X(A)} \circ R_{X^{-1}}$
(where $R$ denotes a right-multiplication diffeomorphism of $\GL(d,\C)$).

We will see later (\cref{l.acyc and pop1}) that $1$ is the eigenvalue of $\Ad_A$
with the biggest geometric multiplicity.
By \cref{p.acyc}, we conclude that $\acyc \Ad_A$ equals 
the codimension of the conjugacy class of $A$.
\end{rem}

\subsection{Definition of rigidity, and the main rigidity estimate}

Let $E$ and $F$ be finite-dimensional complex vector spaces.
Let $H$ be a linear operator action on the space $\mathcal{L}(E,F)$.
We define the \emph{rigidity} of $H$, denoted $\rig H$,
as the least $n$ such that there exist $L_1$, \dots, $L_n \in \mathcal{L}(E,F)$ so that
$\sorb_H(L_1 , \dots, L_n)$ is transitive.
Therefore
$$
1 \le \rig H \le \acyc H \, .
$$

For technical reasons, 
we also define a \emph{modified rigidity} of $H$, 
denoted $\rig_+ H$.
The definition is the same, with the difference that if $E = F$ then 
$L_1$ is required to be the identity map in $\mathcal{L}(E,E)$.
Of course,
$$
\rig H \le \rig_+ H \le \rig H + 1.
$$  

\medskip

We want to give a reasonably good estimate 
of the modified rigidity of $\Ad_A$ for any fixed $A \in \GL(d,\C)$.
(This will be achieved in \cref{l.rig world}.)
We assume that $d \ge 2$; so $\rig_+ \Ad_A \ge 2$.
The next example shows that ``most'' matrices $A$ have the lowest possible $\rig_+ \Ad_A$.

\begin{example}\label{ex.unconstrained_rig}
If $A\in \GL(d,\C)$ is unconstrained (see \cref{ss.unconstrained})
then $\rig_+ \Ad_A = 2$.
Indeed if we take a matrix $B \in \gl(d,\C)$ whose expression in the base that diagonalizes $A$
has no zeros off the diagonal then, by \cref{l.easy_fiber}, 
$\Lambda(A,B) = \sorb_{\Ad_A}(\Id,B)$ is rich.

More generally, if $A\in \GL(d,\C)$ is little constrained (see \cref{a.dim 1})
then it follows from \cref{p.rich_pair} that
$\rig_+ \Ad_A = 2$.
\end{example}

\begin{example}\label{ex.toroidal toeplitz}
Consider $A = \Diag (1, \alpha, \alpha^2)$ where $\alpha = e^{2\pi i /3}$.
(In the terminology of \cref{ss.unconstrained}, 
$A$ has constraints of type 1.) 
Since $\Ad_A^3$ is the identity, we have $\dim \sorb_{\Ad_A}(\Id,B) \le 4$ for any $B \in \gl(3,\C)$.
By the result of Azoff \cite{Azoff} already mentioned at \cref{ex.toeplitz}, 
the minimum dimension of a transitive subspace of $\gl(3,\C)$ is $5$.
This shows that $\rig_+ \Ad_A \ge 3$.
(Actually, equality holds, as we will see in \cref{ex.toroidal toeplitz again} below.)
\end{example}

\medskip

Let $T$ be the set of roots of unity.
Define an equivalence relation $\asymp$ on the set $\C^*$ of nonzero complex numbers by:
\begin{equation}\label{e.equiv rel}
\lambda \asymp \lambda' \ \Leftrightarrow \ \lambda / \lambda' \in T.
\end{equation}
We also say that $\lambda$, $\lambda'$ are \emph{equivalent mod~$T$}.

For $A \in \GL(d,\C)$, we denote
\begin{equation}\label{e.num classes}
c(A) := \text{number of different classes mod $T$ of the eigenvalues of $A$.}
\end{equation}

We now state a technical result
which has a central role in our proofs,
as explained informally in \cref{ss.overview}:

\begin{thm}\label{t.rig}
Let $d \ge 2$ and $A \in \GL(d,\C)$.
Then:
\begin{enumerate}
\item\label{i.rig easy}
If $c(A) = d$ then $\rig_+ \Ad_A = 2$.
\item\label{i.rig hard}
If $c(A) < d$ then $\rig_+ \Ad_A \le \acyc \Ad_A - c(A) + 1$.
\end{enumerate}
\end{thm}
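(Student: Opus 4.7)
My plan addresses the two parts of Theorem~\ref{t.rig} differently. Part~(i) is handled in this section; part~(ii) is substantially harder and, as the introduction to the section indicates, is deferred to \S\ref{s.rig proof}. For part~(i), the hypothesis $c(A) = d$ forces all $d$ eigenvalues of $A$ to lie in distinct classes modulo $T$, and in particular to be pairwise distinct; hence $A$ is diagonalizable. Since conjugating $A$ by $P \in \GL(d,\C)$ and each candidate generator $L_i$ by the same $P$ preserves transitivity of $\sorb_{\Ad_A}(L_1, \ldots, L_n)$ (this is essentially the saturation property of \S\ref{ss.poor_set}, combined with \cref{r.transitivity trick}), I may replace $A$ by $\Diag(\lambda_1, \ldots, \lambda_d)$ with $\lambda_i/\lambda_j \notin T$ for all $i \neq j$.

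My candidate second generator is the off-diagonal all-ones matrix $B = \sum_{i \neq j} E_{ij}$. Because $\Ad_A$ acts diagonally on $\gl(d,\C)$ in the basis $\{E_{ij}\}$ with eigenvalue $\mu_{ij} = \lambda_i/\lambda_j$ on $E_{ij}$, and the hypothesis forces $\mu_{ij} \neq 1$ whenever $i \neq j$, the identity $\Id$ lies in the $\Ad_A$-eigenspace for $\mu = 1$ while the components of $B$ lie in the other eigenspaces. Applying \cref{l.sum} I obtain the explicit decomposition
\[
\Lambda \;:=\; \sorb_{\Ad_A}(\Id, B) \;=\; \C\,\Id \;\oplus\; \bigoplus_{\mu \neq 1} \C \cdot B_\mu ,
\qquad
B_\mu := \sum_{(i,j) \in S_\mu} E_{ij} ,
\]
where $S_\mu = \{(i,j) : i \neq j,\ \mu_{ij} = \mu\}$ partitions the off-diagonal positions. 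The problem reduces to showing that this $\Lambda$ is a transitive subspace of $\gl(d,\C)$.

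To verify transitivity, I would fix $v \in \C^d \setminus \{0\}$ and show that any row vector $w$ with $w^\top L v = 0$ for all $L \in \Lambda$ must vanish. Through the trace pairing, this is equivalent to showing that the rank-one matrix $C := w v^\top$ cannot simultaneously satisfy $\trace C = 0$ and $\sum_{(i,j) \in S_\mu} C_{ij} = 0$ for every off-diagonal $\mu$ unless $C = 0$. The main obstacle here is combinatorial: the sets $S_\mu$ may be large, since the off-diagonal eigenvalues $\mu_{ij}$ can coincide among themselves even under the hypothesis $c(A) = d$. I would handle this by a support-based case analysis on $v$ and $w$, exploiting the rigidity of the factorization $C_{ij} = w_i v_j$ together with the fact that the partition $\{S_\mu\}_{\mu \neq 1}$ is disjoint from the diagonal (a consequence of $\mu_{ij} = 1 \Rightarrow i = j$), so that no cancellation can flow between the diagonal trace condition and the off-diagonal $S_\mu$-sum conditions. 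The ingredients developed here — the conjugation reduction, the $\Ad_A$-eigenspace decomposition, and the application of \cref{l.sum} — are precisely those on which \S\ref{s.rig proof} will build when constructing $\acyc \Ad_A - c(A) + 1$ generators to handle repeated eigenvalue classes in part~(ii).
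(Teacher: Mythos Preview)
Your setup for part~(\ref{i.rig easy}) is correct and close to the paper's: the reduction to diagonal $A$, the choice of an off-diagonal $B$ with nonzero entries, and the eigenspace decomposition
\[
\sorb_{\Ad_A}(\Id,B)=\C\,\Id\oplus\bigoplus_{\mu\neq 1}\C\,B_\mu
\]
are exactly what the paper obtains (the paper in fact shows the stronger $\rig\Ad_A=1$ by omitting $\Id$). The dual reformulation via $C=wv^\top$ is also the right idea; it is equivalent to \cref{l.duality}, which is what the paper uses inside the proof of \cref{l.sudoku}.

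The gap is in your final paragraph. You correctly identify the obstruction --- the sets $S_\mu$ may be large --- but ``a support-based case analysis on $v$ and $w$'' is not an argument, and the observation that $\{S_\mu\}_{\mu\neq 1}$ avoids the diagonal is far too weak to organize such an analysis. Concretely, with $c(A)=d$ one can still have, say, $\lambda=(1,2,3,5,6)$, where $S_{1/2}=\{(1,2),(3,5)\}$: the collision crosses diagonals, so no generalized-Toeplitz argument applies, and a bare case split on supports of $v,w$ has no evident structure to exploit. The paper's resolution is precisely the tool you omit: the multiplication-invariant total preorder $\preccurlyeq$ on $\C_*/T$ from \cref{ss.preorder}. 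Ordering the eigenvalues so that $\lambda_1\prec\cdots\prec\lambda_d$, one checks that $\mu_{ij}=\mu_{i_0j_0}$ forces $(i,j)$ to lie weakly northwest or weakly southeast of $(i_0,j_0)$; this is exactly the hypothesis of the sudoku criterion \cref{l.sudoku} for the $1\times 1$ partition, and transitivity follows. Translated into your dual language: take $j_0$ minimal with $v_{j_0}\neq 0$ and $i_0$ maximal with $w_{i_0}\neq 0$; then in the sum $\sum_{(i,j)\in S_{\mu_{i_0j_0}}}w_iv_j$ every term except $w_{i_0}v_{j_0}$ vanishes, giving the contradiction. That step is the whole content of part~(\ref{i.rig easy}), and it does not go through without the preorder.
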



\begin{rem}
When $c(A) = d$, we have $\acyc \Ad_A = d$ (this will follow from \cref{l.acyc and pop1});
so the conclusion of part~\ref{i.rig hard} does not hold in this case.
\end{rem}

\begin{rem}
The conditions of $A$ being unconstrained and $A$ having $c(A)=d$ 
both mean that $A$ is ``non-degenerate''.
Both of them imply small rigidity, according to \cref{ex.unconstrained_rig}
and  part~\ref{i.rig easy} of \cref{t.rig}.
It is important, however, not to confuse the two properties;
in fact, none implies the other.
\end{rem}

\begin{example}\label{ex.toroidal toeplitz again}
Consider again $A$ as in \cref{ex.toroidal toeplitz}.
The eigenvalues of $\Ad_A$ are $1$, $\alpha$, and $\alpha^2$, each with multiplicity $3$;
so \cref{p.acyc} gives $\acyc \Ad_A = 3$.
So \cref{t.rig} tell us that $\rig_+ \Ad_A \le 3$,
which is actually sharp.
\end{example}

\medskip

The proof of part~\ref{i.rig easy} of \cref{t.rig} will be given in \cref{ss.rig easy}
after a few preliminaries (\cref{ss.sudoku,ss.preorder}).
These preliminaries are also used in the proof of the harder part~\ref{i.rig hard},
which will be given in \cref{s.rig proof}.

\subsection{A criterion for transitivity} \label{ss.sudoku}

We will show the transitivity of 
certain spaces of matrices that remotely resemble Toeplitz matrices.  

Let $t$ and $s$ be positive integers.
Let $\mathcal{R}_1$ be a partition of the interval $[1,t] = \{1, \ldots, t\}$ into intervals,
and let $\mathcal{R}_2$ be a partition of $[1,s]$ into intervals.
Let $\mathcal{R}$ be the product partition.
We will be interested in matrices of the following special form:
\begin{equation}\label{e.sudoku}
M=(m_{i,j})_{1\leq i\leq  t \atop 1\leq j\leq s }=
\left(
\begin{array}{ccc|c|ccc}
&   & &   &  &   & \\
& * & & 0 &  & 0 & \\
&   & &   &  &   & \\
\hline 
& 0 & &M_\tR & & 0 & \\
\hline
&   & &   &  &   & \\
& 0 & & 0 &  & * & \\
&   & &   &  &   & 
\end{array}
\right),
\end{equation}
where $\tR$ is an element of the product partition $\mathcal{R}$, and $M_{\tR}$ is the submatrix 
$(m_{i,j})_{(i,j)\in \tR}$. 

Let $\Lambda$ be a vector space of $t \times s$ matrices.
For each $\tR \in \mathcal{R}$, say of size $k \times \ell$, 
we define the following space of matrices:
\begin{equation}\label{e.sudoku space}
\Lambda^{[\tR]} = \big\{
N \in \Mat_{k \times \ell}(\C) \; ;
\exists \ M \in \Lambda
\text{ of the form \eqref{e.sudoku} with $M_\tR = N$} 
\big\}.
\end{equation}
We regard $\Lambda$ as a subspace of $\mathcal{L}(\C^s,\C^t)$.
If the rectangle $\tR$ is $[p, p+k-1] \times [q, q+\ell-1]$,
we regard the space $\Lambda^{[\tR]}$ as a subspace of  
$$
\mathcal{L} \big( \{0\}^{q-1} \times \C^\ell \times \{0\}^{t-q-\ell+1}, \{0\}^{p-1} \times \C^k \times \{0\}^{t-p-k+1} \big).
$$ 

\begin{lemma}\label{l.sudoku}
Assume that $\Lambda^{[\tR]}$ is transitive for each $\tR \in \mathcal{R}$.
Then $\Lambda$ is transitive.
\end{lemma}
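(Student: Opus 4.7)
Given a nonzero input $v \in \C^s$ and an arbitrary target $w \in \C^t$, the goal is to find $M \in \Lambda$ with $Mv = w$. Fix the natural orderings $\mathcal{R}_1 = \{R_1^{(1)}, \ldots, R_1^{(p)}\}$ (top to bottom) and $\mathcal{R}_2 = \{R_2^{(1)}, \ldots, R_2^{(q)}\}$ (left to right), and set $j^* := \min\{j : v^{[R_2^{(j)}]} \neq 0\}$, $R_2^* := R_2^{(j^*)}$. The crucial feature of the form \eqref{e.sudoku} is its asymmetry: relative to the central block $\tR$, the upper-right and lower-left quadrants are forced to vanish, while the upper-left and lower-right are free.

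The plan is to produce $M$ as a sum $M = \sum_{i=1}^p M_i$, where each $M_i \in \Lambda$ has form \eqref{e.sudoku} for the rectangle $\tR_i := R_1^{(i)} \times R_2^*$. A direct block computation shows that for any such $M_i$,
\[
(M_i v)^{[R_1^{(i')}]} \;=\;
\begin{cases}
0 & \text{if } i' < i, \\
N_i\, v^{[R_2^*]} & \text{if } i' = i, \\
G_{i', i} & \text{if } i' > i,
\end{cases}
\]
where $N_i := (M_i)_{\tR_i}$ and $G_{i', i}$ is a vector depending only on the entries of $M_i$ in the lower-right quadrant of $\tR_i$. The vanishing for $i' < i$ is the crux of the argument: in the rows above $\tR_i$ the only potentially nonzero entries of $M_i$ (other than those in the column-strip of $\tR_i$, which are themselves zero) lie in the upper-left quadrant, i.e., at columns with $j < j^*$, and these are multiplied by $v^{[R_2^{(j)}]} = 0$ by the very choice of $j^*$.

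Summing over $i$ therefore yields $(Mv)^{[R_1^{(i')}]} = N_{i'}\, v^{[R_2^*]} + \sum_{i < i'} G_{i', i}$, a system that is lower-triangular in $i'$. I solve it inductively: for $i' = 1, 2, \ldots, p$ in turn, the correction $\sum_{i<i'} G_{i',i}$ is already determined by the previous choices $M_1,\ldots,M_{i'-1}$, and the transitivity of $\Lambda^{[\tR_{i'}]}$ applied to the nonzero vector $v^{[R_2^*]}$ supplies some $N_{i'} \in \Lambda^{[\tR_{i'}]}$ with $N_{i'}\, v^{[R_2^*]} = w^{[R_1^{(i')}]} - \sum_{i<i'} G_{i',i}$; I then take any $M_{i'} \in \Lambda$ of form \eqref{e.sudoku} realizing this $N_{i'}$ (with whatever garbage appears in its free blocks). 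The resulting $M = \sum_i M_i$ satisfies $Mv = w$, proving transitivity of $\Lambda$. The only step that requires care is the block-by-block verification of the zero/free pattern behind the case distinction above (including degenerate boundary cases where $\tR_i$ sits at the edge of the partition and some strips collapse); once that bookkeeping is in place, the remainder is a clean triangular induction.
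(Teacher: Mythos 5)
Your proof is correct, but it takes a genuinely different route from the paper's. The paper applies the dual characterization of transitivity from \cref{l.duality}: for a nonzero vector $u$ and a nonzero functional $\phi$, pick $j_0$ the least index with $u_{j_0}\neq 0$ and $i_0$ the greatest with $\phi_{i_0}\neq 0$, let $\tR\in\mathcal R$ contain $(i_0,j_0)$, and observe that for matrices of form \eqref{e.sudoku} the scalar $\phi(Mu)$ depends only on $M_\tR$; one application of transitivity of $\Lambda^{[\tR]}$ then finishes the argument with no induction. Your argument is a primal, constructive variant: you fix only the earliest nonzero column block $R_2^*$, and then work with the whole column strip $\{\tR_i = R_1^{(i)}\times R_2^*\}_{i=1}^p$ rather than a single rectangle. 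Your block computation is right: the NW quadrant of an $M_i$ of form \eqref{e.sudoku} for $\tR_i$ is killed by the vanishing blocks of $v$, and the NE quadrant and the middle column are zero by the form, which is exactly why $(M_iv)^{[R_1^{(i')}]}=0$ for $i'<i$ and why the system in $i'$ is block lower-triangular. The forward sweep, choosing $N_{i'}\in\Lambda^{[\tR_{i'}]}$ to hit the corrected target $w^{[R_1^{(i')}]}-\sum_{i<i'}G_{i',i}$ (using $v^{[R_2^*]}\neq 0$ and transitivity of each $\Lambda^{[\tR_{i'}]}$), is valid because the $G_{i',i}$ are determined once the actual matrix $M_i$ (not just $N_i$) has been fixed in the previous steps. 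What your approach buys is an explicit matrix $M$ with $Mv=w$; what the paper's approach buys is brevity and the complete avoidance of both the induction and the bookkeeping over the free ``garbage'' blocks. Both are correct proofs of the lemma.
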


An interesting feature of the lemma which will be useful later is that 
it can be applied recursively.
Before giving the proof of the lemma, 
we illustrate its usefulness by showing the transitivity
of generalized Toeplitz spaces:

\begin{proof}[Proof of \cref{ex.gen Toeplitz}]
Consider the partition of $[1,d]^2$ into $1 \times 1$ ``rectangles''.
If $\Lambda$ is a generalized Toeplitz space then 
$\Lambda^{[\tR]} = \Mat_{1\times 1}(\C) = \C$ for each rectangle $\tR$.
These are transitive spaces, so \cref{l.sudoku} implies that $\Lambda$
is transitive.
\end{proof}

Before proving \cref{l.sudoku}, notice the following dual characterization of transitivity, whose proof is immediate:

\begin{lemma}\label{l.duality}
A subspace $\Lambda \subset \mathcal{L}(\C^s,\C^t)$ is transitive iff
for any non-zero vector $u \in \C^s$ and any non-zero linear functional $\phi \in (\C^t)^*$
there exists $M \in \Lambda$ such that $\phi (M \cdot u) \neq 0$.
\end{lemma}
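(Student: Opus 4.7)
The plan is to unwind the definition of transitivity and reduce the statement to the standard linear-algebraic fact that a subspace $V\subset \C^t$ coincides with $\C^t$ if and only if every non-zero functional $\phi\in(\C^t)^*$ is non-zero somewhere on $V$. Recall that $\Lambda$ is transitive by definition when $\Lambda\cdot u=\C^t$ for every non-zero $u\in\C^s$, so the lemma is really a pointwise statement (fix $u$) that we then quantify over $u$.

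First, I would handle the forward direction. Assume $\Lambda$ is transitive, fix a non-zero $u\in \C^s$ and a non-zero $\phi\in(\C^t)^*$. Since $\phi\neq 0$, we may pick $y\in\C^t$ with $\phi(y)\neq 0$; since $\Lambda\cdot u=\C^t$ by transitivity, we can write $y=Mu$ for some $M\in\Lambda$, and this $M$ satisfies $\phi(Mu)\neq 0$.

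For the reverse direction, I would argue by contraposition. Suppose $\Lambda$ is not transitive, so there exists a non-zero $u\in\C^s$ with $\Lambda\cdot u\subsetneq\C^t$. Because $\Lambda\cdot u$ is a proper linear subspace of $\C^t$, the annihilator $(\Lambda\cdot u)^{\perp}\subset(\C^t)^*$ is non-zero, and any non-zero $\phi$ in this annihilator vanishes on every vector of the form $Mu$ with $M\in\Lambda$, which contradicts the hypothesis on the pair $(u,\phi)$.

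No step here should be an obstacle; the only thing to watch is to keep the quantifiers aligned (the ``$u$'' is the same on both sides, while $\phi$ is existentially introduced on the left and universally furnished on the right after the contraposition), and to invoke the annihilator fact cleanly rather than re-proving it. The proof will be essentially two lines once these observations are made.
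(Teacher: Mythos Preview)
Your argument is correct and is exactly the natural unpacking of the definition via annihilators; the paper itself gives no proof, simply declaring the lemma ``immediate,'' so your approach is precisely what the authors have in mind.
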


\begin{proof}[Proof of \cref{l.sudoku}]
Take any non-zero vector $u = (u_1,\ldots,u_s)$ in $\C^s$
and a non-zero functional $\phi(v_1,\ldots,v_t) = \sum_{i=1}^t \phi_i v_i$ in $(\C^t)^*$.
By \cref{l.duality}, we need to show that there exists $M = (x_{ij}) \in \Lambda$ such that 
\begin{equation}\label{e.sum}
\phi (M \cdot u) = \sum_{i=1}^t \sum_{j=1}^s \phi_i x_{ij} u_j
\end{equation}
is non-zero.

Let $j_0$ be the least index such that $u_{j_0} \neq 0$,
and let $i_0$ be the greatest index such that $\phi_{i_0} \neq 0$.
Let $\tR$ be the element of $\mathcal{R}$ that contains $(i_0,j_0)$. 
Notice that if $M$ is of the form \eqref{e.sudoku} then
the $(i,j)$-entries of $M$ that are above left (resp.\ below right) of $\tR$
do not contribute to the sum \eqref{e.sum}, because $u_j$ (resp.\ $\phi_i$) vanishes.
That is,
$\phi (M \cdot u)$ depends only on $M_{\tR}$ 
and is given by $\sum_{(i,j)\in \tR} \phi_i x_{ij} u_j$;
Since $\Lambda^{[\tR]}$ is transitive, by \cref{l.duality}
there is a choice of a matrix $M \in \Lambda$ of the form \eqref{e.sudoku}
so that $\phi (M \cdot u) \neq 0$.
So we are done.
\end{proof}

\subsection{Preorder in the complex plane}\label{ss.preorder}


We consider the set $\C_* / T$ of equivalence classes 
of the relation~\eqref{e.equiv rel}.
Since $T$ is the torsion subgroup of $\C_*$,
the quotient $\C_* / T$ is an abelian torsion-free group.

\begin{prop}
There exists a multiplication-invariant total order $\preccurlyeq$  on $\C_* / T$. 
\end{prop}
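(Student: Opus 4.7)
The plan is to use the well-known fact that every torsion-free abelian group admits a translation-invariant total order, then apply it to $\C_*/T$. Indeed, since $T$ is precisely the torsion subgroup of the abelian group $\C_*$ (the roots of unity being the only elements of finite multiplicative order in $\C_*$), the quotient $\C_*/T$ is automatically abelian and torsion-free.

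To produce the invariant total order, I would first pass to the divisible hull $V = (\C_*/T) \otimes_\Z \Q$, which is a $\Q$-vector space and into which $\C_*/T$ embeds because it is torsion-free. Any translation-invariant total order on $V$ restricts to one on $\C_*/T$, so it suffices to order $V$. For this, choose (by Zorn's lemma) a Hamel basis $\{e_\alpha\}_{\alpha \in I}$ of $V$ over $\Q$ together with an arbitrary well-ordering of the index set $I$. Every nonzero $v \in V$ has a unique finite expansion $v = \sum_\alpha q_\alpha e_\alpha$, and we declare $v \succ 0$ iff the coefficient $q_{\alpha_0}$ at the smallest index $\alpha_0$ with $q_{\alpha_0}\neq 0$ is positive in $\Q$. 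This is the standard lexicographic order; it is easily checked to be a total order, and it is translation-invariant because passage to the leading coefficient is $\Q$-linear. Writing the operation on $\C_*/T$ multiplicatively then yields a multiplication-invariant total order $\preccurlyeq$.

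A concrete alternative that avoids Zorn beyond what is needed for a Hamel basis of a specific space: using polar decomposition, $\C_* \cong \R_{>0} \times S^1$ (as groups), and $T$ sits inside $S^1 \cong \R/\Z$ as $\Q/\Z$. Hence $\C_*/T \cong \R \oplus (\R/\Q)$, written additively. Order the first summand by the usual order of $\R$ (i.e., eventually by absolute value of the original $\lambda$), and the second summand, which is a $\Q$-vector space, by lex order on any Hamel basis; combine the two with lex priority to the first. This also produces the desired order, and it has the pleasant feature that $\lambda \succ \lambda'$ whenever $|\lambda| > |\lambda'|$.

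The only genuine subtlety is the appeal to the axiom of choice (a Hamel basis of an infinite-dimensional $\Q$-vector space), which is standard and unavoidable. No nontrivial computation is needed; the heart of the argument is the torsion-free property of $\C_*/T$, which comes for free from the definition of $T$ as the torsion subgroup of $\C_*$.
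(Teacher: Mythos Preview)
Your proposal is correct, and your second (``concrete alternative'') argument is essentially identical to the paper's own proof: the paper uses the isomorphism $\C_*/T \cong \R \oplus (\R/\Q)$ via $(x,y) \mapsto \exp(x + 2\pi i y)$, orders $\R$ as usual, orders $\R/\Q$ lexicographically via a Hamel basis of $\R$ over $\Q$ containing $1$, and takes the lexicographic combination. Your first approach (torsion-free $\Rightarrow$ orderable, via the divisible hull) is the general Levi-type argument that the paper explicitly cites but then bypasses in favor of the direct construction; note incidentally that $\C_*/T$ is already divisible, so the tensor with $\Q$ is a no-op.
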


The proposition follows from a result of Levi~\cite{Levi},
but nevertheless let us give a direct proof:

\begin{proof}
There is an isomorphism between $\R \oplus (\R / \Q)$ and $\C_*/T$, namely
$(x,y) \mapsto \exp(x + 2\pi i y)$. 
So it suffices to find a multiplication-invariant order in $\R/\Q$
(and then take the lexicographic order).
Take a Hamel basis $B$ of the $\Q$-vector space $\R$ so that $1 \in B$.
Then $\R/\Q$ is a direct sum of abelian groups $\bigoplus_{x\in B, \ x \neq 1} x\Q$.
Order each $x\Q$ in the usual way and take any total order on $B$.
Then the induced lexicographic order on $\R/\Q$ is multiplication-invariant,
and the proof is concluded.
\end{proof}

Let $[z]\in \C_*/T$ denote the equivalence class of $z\in \C_*$.
Let us extend the notation, writing $z \preccurlyeq z'$ if $[z] \preccurlyeq [z']$.
Then $\preccurlyeq$ becomes a multiplication-invariant total preorder on $\C_*$ 
that induces the equivalence relation $\asymp$.
In other words, for all $z$, $z'$, $z'' \in \C_*$ we have:
\begin{itemize}
	\item $z \preccurlyeq z'$ or $z' \preccurlyeq z$;
	\item $z \preccurlyeq z'$ and $z' \preccurlyeq z$ $\Longleftrightarrow$ $z \asymp z'$;
	\item $z \preccurlyeq z'$ and $z' \preccurlyeq z''$ $\Longrightarrow$ $z \preccurlyeq z''$;
	\item $z \preccurlyeq z'$ $\Longrightarrow$ $z z'' \preccurlyeq z' z''$.
\end{itemize}
It follows that:
\begin{itemize}
	\item $z \preccurlyeq z'$ $\Longrightarrow$ $(z')^{-1} \preccurlyeq z^{-1}$.
\end{itemize}
We write $z \prec z'$ when $z \preccurlyeq z'$ and $z \not \asymp z'$.

\subsection{Proof of the easy part of \cref{t.rig}}\label{ss.rig easy}

\begin{proof}[Proof of part~\ref{i.rig easy} of \cref{t.rig}]
If $c(A)=d$ then in particular all eigenvalues are different and so the matrix $A$ is diagonalizable.
So with a change of basis we can assume that
$A = \Diag(\lambda_1, \dots, \lambda_d)$.
We can also assume that the eigenvalues are 
increasing with respect to the preorder introduced in \cref{ss.preorder}:
$$
\lambda_1 \prec \lambda_2 \prec \cdots \prec \lambda_d \, .
$$

Fix any matrix $B$ with only nonzero entries,
and consider the space $\Lambda = \sorb_{\Ad_A}(B)$,
which is described by \eqref{e.poly entry}.
We will use \cref{l.sudoku} to show that $\Lambda$ is transitive.
Let $\mathcal{R}$ be the partition of $[1,d]^2$ into $1 \times 1$ rectangles.
Given a cell $\tR = \{(i_0,j_0)\} \in \mathcal{R}$ and a coefficient $t \in \C$,
there exists a polynomial $f$ such that 
$f(\lambda_i \lambda_j^{-1})$
equals $t$ if $\lambda_i \lambda_j^{-1} = \lambda_{i_0} \lambda_{j_0}^{-1}$
and equals $0$ otherwise.
Because the eigenvalues are ordered,
$M = f(\Ad_A)\cdot B$ is a matrix in $\Lambda$ of the form \eqref{e.sudoku}.
Also, $M_{\tR} = (t)$.
So $\Lambda^{[\tR]} = \C$, which is transitive.
This shows that $\rig \Ad_A = 1$, and  $\rig_+ \Ad_A \leq 2$. 
Thus, as $d\geq 2$, we have $\rig_+ \Ad_A = 2$.
\end{proof}

\section{Proof of the hard part of the rigidity estimate}\label{s.rig proof}

This section is wholly devoted to proving part~\ref{i.rig hard} of \cref{t.rig}.
In the course of the proof we need to introduce some terminology 
and to establish several intermediate results.
None of these are used in the rest of the paper, apart form a simple consequence,
which is \cref{r.jordan type and acyc}.

\subsection{The normal form}\label{ss.normal form}

Let $A \in \GL(d,\C)$.
In order to describe the estimate on $\rig_+ \Ad_A$, 
we need to put $A$ in a certain normal form, which we now explain. 
Fix a preorder $\preccurlyeq$ on $\C_*$ as in \cref{ss.preorder}.

\medskip

List the eigenvalues of $A$ without repetitions as
\begin{equation}\label{e.order lambdas}
\lambda_1 \preccurlyeq \cdots \preccurlyeq \lambda_r
\end{equation}
Write each eigenvalue in polar coordinates:
$$
\lambda_k = \rho_k \exp ( \theta_k \sqrt{-1}), \quad
\text{where } \rho_k > 0 \text { and } 0 \leq \theta_k <  2 \pi.
$$
Up to reordering, we may assume
$$
\left.
\begin{array}{l}
\lambda_k \asymp \lambda_\ell \\
k < \ell
\end{array}
\right\} \ \Rightarrow \ 
\theta_k < \theta_\ell \, .
$$

With a change of basis, we can assume that 
$A$ has Jordan form: 
\begin{equation}\label{e.order blocks}
A =
\begin{pmatrix}
A_1  &        &     \\
     & \ddots &     \\
     &        & A_r      
\end{pmatrix}, \quad 
A_k =
\begin{pmatrix}
J_{t_{k,1}}(\lambda_k) &        &                         \\
                       & \ddots &                         \\
                       &        & J_{t_{k,\tau_k}}(\lambda_k)      
\end{pmatrix},
\end{equation}
where $t_{k,1} + \cdots + t_{k,\tau_k} = s_k$ is the multiplicity of the eigenvalue $\lambda_k$,
and $J_t(\lambda)$ is the following $t \times t$ Jordan block:
\begin{equation}\label{e.block}
J_t(\lambda) = 
\left(
\vcenter{
\xymatrix @-1.5pc {
\lambda \ar@{.}[rrdd]  & 1 \ar@{.}[rd] &         \\
                 &               & 1       \\
                 &               & \lambda \\
}}
\right).
\end{equation}
The matrix $A$ will be fixed from now on.

\subsection{Rectangular partitions}\label{ss.geo defs}

This subsection contains several definitions that will be fundamental in all 
arguments until the end of the section.
We will define certain subregions of the set $\{1,\dots,d\}^2$ of matrix entry positions
that depend on the normal form of the matrix $A$.
Later we will see they are related to $\Ad_A$-invariant subspaces.
Those regions will be $\rm c$-rectangles, $\rm e$-rectangles, and $\rm j$-rectangles 
(where $\rm c$ stands for classes of eigenvalues, $\rm e$ for eigenvalues and $\rm j$ for Jordan blocks).
Regions will have some numerical attributes (banners and weights) coming from their geometry and from the eigenvalues of $A$ they will be associated to. Those attributes will be related to 
numerical invariants of $\Ad_A$ (eigenvalues and geometric multiplicities), but we use different names so that we remember their geometric meaning and so that they are not mistaken for the corresponding invariants of $A$. We also introduce positional attributes of the regions (arguments and latitudes) which will be useful fundamental later in the proofs of our rigidity estimates.

\medskip

Recall $A$ is a matrix in normal form as explained in \cref{ss.normal form}.
Define three partitions $\cP_{\rm c}$, $\cP_{\rm e}$, $\cP_{\rm j}$ of the set $[1,d] = \{1,\dots,d\}$ into intervals:
\begin{itemize}
\item The partition $\cP_{\rm c}$ corresponds to equivalence classes of eigenvalues 
under the relation $\asymp$, that is, 
the right endpoints of its atoms are the numbers 
$s_1 + \dots + s_k$
where $k=r$ or $k$ is such that $\lambda_k \prec \lambda_{k+1}$.
\item The partition $\cP_{\rm e}$ corresponds to eigenvalues:
the right endpoints of its atoms are the numbers 
$s_1 + \dots + s_k$, where $1 \le k \le r$.
So $\cP_{\rm e}$ refines $\cP_{\rm c}$.
\item The partition $\cP_{\rm j}$ corresponds to Jordan blocks:
the right endpoints of its atoms are the numbers 
$s_1 + \dots + s_{k-1} + t_{k,1} + \dots + t_{k,\ell}$, where $1 \le k \le r$
and $1 \le \ell \le \tau_k$.
So $\cP_{\rm j}$ refines $\cP_{\rm e}$.
\end{itemize}
For $*= \rm c$, $\rm e$, $\rm j$, let 
$\cP_*^2$ be the partition of the square $[1,d]^2$ 
into rectangles that are products of atoms of $\cP_*$.
The elements of $\cP_{\rm c}^2$ are called \emph{$\rm c$-rectangles}, 
the elements of $\cP_{\rm e}^2$ are called \emph{$\rm e$-rectangles},
and elements of $\cP_{\rm j}^2$ are called \emph{$\rm j$-rectangles}.
Thus the square $[1,d]^2$ is a disjoint union $\rm c$-rectangles,
each of them is a disjoint union of $\rm e$-rectangles,
each of them is a disjoint union of $\rm j$-rectangles.

\begin{example}\label{ex.big matrix}
Suppose $d=17$,
$A$ has $r=5$ eigenvalues 
$$
\lambda_1 =  \exp { \tfrac{1}{2}\pi i} , \quad
\lambda_2 =  \exp { \tfrac{7}{6}\pi i} , \quad
\lambda_3 =  \exp {\tfrac{11}{6}\pi i} , \quad
\lambda_4 = 2\exp { \tfrac{1}{6}\pi i} , \quad
\lambda_5 = 2\exp { \tfrac{5}{6}\pi i} 
$$
with $\lambda_1 \asymp \lambda_2 \asymp \lambda_3 \prec \lambda_4 \asymp \lambda_5$
and  respective Jordan blocks of sizes 
$4$, $2$, $1$; $3$, $2$;  $2$;  $2$; $1$. 
Then there are $4$ $\rm c$-rectangles, $25$ $\rm e$-rectangles, and $64$ $\rm j$-rectangles.
See \cref{f.big matrix}. 
\begin{figure}[hbt]  
\setlength{\unitlength}{.6cm}
\begin{picture}(17,17)(0,-1)  

\thicklines  
\put(0,-1){\framebox(17,0)} 
\put(0,2){\framebox(17,0)}
\put(0,16){\framebox(17,0)} 
\put(0,-1){\framebox(0,17)}  
\put(14,-1){\framebox(0,17)}
\put(17,-1){\framebox(0,17)} 

\thinlines
\put(0,0){\line(1,0){17}}
\put(0,4){\line(1,0){17}}
\put(0,9){\line(1,0){17}}
\put(7,-1){\line(0,1){17}}
\put(12,-1){\line(0,1){17}}
\put(16,-1){\line(0,1){17}}

\put(0,6){\dashbox{.2}(17,0)}
\put(0,10){\dashbox{.2}(17,0)}
\put(0,12){\dashbox{.2}(17,0)}
\put(4,-1){\dashbox{.2}(0,17)}
\put(6,-1){\dashbox{.2}(0,17)}
\put(10,-1){\dashbox{.2}(0,17)}


\put( 7.2,13.1){\tiny wgt.~$3$}
\put( 7.2,12.6){\tiny lat.~$0$}

\put(10.2,13.1){\tiny wgt.~$2$}
\put(10.2,12.6){\tiny lat.~$1$}

\put( 7.2,11.1){\tiny wgt.~$2$}
\put( 7.2,10.6){\tiny lat.~$-1$}

\put(10.2,11.1){\tiny wgt.~$2$}
\put(10.2,10.6){\tiny lat.~$0$}

\put( 7.2, 9.6){\tiny wgt.~$1$}
\put( 7.2, 9.1){\tiny lat.~$-2$}

\put(10.2, 9.6){\tiny wgt.~$1$}
\put(10.2, 9.1){\tiny lat.~$-1$}

\put(   0.05,15.4){{\footnotesize  $15$\,}\framebox(.5,.35){\tiny $\bullet${\sf a}}\put(-.51,0){\line(0,-1){.3}}}
\put(   7.05, 8.4){{\footnotesize   $9$\,}\framebox(.5,.35){\tiny $\bullet${\sf a}}\put(-.51,0){\line(0,-1){.3}}}
\put(  12.05, 3.4){{\footnotesize   $2$\,}\framebox(.5,.35){\tiny $\bullet${\sf a}}\put(-.51,0){\line(0,-1){.3}}}
\put(   7.05,15.4){{\footnotesize  $11$\,}\framebox(.5,.35){\tiny $\bullet${\sf b}}\put(-.51,0){\line(0,-1){.3}}}
\put(  12.05, 8.4){{\footnotesize   $4$\,}\framebox(.5,.35){\tiny $\bullet${\sf b}}\put(-.51,0){\line(0,-1){.3}}}
\put(   0.05, 3.4){{\footnotesize   $5$\,}\framebox(.5,.35){\tiny $\bullet${\sf b}}\put(-.51,0){\line(0,-1){.3}}}
\put(   0.05, 2.7){{\footnotesize $\ominus$}}
\put(  12.05,15.4){{\footnotesize   $5$\,}\framebox(.5,.35){\tiny $\bullet${\sf c}}\put(-.51,0){\line(0,-1){.3}}}
\put(   0.05, 8.4){{\footnotesize  $11$\,}\framebox(.5,.35){\tiny $\bullet${\sf c}}\put(-.51,0){\line(0,-1){.3}}}
\put(   0.05, 7.7){{\footnotesize $\ominus$}}
\put(   7.05, 3.4){{\footnotesize   $4$\,}\framebox(.5,.35){\tiny $\bullet${\sf c}}\put(-.51,0){\line(0,-1){.3}}}
\put(   7.05, 2.7){{\footnotesize $\ominus$}}

\put(  14.05, 1.4){{\footnotesize   $2$\,}\framebox(.5,.35){\tiny $\bullet${\sf a}}\put(-.51,0){\line(0,-1){.3}}}
\put(  16.05, -.6){{\footnotesize   $1$\,}\framebox(.5,.35){\tiny $\bullet${\sf a}}\put(-.51,0){\line(0,-1){.3}}}
\put(  14.05, -.6){{\footnotesize   $1$\,}\framebox(.5,.35){\tiny $\bullet${\sf c}}\put(-.51,0){\line(0,-1){.3}}}
\put(  15.05, -.6){{\footnotesize $\ominus$}}
\put(  16.05, 1.4){{\footnotesize   $1$\,}\framebox(.5,.35){\tiny $\bullet${\sf b}}\put(-.51,0){\line(0,-1){.3}}}

\put(  14.05,15.4){{\footnotesize   $5$\,}\framebox(.5,.35){\tiny $\mathord{\Uparrow}${\sf a}}\put(-.51,0){\line(0,-1){.3}}}
\put(  14.05,14.7){{\footnotesize $\ominus$}}
\put(  16.05,15.4){{\footnotesize   $3$\,}\framebox(.5,.35){\tiny $\mathord{\Uparrow}${\sf b}}\put(-.51,0){\line(0,-1){.3}}}
\put(  14.05, 8.4){{\footnotesize   $4$\,}\framebox(.5,.35){\tiny $\mathord{\Uparrow}${\sf c}}\put(-.51,0){\line(0,-1){.3}}}
\put(  14.05, 7.7){{\footnotesize $\ominus$}}
\put(  16.05, 8.4){{\footnotesize   $2$\,}\framebox(.5,.35){\tiny $\mathord{\Uparrow}${\sf a}}\put(-.51,0){\line(0,-1){.3}}}
\put(  16.05, 7.7){{\footnotesize $\ominus$}}
\put(  14.05, 3.4){{\footnotesize   $2$\,}\framebox(.5,.35){\tiny $\mathord{\Uparrow}${\sf b}}\put(-.51,0){\line(0,-1){.3}}}
\put(  14.05, 2.7){{\footnotesize $\ominus$}}
\put(  16.05, 3.4){{\footnotesize   $1$\,}\framebox(.5,.35){\tiny $\mathord{\Uparrow}${\sf c}}\put(-.51,0){\line(0,-1){.3}}}
\put(  16.05, 2.7){{\footnotesize $\ominus$}}

\put(   0.05, 1.4){{\footnotesize $5$\,}\framebox(.5,.35){\tiny $\mathord{\Downarrow}${\sf a}}\put(-.51,0){\line(0,-1){.3}}}
\put(   0.05, -.6){{\footnotesize $3$\,}\framebox(.5,.35){\tiny $\mathord{\Downarrow}${\sf c}}\put(-.51,0){\line(0,-1){.3}}}
\put(   1.05, -.6){{\footnotesize $\ominus$}}
\put(   7.05, 1.4){{\footnotesize $4$\,}\framebox(.5,.35){\tiny $\mathord{\Downarrow}${\sf b}}\put(-.51,0){\line(0,-1){.3}}}
\put(   7.05, -.6){{\footnotesize $2$\,}\framebox(.5,.35){\tiny $\mathord{\Downarrow}${\sf a}}\put(-.51,0){\line(0,-1){.3}}}
\put(  12.05, 1.4){{\footnotesize $2$\,}\framebox(.5,.35){\tiny $\mathord{\Downarrow}${\sf c}}\put(-.51,0){\line(0,-1){.3}}}
\put(  12.05, -.6){{\footnotesize $1$\,}\framebox(.5,.35){\tiny $\mathord{\Downarrow}${\sf b}}\put(-.51,0){\line(0,-1){.3}}}

\end{picture}
\caption{\footnotesize The partitions of the square $[1,d]^2$ corresponding to \cref{ex.big matrix}.
Thick (resp., thin, dashed) lines represent $\rm c$-rectangles (resp., $\rm e$-, $\rm j$-)  borders.
Weight and latitude of each $\rm j$-rectangle inside a selected $\rm e$-rectangle are indicated.
The weight of each $\rm e$-rectangle is recorded in its upper left corner, along with 
a symbolic representation of its banner.
There are three banner classes
($\bullet = [1]$, $\mathord{\Downarrow}=[2]$ and $\mathord{\Uparrow}=[1/2]$),
each of them with $3$ different banners. The $\rm e$-rectangles with negative arguments are marked with $\ominus$.
} 
\label{f.big matrix}
\end{figure}
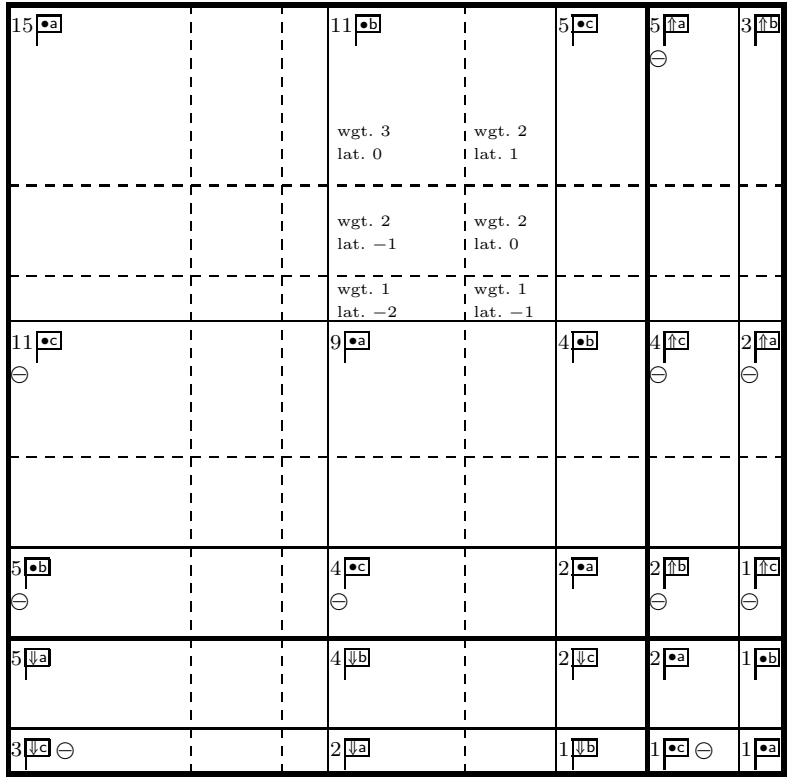
\end{example}

For each $\rm e$-rectangle we define its \emph{row eigenvalue} and its \emph{column eigenvalue}
in the obvious way: 
If an $\rm e$-rectangle $\tE$ equals $I_k \times I_\ell$ where $I_k$ and $I_\ell$
are intervals with right endpoints $s_1 + \dots + s_k$ and $s_1 + \dots + s_\ell$, respectively,
then the row eigenvalue of $\tE$ is $\lambda_k$ and the column eigenvalue of $\tE$ is $\lambda_\ell$.
The row and column eigenvalues of a $\rm j$-rectangle $\tJ$ are defined respectively
as the row and column eigenvalues of the $\rm e$-rectangle that contains it.

Let $\tE$ be an $\rm e$-rectangle with row eigenvalue $\lambda_k$ and column eigenvalue $\lambda_\ell$.
The \emph{banner} of $\tE$ is defined by $\lambda_k^{-1}\lambda_\ell$. The \emph{argument} of the $\rm e$-rectangle is the quantity $\theta_\ell - \theta_k \in (-2\pi,2\pi)$. 
It coincides modulo $2\pi$ with the argument of the banner, but it contains more information than the argument of the banner. 

Each $\rm j$-rectangle $\tJ$ has an address of the type ``$i^\text{th}$ row, $j^\text{th}$ column, $\rm e$-rectangle $\tE$''; then the \emph{latitude} of the $\rm j$-rectangle $\tJ$ within the $\rm e$-rectangle $\tE$ is defined as $j-i$. See an example in \cref{f.big matrix}.

If two $\rm e$-rectangles lie in the same $\rm c$-rectangle then their banners are equivalent mod~$T$.
Thus every $\rm c$-rectangle has a well-defined \emph{banner class} in $\C^*/T$.

If a $\rm j$-rectangle, $\rm e$-rectangle, or $\rm c$-rectangle 
intersects the diagonal $\{(1,1), \dots, (d,d)\}$
then we call it \emph{equatorial}.
Equatorial regions are always square.
Thus every equatorial $\rm e$-rectangle has banner $1$.

The \emph{weight} of a $\rm j$-rectangle is defined as the minimum of its sides.
The weight of a union $R$ of $\rm j$-rectangles in $[1,d]^2$ is defined as
the sum of the weights of those $\rm j$-rectangles. We denote it by $\pop R$.
We can in particular consider the weights of $\rm e$ and $\rm c$-rectangles, and of the complete square $[1,d]^2$.
\medskip

Let us notice some facts on the location of the banners
(which will be useful to apply \cref{l.sudoku}):

\begin{lemma}\label{l.geo}
Let $\tE$ be an $\rm e$-rectangle in a $\rm c$-rectangle $\tC$. Consider the divisions of the square $[1,d]^2$ and the $\rm c$-rectangle $\tC$ as in \cref{f.divisions}. 

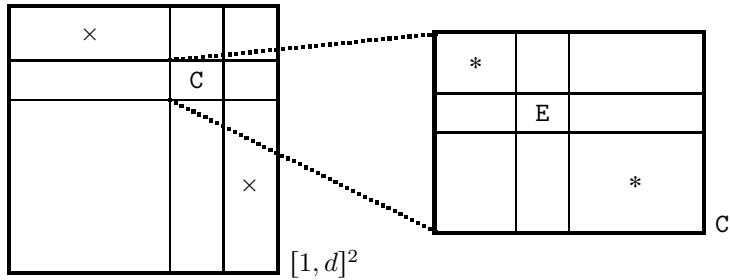
\begin{figure}[hbt]  
\setlength{\unitlength}{.35cm}
\begin{picture}(27, 10)(0,0)

\linethickness{0.4mm}
\put(0,0){\framebox(10,10)} 
\put(10.5,0){$[1,d]^2$}

\put(16,1.5){\framebox(10,7.5)} 
\put(26.5,1.5){$\tC$}


\dottedline{.25}(6,8)(16,9)
\dottedline{.25}(6,6.5)(16,1.5)

\linethickness{0.1mm}
\put(0,0  ){\framebox(6,6.5){$ $}}  
\put(0,6.5){\framebox(6,1.5){$ $}}  
\put(0,8  ){\framebox(6,2  ){$\times$}}     
\put(6,0  ){\framebox(2,6.5){$ $}}  
\put(6,6.5){\framebox(2,1.5){$\tC$}}     
\put(6,8  ){\framebox(2,2  ){$ $}}  
\put(8,0  ){\framebox(2,6.5){$\times$}}     
\put(8,6.5){\framebox(2,1.5){$ $}}  
\put(8,8  ){\framebox(2,2  ){$ $}}  

%

\linethickness{0.1mm}
\put(16,1.5 ){\framebox(3,3.75){$ $}}  
\put(16,5.25){\framebox(3,1.5 ){$ $}}         
\put(16,6.75){\framebox(3,2.25){\large$*$}}      
\put(19,1.5 ){\framebox(2,3.75){$ $}}         
\put(19,5.25){\framebox(2,1.5 ){$\tE$}}       
\put(19,6.75){\framebox(2,2.25){$ $}}         
\put(21,1.5){\framebox(5,3.75){\large$*$}}        
\put(21,5.25){\framebox(5,1.5 ){$ $}}         
\put(21,6.75){\framebox(5,2.25){$ $}}   

\end{picture}
\caption{\footnotesize The divisions of $[1,d]^2$ and $\tC$ in \cref{l.geo}.} 
\label{f.divisions}
\end{figure}

Let $\beta$ be the banner of the $\rm e$-rectangle $\tE$, and let $[\beta]$ be the banner class of the 
$\rm c$-rectangle $\tC$.
Then: 
\begin{enumerate}
\item\label{i.geo0} All the $\rm c$-rectangles with banner class $[\beta]$ are inside the rectangles marked with $\times$.
\item\label{i.geo2} If the $\rm e$-rectangle $\tE$ has nonnegative (resp.\ negative) argument 
then the all the $\rm e$-rectangles with nonnegative (resp.\ negative) argument and with same banner $\beta$ are inside the rectangles marked with $*$.
\end{enumerate}
\end{lemma}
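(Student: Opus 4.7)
The plan is to translate the banner and banner-class conditions into equations on the class indices of the row and column eigenvalues of the rectangles, and then to read off the geometric location from the $\preccurlyeq$-ordering of the classes together with the angle ordering inside each class.

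For part~\ref{i.geo0} I will write $\tC = I_{a_0} \times I_{b_0}$, where $I_a$ is the subinterval of $[1,d]$ corresponding to the $a$-th $\asymp$-class $C_a$, with the classes listed in increasing $\preccurlyeq$-order, so that $[\beta] = [C_{b_0} C_{a_0}^{-1}]$. Another $\rm c$-rectangle $\tC' = I_a \times I_b$ will have banner class $[\beta]$ precisely when $C_b C_a^{-1} \asymp C_{b_0} C_{a_0}^{-1}$; since the classes lie in the torsion-free group $\C_* / T$, this rewrites as the equation $C_a / C_{a_0} = C_b / C_{b_0}$, and I will call this common quotient $\gamma$. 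Because $\preccurlyeq$ is multiplication-invariant, the three cases $\gamma \succ 1$, $\gamma = 1$, and $\gamma \prec 1$ will correspond respectively to $(a > a_0,\, b > b_0)$, $(a, b) = (a_0, b_0)$, and $(a < a_0,\, b < b_0)$; the two outer cases place $\tC'$ in the two $\times$-rectangles of \cref{f.divisions}, while the middle case gives $\tC' = \tC$.

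For part~\ref{i.geo2}, by part~\ref{i.geo0} the only $\rm c$-rectangle with banner class $[\beta]$ sitting in the central column and central row of the left-hand picture is $\tC$ itself, so I will restrict to $\tE'$ contained in $\tC$. Within $\tC$, I will index the row-class eigenvalues of $A$ as $\lambda_{k_0}, \lambda_{k_1}, \dots$ and the column-class ones as $\lambda_{\ell_0}, \lambda_{\ell_1}, \dots$, both listed in the order inherited from \cref{ss.normal form}, so that inside each class the eigenvalue index is strictly increasing in the angle $\theta$. For $\tE' = J_k \times J_\ell$ the equation $\lambda_\ell / \lambda_k = \beta$, together with the fact that $\lambda_k$ and $\lambda_{k_0}$ share a modulus (and similarly $\lambda_\ell$ and $\lambda_{\ell_0}$), will give $\theta_\ell - \theta_k \equiv \theta_{\ell_0} - \theta_{k_0} \pmod{2\pi}$. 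Since both sides lie in $(-2\pi, 2\pi)$, the same-sign hypothesis on the arguments of $\tE$ and $\tE'$ will promote the congruence to an equality; setting $\delta := \theta_k - \theta_{k_0} = \theta_\ell - \theta_{\ell_0}$, the cases $\delta > 0$, $\delta = 0$, $\delta < 0$ will put $\tE'$ in the lower-right $*$-region of $\tC$, on $\tE$, or in the upper-left $*$-region, respectively.

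The main difficulty will be purely notational: aligning the combinatorial ``greater index'' relation with the geometric orientation used in \cref{f.divisions}, so that the $\preccurlyeq$-ascending direction along rows and columns matches the lower-right direction in the figure. Once that bookkeeping is set, both statements reduce to direct consequences of the multiplication-invariance of $\preccurlyeq$ on $\C_*/T$ and of the monotonicity of angles within a single $\asymp$-class.
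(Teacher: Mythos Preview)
Your proposal is correct and follows essentially the same approach as the paper: both argue via the $\preccurlyeq$-monotonicity of the banner class across $\rm c$-rectangles for part~\ref{i.geo0}, and via the strict monotonicity of the argument $\theta_\ell - \theta_k$ within $\tC$ (together with the observation that same banner plus same sign of argument forces equal argument) for part~\ref{i.geo2}. The paper states each monotonicity in one line and concludes immediately, whereas you unpack them as case splits on the sign of $\gamma$ and of $\delta$, but the substance is identical; note also that the paper's own proof of part~\ref{i.geo2} works explicitly ``inside $\tC$'', so your restriction to $\tE' \subset \tC$ needs no justification via part~\ref{i.geo0}.
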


\begin{proof}
In view of the ordering of the eigenvalues \eqref{e.order lambdas},
the banner class increases strictly (with respect to the order $\prec$, of course)
when we move rightwards or upwards to another $\rm c$-rectangle.
So Claim (\ref{i.geo0}) follows.

The argument of an $\rm e$-rectangle takes values in the interval $(-2\pi,2\pi)$.
It increases strictly by moving rightwards or upwards inside $\tC$. 
If two $\rm e$-rectangles in the same $\rm c$-rectangle have both nonnegative or negative argument then 
they have the same banner if and only if they have the same argument.
So Claim (\ref{i.geo2}) follows.
\end{proof}

\subsection{The action of the adjoint of $A$}\label{ss.ad_geo}

Given any $d\times d$ matrix $X = (x_{i,j})$ and 
a $\rm j$-rectangle, $\rm e$-rectangle or $\rm c$-rectangle $\tR = [p,p+t-1] \times [q, q+s-1]$
we define the \emph{submatrix} of $X$ corresponding to $\tR$ 
as $(x_{i,j})_{(i,j) \in \tR}$.
We regard the space of $\tR$-submatrices as 
$\mathcal{L} \big( \{0\}^{q-1} \times \C^s \times \{0\}^{d-q-s+1} , \{0\}^{p-1} \times \C^t \times \{0\}^{d-p-t+1} \big)$,
or as the set of $d \times d$ matrices whose entries outside $\tR$ are all zero.
Such spaces are denoted by $\tR^\square$, and are invariant under $\Ad_A$.
Indeed, if $\tR = \tJ$ is a $\rm j$-rectangle 
then identifying $\tJ^\square$ with $\Mat_{t\times s}(\C)$, the action
of $\Ad_A | \tJ^\square$ is given by
$$
X \mapsto J_{t}(\lambda_k) \cdot X \cdot J_{s}(\lambda_\ell)^{-1},
$$
where $\lambda_k$ and $\lambda_\ell$ 
are respectively the row and the column eigenvalues of $\tJ$
and $J$ denotes Jordan blocks as defined by \eqref{e.block}.

\medskip

\begin{lemma}\label{l.min}
For each $\rm j$-rectangle $\tJ$, 
the only eigenvalue of $\Ad_A | \tJ^\square$ is the banner of the $\rm e$-rectangle that contains $\tJ$.
Moreover, the geometric multiplicity of the eigenvalue is the weight of the $\rm j$-rectangle.
\end{lemma}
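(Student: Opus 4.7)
The plan is to use the concrete description of the operator $T := \Ad_A\mid\tJ^\square$ that was recorded just before the statement: under the identification $\tJ^\square \cong \Mat_{t\times s}(\C)$, with $t$ and $s$ the height and width of $\tJ$, the operator acts by $T(X) = J_t(\lambda_k)\, X\, J_s(\lambda_\ell)^{-1}$, where $\lambda_k$ and $\lambda_\ell$ are respectively the row and column eigenvalues of $\tJ$.

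For the spectral claim, I would split each Jordan block into its scalar and nilpotent parts, writing $J_t(\lambda_k) = \lambda_k \Id + N_t$ and $J_s(\lambda_\ell)^{-1} = \lambda_\ell^{-1}\Id + N'$, where $N_t$ is the standard nilpotent superdiagonal shift and $N'$ is nilpotent. Expanding the product yields $T = \beta\Id + M$, where $\beta$ is the banner of the containing $\rm e$-rectangle and $M$ is a polynomial in the left-multiplication operator $L_{N_t}$ and the right-multiplication operator $R_{N'}$ with no constant term. Since $L_{N_t}$ and $R_{N'}$ commute (left and right multiplications always do) and are each nilpotent, $M$ is itself nilpotent, proving that $\beta$ is the only eigenvalue of $T$.

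For the geometric multiplicity, I would compute $\dim \Ker(T - \beta\Id)$ directly. After cancellation of the scalar terms, the fixed-point equation $T(X) = \beta X$ reduces to the matrix equation $\lambda_\ell\, N_t X = \lambda_k\, X N_s$. In entries $X = (x_{ij})$ this reads as a recurrence $x_{i+1, j} = (\lambda_k/\lambda_\ell)\, x_{i, j-1}$ for $1 \le i < t$ and $1 < j \le s$, together with boundary conditions $x_{i, 1} = 0$ for $i \ge 2$ (coming from the case $j=1$) and $x_{t, j} = 0$ for $j \le s-1$ (coming from the case $i=t$). The recurrence propagates values along the anti-diagonals $\{i - j = \mathrm{const}\}$ of the $t \times s$ block: any anti-diagonal that meets the vanishing L-shape is entirely forced to zero (since the propagation constant $\lambda_k/\lambda_\ell$ is nonzero), while each of the remaining anti-diagonals contributes exactly one free parameter. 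A short combinatorial count shows that the anti-diagonals forbidden by the L-shape are those of index $c = i-j$ in $\{1,\dots,t-1\} \cup \{t-s+1,\dots,t-1\}$, and the number of remaining ones is $\min(t, s) = \pop \tJ$ in both cases $s \le t$ and $s > t$, giving the claimed dimension.

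The only real obstacle is bookkeeping the anti-diagonal count cleanly; conceptually the argument is just a direct Jordan-form computation.
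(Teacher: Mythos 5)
Your proof is correct, but it takes a different route than the paper's. The paper proves \cref{l.min} by citing Horn--Johnson \cite[Lemma~4.3.1]{HJ} to identify $\Ad_A|\tJ^\square$ with a Kronecker product $J_t(\lambda_k) \otimes J_s(\lambda_\ell)^{-\mathsf{T}}$ (up to vectorization) and then quoting \cite[Theorem~4.3.17(a)]{HJ} for the full Jordan structure of such Kronecker products --- this gives the single eigenvalue $\lambda_k/\lambda_\ell$ and the geometric multiplicity $\min(t,s)$ (and in fact the exact Jordan block sizes) in one stroke. Your argument instead computes everything from scratch: the spectral claim via nilpotency of $T - \beta\Id$ (writing $T$ as $\beta\Id$ plus a constant-free polynomial in the commuting nilpotents $L_{N_t}$ and $R_{N'}$), and the geometric multiplicity by solving $\lambda_\ell N_t X = \lambda_k X N_s$ entry-by-entry along the diagonals $\{i-j=\mathrm{const}\}$. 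Your diagonal bookkeeping is correct --- the boundary zeros kill exactly those diagonals with $c \in \{1,\dots,t-1\}\cup\{t-s+1,\dots,t-1\}$, leaving $\min(t,s)$ free parameters in both the $s\le t$ and $s>t$ cases. The trade-off is the usual one: the paper's citation is shorter and also yields the full Jordan block sizes (which they don't need); your computation is self-contained and avoids invoking Kronecker product machinery, at the cost of the index juggling. One small terminological point: the lines $\{i-j=\mathrm{const}\}$ are conventionally called \emph{diagonals}, not anti-diagonals (which would be $\{i+j=\mathrm{const}\}$); this is harmless but worth fixing.
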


\begin{proof}
The matrix of the the linear operator $\Ad_A | \tJ^\square$
can be described using the Kronecker product: see \cite[Lemma~4.3.1]{HJ}.
The Jordan form of this operator is then described by \cite[Theorem~4.3.17(a)]{HJ}.
The assertions of the lemma follow.
\end{proof}

Some immediate consequences are the following:
\begin{itemize}
\item The eigenvalues of $\Ad_A$ are the banners of $\rm e$-rectangles.
\item The geometric multiplicity of the eigenvalue $\beta$ for $\Ad_A$ is the total weight of $\rm e$-rectangles of banner~$\beta$.
\end{itemize}

\medskip

If $\tR$ is an equatorial $\rm j$-rectangle, $\rm e$-rectangle, or $\rm c$-rectangle
we will refer to 
the $d\times d$-matrix in $\tR^\square$ whose $\tR$-submatrix is the identity
as the \emph{identity on $\tR^\square$}.
The following observation will be useful:

\begin{lemma}\label{l.1x1}
If $\tJ$ is an equatorial $\rm j$-rectangle then the identity on $\tJ^\square$
is an eigenvector of the operator $\Ad_A | \tJ^\square$ corresponding to a Jordan block of size $1 \times 1$. 
\end{lemma}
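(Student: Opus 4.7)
The plan is to exhibit $\spa(I_t)$ as an $\Ad_A$-invariant direct summand of $\tJ^\square$, which immediately makes the identity on $\tJ^\square$ the basis vector of a $1\times 1$ Jordan block.

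First I would set up the identification. Since $\tJ$ is equatorial, it is square, say of side $t$, and lies inside a diagonal block of $A$ which is a single Jordan block $J_t(\lambda_k)$. Identify $\tJ^\square$ with $\Mat_{t\times t}(\C)$ in the obvious way: then $\Ad_A|\tJ^\square$ becomes the operator $X \mapsto J_t(\lambda_k)\, X\, J_t(\lambda_k)^{-1}$, and the identity on $\tJ^\square$ corresponds to $I_t$. Clearly $\Ad_A(I_t)=I_t$, so $I_t$ is an eigenvector for the eigenvalue $1$ (which, as the banner of the equatorial $\rm e$-rectangle containing $\tJ$, is consistent with \cref{l.min}).

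The second and key step is to produce an $\Ad_A$-invariant complement to $\spa(I_t)$ inside $\tJ^\square$. The natural candidate is
\[
W = \{X \in \Mat_{t\times t}(\C) : \trace X = 0\}.
\]
Two trivial observations suffice: (i) $W$ is $\Ad_A$-invariant because trace is conjugation-invariant; (ii) $\trace I_t = t \neq 0$, so $\tJ^\square = \spa(I_t) \oplus W$ as vector spaces. Combining (i) and (ii) yields a decomposition of $\tJ^\square$ into two $\Ad_A$-invariant subspaces, with $\spa(I_t)$ being one-dimensional. By definition, this means $I_t$ is the basis vector of a $1\times 1$ Jordan block of $\Ad_A|\tJ^\square$, which is exactly the claim.

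I do not anticipate any serious obstacle: the argument reduces to the invariance of trace under conjugation together with $\trace I_t = t$. (Alternatively, one could verify the same conclusion by observing that $I_t \notin \Im(\Ad_A|\tJ^\square - \id)$ via a trace obstruction: if $I_t = AXA^{-1} - X$ for some $X$, then $A = [A,X]$ would have zero trace, contradicting $\trace A = t\lambda_k \neq 0$. This route also works but is slightly less direct than the summand argument above.)
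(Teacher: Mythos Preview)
Your proof is correct, and it takes a different route from the paper's.  The paper argues by contradiction: if the identity were \emph{not} a $1\times 1$ Jordan block, there would exist $X$ with $J_t(\lambda)\,X\,J_t(\lambda)^{-1} = X + \Id$, which is impossible since conjugate matrices have the same spectrum while $\spec(X+\Id)=\spec(X)+1$.  You instead produce directly an $\Ad_A$-invariant complement, namely $W=\ker(\trace)$.  Both arguments are equally short; yours has the mild advantage of being constructive, since the explicit splitting $\tJ^\square=\spa(I_t)\oplus W$ is exactly what is invoked a few lines later in the proof of \cref{l.rig district}.  Your parenthetical alternative (the trace obstruction showing $\Id\notin\Im(\Ad_A|\tJ^\square-\id)$) is close in spirit to the paper's argument, which uses the stronger spectral obstruction in place of the trace.
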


\begin{proof}
Suppose $\tJ$ has size $t \times t$ and row (or column) eigenvalue $\lambda$.
Assume that the claim is false.
This means that there exists a matrix $X \in \Mat_{t \times t}(\C)$ such that $J_t(\lambda) X J_t(\lambda)^{-1} = X + \Id$, which is impossible because $X$ and $X + \Id$ have different spectra.
\end{proof}

\subsection{Rigidity estimates for $\rm j$-rectangles and $\rm e$-rectangles}

\begin{lemma}\label{l.rig district}
For any $\rm j$-rectangle $\tJ$, we have $\rig_+ (\Ad_A | \tJ^\square)  \le \pop \tJ$.
\end{lemma}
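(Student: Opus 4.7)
The plan is to deduce the estimate from acyclicity together with the obvious fact that $\tJ^\square$, identified with $\Mat_{t \times s}(\C)$ (where $t\times s$ is the size of $\tJ$), is the \emph{full} space of linear maps from $\C^s$ to $\C^t$ — in particular it is trivially a transitive subspace of $\mathcal{L}(\C^s, \C^t)$. So if I can exhibit generators $L_1,\dots,L_w \in \tJ^\square$ whose $\sorb_H$-span is all of $\tJ^\square$, where $w := \pop \tJ = \min(t,s)$ and $H := \Ad_A\,|\,\tJ^\square$, then I immediately get $\rig_+ H \le w$ (subject to the identity constraint in the equatorial case, treated below).

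The key input is that $H$ has acyclicity exactly $w$. By \cref{l.min}, the operator $H$ has a single eigenvalue (the banner $\mu$ of the enclosing $\rm e$-rectangle) with geometric multiplicity $w$. \Cref{p.acyc} then gives $\acyc H = w$, so by definition there exist $L_1,\dots,L_w \in \tJ^\square$ with $\sorb_H(L_1,\dots,L_w) = \tJ^\square$.

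In the non-equatorial case the domain and codomain of maps in $\tJ^\square$ correspond to disjoint blocks of coordinates in $\C^d$, so the definition of $\rig_+$ imposes no constraint on $L_1$, and the estimate $\rig_+ H \le w = \pop \tJ$ follows at once. In the equatorial case we have $E=F$, the banner is $\mu = 1$, and we must take $L_1 = \Id$. Here I would invoke \cref{l.1x1}: the identity lies in $\ker(H-\Id)$ but, since its cyclic block has size $1$, it lies outside $\mathrm{im}(H-\Id)$. This means that in the primary cyclic decomposition of $\tJ^\square$ (under $H$), the line $\C\cdot \Id$ can be chosen to be one of the $w$ cyclic summands. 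Choosing cyclic generators $L_2,\dots,L_w$ for the remaining $w-1$ summands and setting $L_1 = \Id$, we obtain $\sorb_H(L_1,\dots,L_w) = \tJ^\square$, so again $\rig_+ H \le w$.

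The only slightly subtle step is the last one, namely justifying that $\C\cdot \Id$ genuinely occurs as a cyclic summand in some primary cyclic decomposition. This is a standard fact: any vector $v \in \ker(H-\Id)\smallsetminus \mathrm{im}(H-\Id)$ can be completed, using lifts through $H-\Id$ of a chosen basis of $\ker(H-\Id)/(\mathrm{im}(H-\Id)\cap \ker(H-\Id))$, to a Jordan basis in which $v$ spans a size-$1$ block. I do not expect any computation beyond invoking \cref{l.min,l.1x1,p.acyc}.
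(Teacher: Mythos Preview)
Your proof is correct and follows essentially the same approach as the paper: both use \cref{l.min} with \cref{p.acyc} to get $\acyc H = \pop \tJ$, note that the whole space $\tJ^\square$ is trivially transitive, and then invoke \cref{l.1x1} to handle the identity constraint in the equatorial case. The paper phrases the last step more briefly (``$\tJ^\square$ splits invariantly into two subspaces, one of them spanned by the identity''), while you spell out the Jordan-basis justification, but the content is the same.
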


\begin{proof}
By \cref{l.min} (and \cref{p.acyc}), 
$\Ad_A|\tJ^\square$ has acyclicity $n = \pop \tJ$,
that is, there are matrices $X_1$, \dots, $X_n \in \tJ^\square$ 
such that 
$\sorb_{\Ad_A}(X_1, \dots, X_n)$ is the whole $\tJ^\square$
(and, in particular, is transitive in $\tJ^\square$).
So $\rig (\Ad_A|\tJ^\square) \le n$,
which proves the lemma for non-equatorial $\rm j$-rectangles.

If $\tJ$ is an equatorial $\rm j$-rectangle then, 
by \cref{l.1x1}, $\tJ^\square$ splits invariantly into two subspaces, one
of them spanned by the the identity matrix on $\tJ^\square$.
So we can choose the matrices $X_i$ above so that $X_1$ is the identity.
This shows that $\rig_+ (\Ad_A|\tJ^\square) \le n$.
\end{proof}

In all that follows, we adopt the convention $\max \emptyset = 0$.

\begin{lemma}\label{l.rig city} 
For any $\rm e$-rectangle $\tE$,
$$
\rig_+ (\Ad_A | \tE^\square) \le
\sum_{\text{\rm{$\ell$ latitude}}} 
\max_{\text{\rm{$\tJ \subset \tE$ is a $\rm j$-rectangle}} \atop \text{\rm{with latitude $\ell$}}}
\rig_+ (\Ad_A | \tJ^\square) \, .
$$
\end{lemma}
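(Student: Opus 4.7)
The plan is to apply \cref{l.sudoku} to $\tE^\square$ with the partition into $\rm j$-rectangles, assembling a family of generators organized by latitude. The crucial structural observation is that $\tE^\square$ admits an $\Ad_A$-invariant direct sum decomposition $\tE^\square = \bigoplus_\ell \tE_\ell^\square$, where $\tE_\ell^\square := \bigoplus_{j-i=\ell} \tJ_{ij}^\square$ collects the $\rm j$-rectangle blocks of common latitude $\ell$ inside $\tE$. All the operators $\Ad_A|\tJ_{ij}^\square$ share the same single eigenvalue (the banner of $\tE$, by \cref{l.min}), so \cref{l.sum} is unavailable; the latitude decomposition will provide the substitute structure.

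For each latitude $\ell$ occurring in $\tE$, I would set $n_\ell := \max_{j-i=\ell}\rig_+(\Ad_A|\tJ_{ij}^\square)$, and for each $\rm j$-rectangle $\tJ_{ij}$ of that latitude choose matrices $K_{ij,1},\ldots,K_{ij,n_\ell}\in\tJ_{ij}^\square$ whose $\Ad_A$-orbit is transitive in $\tJ_{ij}^\square$, with $K_{ij,1}$ equal to the identity on $\tJ_{ij}^\square$ whenever $\tJ_{ij}$ is equatorial (padding with zero when the actual modified rigidity is strictly smaller than $n_\ell$). Then assemble
\[
L^{(\ell)}_k \,:=\, \sum_{j-i=\ell} K_{ij,k}\ \in\ \tE_\ell^\square,
\]
and let $\Lambda:=\sorb_{\Ad_A}\bigl(\{L^{(\ell)}_k\}_{\ell,k}\bigr)$. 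Since the equatorial $\rm j$-rectangles $\tJ_{ii}$ exhaust the diagonal of $\tE$, when $\tE$ is equatorial the first generator $L^{(0)}_1$ coincides with the identity on $\tE^\square$, handling the ``$+$'' constraint; the total number of generators is $\sum_\ell n_\ell$, which matches the desired bound.

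The hard part will then be to verify the hypothesis of \cref{l.sudoku}: for every $\rm j$-rectangle $\tJ_{i_0 j_0}\subset\tE$ of some latitude $\ell_0$, the space $\Lambda^{[\tJ_{i_0 j_0}]}$ must be transitive in $\tJ_{i_0 j_0}^\square$. The plan is to pick an arbitrary element $f(\Ad_A|\tJ_{i_0 j_0}^\square)(K_{i_0 j_0,k})$ of the transitive orbit produced above, and observe that it is the $\tJ_{i_0 j_0}$-block of $M:=f(\Ad_A)(L^{(\ell_0)}_k)\in\Lambda$, since $\Ad_A$ preserves each $\tJ_{ij}^\square$ and so $M=\sum_{j-i=\ell_0} f(\Ad_A|\tJ_{ij}^\square)(K_{ij,k})$. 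The essential step, which is precisely what forces the proof to respect the latitude decomposition, is then that $M$ automatically has the form \eqref{e.sudoku} with respect to $\tJ_{i_0 j_0}$. Indeed every non-zero block of $M$ sits at indices $(i,j)$ with $j-i=\ell_0=j_0-i_0$, and a one-line case analysis shows that such $(i,j)$ must lie at $(i_0,j_0)$ itself, in the strict top-left quadrant ($i<i_0,\,j<j_0$), or in the strict bottom-right quadrant ($i>i_0,\,j>j_0$); consequently the six blocks forbidden in \eqref{e.sudoku} (the central cross and the anti-diagonal corners) are automatically empty. Linearity of $\Lambda^{[\tJ_{i_0 j_0}]}$ then gives the full transitive orbit inside it, and \cref{l.sudoku} closes the argument.
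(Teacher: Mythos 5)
Your proof is correct and follows essentially the same approach as the paper's: both assemble generators $Y_{\ell,i}$ (your $L^{(\ell)}_k$) by summing chosen generators of the $\tJ^\square$ along each fixed latitude, then apply the sudoku lemma (\cref{l.sudoku}) to the resulting orbit. The only difference is one of emphasis: you spell out the small case analysis showing that any block of $M$ at the latitude $\ell_0$ automatically avoids the six forbidden positions in~\eqref{e.sudoku}, whereas the paper states the equivalent claim (all different-latitude blocks vanish) and leaves that verification implicit.
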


\begin{proof}
For each $\rm j$-rectangle $\tJ$ contained in $\tE$,
let $r(\tJ) = \rig_+ (\Ad_A|\tJ^\square)$.
Take matrices $X_{\tJ, 1}$, \dots, $X_{\tJ, r(\tJ)}$
such that 
$\Lambda_\tJ := \sorb_{\Ad_A} \big( X_{\tJ, 1}, \ldots, X_{\tJ, r(\tJ)} \big)$
is a transitive subspace of $\tJ^\square$,
and $X_{\tJ,1}$ is the identity matrix in $\tJ^\square$ if $\tJ$ is an equatorial ${\rm j}$-rectangle.
Define $X_{\tJ, i} = 0$ for $i>r(\tJ)$.		
For each latitude $\ell$, 
let $n_\ell$ be the maximum of $r(\tJ)$
over the $\rm j$-rectangles $\tJ$ of $\tE$ with latitude $\ell$,
and let
$$
Y_{\ell, i} = \sum_{\text{\rm{$\tJ \subset \tE$ is a $\rm j$-rectangle}} \atop \text{\rm{with latitude $\ell$}}} 
X_{\tJ, i} \, ,
\quad \text{for $1 \le i \le n_\ell$.}
$$
Notice that if $\tE$ is an equatorial $\rm e$-rectangle then $Y_{0,1}$ is 
the identity matrix in $\tE^\square$.
Consider the space 
$$
\Delta = \sorb_{\Ad_A} \big\{Y_{\ell,i} ; \; \ell \text{ is a latitude, } 1 \le i \le n_\ell \big\}.
$$

We claim that for every $\rm j$-rectangle $\tJ$ in $\tE$ and for every $M \in \Lambda_\tJ$,
we can find some $N \in \Delta$ with the following properties:
\begin{itemize} 
\item the submatrix $N_\tJ$ equals $M$;
\item for every $\rm j$-rectangle $\tJ'$ in $\tE$ that has a different latitude 
than $\tJ$, the submatrix $N_{\tJ'}$ vanishes.  
\end{itemize}
Indeed, if 
$M = \sum_{i=1}^{r(\tJ)} f_i(\Ad_A) X_{\tJ,i}$ for certain polynomials $f_i$,
we simply take 
$N = \sum_{i=1}^{r(\tJ)} f_i(\Ad_A) Y_{\ell,i}$, where $\ell$ is the latitude of $\tJ$.

In notation~\eqref{e.sudoku space}, 
the claim we have just proved means that 
$\Delta^{[\tJ]} \supset \Lambda_\tJ$.
So we can apply \cref{l.sudoku} and conclude that
$\Delta$ is a transitive subspace of $\tE^\square$.
Therefore $\rig_+ (\Ad_A | \tE^\square) \le \sum n_\ell$,
as we wanted to show.
\end{proof}

\begin{example}\label{ex.continued}
Using \cref{l.rig district,l.rig city},
we see that the $\rm e$-rectangle $\tE$ 
whose $\rm j$-rectangle weights are indicated in \cref{f.big matrix}
has $\rig_+ (\Ad_A | \tE^\square) \le 5$. 
\end{example}

In fact, we will not use \cref{l.rig district,l.rig city} directly, 
but only the following immediate consequence:
\begin{lemma}\label{l.crude rig city}
For every $\rm e$-rectangle $\tE$ we have $\rig_+ (\Ad_A | \tE^\square) \le \pop \tE$.
The inequality is strict if $\tE$ has more than one row of $\rm j$-rectangles and more that one column of $\rm j$-rectangles.
\end{lemma}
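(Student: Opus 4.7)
The plan is to deduce this corollary directly from the preceding two lemmas without further matrix analysis. Specifically, by chaining \cref{l.rig city} with \cref{l.rig district}, one has
\begin{equation*}
\rig_+(\Ad_A|\tE^\square) \;\le\; \sum_{\ell} \max_{\substack{\tJ \subset \tE \\ \text{latitude } \ell}} \rig_+(\Ad_A|\tJ^\square) \;\le\; \sum_{\ell}\max_{\substack{\tJ \subset \tE \\ \text{latitude } \ell}} \pop \tJ .
\end{equation*}
Then I would observe that $\pop \tE$ equals the sum of $\pop \tJ$ over \emph{all} $\rm j$-rectangles $\tJ \subset \tE$, regrouped by latitude: $\pop \tE = \sum_\ell \sum_{\tJ \text{ at latitude } \ell} \pop \tJ$. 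Since for each fixed latitude $\ell$ the maximum of a family of positive numbers is bounded above by their sum, the non-strict inequality $\rig_+(\Ad_A|\tE^\square) \le \pop \tE$ follows.

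For the strict version, I would look for a latitude carrying at least two $\rm j$-rectangles, because then the ``max $\le$ sum'' estimate at that latitude is strict (weights are automatically positive integers). If $\tE$ has $p \ge 2$ rows and $q \ge 2$ columns of $\rm j$-rectangles, then latitude $0$ contains the $\rm j$-rectangles on the diagonal of the $p \times q$ grid, and there are $\min(p,q) \ge 2$ of them. This yields a strict loss in at least one term of the sum, so
\begin{equation*}
\sum_\ell \max_{\text{lat }\ell} \pop \tJ \;<\; \sum_\ell \sum_{\text{lat }\ell} \pop \tJ \;=\; \pop \tE,
\end{equation*}
and hence $\rig_+(\Ad_A|\tE^\square) < \pop \tE$ as claimed. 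Since both ingredients (\cref{l.rig district} and \cref{l.rig city}) are already proved, there is no substantive obstacle; the only thing to be careful about is the combinatorial count of $\rm j$-rectangles at latitude $0$, which is the genuine source of the strict inequality.
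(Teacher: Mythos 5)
Your proof is correct and is exactly the intended deduction: the paper states \cref{l.crude rig city} is an immediate consequence of \cref{l.rig district,l.rig city} without spelling it out, and your argument — chaining those two lemmas, using $\pop\tE=\sum_\ell\sum_{\text{lat }\ell}\pop\tJ$, and observing that latitude $0$ carries $\min(p,q)\ge 2$ $\rm j$-rectangles each of weight at least $1$, so the max is strictly below the sum there — is precisely the missing verification.
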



\subsection{Comparison of weights}

If $\tR$ is a $\rm j$-rectangle, $\rm e$-rectangle or $\rm c$-rectangle,
we define its \emph{row projection} $\prow(\tR)$
as the unique equatorial $\rm j$-rectangle, $\rm e$-rectangle or $\rm c$-rectangle (respectively)
that is in the same row as $\tR$.
Analogously, we define the \emph{column projection} $\pcol(\tR)$.

\begin{lemma}\label{l.city proj}
For any $\rm e$-rectangle $\tE$, we have
$$
\pop \tE \le \frac{\pop \prow(\tE) + \pop \pcol(\tE)}{2} \, .
$$
Moreover, if equality holds then 
the number of rows of $\rm j$-rectangles for $\tE$ equals the number of columns of $\rm j$-rectangles.
\end{lemma}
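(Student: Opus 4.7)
The plan is to reduce the inequality to an elementary $L^2$-type inequality by using an integral representation of the minimum function. Let $\lambda_k$ and $\lambda_\ell$ be the row and column eigenvalues of $\tE$, with associated Jordan block sizes $t_{k,1}, \ldots, t_{k,\tau_k}$ and $t_{\ell,1}, \ldots, t_{\ell,\tau_\ell}$. Then the $\rm j$-rectangles contained in $\tE$ are indexed by pairs $(i,j)$ and have dimensions $t_{k,i} \times t_{\ell,j}$, so by definition
$$
\pop \tE \;=\; \sum_{i=1}^{\tau_k} \sum_{j=1}^{\tau_\ell} \min(t_{k,i}, t_{\ell,j}),
$$
while $\prow(\tE)$ and $\pcol(\tE)$ are the squared versions where both sets of Jordan sizes are $(t_{k,i})$ or $(t_{\ell,j})$ respectively.

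The next step is to encode these sums analytically. Introducing the step functions
$$
f(s) = \#\{i : t_{k,i} > s\}, \qquad g(s) = \#\{j : t_{\ell,j} > s\}
$$
on $[0,\infty)$ and using the identity $\min(a,b) = \int_0^\infty \mathbf{1}_{\{s<a\}}\mathbf{1}_{\{s<b\}}\,ds$, one obtains
$$
\pop \tE = \int_0^\infty f(s)g(s)\,ds, \qquad
\pop \prow(\tE) = \int_0^\infty f(s)^2\,ds, \qquad
\pop \pcol(\tE) = \int_0^\infty g(s)^2\,ds.
$$

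The desired inequality is then just the pointwise estimate $2fg \le f^2 + g^2$ integrated over $s$, i.e.\ $\int_0^\infty (f-g)^2\,ds \ge 0$. This gives the bound immediately. For the equality case, $\int (f-g)^2 = 0$ forces $f=g$ almost everywhere (both being right-continuous step functions, this means they coincide everywhere), and in particular $\tau_k = f(0) = g(0) = \tau_\ell$, which is precisely the statement that the number of rows of $\rm j$-rectangles in $\tE$ matches the number of columns.

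There is no real obstacle here once the integral trick is spotted; the only care needed is to make sure the combinatorial quantities $\pop \tE$, $\pop \prow(\tE)$, $\pop \pcol(\tE)$ are correctly identified with the $L^2$ quantities $\langle f,g\rangle$, $\|f\|_2^2$, $\|g\|_2^2$, and to note that the equality discussion only needs the conclusion $\tau_k = \tau_\ell$ (the full conclusion $f \equiv g$ actually gives more, namely that the two Jordan partitions coincide as multisets, but this stronger fact is not required by the statement).
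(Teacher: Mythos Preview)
Your proof is correct and takes a genuinely different route from the paper. The paper isolates the combinatorial content into an abstract lemma (\cref{l.combin}): for positive numbers $x_\alpha$ indexed by a set $F = F_0 \sqcup F_1$, one has $\Sigma_{01} \le (\Sigma_{00}+\Sigma_{11})/2$ where $\Sigma_{\epsilon\delta} = \sum_{(\alpha,\beta)\in F_\epsilon\times F_\delta} \min(x_\alpha,x_\beta)$, with equality forcing $|F_0|=|F_1|$; this is then proved by induction on $|F|$, peeling off the smallest $x_\alpha$ one at a time. Your layer-cake trick $\min(a,b)=\int_0^\infty \mathbf{1}_{s<a}\mathbf{1}_{s<b}\,ds$ replaces that induction by the single-line identity $\int(f-g)^2\ge 0$, which is cleaner and in fact yields more: not only $\tau_k=\tau_\ell$ but the full equality of the Jordan partitions as multisets. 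Incidentally, your representation also recovers the paper's quantitative strengthening \eqref{e.induction}, since $f-g$ is constant equal to $\tau_k-\tau_\ell$ on the interval $[0,\min_\alpha x_\alpha)$. The inductive argument has the modest virtue of staying purely in the integers, but your approach is shorter and more transparent.
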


This is a clear consequence of the abstract lemma below,
taking $x_\alpha$, $\alpha\in F_0$ (resp.\ $\alpha\in F_1$) 
as the sequence of heights (resp.\ widths) of $\rm j$-rectangles in $\tE$,
counting repetitions.

\begin{lemma}\label{l.combin}
Let $F$ be a nonempty finite set,
and let $x_\alpha$ be positive numbers indexed by $\alpha \in F$.
Take any partition $F = F_0 \sqcup F_1$, where $\sqcup$ stands for disjoint union.
For $\epsilon$, $\delta \in \{0,1\}$, let
$$
\Sigma_{\epsilon\delta} = \sum_{(\alpha,\beta) \in F_\epsilon \times F_\delta} \min(x_\alpha, x_\beta) \, .
$$
Then
$$
\Sigma_{01} = \Sigma_{10} \le \frac{\Sigma_{00} + \Sigma_{11}}{2} \, .
$$
Moreover, equality implies that $F_0$ and $F_1$ have the same cardinality.
\end{lemma}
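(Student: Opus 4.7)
The plan is to convert everything into a single-integral identity and then reduce to the scalar AM--GM inequality $2ab \le a^2 + b^2$, applied pointwise.

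First, $\Sigma_{01} = \Sigma_{10}$ is immediate from the symmetry $\min(x_\alpha, x_\beta) = \min(x_\beta, x_\alpha)$, so the whole content is the inequality $2\Sigma_{01} \le \Sigma_{00} + \Sigma_{11}$. The key tool is the ``layer cake'' identity
$$
\min(x_\alpha, x_\beta) = \int_0^\infty \mathbf{1}_{\{x_\alpha > t\}} \, \mathbf{1}_{\{x_\beta > t\}} \, dt.
$$
For each $t > 0$ and each $\epsilon \in \{0,1\}$, set $n_\epsilon(t) := \#\{\alpha \in F_\epsilon : x_\alpha > t\}$. Swapping the sum with the integral gives
$$
\Sigma_{\epsilon\delta} = \int_0^\infty n_\epsilon(t)\, n_\delta(t)\, dt.
$$

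The inequality $2\Sigma_{01} \le \Sigma_{00} + \Sigma_{11}$ then follows by integrating the pointwise estimate $2 n_0(t) n_1(t) \le n_0(t)^2 + n_1(t)^2$. For the equality case, one needs $n_0(t) = n_1(t)$ for almost every $t > 0$. Since both functions are nonincreasing step functions in $t$ that are constant and equal to the respective cardinalities $|F_\epsilon|$ for all $t$ smaller than $\min_{\alpha\in F} x_\alpha > 0$, equality almost everywhere forces $|F_0| = |F_1|$.

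I do not expect any serious obstacle: the only thing to be slightly careful about is the equality case, but the observation that $t \mapsto n_\epsilon(t)$ is constant on a nondegenerate interval $(0, \min_\alpha x_\alpha)$ makes the passage from ``equal a.e.'' to ``equal cardinalities'' completely routine.
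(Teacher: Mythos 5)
Your proof is correct, and it takes a genuinely different and arguably cleaner route than the paper's. The paper proves the stronger quantitative estimate
$$
\Sigma_{00} - 2\Sigma_{01} + \Sigma_{11} \ge \bigl(|F_0| - |F_1|\bigr)^2 \min_{\alpha\in F} x_\alpha
$$
by induction on $|F|$, peeling off the smallest $x_\alpha$ at each step and tracking how the three sums change. You instead use the layer-cake identity $\min(x_\alpha,x_\beta)=\int_0^\infty \mathbf{1}_{\{x_\alpha>t\}}\mathbf{1}_{\{x_\beta>t\}}\,dt$, which immediately linearizes the problem: $\Sigma_{\epsilon\delta}=\int_0^\infty n_\epsilon(t)n_\delta(t)\,dt$, and the inequality collapses to $\int_0^\infty(n_0(t)-n_1(t))^2\,dt\ge 0$. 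The equality analysis via the constancy of $n_\epsilon$ on $(0,\min_\alpha x_\alpha)$ is exactly right and handles the edge case $F_1=\emptyset$ automatically. It is worth noting that your method recovers the paper's stronger bound for free: restricting the integral of $(n_0-n_1)^2$ to $(0,\min_\alpha x_\alpha)$, where the integrand equals $(|F_0|-|F_1|)^2$, yields precisely the paper's displayed estimate with no induction needed. So your approach buys a shorter argument and a transparent structural explanation of where the bound comes from, while the paper's induction is more elementary (no interchange of sum and integral) but more bookkeeping-heavy.
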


\begin{proof}
We will in fact prove the stronger fact:
\begin{equation}\label{e.induction}
\Sigma_{00} - 2\Sigma_{01} + \Sigma_{11} \ge \big(|F_0| -|F_1|\big)^2 \min_{\alpha \in F} x_\alpha \, ,
\end{equation}
where $|\mathord{\cdot}|$ denotes set cardinality.
The proof is by induction on $|F|$.
It clearly holds for $|F|=1$.
Fix some $n$ and assume that \eqref{e.induction} always holds when $|F| = n$.
Take a set $F$ with $|F|=n+1$, and take positive numbers $x_\alpha$, $\alpha \in F$.
We can assume that $F=\{1, \ldots, n+1\}$ and that $x_1 \ge \cdots \ge x_{n+1}$.
Take any partition $F = F_0 \sqcup F_1$.
Without loss of generality, assume that $n+1 \in F_0$.
Apply the induction hypothesis to $F'=\{1, \ldots, n\}$, obtaining
$$
\Sigma'_{00} - 2\Sigma'_{01} + \Sigma'_{11}  \ge  \big(|F_0| - 1 - |F_1|)^2 x_n.
$$
We have 
$$
\Sigma_{00} = \Sigma_{00}' + \big( 2|F_0| -1 \big) x_{n+1} \, ,
\quad 
\Sigma_{01} = \Sigma'_{01} + |F_1| x_{n+1} \, ,
\quad \text{and} \quad
\Sigma_{11} = \Sigma'_{11} \, ,
$$
so \eqref{e.induction} follows.
\end{proof}

If $\tR$ is a $\rm c$-rectangle or the entire square $[1,d]^2$, let $\pop_1 \tR$ denote 
the sum of the weights of the $\rm e$-rectangles in $\tR$ with banner $1$.

Let us give the following useful consequence of \cref{l.city proj}:

\begin{lemma}\label{l.acyc and pop1}
$\acyc \Ad_A = \pop_1 [1,d]^2$.
\end{lemma}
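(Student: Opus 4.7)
The plan is to combine Proposition~\ref{p.acyc}, the description of eigenvalues and geometric multiplicities of $\Ad_A$ given right after Lemma~\ref{l.min}, and the weight comparison in Lemma~\ref{l.city proj}.

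First I would reduce the claim to comparing multiplicities. By Proposition~\ref{p.acyc}, $\acyc \Ad_A$ equals the maximum geometric multiplicity among eigenvalues of $\Ad_A$. The remarks after Lemma~\ref{l.min} identify those eigenvalues with the banners of the $\rm e$-rectangles, and identify the geometric multiplicity of a banner $\beta$ with the sum of weights of $\rm e$-rectangles whose banner is $\beta$. The banner $1$ is attained precisely on the equatorial $\rm e$-rectangles $\tE_{1,1}, \dots, \tE_{r,r}$, so the geometric multiplicity of the eigenvalue $1$ equals exactly $\pop_1[1,d]^2 = \sum_{k=1}^r \pop \tE_{k,k}$. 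Hence it remains to prove that no other banner $\beta$ can have total weight larger than this.

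Next I would fix a banner $\beta$ and estimate $\sum_{\tE : \text{banner }\beta} \pop \tE$. For each such $\tE = \tE_{k,\ell}$ (so that $\lambda_k^{-1}\lambda_\ell = \beta$), Lemma~\ref{l.city proj} gives
\[
\pop \tE_{k,\ell} \le \tfrac{1}{2}\bigl(\pop \prow(\tE_{k,\ell}) + \pop \pcol(\tE_{k,\ell})\bigr) = \tfrac{1}{2}\bigl(\pop \tE_{k,k} + \pop \tE_{\ell,\ell}\bigr),
\]
since $\prow$ and $\pcol$ produce equatorial $\rm e$-rectangles. Summing these inequalities over all pairs $(k,\ell)$ with $\lambda_k^{-1}\lambda_\ell = \beta$ gives a bound in terms of the weights of the equatorial $\rm e$-rectangles.

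The key combinatorial point, and the only one that requires a little care, is counting how often each equatorial $\tE_{k,k}$ is charged in that sum. Because our eigenvalues $\lambda_1, \dots, \lambda_r$ are listed without repetition, for a fixed index $k$ there is at most one $\ell$ with $\lambda_\ell = \beta \lambda_k$ and at most one $\ell'$ with $\lambda_{\ell'} = \beta^{-1}\lambda_k$. Thus $\pop \tE_{k,k}$ is picked up at most once as a ``row'' contribution and at most once as a ``column'' contribution, so appears with total coefficient at most $2 \cdot \tfrac{1}{2} = 1$. Therefore
\[
\sum_{(k,\ell):\, \lambda_k^{-1}\lambda_\ell = \beta} \pop \tE_{k,\ell} \;\le\; \sum_{k=1}^{r} \pop \tE_{k,k} \;=\; \pop_1 [1,d]^2.
\]
This shows that the maximum geometric multiplicity among the eigenvalues of $\Ad_A$ is attained at $\beta=1$ and equals $\pop_1[1,d]^2$, proving the lemma.

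I do not expect a genuine obstacle here: every ingredient is already in place, and the only non-routine step is the double-counting observation above, which is immediate from the distinctness of the listed eigenvalues.
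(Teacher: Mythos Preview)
Your proof is correct and follows essentially the same approach as the paper: reduce via Proposition~\ref{p.acyc} to showing banner $1$ has maximal total weight, apply Lemma~\ref{l.city proj} to each $\rm e$-rectangle of banner $\beta$, and then observe that the row and column projections are injective on the set of $\rm e$-rectangles with a given banner. The paper phrases this last step as ``no two $\rm e$-rectangles in the same row (resp.\ column) can have the same banner, so $\prow$ (resp.\ $\pcol$) is one-to-one on banner-$\beta$ rectangles'', which is exactly your double-counting observation.
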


\begin{proof}
By \cref{p.acyc}, $\acyc \Ad_A$ 
is the maximum of the geometric multiplicities of the eigenvalues of $\Ad_A$.
Those eigenvalues are the banners $\beta$, and the geometric multiplicity of each $\beta$
is the total weight with banner $\beta$.
Thus, to prove the lemma we have to show that banner $1$ has biggest total weight.

Let $\beta$ be a banner.
Then, using \cref{l.city proj},
$$
\sum_{\text{$\tE$ is an $\rm e$-rectangle}\atop\text{with banner $\beta$}} \pop \tE 
\le \frac12 \sum_{\text{$\tE$ is an $\rm e$-rectangle}\atop\text{with banner $\beta$}} \pop \prow(\tE)  
+   \frac12 \sum_{\text{$\tE$ is an $\rm e$-rectangle}\atop\text{with banner $\beta$}} \pop \pcol(\tE) \, .
$$
Since no two $\rm e$-rectangles in the same row (resp.\ column) can have the same banner,
the restriction of $\prow$ (resp.\ $\pcol$) to the set of $\rm e$-rectangles with banner $\beta$ is 
a one-to-one map. 
This allows us to conclude.
\end{proof}

\begin{rem}\label{r.jordan type and acyc}
The \emph{Jordan type} of a matrix $A \in \Mat_{d \times d}(\C)$ consists 
on the following data: 
\begin{enumerate}
\item The number of different eigenvalues.
\item For each eigenvalue, the number of Jordan blocks and their sizes.
\end{enumerate}
It follows from \cref{l.acyc and pop1} that these data is sufficient to
determine $\acyc \Ad_A$.
\end{rem}

\subsection{Rigidity estimate for $\rm c$-rectangles}

\begin{lemma}\label{l.rig island}
For any $\rm c$-rectangle $\tC$,	
$$
\rig_+ (\Ad_A | \tC^\square) \le \frac{\pop_1 \prow(\tC) + \pop_1 \pcol(\tC)}{2} \, .
$$
\end{lemma}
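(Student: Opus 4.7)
The plan is to assemble generators for $\rig_+(\Ad_A \mid \tC^\square)$ from generators of the individual $\rm e$-rectangles of $\tC$, combining them across rectangles via Lemma~\ref{l.sudoku} (for rectangles sharing a banner and a sign of argument) and via Lemma~\ref{l.sum} (across different banners, which correspond to distinct eigenvalues of $\Ad_A$). First I would invoke Lemma~\ref{l.crude rig city} to obtain, for each $\rm e$-rectangle $\tE\subset \tC$, generators $X_{\tE,1},\dots,X_{\tE,r_\tE}$ with $r_\tE\leq \pop\tE$ whose $\Ad_A$-orbit is transitive on $\tE^\square$, taking $X_{\tE,1}=\Id_{\tE^\square}$ when $\tE$ is equatorial. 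Then I would partition the $\rm e$-rectangles of $\tC$ by banner $\beta$ and sign of argument $s\in\{+,-\}$, producing sets $\mathcal{E}_\beta^s$; Lemma~\ref{l.geo}\eqref{i.geo2} tells us that each $\mathcal{E}_\beta^s$ forms a NW-SE chain in $\tC$, meaning no two of its members share a row or a column.

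Next, for each pair $(\beta,s)$ I would form aggregate generators $Y_{\beta,s,i}:=\sum_{\tE\in \mathcal{E}_\beta^s}X_{\tE,i}$ for $i=1,\dots,r_{\beta,s}:=\max_{\tE\in \mathcal{E}_\beta^s}r_\tE$. The NW-SE chain arrangement is precisely what is needed to apply Lemma~\ref{l.sudoku}, so that $\sorb_{\Ad_A}(Y_{\beta,s,1},\dots,Y_{\beta,s,r_{\beta,s}})$ is transitive on $\bigoplus_{\tE\in \mathcal{E}_\beta^s}\tE^\square$. To combine across banners I would use Lemma~\ref{l.min}, which shows that $\Ad_A$ acts on $E_\beta:=\bigoplus_{\tE\in \mathcal{E}_\beta}\tE^\square$ with the single eigenvalue $\beta$; thus distinct banners produce $\Ad_A$-invariant subspaces with disjoint spectra, and Lemma~\ref{l.sum} allows me to sum the $Y_{\beta,s,i}$ from different banners at matched position indices. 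The two signs of the same banner share a common eigenvalue and cannot be combined via Lemma~\ref{l.sum}, so their generator lists must be concatenated. When $\tC$ is equatorial, the combined first generator is $Y_{1,+,1}=\sum_k \Id_{\tE_{kk}^\square}=\Id_{\tC^\square}$, which naturally supplies the identity required by $\rig_+$.

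The counting will then proceed via Lemma~\ref{l.city proj}, giving $r_\tE\leq \pop\tE_{k\ell}\leq (\pop\tE_{kk}+\pop\tE_{\ell\ell})/2$, together with the key geometric fact that for any non-trivial banner $\beta$, any two rectangles $\tE_{k_+\ell_+}\in \mathcal{E}_\beta^+$ and $\tE_{k_-\ell_-}\in \mathcal{E}_\beta^-$ must have $k_+\neq k_-$ and $\ell_+\neq \ell_-$, since equal row or column eigenvalues would force equal arguments. Consequently the two-sign contributions involve disjoint rows and columns, so that $\pop\tE_{k_+k_+}+\pop\tE_{k_-k_-}\leq \pop_1\prow(\tC)$ and symmetrically for columns, yielding the desired bound $(\pop_1\prow(\tC)+\pop_1\pcol(\tC))/2$. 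The hardest step will be handling the identity constraint in the equatorial case: the first generator must be exactly $\Id_{\tC^\square}$ and cannot absorb contributions from non-trivial banners, so closing the bound without an extra generator relies on using Lemma~\ref{l.sum} to shift the first generators of non-trivial banners into later position indices in a manner compatible with the geometric distinctness above, with additional ad hoc care needed when some $\mathcal{E}_\beta^s$ is large relative to $\mathcal{E}_1^+$.
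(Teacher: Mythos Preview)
Your strategy is essentially the paper's for the non-equatorial case, and your counting argument there is correct: for each banner $\beta$ you pick the maximizing $\tE_+\in\mathcal{E}_\beta^+$ and $\tE_-\in\mathcal{E}_\beta^-$, apply \cref{l.crude rig city} and \cref{l.city proj}, and use that $\prow(\tE_+)\neq\prow(\tE_-)$ and $\pcol(\tE_+)\neq\pcol(\tE_-)$ to bound the sum of their weights by $\tfrac{1}{2}(\pop_1\prow(\tC)+\pop_1\pcol(\tC))$. The combination via \cref{l.sum} across banners and via \cref{l.sudoku} within a banner-sign class is also exactly what the paper does.

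The gap is in the equatorial case, and your phrase ``additional ad hoc care'' hides a genuine obstruction. When $\tC$ is equatorial you need one of the combined generators $Z_j$ to equal $\Id_{\tC^\square}$ exactly; since $\Id_{\tC^\square}$ lives in the banner-$1$ eigenspace of $\Ad_A$, the corresponding $Y_{\beta,j}$ must vanish for every $\beta\neq 1$. Thus every non-trivial banner must fit its concatenated $(+,-)$ list into at most $\pop_1\tC-1$ slots, i.e.\ you need the \emph{strict} inequality $r_{\beta,+}+r_{\beta,-}<\pop_1\tC$ for all $\beta\neq 1$. The paper proves this (their inequality \eqref{e.equat ineq}) by tracing when equality could hold in the chain \eqref{e.ineq1}--\eqref{e.ineq3}, and discovers that equality forces a very specific configuration they call \emph{exceptional}: $\tC$ has exactly the banners $1$ and $-1$, every $\rm e$-rectangle is a single $\rm j$-rectangle, and all four have the same weight $k$. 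In that case $r_{-1,+}+r_{-1,-}=2k=\pop_1\tC$ and your scheme genuinely needs $2k+1$ generators, one too many.

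The paper resolves the exceptional case by abandoning the Toeplitz-style construction entirely and instead producing a transitive space of Hankel type: writing $\tC^\square$ as $2\times 2$ block matrices with $k\times k$ blocks, they exhibit $2k$ generators (one of them the identity) whose $\Ad_A$-orbit is the space $\{\begin{smallmatrix}P&M\\M&N\end{smallmatrix}\}$, which is transitive because it equals $S\cdot\Gamma$ for a permutation $S$ and a generalized Toeplitz space $\Gamma$. Nothing in your outline anticipates this case or this construction, so as written the proposal does not close.
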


In order to prove this lemma, it is convenient to consider separately
the cases of non-equatorial and equatorial $\rm c$-rectangles.

\begin{proof}[Proof of \cref{l.rig island} when $\tC$ is non-equatorial]
For each banner $\beta$ in $\tC$, let $n_\beta$ (resp.\ $s_\beta$) be the maximum of 
$\rig_+(\Ad_A | \tE^\square)$ over nonnegative (resp.\ negative) argument $\rm e$-rectangles $\tE$ in $\tC$ with banner $\beta$.
For each $\rm e$-rectangle $\tE$ with banner $\beta$, 
choose matrices $X_{\tE,1}$, \dots, $X_{\tE, n_\beta + s_\beta} \in \tE^\square$ such that:
\begin{itemize}
\item $\Lambda_\tE := \sorb_{\Ad_A} ( X_{\tE,1}, \dots, X_{\tE, m})$ is a transitive subspace of $\tE^\square$;
\item if $\tE$ has negative argument then $X_{1} = X_{2} = \cdots = X_{n_\beta} = 0$;
\item if $\tE$ has nonnegative argument then $X_{n_\beta+1} = \cdots = X_{n_\beta+s_\beta} = 0$.
\end{itemize}
Also, let $X_{\tE, j} = 0$ for $j>n_\beta+s_\beta$.

Next, define
\begin{equation}\label{e.Y_j}
Y_{\beta, j} = 
\sum_{\text{$\tE$ is an $\rm e$-rectangle}\atop\text{of $\tC$ with banner $\beta$}} X_{\tE, j} 
\end{equation}
and
\begin{equation}\label{e.Z_j}
Z_j = \sum_{\text{$\beta$ banner on $\tC$}} Y_{\beta, j} 
\end{equation}
Consider the space 
$$
\Delta = \sorb_{\Ad_A} (Z_1, \dots, Z_m), \quad 
\text{where} \quad
m = \max_{\text{$\beta$ banner on $\tC$}} (n_\beta + s_\beta)
$$
It follows from \cref{l.sum} that 
$$
\Delta = \sorb_{\Ad_A} \big\{Y_{\beta,j} ; \; \beta \text{ is a banner, } 1 \le j \le n_\beta + s_\beta \big\}.
$$

Recall notation~\eqref{e.sudoku space}.
We claim that 
\begin{equation}\label{e.inclusion}
\Lambda_\tE \subset \Delta^{[\tE]}.
\end{equation}
Indeed, given $M \in \Lambda_{\tE}$, write 
$M = \sum_j f_j(\Ad_A) X_{\tE, j}$,
where the $f_j$'s are polynomials and $f_j \equiv 0$ whenever $X_{\tE,j}=0$.
Consider $N = \sum_j f_j(\Ad_A) Y_{\beta, j}$, where $\beta$ is the banner of $\tE$.
Then it follows from \cref{l.geo} (part \ref{i.geo2}) that 
$N \in \Delta^{[\tE]}$. 
This shows \eqref{e.inclusion}.
So, by \cref{l.sudoku}, $\Delta$ is a transitive subspace of $\tC^\square$, showing that
$\rig_+ (\Ad_A | \tC^\square) \le m$.

To complete the proof of the lemma in the non-equatorial case, we show that
\begin{equation}\label{e.nonequat ineq}
m \le \frac{\pop_1 \prow(\tC) + \pop_1 \pcol(\tC)}{2} \, .
\end{equation}

Let $\beta$ be the banner for which $n_\beta + s_\beta$ attains the maximum $m$.
If $n_\beta>0$, let $\tE_+$ be a nonnegative argument $\rm e$-rectangle in $\tC$ with banner $\beta$
and $\rig_+(\Ad_A | \tE_+^\square) = n_\beta$. 
If $s_\beta>0$, let $\tE_-$ be a negative argument $\rm e$-rectangle in $\tC$ with banner $\beta$
and $\rig_+(\Ad_A | \tE_-^\square) = s_\beta$. 
Assume for the moment that both $\rm e$-rectangles exist.
Let $\tE_1$, $\tE_2$, $\tE_3$, $\tE_4$ be projected equatorial $\rm e$-rectangles as in \cref{f.projections1}.

\begin{figure}[htb] 
\begin{minipage}[t]{0.4\textwidth}
\centering
\setlength{\unitlength}{.25cm}
\begin{picture}(19,19)  
\thicklines  
\put(0,0){\framebox(19,19)} 
\put(.5,.5){$[1,d]^2$}

\put(1,11){\framebox(7,7)} 
\put(1.5,11.5){$\tC_1$}

\put(9,1){\framebox(9,9)} 
\put(9.5,1.5){$\tC_2$}

\put(9,11){\framebox(9,7)} 
\put(17,17){$\tC$}

\thinlines
\put(2.5,14){\framebox(2.5,2.5){$\tE_1$}} 
\put(5.5,11.5){\framebox(2,2){$\tE_2$}}
\put(10,5){\framebox(4,4){$\tE_3$}}
\put(15,2){\framebox(2,2){$\tE_4$}}
\put(10,11.5){\framebox(4,2){$\tE_-$}} 
\put(15,14){\framebox(2,2.5){ $\tE_+$}}

\put(5,16.5){\dashbox{.2}(10,0)}
\put(5,14){\dashbox{.2}(10,0)}
\put(7.5,13.5){\dashbox{.2}(2.5,0)}
\put(7.5,11.5){\dashbox{.2}(2.5,0)}

\put(10,9){\dashbox{.2}(0,2.5)}
\put(14,9){\dashbox{.2}(0,2.5)}
\put(15,4){\dashbox{.2}(0,10)}
\put(17,4){\dashbox{.2}(0,10)}
\end{picture}
\caption{The case of non-equatorial $\rm c$-rectangles: $\tE_1 = \prow(\tE_+)$, $\tE_2 = \prow(\tE_-)$, $\tE_3 = \pcol(\tE_-)$, $\tE_4 = \pcol(\tE_+)$.} 
\label{f.projections1}
\end{minipage}
\qquad 
\qquad
\begin{minipage}[t]{0.4\textwidth}
\centering
\setlength{\unitlength}{.25cm}
\begin{picture}(19,19)  
\thicklines  
\put(0,0){\framebox(19,19)} 
\put(.5,.5){$\tC$}

\thinlines
\put(2.5,14){\framebox(2.5,2.5){$\tE_1$}} 
\put(5.5,11.5){\framebox(2,2){$\tE_2$}}
\put(10,5){\framebox(4,4){$\tE_3$}}
\put(15,2){\framebox(2,2){$\tE_4$}}
\put(10,11.5){\framebox(4,2){$\tE_+$}} 
\put(2.5,2){\framebox(2.5,2){$\tE_-$}}

\put(5,2){\dashbox{.2}(10,0)}
\put(5,4){\dashbox{.2}(10,0)}
\put(7.5,13.5){\dashbox{.2}(2.5,0)}
\put(7.5,11.5){\dashbox{.2}(2.5,0)}

\put(10,9){\dashbox{.2}(0,2.5)}
\put(14,9){\dashbox{.2}(0,2.5)}
\put(2.5,4){\dashbox{.2}(0,10)}
\put(5,4){\dashbox{.2}(0,10)}
\end{picture}
\caption{The case of equatorial non-exceptional $\rm c$-rectangles: $\tE_1 = \pcol(\tE_-)$, $\tE_2 = \prow(\tE_+)$, $\tE_3 = \pcol(\tE_+)$, $\tE_4 = \prow(\tE_-)$. It is possible that $\tE_1 = \tE_2$ or $\tE_3 = \tE_4$.} 
\label{f.projections2}
\end{minipage}
\end{figure}
Then
\begin{multline*}
m = \rig_+ (\Ad_A | \tE_+) + \rig_+ (\Ad_A | \tE_-) 
\stackrel{\text{(i)}}{\le} \pop \tE_+ + \pop \tE_-  \\
\stackrel{\text{(ii)}}{\le} \tfrac{1}{2}\big( \pop \tE_1 + \dots + \pop \tE_4 \big)  
\le \tfrac{1}{2}\big(\pop_1 \tC_1 + \pop_1 \tC_2 \big), 
\end{multline*}
where (i) and (ii) follow respectively from \cref{l.crude rig city,l.city proj}.
This proves \eqref{e.nonequat ineq} in this case.
If there is no nonnegative argument $\rm e$-rectangle or no negative argument $\rm e$-rectangle within $\tC$ with banner~$1$
then the proof of \eqref{e.nonequat ineq} is easier.

So the lemma is proved for non-equatorial $\tC$.
\end{proof}

We now consider equatorial $\rm c$-rectangles.
There is a special kind of $\rm c$-rectangle 
for which the proof of the rigidity estimate has to follow a different strategy.
A $\rm c$-rectangle is called \emph{exceptional} if
it has only the banners $1$ and $-1$ (so it is equatorial and has $4$ $\rm e$-rectangles),
each $\rm e$-rectangle has a single $\rm j$-rectangle,
and all $\rm j$-rectangles have the same weight.

\begin{proof}[Proof of \cref{l.rig island} when $\tC$ is equatorial non-exceptional]
As in the previous case,  
let $n_\beta$ (resp.\ $s_\beta$) be the maximum of 
$\rig_+(\Ad_A|\tE^\square)$ over the nonnegative (resp.\ negative)  argument $\rm e$-rectangles $\tE$ in $\tC$ with banner $\beta$.

We claim that 
\begin{equation}\label{e.equat ineq}
n_\beta + s_\beta < \pop_1 \tC \quad \text{for all banners $\beta \neq 1$ in $\tC$.}
\end{equation}
Let us postpone the proof of this inequality and see how to conclude.

Let $M=\pop_1 \tC$.
In view of \cref{l.crude rig city} and relation~\eqref{e.equat ineq},
for each $\rm c$-rectangle $\tE$ we can take matrices $X_{\tE, 1}$, \dots, $X_{\tE, M} \in \tE^\square$
such that:
\begin{itemize}
\item $\Lambda_\tE := \sorb_{\Ad_A} (X_{\tE, 1}, \dots, X_{\tE, M})$ is a transitive subspace of  $\tE^\square$;
\item $X_{\tE, M} = 0$ if $\tE$ is non-equatorial;
\item $X_{\tE, M}$ is the identity in $\tE^\square$ if $\tE$ is equatorial.
\end{itemize}
Then define matrices $Z_j$ as before: by \eqref{e.Y_j} and \eqref{e.Z_j}.
Here we have that $Z_M$ is the identity matrix in $\tC^\square$.
As before, $\sorb_{\Ad_A}(Z_1, \dots, Z_M)$ is a transitive subspace of $\tC^\square$.
Hence $\rig_+(\Ad_A|\tC^\square) \le M = \pop_1 \tC$, as desired.

\medskip

Now let us prove \eqref{e.equat ineq}.
Consider a banner $\beta \neq 1$ in $\tC$.
Let $\tE_+$ (resp.\ $\tE_-$) be a nonnegative (resp.\ negative) argument $\rm e$-rectangle within $\tC$ with banner $\beta$ 
and of maximal weight;
assume for the moment that both $\rm e$-rectangles exist.
Let $\tE_1$, $\tE_2$, $\tE_3$, $\tE_4$ be projected equatorial $\rm e$-rectangles as in \cref{f.projections2}.
Then
\begin{align}
n_\beta + s_\beta  
&= \rig_+(\Ad_A | \tE_+) + \rig_+(\Ad_A | \tE_-)                \nonumber \\
&\le \pop \tE_+ + \pop \tE_-                                   \label{e.ineq1}\\
&\le  \tfrac{1}{2}\big( \pop \tE_1 + \dots + \pop \tE_4 \big)   \label{e.ineq2}\\
&\le \pop_1 \tC.                                                \label{e.ineq3}
\end{align}
Inequality~\eqref{e.ineq1} follows from \cref{l.crude rig city}, 
inequality~\eqref{e.ineq2} follows from \cref{l.city proj}, and
inequality~\eqref{e.ineq3} holds because the $\rm e$-rectangles $\tE_1$, \dots, $\tE_4$ are equatorial,
and any $\rm e$-rectangle can appear at most twice in this list.
So 
\begin{equation}\label{e.weaker}
n_\beta + s_\beta \le \pop_1 \tC.
\end{equation}
In the case that there is no nonnegative argument $\rm e$-rectangle or no negative argument $\rm e$-rectangle with banner $\beta$ 
(i.e., $n_\beta$ or $s_\beta$ vanishes),
a simpler argument shows that strict inequality holds in \eqref{e.weaker}.

Now assume by contradiction that \eqref{e.equat ineq} does not hold.
Then we must have equality in \eqref{e.weaker}.
By what we have just seen, both $\rm e$-rectangles $\tE_+$ and $\tE_-$ above exist.
Then the inequalities in \eqref{e.ineq1}--\eqref{e.ineq3} become equalities.
Since~\eqref{e.ineq3} is an equality, there must be exactly two equatorial $\rm e$-rectangles in $\tC$.
So the non-equatorial banner $\beta$ satisfies $\beta^{-1} = \beta$, that is, $\beta=-1$.
Since~\eqref{e.ineq2} is an equality, it follows from \cref{l.city proj} that both non-equatorial $\rm e$-rectangles 
have the same number of $\rm j$-rectangles in each column and each row.
So there is some $\ell$ such that all four $\rm e$-rectangles in $\tC$ have $\ell$ rows of $\rm j$-rectangles 
and $\ell$ columns of $\rm j$-rectangles.
Since~\eqref{e.ineq1} is an equality, \cref{l.crude rig city} implies that $\ell=1$.
That is, $\tC$ is a exceptional $\rm c$-rectangle, a situation which we excluded a priori.
This contradiction proves \eqref{e.equat ineq} and \cref{l.rig island}
in the present case.
\end{proof}

Let us now deal with exceptional $\rm c$-rectangles.
In all the previous cases, the transitive subspace we found had some vaguely Toeplitz form.
For exceptional $\rm c$-rectangles, however, this strategy is not efficient. 
What we are going to do is to find a transitive space of vaguely Hankel form, namely the following:
\begin{equation}\label{e.hankel like}
\Lambda_k = \left\{ 
\begin{pmatrix}
P & M \\
M & N
\end{pmatrix};
\; \text{$M$, $N$, $P$ are $k\times k$ matrices}
\right\}.
\end{equation}
Notice that $\Lambda_k = S_k \cdot \Gamma_k$, where
$$
S_k = \begin{pmatrix} 0 & \Id \\ \Id & 0 \end{pmatrix}
\quad \text{and} \quad
\Gamma_k = \left\{ 
\begin{pmatrix}
M & N \\
P & M
\end{pmatrix};
\; \text{$M$, $N$, $P$ are $k\times k$ matrices}
\right\}.
$$
Since $\Gamma_k$ is 
a generalized Toeplitz space, 
it follows from \cref{r.transitivity trick} that $\Lambda_k$ is transitive.

\begin{proof}[Proof of \cref{l.rig island} when $\tC$ is exceptional]
If $\tC$ is exceptional then it has size $2k \times 2k$ for some $k$,
and the operator $\Ad_A | \tC^\square$ is given by $X \mapsto \Ad_L(X)$, where
$$
L = \begin{pmatrix}  J & 0 \\ 0 & -J \end{pmatrix},
\quad \text{and $J = J_k(1)$ is the Jordan block~\eqref{e.block}.}
$$
Let $V$ be unique $\Ad_J$-invariant subspace of $\Mat_{k \times k}(\C)$ that 
has codimension $1$ and does not contain the identity matrix (which exists by \cref{l.1x1}).
Take matrices $X_1$, \dots, $X_k \in \Mat_{k \times k}(\C)$ such that
$X_1 = \Id$
and 
$V = \sorb_{\Ad_J} (X_2, \dots, X_k)$.
Define $Y_1$, \dots, $Y_k \in \Mat_{2k \times 2k} (\C)$ by 
$$
Y_1 = \begin{pmatrix} \Id & 0 \\ 0 & \Id \end{pmatrix},
\quad 
Y_j = \begin{pmatrix} X_j & 0       \\ 0 & 0   \end{pmatrix} \text{ for $2 \le j \le k$,}
$$
Then
$$
\sorb_{\Ad_L}(Y_1,\dots,Y_k) = 
\left\{ 
\begin{pmatrix} x\Id + Z & 0 \\ 0 & x \Id \end{pmatrix} ; \;
x \in \C, \ Z \in V
\right\}.
$$
For $j=k+1$, \dots, $2k$, define
$$
Y_j = \begin{pmatrix} 0   & X_{j-k} \\ X_{j-k} & X_{j-k} \end{pmatrix}. 
$$
Then, by \cref{l.sum}, 
$$
\sorb_{\Ad_L}(Y_{k+1},\dots,Y_{2k}) = 
\left\{ 
\begin{pmatrix} 0 & M \\ M & N \end{pmatrix} ; \;
M, \ N \in \Mat_{k \times k}(\C)
\right\}.
$$
Therefore $\sorb_{\Ad_L}(Y_1,\dots,Y_{2k})$ is the transitive space given by \eqref{e.hankel like}.
Since $Y_1$ is the identity on $\tC$, this shows that 
$\rig_+(\Ad_A|\tC^\square) \le 2k = \pop_1 \tC$,
concluding the proof of \cref{l.rig island}.
\end{proof}

\subsection{The final rigidity estimate}

Let $c = c(A)$ be the number of equivalence classes mod~$T$ of eigenvalues of $A$.

\begin{lemma}\label{l.rig world} 
If $c<d$ then
$$
\rig_+ \Ad_A \le  \pop_1 [1,d]^2 - c + 1 \, . 
$$
\end{lemma}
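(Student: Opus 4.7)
The plan is to combine the per-$\rm c$-rectangle rigidity estimates from \cref{l.rig island} into a global estimate by applying \cref{l.sudoku} to the partition $\cP_{\rm c}^2$ of $[1,d]^2$ into $\rm c$-rectangles. For each $\rm c$-rectangle $\tC$, first fix generators $X_{\tC,1},\dots,X_{\tC,r(\tC)}\in\tC^\square$ whose $\Ad_A$-orbit is a transitive subspace $\Lambda_\tC$ of $\tC^\square$, with $r(\tC)=\rig_+(\Ad_A|\tC^\square)\le (\pop_1\prow(\tC)+\pop_1\pcol(\tC))/2$, and with $X_{\tC,1}$ equal to the identity on $\tC^\square$ when $\tC$ is equatorial. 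Write $p_i:=\pop_1\tC_{ii}$ for the equatorial $\rm c$-rectangles $\tC_{11},\dots,\tC_{cc}$, and set $N:=\pop_1[1,d]^2-c+1=1+\sum_{i=1}^c(p_i-1)$. Define $Z_1:=\Id_d$; this single matrix simultaneously plays the role of the identity first-generator for all $c$ equatorial $\rm c$-rectangles, which accounts for the saving of $c-1$ generator slots. For $j\ge 2$, define $Z_j$ as $\sum_\tC X_{\tC,k(\tC,j)}$, where each $\tC$ contributes at most one generator per slot and different generators of the same $\tC$ are placed in different slots of $\{2,\dots,N\}$.

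A short case analysis shows that this slot assignment is feasible, that is, each $\tC$ requires at most $N-1=\sum_k(p_k-1)$ slots. The equatorial bound $p_i-1\le N-1$ is trivial. For non-equatorial $\tC_{ij}$ one needs $\lfloor(p_i+p_j)/2\rfloor\le N-1$: when $p_i+p_j\ge 4$ this follows from $(p_i+p_j)/2\le (p_i-1)+(p_j-1)$, and the edge cases $p_i+p_j\in\{2,3\}$ are handled using the hypothesis $c<d$, which forces some $p_m\ge 2$, and moreover, when $p_i=p_j=1$, forces such an $m$ to satisfy $m\notin\{i,j\}$ (since $p_k\ge 1$ for all $k$ together with $c<d$ rules out $\sum_k p_k=c$).

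To verify the hypothesis of \cref{l.sudoku}, I would show that for each $\rm c$-rectangle $\tC$ of banner class $[\beta]$ the sudoku projection $\Delta^{[\tC]}$ of $\Delta:=\sorb_{\Ad_A}(Z_1,\dots,Z_N)$ contains $\Lambda_\tC$. Given $M=\sum_k f_k(\Ad_A)X_{\tC,k}\in\Lambda_\tC$, the Chinese remainder theorem supplies a polynomial $q$ with $q\equiv 1$ modulo the minimal polynomial of $\Ad_A$ restricted to $V_{[\beta]}:=\bigoplus_{\tC'\text{ of banner class }[\beta]}\tC'^\square$, and $q\equiv 0$ modulo the minimal polynomials on the other banner-class subspaces; pairwise coprimality follows because banner classes partition the spectra into disjoint $T$-equivalence classes of $\C_*$. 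Letting $\sigma_\tC(k)$ denote the slot containing $X_{\tC,k}$, the element $\sum_k(qf_k)(\Ad_A)Z_{\sigma_\tC(k)}\in\Delta$ has $\tC$-component equal to $M$ and vanishes on every $\rm c$-rectangle of banner class different from $[\beta]$. By part~\ref{i.geo0} of \cref{l.geo}, these are precisely the $\rm c$-rectangles lying in the ``forbidden'' positions of the sudoku form~\eqref{e.sudoku} relative to $\tC$, so the sudoku condition is met and \cref{l.sudoku} yields the transitivity of $\Delta$. The main obstacle is the combinatorial slot-counting, in particular verifying the edge cases $p_i+p_j\le 3$ where the hypothesis $c<d$ is crucially used.
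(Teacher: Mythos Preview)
Your argument is correct and follows essentially the same route as the paper: both bound $\rig_+(\Ad_A|\tC^\square)$ via \cref{l.rig island}, reserve a single slot for the identity (shared among all equatorial $\rm c$-rectangles), pack the remaining generators into the other $N-1$ slots, and then combine using \cref{l.sum} (your Chinese remainder argument is exactly this) together with \cref{l.geo}(\ref{i.geo0}) and \cref{l.sudoku}. The only cosmetic differences are that the paper places the identity in the last slot rather than the first, pads each $\tC$ with zeros instead of using a variable slot map $\sigma_\tC$, and phrases the non-equatorial counting as a contradiction argument rather than your explicit case split on $p_i+p_j$; the needed inequalities are identical.
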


\begin{proof} 
Let $m = \pop_1 [1,d]^2 - c + 1$.
For each $\rm c$-rectangle $\tC$, let 
$$
r(\tC) =  \big\lfloor \tfrac{1}{2} (\pop_1 \prow(\tC) + \pop_1 \pcol(\tC)) \big\rfloor.
$$
We claim that 
\begin{equation}\label{e.crude}
r(\tC) \le 
\begin{cases}
m   &\text{if $\tC$ is an equatorial $\rm c$-rectangle,} \\
m-1 &\text{if $\tC$ is a non-equatorial $\rm c$-rectangle.}
\end{cases}
\end{equation}
Let us postpone the proof of this inequality and see how it implies the lemma.

In view of \cref{l.rig island} and relation~\eqref{e.crude},
for each $\rm c$-rectangle $\tC$ we can take matrices $X_{\tC, 1}$, \dots, $X_{\tC, m} \in \tC^\square$
such that:
\begin{itemize}
\item $\Lambda_\tC := \sorb_{\Ad_A} (X_{\tC, 1}, \dots, X_{\tC, m})$ is a transitive subspace of  $\tC^\square$;
\item $X_{\tC, m} = 0$ if $\tC$ is non-equatorial;
\item $X_{\tC, m}$ is the identity in $\tC^\square$ if $\tC$ is equatorial.
\end{itemize}

Define matrices:
\begin{alignat*}{2}
Y_{\alpha, j} &= 
\sum_{\text{$\tC$ is a $\rm c$-rectangle}\atop\text{with banner class $\alpha$}} X_{\tC, j}
&\qquad &(\alpha \text{ is a banner class}, \ 1 \le j \le m), \\
Z_j &= \sum_{\alpha \text{ is a banner class}} Y_{\alpha, j} 
&\qquad &(1 \le j \le m).
\end{alignat*}
So $Z_m$ is the $d \times d$ identity matrix.
Consider the space 
$$
\Delta = \sorb_{\Ad_A} (Z_1, \dots, Z_m).
$$
It follows from \cref{l.sum} that 
$$
\Delta = \sorb_{\Ad_A} \big\{ Y_{\alpha,j} ; \; \alpha \text{ is a banner class, } 1 \le j \le m \big\}.
$$
We claim that every $\rm c$-rectangle $\tC$, 
\begin{equation}\label{e.inclusion2}
\Lambda_\tC \subset \Delta^{[\tC]}.
\end{equation}
Indeed, if $M \in \tC$ then we can write $M = \sum_j f_j(\Ad_A) X_{\tC,j}$,
where the $f_j$'s are polynomials.
Consider $N = \sum_j f_j(\Ad_A) Y_{\alpha,j}$, where $\alpha$ is the banner class of $\tC$.
It follows \cref{l.geo} (part \ref{i.geo0}) that $N \in \Delta^{[\tC]}$.
This proves \eqref{e.inclusion2}.
So, by \cref{l.sudoku}, $\Delta$ is a transitive subspace of $\Mat_{d \times d}(\C)$, 
showing that $\rig_+ \Ad_A \le m$.

\medskip

To conclude the proof we have to show estimate~\eqref{e.crude}.
First consider a equatorial $\rm c$-rectangle~$\tC$.
Since there are $c$
equatorial $\rm c$-rectangles, and each of them has a nonzero $\pop_1$ value,
we conclude that $r(\tC) \le m$, as claimed.

Now take a non-equatorial $\tC$.
Applying what we have just proved for the equatorial $\rm c$-rectangles $\prow(\tC)$ and $\pcol(\tC)$,
we conclude that $r(\tC) \le m$.
Now assume that \eqref{e.crude} does not hold for $\tC$, that is, $r(\tC) = m$.
Then 
$$
\pop_1 \prow(\tC) = \pop_1 \pcol (\tC) = m = \pop_1 [1,d]^2 - c + 1.
$$
Since $\pop_1 [1,d]^2 \ge \pop_1 \prow(\tC) + \pop_1 \pcol (\tC) + c - 2$,
we have $m=1$ and $\pop_1 [1,d]^2 = c$.
This means that $\pop_1 \tilde \tC = 1$ for all equatorial $\rm c$-rectangles $\tilde \tC$,
which is only possible if $c=d$.
However, this case was excluded by hypothesis.

This proves \eqref{e.crude} and hence \cref{l.rig world}.
\end{proof}

\begin{example}
If $A$ is the matrix of \cref{ex.big matrix} then \cref{l.rig world}
gives the estimate $\rig_+ \Ad_A \le 28$.
A more careful analysis (going through the proofs of the lemmas) would give $\rig_+ \Ad_A \le 7$
(see \cref{ex.continued}).
\end{example}

\begin{proof}[Proof of part~\ref{i.rig hard} of \cref{t.rig}]
Apply \cref{l.rig world,l.acyc and pop1}.
\end{proof}

\section{Proof of the hard part of the codimension $m$ theorem}\label{s.cod proof}

We showed in \cref{p.cod_data_easy_half} that $\codim \cP_m^{(\K)} \le m$.
In this section, we will prove the reverse inequalities.
More precisely, we will first prove \cref{t.cod_data_C}
and then deduce \cref{t.cod_data_R} from it.

\subsection{Preliminaries on elementary algebraic geometry}

\subsubsection{Quasiprojective varieties} 

An algebraic subset of $\C^n$ is also called an \emph{affine variety}.
A \emph{projective variety} is a subset of $\CP^n$ that can be expressed 
as the zero set of a family of homogeneous polynomials in $n+1$ variables.
The \emph{Zariski topology} on an (affine or projective) variety $X$ is 
the topology whose closed sets are the (affine or projective) subvarieties of $X$.

An open subset $U$ of a projective variety $X$ is called a \emph{quasiprojective variety}.
We consider in $U$ the induced Zariski topology.
The affine space $\C^n$ can be identified with a quasiprojective variety,
namely its image under the embedding $(z_1, \dots, z_n) \mapsto (1: z_1 : \dots : z_n)$.

If $X$ and $Y$ are quasi-projective varieties
then the product $X \times Y$ can be identified with a quasiprojective variety, 
namely its image under the Segre embedding; see \cite[\S~5.1]{Shafa}.

Recall the following property from \cite[p.~58]{Shafa}:
 
\begin{prop}\label{p.projection}
If $X$ is a projective variety and $Y$ is a quasiprojective variety 
then the projection $p \colon X \times Y \to Y$ takes Zariski closed sets to Zariski closed sets.
\end{prop}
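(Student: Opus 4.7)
The plan is to prove this via the classical projective elimination argument, establishing the completeness of $\mathbb{P}^n$. First I would reduce to the case $X = \mathbb{P}^n$ and $Y = \mathbb{A}^m$. Since $X$ is projective, it is a closed subvariety of some $\mathbb{P}^n$, so a closed set $Z \subset X \times Y$ is also Zariski closed in $\mathbb{P}^n \times Y$ with the same image in $Y$; hence I may replace $X$ by $\mathbb{P}^n$. Since Zariski closedness is a local property on $Y$ and $Y$ admits an affine open cover (standard for quasi-projective varieties), I may assume $Y$ is affine. An affine variety embeds as a closed subvariety of some $\mathbb{A}^m$, and a closed set in $\mathbb{P}^n \times Y$ is then closed in $\mathbb{P}^n \times \mathbb{A}^m$ with image contained in $Y$, so the problem reduces to showing that $p \colon \mathbb{P}^n \times \mathbb{A}^m \to \mathbb{A}^m$ is a closed map.

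Next I would set up the elimination. A closed subset $Z \subset \mathbb{P}^n \times \mathbb{A}^m$ is the common zero set of finitely many polynomials $f_1, \dots, f_k \in \mathbb{C}[x_0,\dots,x_n, y_1,\dots,y_m]$ that are homogeneous in $x_0,\dots,x_n$, of respective degrees $d_1, \dots, d_k$. For $y \in \mathbb{A}^m$, the point $y$ belongs to $p(Z)$ iff the specialized system $f_i(\cdot, y) = 0$ has a nontrivial zero in $\mathbb{C}^{n+1}$. By the projective Nullstellensatz, this fails iff the homogeneous ideal $I_y = (f_1(\cdot, y), \dots, f_k(\cdot, y))$ in $\mathbb{C}[x_0,\dots,x_n]$ contains some power $\mathfrak{m}^d$ of the irrelevant ideal $\mathfrak{m} = (x_0, \dots, x_n)$.

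For each integer $d \ge 0$, define $U_d = \{ y \in \mathbb{A}^m : \mathfrak{m}^d \subset I_y \}$. Since $I_y$ is graded, the condition $\mathfrak{m}^d \subset I_y$ is equivalent to the surjectivity of the $\mathbb{C}$-linear map
\[
\mu_{d,y} \colon \bigoplus_{i=1}^{k} \mathbb{C}[x_0,\dots,x_n]_{d-d_i} \longrightarrow \mathbb{C}[x_0,\dots,x_n]_d, \qquad (g_i) \longmapsto \sum_{i} g_i\, f_i(\cdot, y),
\]
which is a maximal-rank condition on a matrix whose entries are polynomials in $y$, hence a Zariski open condition. Thus each $U_d$ is open, and multiplying by any variable $x_j$ gives $U_d \subseteq U_{d+1}$.

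The concluding step is a Noetherianness argument. The complements $C_d = \mathbb{A}^m \setminus U_d$ form a descending chain of Zariski closed subsets, and $\mathbb{A}^m$ with the Zariski topology is a Noetherian topological space (Hilbert basis theorem applied to $\mathbb{C}[y_1,\dots,y_m]$). Hence the chain stabilizes at some $d_0$, so $\bigcup_d U_d = U_{d_0}$ is open, and $p(Z) = C_{d_0}$ is closed, as required. The main obstacle is precisely this last step: a priori $p(Z)^c = \bigcup_d U_d$ is only a countable increasing union of opens, which need not be Zariski open; it is Noetherianness that forces the union to collapse to a single $U_{d_0}$.
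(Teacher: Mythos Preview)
Your argument is correct and is the standard elimination-theoretic proof of completeness of projective space. The paper, however, does not prove this proposition at all: it simply records it as a known fact with a citation to Shafarevich (\cite[p.~58]{Shafa}), so there is no in-paper proof to compare against. What you have written is essentially the proof one finds at that reference, carried out cleanly; the reductions to $X=\mathbb{P}^n$ and $Y=\mathbb{A}^m$, the Nullstellensatz reformulation, and the Noetherian stabilization of the chain $C_d$ are all handled correctly.
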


A quasiprojective variety is called \emph{irreducible} if it cannot be written as a nontrivial union of two quasiprojective varieties (that is, none contains the other).

\subsubsection{Dimension}
The dimension $\dim X$ of an irreducible quasiprojective variety $X$
may be defined in various equivalent ways (see for instance~\cite[p.~133ff]{Harris}). 
It will be sufficient for us to know that there exists an (intrinsically defined) 
subvariety $Y$ of the \emph{singular points of $X$}
such that in a neighborhood of each point of $X \setminus Y$, the set 
$X$ is a complex submanifold of dimension (in the classical sense of differential geometry) $\dim X$;
moreover, each irreducible component of $Y$ has dimension strictly less than $\dim X$.

The dimension of a general quasiprojective variety is by definition the maximum of the dimensions of the irreducible components.

The following lemma is useful to estimate the codimension of an algebraic set $X$
from information about the fibers of a certain projection $\pi \colon X \to Y$.

\begin{lemma}\label{l.pret_a_porter}
Let $Y$ be a quasiprojective variety.  
Let $X \subset Y \times \CP^n$ be a nonempty algebraically closed set.
Let $\pi \colon X \to Y$ be the projection along $\CP^n$.
Then:
\begin{enumerate}
\item
For each $j \ge 0$, the set
$$
C_j = \{ y \in \pi(X) ; \; \codim \pi^{-1}(y) \le j \}
$$
is algebraically closed in $Y$.
\item
The dimension of $X$ is given in terms of the dimensions of the $C_j$'s by:
\begin{equation}\label{e.formula_pret_a_porter}
\codim X = \min_{j ; \; C_j \neq \emptyset} \big( j + \codim C_j \big) \, .
\end{equation}
\end{enumerate}
\end{lemma}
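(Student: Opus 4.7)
The plan is to obtain both parts from two standard algebraic-geometric tools applied to the projection $\pi\colon X\to Y$: upper semi-continuity of fiber dimension for projective morphisms, and the fiber-dimension formula $\dim \pi^{-1}(S)=\dim S + k$ when all fibers over $S$ have constant dimension $k$. Since $\CP^n$ is projective, the projection $Y\times\CP^n\to Y$ is proper (in particular, closed in the Zariski topology), and its restriction to the closed subvariety $X$ inherits this. In particular $\pi(X)$ is Zariski closed in $Y$ by \cref{p.projection}. For part~(1), each fiber $\pi^{-1}(y)$ is a closed subvariety of $\{y\}\times \CP^n\cong \CP^n$, so $\codim \pi^{-1}(y) = n - \dim \pi^{-1}(y)$. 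The upper semi-continuity theorem says that $\{y\in\pi(X) : \dim \pi^{-1}(y)\ge n-j\}$ is Zariski closed, and this set is precisely $C_j$.

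For part~(2), I would partition $\pi(X)$ into the locally closed pieces $D_j = C_j\setminus C_{j-1}$ (with the convention $C_{-1}=\emptyset$), so $D_j$ consists exactly of the points whose fiber has codimension $j$ in $\CP^n$, i.e.\ dimension $n-j$. Then $X = \bigsqcup_j \pi^{-1}(D_j)$, and by the fiber-dimension formula applied to $\pi\colon \pi^{-1}(D_j)\to D_j$ (which has constant fiber dimension $n-j$),
\[
\dim \pi^{-1}(D_j) \;=\; \dim D_j + (n-j).
\]
Taking the maximum over $j$ and subtracting from $\dim(Y\times\CP^n)=\dim Y + n$ gives
\[
\codim X \;=\; \min_{j:\, D_j\neq\emptyset}\bigl(\codim_Y D_j + j\bigr).
\]

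The final step is to show that this minimum equals the one in~\eqref{e.formula_pret_a_porter}, i.e., that replacing $D_j$ by $C_j$ does not change the value. Since $D_j\subset C_j$ we have $\codim D_j\ge \codim C_j$, giving one inequality immediately (and $D_j=\emptyset\Rightarrow C_j=C_{j-1}$, so the extra indices in the $C_j$-minimum are harmless because they yield larger values). For the reverse inequality, fix $j$ with $C_j\neq\emptyset$, choose an irreducible component $W$ of $C_j$ of maximal dimension (so $\dim W = \dim C_j$), and let $k\le j$ be such that $W\cap D_k\neq\emptyset$; since $W\cap C_{k-1}$ is a proper closed subset of $W$, the intersection $W\cap D_k$ is open and dense in $W$, hence has dimension $\dim C_j$. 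Thus $\dim D_k \ge \dim C_j$, which combined with $k\le j$ yields $k+\codim D_k \le j+\codim C_j$, finishing the equality of the two minima.

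The only nontrivial ingredients are the semi-continuity of fiber dimension and the fiber-dimension formula, which are classical (e.g.\ \cite[Ch.~I, \S6]{Shafa} and \cite[Lecture~11]{Harris}); I would simply cite them. The one mildly delicate point, which I would expect to be the main source of bookkeeping, is handling the possibly reducible $C_j$'s in the last step above -- specifically, verifying that a top-dimensional irreducible component of $C_j$ meets the locally closed stratum $D_k$ in an open dense subset of that component; this is where the partition $C_j = C_{j-1}\sqcup D_j$ and the openness of $D_k$ inside $C_k$ are used.
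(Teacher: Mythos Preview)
Your proof is correct and takes essentially the same route as the paper: both use upper semi-continuity of fiber dimension for part~(1) and the fiber-dimension formula on the constant-fiber-dimension strata for part~(2), the only organizational difference being that the paper decomposes upstairs (irreducible components of $\pi^{-1}(C_j)\subset X$) while you decompose downstairs (irreducible components of $C_j\subset Y$). One small fix in your final paragraph: take $k$ to be the \emph{smallest} index with $W\subset C_k$ (this exists since $W\subset C_j$), so that $W\cap D_k = W\setminus C_{k-1}$ is genuinely open in $W$; merely requiring $W\cap D_k\neq\emptyset$ does not ensure $W\subset C_k$, and without that your openness claim need not hold.
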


In the above, the codimensions of $\pi^{-1}(Y)$, $X$ and $C_j$
are taken with respect to $\CP^n$, $Y \times \CP^n$ and $Y$, respectively.
The proof of the lemma is given in \cref{a.algebraic}.

\begin{rem}\label{r.homogeneous}
\Cref{l.pret_a_porter} works with the same statement if $\CP^{n}$ is replaced by $\C^{n+1}$,
provided one assumes that $X \subset Y \times \C^{n+1}$ is homogeneous in the second factor
(i.e., $(y,z) \in X$ implies $(y,tz)\in X$ for every $t\in \C$).
Indeed, this follows from the fact that the projection $\C^{n+1}\setminus\{0\} \to \CP^{n}$
preserves codimension of homogeneous sets.
\end{rem}

\subsubsection{Dimension estimates for sets of vector subspaces}\label{sss.sets_of_spaces}

%

If $M \in \Mat_{n \times m}(\K)$, let $\col M \subset \K^n$ denote the column space of $M$.
A set $X \subset \Mat_{n \times m}(\K)$ is called \emph{column-invariant}
if 
$$
\left.
\begin{array}{c}
M \in X \\ 
N \in \Mat_{n \times m}(\K)\\
\col M = \col N
\end{array}
\right\} \ \Rightarrow \ 
N \in X.
$$
So a column-invariant set $X$ is characterized by its set of column spaces.
We enlarge the latter set by including also subspaces, thus defining:
\begin{equation}\label{e.bracket_notation}
\ldbrack X \rdbrack := \big\{ E \text{ subspace of } \K^n ; \; E \subset \col M \text{ for some } M \in X \big\}.
\end{equation}
In \cref{a.algebraic} we prove:

\begin{thm}\label{t.schubert}
Let $X \subset \Mat_{n \times m}(\C)$ be an algebraically closed, column-invariant set.
Suppose $E$ is a vector subspace of $\C^n$
that does not belong to $\ldbrack X \rdbrack$.
Then
$$
\codim X \ge m + 1 - \dim E \, .
$$
\end{thm}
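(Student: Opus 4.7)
Let $e = \dim E$; I assume $e \leq m$ (else the bound is vacuous) and $X$ is irreducible (decomposing otherwise). Let $k_0$ denote the generic rank of matrices in $X$; note $k_0 < n$, since $k_0 = n$ would give $\col M = \C^n \supset E$ for generic $M \in X$, contradicting $E \notin \ldbrack X \rdbrack$. By column invariance, the map $M \mapsto \col M$ restricts, on the stratum $X^{(k_0)} := \{M \in X : \rank M = k_0\}$, to a smooth fibration over $W^{(k_0)} := \col(X^{(k_0)}) \subset \mathrm{Gr}(k_0, n)$ with fibers of dimension $k_0 m$. This yields the codimension formula
\[ \codim X = (n - k_0)(m - k_0) + \codim_{\mathrm{Gr}(k_0, n)} \overline{W^{(k_0)}}. \]

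\textbf{Disjointness from a Schubert variety.} The Zariski closure $\overline{W^{(k_0)}}$ in $\mathrm{Gr}(k_0, n)$ avoids the special Schubert variety $\sigma_E := \{V \in \mathrm{Gr}(k_0, n) : E \subset V\}$. Indeed, if a sequence $V_i \to V_0$ in $W^{(k_0)}$ had $V_0 \supset E$, then by choosing continuously varying bases along the sequence I would assemble matrices $M_i$ (using column-invariance to land in $X^{(k_0)}$) with $\col M_i = V_i$ that converge, by closedness of $X$, to some $M_0 \in X$ with $\col M_0 = V_0 \supset E$, contradicting the hypothesis.

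\textbf{Schubert calculus bound.} Since $\overline{W^{(k_0)}}$ and $\sigma_E$ are set-theoretically disjoint closed subvarieties of the smooth projective variety $\mathrm{Gr}(k_0, n)$, the cohomological product $[\overline{W^{(k_0)}}] \cdot [\sigma_E]$ vanishes. The class $[\sigma_E]$ is the rectangular Schur class $s_{((n-k_0)^e)}$, and $[\overline{W^{(k_0)}}]$ is a nonnegative combination of Schubert classes $s_\mu$ by effectivity. So every partition $\mu$ in the support must satisfy $s_\mu \cdot s_{((n-k_0)^e)} = 0$ in $\mathrm{Gr}(k_0, n)$. A Littlewood--Richardson computation shows that for any $\mu$ with $|\mu| \leq k_0 - e$ one can construct an explicit LR filling of a suitable skew shape $\nu/\mu$ of content $((n-k_0)^e)$, certifying a nonzero product; hence every $\mu$ in the support has $|\mu| \geq k_0 - e + 1$, giving $\codim \overline{W^{(k_0)}} \geq k_0 - e + 1$. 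Combining with $(n-k_0)(m-k_0) \geq m - k_0$ (from $n - k_0 \geq 1$) yields $\codim X \geq (m - k_0) + (k_0 - e + 1) = m - e + 1$, as desired.

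\textbf{Main obstacle.} The Littlewood--Richardson combinatorics: constructing, for every $\mu$ with $|\mu| \leq k_0 - e$, an explicit filling of an appropriate skew shape by values $1, \ldots, e$ satisfying the row-weakly-increasing, column-strictly-increasing, and lattice-word conditions. A natural approach places $\mu$ in the bottom-right corner of the $k_0 \times (n - k_0)$ rectangle and fills the complementary skew region by a suitable pattern (constant along rows when $m \leq 2e$, and cyclic or diagonal otherwise), though one must verify these conditions uniformly across the various shapes of $\mu$.
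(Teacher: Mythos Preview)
Your approach is essentially the same as the paper's: stratify by rank, push down to the Grassmannian via $M\mapsto\col M$, show the image avoids the special Schubert variety $S_{k}(E)=\{V:E\subset V\}$, and use Schubert calculus to bound its codimension from below. The paper carries this out over all rank strata rather than only the generic one, and it proves directly that the image $Y_k=\pi_k(X_k)$ is already Zariski closed (via a compactness trick with orthonormal columns), which is cleaner than your sequential argument about the closure---though yours is also valid once one notes that $W^{(k_0)}$ is constructible so its analytic and Zariski closures agree.

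The substantive difference is your ``main obstacle''. You propose constructing explicit Littlewood--Richardson fillings to certify $s_\mu\cdot s_{((n-k_0)^e)}\neq 0$ whenever $|\mu|\le k_0-e$. This is unnecessary: the paper instead invokes the elementary non-overlap criterion (Fulton, p.~148--149), which states that $\sigma_\lambda\cupro\sigma_\mu\neq 0$ in $H^*(G_k(\C^n))$ if and only if $\lambda_i+\mu_{k+1-i}\le n-k$ for all $i$. For $\lambda=((n-k_0)^e,0^{k_0-e})$, the overlap condition $\sigma_\lambda\cupro\sigma_\mu=0$ forces $\mu_{k_0+1-i}\ge 1$ for some $i\le e$, hence $\mu$ has at least $k_0-e+1$ nonzero parts, giving $|\mu|\ge k_0-e+1$ immediately. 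This replaces your entire LR construction with a two-line argument.

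One minor gap: you do not treat the case $k_0<e$, where $\sigma_E$ is empty in $\mathrm{Gr}(k_0,n)$ and the Schubert bound $k_0-e+1$ is non-positive. In that regime one simply observes $(n-k_0)(m-k_0)\ge (1)(m-e+1)=m+1-e$ directly, since $k_0\le e-1\le m-1$.
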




\subsubsection{The real part of an algebraic set}

Let $X$ be an algebraically closed subset of $\C^n$.
The \emph{real part} of $X$ is defined as $X \cap \R^n$.
This is an algebraically closed subset of $\R^n$. 
Indeed, generators of the corresponding ideal $f_1,\ldots, f_k$ in $\C[T_1,\ldots, T_n]$ can be replaced by the corresponding real and imaginary parts polynomials.

As in the complex case, there are many equivalent algebraic-geometric definitions of dimensions of real algebraic or semialgebraic sets. We just point out that a real algebraic or semialgebraic set admits a stratification into real manifolds such that the maximal differential-geometric dimension of the strata coincides with the algebraic-geometric dimension (see~\cite[\S~3.4]{BR} or \cite[p.~50]{BCR}).

The following is an immediate consequence of \cite[Prop.~3.3.2]{BR}:

\begin{prop}\label{p.real complex dimension}
If $X$ is an algebraically closed subset of $\C^n$ then
$\dim_\R (X \cap \R^n) \le \dim_\C X$.
\end{prop}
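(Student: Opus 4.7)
The plan is to reduce the statement to the case where $X$ is cut out by polynomials with real coefficients, and then bound the real dimension via tangent space considerations. For the reduction, I would introduce the complex conjugation involution $\sigma \colon \C^n \to \C^n$ and set $Y := X \cap \sigma(X)$. Since $\sigma(X)$ is the vanishing locus of the defining polynomials of $X$ with their coefficients conjugated, $Y$ is a $\sigma$-invariant algebraic subset of $\C^n$, and any such set is cut out by real polynomials (splitting each complex defining equation into its real and imaginary parts). Every point of $X \cap \R^n$ is $\sigma$-fixed and therefore also lies in $\sigma(X)$, so $X \cap \R^n = Y \cap \R^n$; moreover $\dim_\C Y \le \dim_\C X$ since $Y \subset X$. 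It thus suffices to establish the inequality for $Y$ in place of $X$.

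Once $X$ is defined over $\R$, I would proceed by induction on $d := \dim_\C X$, after first decomposing $X$ into irreducible components (so that dimensions are handled componentwise). The singular locus $\mathrm{Sing}(X)$ is a proper subvariety, still defined over $\R$, of complex dimension strictly less than $d$; by the inductive hypothesis, $\dim_\R(\mathrm{Sing}(X) \cap \R^n) < d$. It remains to bound the real dimension of $X_{\mathrm{sm}} \cap \R^n$, where $X_{\mathrm{sm}} := X \setminus \mathrm{Sing}(X)$ is a complex submanifold of $\C^n$ of complex dimension $d$. At any point $p \in X_{\mathrm{sm}} \cap \R^n$, the complex tangent space $T_p X \subset \C^n$ is the kernel of the Jacobian of the defining equations at $p$, which has real entries since both the polynomials and the point are real. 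Hence $T_p X$ is a $\sigma$-invariant complex subspace of $\C^n$, and any such subspace is the complexification of its real part, so $\dim_\R(T_p X \cap \R^n) = \dim_\C T_p X = d$. The real-analytic subset $X_{\mathrm{sm}} \cap \R^n$ of $X_{\mathrm{sm}}$ has tangent spaces contained in these $d$-dimensional real subspaces, whence $\dim_\R(X_{\mathrm{sm}} \cap \R^n) \le d$. Combining the two bounds gives $\dim_\R(X \cap \R^n) \le d$, as required.

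The main obstacle is really just the first step: the proposition does not assume $X$ is defined over $\R$, so the classical real algebraic geometry machinery cannot be applied verbatim, and the reduction via $Y := X \cap \sigma(X)$ is essential. After that step, the argument is a routine combination of smoothness and induction, which is essentially the content of \cite[Prop.~3.3.2]{BR}.
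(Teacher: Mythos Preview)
Your proposal is correct and matches the paper's approach: the paper simply cites \cite[Prop.~3.3.2]{BR} after observing (in the paragraph preceding the proposition) that $X\cap\R^n$ is cut out by the real and imaginary parts of the defining polynomials of~$X$, which is precisely your reduction via $Y=X\cap\sigma(X)$. Your inductive smooth/singular argument then unpacks the content of the cited result.
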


\subsection{Rigidity and the dimension of the poor fibers}

For simplicity of notation, let us write $\cP_m = \cP_m^{(\C)}$.
Also, for $A \in \GL(d,\C)$, write:
$$
r(A) := \rig_+ \Ad_A  - 1  \, .
$$

We decompose the set $\cP_m$ of poor data in fibers:
\begin{equation}\label{e.fiber decomp}
\cP_m = \bigcup_{A \in \GL(d,\C)} \{A\} \times \cP_m(A),
\quad \text{where} \quad
\cP_m(A) \subset \gl(d,\C)^m \, .
\end{equation}

\begin{lemma}\label{l.cod fiber}
For any $A \in \GL(d,\C)$, 
the codimension of $\cP_m(A)$ in $\gl(d,\C)^m$
is at least $m + 1 - r(A)$.  
\end{lemma}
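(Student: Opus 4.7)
The plan is to apply Theorem~\ref{t.schubert} to $\cP_m(A)$, viewed as a subset of $\Mat_{d^2\times m}(\C)$ under the identification of $\gl(d,\C)^m$ with $\Mat_{d^2\times m}(\C)$ that stacks each $B_j$ (regarded as a vector in $\C^{d^2}$) as the $j$-th column. Two hypotheses of that theorem must be verified. First, $\cP_m(A)$ is algebraically closed, since it is the preimage of the non-transitivity locus $\cN^{(\C)}_{d,d,1+md^2}$ of Proposition~\ref{p.NT algebraicity} under the polynomial map $(B_1,\dots,B_m) \mapsto (\Id,\Ad_A^t B_j)_{0\le t < d^2,\,1\le j\le m}$. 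Second, $\cP_m(A)$ is column-invariant: since $\Ad_A^t(\Id)=\Id$, we have $\sorb_{\Ad_A}(\Id,B_1,\dots,B_m) = \spa\{\Id\} + \sorb_{\Ad_A}(B_1,\dots,B_m)$, and the latter depends only on $\spa(B_1,\dots,B_m) = \col(B_1\mid\cdots\mid B_m)$, so poorness is determined by the column span.

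Set $r := r(A)$. If $r > m$ the asserted bound $m+1-r\le 0$ is trivial, so I will assume $r\le m$. Since $\rig_+ \Ad_A = r+1$, there exist $C_1,\dots,C_r\in\gl(d,\C)$ for which $\sorb_{\Ad_A}(\Id,C_1,\dots,C_r)$ is transitive. I may take $C_1,\dots,C_r$ linearly independent: otherwise some $C_i$ lies in $\spa\{C_j : j\ne i\}$, and deleting it would preserve the span, hence preserve $\sorb_{\Ad_A}(\Id,\cdot)$ and its transitivity, contradicting the minimality of $r$. Set $E := \spa(C_1,\dots,C_r) \subset \C^{d^2}$, a subspace of dimension exactly $r$.

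The key step is to verify that $E \notin \ldbrack \cP_m(A)\rdbrack$. Suppose by contradiction that some $M = (B_1\mid\cdots\mid B_m)\in\cP_m(A)$ satisfies $E\subset \col M$. Then every $C_i$ lies in $\spa(B_1,\dots,B_m)$, and therefore
$$
\sorb_{\Ad_A}(\Id,B_1,\dots,B_m) \supset \sorb_{\Ad_A}(\Id,C_1,\dots,C_r).
$$
Since the right-hand side is transitive, so is the left-hand side, contradicting $M\in\cP_m(A)$.

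Applying Theorem~\ref{t.schubert} then yields $\codim \cP_m(A) \ge m+1-\dim E = m+1-r(A)$, which is the claim. The main conceptual step—already signposted in the overview (\S~\ref{ss.overview})—is packaging the rigidity invariant $r(A)$ as a concrete subspace $E \subset \gl(d,\C)$ that witnesses the hypothesis of Theorem~\ref{t.schubert}. The one subtlety to watch is the direction of inclusion in the third paragraph: enlarging a set of generators only enlarges its $\sorb_{\Ad_A}$-span, and that monotonicity is what propagates transitivity from $(\Id,C_1,\dots,C_r)$ to any $(\Id,B_1,\dots,B_m)$ whose $B_i$-span contains all the $C_i$.
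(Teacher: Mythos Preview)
Your proof is correct and follows essentially the same approach as the paper's: identify $\cP_m(A)$ with a column-invariant algebraically closed subset of $\Mat_{d^2\times m}(\C)$, use the definition of $r(A)=\rig_+\Ad_A-1$ to produce an $r$-dimensional subspace $E\subset\gl(d,\C)$ witnessing $E\notin\ldbrack\cP_m(A)\rdbrack$, and apply Theorem~\ref{t.schubert}. Your version simply spells out more of the details (the explicit preimage argument for algebraicity, the minimality argument for $\dim E=r$, and the monotonicity of $\sorb_{\Ad_A}$ under enlarging generators) that the paper leaves implicit by appealing to the saturation of $\cP_m$.
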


The lemma follows easily from \cref{t.schubert} above:

\begin{proof} 
Fix $A \in \GL(d,\C)$, and write $r=r(A)$.
We can assume that $r \le m$, otherwise there is nothing to prove.
By definition, there exists a $r$-dimensional subspace $E \subset \gl(d,\C)^m$
such that $\sorb_{\Ad_A}(\Id \vee E)$ is transitive.
Identify $\gl(d,\C)$ with $\C^{d^2}$ and thus
regard $\cP_m(A)$ as a subset of $\Mat_{d^2 \times m} (\C)$.
Since the set $\cP_m$ is algebraically closed and saturated (recall \cref{ss.poor_set}),
the fiber $\cP_m(A)$ is algebraically closed and column-invariant,
as required by \cref{t.schubert}.
In the notation \eqref{e.bracket_notation}, 
we have $E \not\in \ldbrack \cP_m(A) \rdbrack$.
So \cref{t.schubert} gives the desired codimension estimate.
\end{proof}

\subsection{How rare is high rigidity?}

For simplicity of notation, let us write:
$$
a(A) := \acyc \Ad_A  \quad \text{for $A \in \GL(d,\C)$.}
$$
So \cref{t.rig} says that $r(A) \le a(A)-c(A)$ provided $c(A) < d$.

\begin{lemma}\label{l.cod rig}
For any integer $k \ge 1$, the set
$$
M_k = \big\{ A \in \GL(d,\C)  ; \; r(A) \ge k \big\};
$$
is algebraically closed in $\GL(d,\C)$;
moreover if $M_k \neq \emptyset$ then
$$
\codim M_k
\begin{cases}
= 0   &\text{if $k=1$,} \\
\ge k &\text{if $k \ge 2$.}
\end{cases}
$$
\end{lemma}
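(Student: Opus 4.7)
The plan is to handle the three assertions in turn, with the codimension estimate for $k \ge 2$ being the main content. \emph{Algebraic closedness and $k=1$:} observe that $A \in M_k$ iff the slice $\{A\} \times \gl(d,\C)^k$ is contained in $\cP_k$. Any polynomial $p$ in the ideal of $\cP_k$, expanded as $p = \sum_\alpha p_\alpha(A)\,B^\alpha$ in powers of $B_1,\ldots,B_k$ with coefficients in $\C[\GL(d,\C)]$, vanishes identically on this slice iff each $p_\alpha(A)=0$. Hence $M_k$ is algebraic in $\GL(d,\C)$, being cut out by the $p_\alpha$'s (a finite subfamily suffices by Hilbert's basis theorem). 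Moreover, for $d \ge 2$ the orbit $\sorb_{\Ad_A}(\Id) = \C\cdot \Id$ is one-dimensional and non-transitive, so $\rig_+ \Ad_A \ge 2$ and $r(A)\ge 1$ for every $A$; thus $M_1 = \GL(d,\C)$, of codimension $0$.

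\emph{Reduction for $k \ge 2$.} If $A \in M_k$, then $r(A) \ge 2$, so by \cref{t.rig} we must have $c(A) < d$ and $r(A) \le a(A) - c(A)$, giving
$$
M_k \ \subset\ \{A \in \GL(d,\C) : a(A) - c(A) \ge k\}.
$$
Now I would stratify $\GL(d,\C)$ by Jordan type. For a Jordan type $J$ with $r$ distinct eigenvalues, the stratum $X_J \subset \GL(d,\C)$ is an irreducible locally closed subset of dimension $r + (d^2 - a_J)$, where $a_J$ is the common value of $\acyc \Ad_A$ on $X_J$, which is constant on $X_J$ by \cref{r.jordan type and acyc} and coincides with the codimension of each conjugacy class inside $X_J$ by \cref{r.conjugacy_class}. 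Within $X_J$, for each partition of the eigenvalues into $c$ classes modulo $T$ and each choice of torsion multipliers linking each eigenvalue to a representative of its class, the corresponding sub-stratum is an algebraic subvariety of $X_J$ of dimension $c + (d^2 - a_J)$. Collecting those with $c \le a_J - k$, the set $\{A \in X_J : a_J - c(A) \ge k\}$ is therefore a \emph{countable} union of algebraic subvarieties of $X_J$, each of dimension at most $d^2 - k$.

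\emph{Baire step.} Let $M$ be an irreducible component of $M_k$, and let $J$ be its generic Jordan type, so that $U := M \cap X_J$ is Zariski dense and open in $M$, irreducible, with $\dim U = \dim M$. By the previous step, $U$ is contained in a countable union $\bigcup_i V_i$ of algebraic subvarieties of $X_J$ of dimension $\le d^2 - k$. If $\dim M > d^2 - k$, then each $V_i \cap U$ would be a proper algebraic subset of the irreducible variety $U$, hence Euclidean nowhere dense; but $U$, being locally closed in $\C^{d^2}$, is a locally compact Hausdorff space, and the Baire category theorem forbids it from being a countable union of closed nowhere dense subsets. This contradiction gives $\dim M \le d^2 - k$, and hence $\codim M_k \ge k$. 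The main obstacle is that the enveloping set $\{A: a(A) - c(A) \ge k\}$ is itself not algebraic: refining the (finite) Jordan stratification by torsion patterns yields only a \emph{countable} family of algebraic pieces, indexed by roots of unity, and the Baire-category step is precisely what transports the dimension bound from those pieces to the genuine algebraic set $M_k$.
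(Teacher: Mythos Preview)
Your argument is correct and follows essentially the same strategy as the paper's: reduce via \cref{t.rig} to the inclusion $M_k \subset \{A : a(A) - c(A) \ge k\}$, cover the right-hand side by a countable family of pieces of codimension $\ge k$, and conclude by a Baire-type argument. The paper packages your ``Jordan stratum refined by torsion pattern'' pieces into the immersed manifolds $\cG(A)$ of \cref{l.countable cover} (characterized, exactly as you do, by block sizes together with the torsion ratios among equivalent eigenvalues), and runs Baire on a smooth embedded disk $D \subset M_k$ around a nonsingular point of maximal local dimension rather than on $M \cap X_J$ for a generic Jordan type; these are cosmetic differences.

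One minor imprecision worth flagging: your torsion sub-strata are in general only constructible (locally closed) in $X_J$, not Zariski-closed ``algebraic subvarieties'', since the eigenvalues cannot be globally ordered on $X_J$ and the image of the parametrization $(\mu_1,\dots,\mu_c) \mapsto \{\zeta_i \mu_{\sigma(i)}\}$ in the symmetric-function coordinates need not be closed. This does not affect the conclusion: replacing each $V_i$ by its Zariski closure (which has the same dimension) makes $\overline{V_i} \cap U$ genuinely closed and nowhere dense in $U$, and the Baire step goes through unchanged. The paper sidesteps this by only claiming the $\cG(A)$ are \emph{immersed manifolds} of the right codimension.
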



\Cref{l.cod rig} is basically a consequence of \cref{t.rig},
using the following construction:

\begin{lemma}\label{l.countable cover}
There is a family $\cG(A)$ of subsets of $\GL(d,\C)$, indexed by $A\in \GL(d,\C)$,
such that the following properties hold:
\begin{enumerate}
\item \label{i.contains}
Each $\cG(A)$ contains $A$.
\item \label{i.manifold}
Each $\cG(A)$ is an immersed manifold of codimension $a(A)-c(A)$.
\item \label{i.countable}
There are only countably many different sets $\cG(A)$.
\end{enumerate}
\end{lemma}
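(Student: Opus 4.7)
The plan is to define $\cG(A)$ purely in terms of a combinatorial invariant of $A$ that simultaneously records its Jordan type and the multiplicative relations between its eigenvalues mod $T$. Then countability will follow because this invariant takes countably many values, while the codimension count will follow from the known dimension of the Jordan stratum together with a transverse constraint on the eigenvalues.

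\smallskip

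\emph{Setup and definition.} For $A\in\GL(d,\C)$, list its distinct eigenvalues $\lambda_1,\dots,\lambda_r$ in the ordering \eqref{e.order lambdas}, and associate to $A$ the datum
\[
\sigma(A)\;:=\;\bigl(\tau,\ \sim,\ (\zeta_{ij})_{i\sim j}\bigr),
\]
where $\tau$ records the Jordan block sizes attached to each eigenvalue index, $\sim$ is the equivalence relation $i\sim j \Leftrightarrow \lambda_i\asymp\lambda_j$ on $\{1,\dots,r\}$, and $\zeta_{ij}:=\lambda_i/\lambda_j\in T$ for each pair with $i\sim j$. I would then put
\[
\cG(A)\;:=\;\bigl\{B\in\GL(d,\C)\,:\,\sigma(B)=\sigma(A)\bigr\},
\]
where the equality is understood up to a relabeling of the distinct eigenvalues of $B$. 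Clearly $A\in\cG(A)$, establishing \ref{i.contains}.

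\smallskip

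\emph{Manifold structure and codimension.} The Jordan-type stratum $\cS_\tau$ of matrices with Jordan type $\tau$ is a classical smooth locally closed algebraic subvariety of $\GL(d,\C)$, and fibers (after a finite covering $\widetilde{\cS}_\tau\to\cS_\tau$ labeling the $r$ distinct eigenvalues) over the configuration space $\{(\mu_1,\dots,\mu_r)\in(\C^*)^r : \mu_i\neq\mu_j\}$ via the eigenvalue map $\Phi$. A single conjugacy class inside $\cS_\tau$ has codimension equal to $\dim Z(A)$, which by \cref{r.conjugacy_class} and \cref{l.acyc and pop1} equals $a(A)$; hence $\codim\cS_\tau=a(A)-r$, and $\Phi$ is a holomorphic submersion. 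The constraint locus $\{(\mu_i):\mu_i/\mu_j=\zeta_{ij}\text{ for all }i\sim j\}\subset(\C^*)^r$ is cut out by $r-c(A)$ independent equations (each $\sim$-class of size $s$ contributes $s-1$ equations relating its members to a chosen representative, totaling $r-c(A)$). Pulling this locus back by $\Phi$ gives an embedded submanifold of $\widetilde{\cS}_\tau$ of codimension $(a(A)-r)+(r-c(A))=a(A)-c(A)$ in $\GL(d,\C)$; projecting to $\cS_\tau$ under the finite cover yields $\cG(A)$ as an immersed submanifold of the same codimension, proving \ref{i.manifold}.

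\smallskip

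\emph{Countability.} For fixed $d$ the Jordan type $\tau$ and the equivalence relation $\sim$ range over finite sets. For each choice of $(\tau,\sim)$ the remaining data $(\zeta_{ij})_{i\sim j}$ consists of a consistent assignment of roots of unity to a spanning set of pairs in each $\sim$-class, amounting to at most $r-c$ independent elements of the countable group $T$. Hence the set of realized data $\sigma$ is countable and so is the collection of sets $\cG(A)$, proving \ref{i.countable}.

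\smallskip

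\emph{Main obstacle.} The only nonroutine step is the local-model description of $\cS_\tau$ used above: one must justify that $\cS_\tau$ is indeed smooth of codimension $a(A)-r$ and that $\Phi$ is transverse to the algebraic subvariety $\{\mu_i/\mu_j=\zeta_{ij}\}\subset(\C^*)^r$. This is standard (via a Jordan normal form slice and the identification of the centralizer with the kernel of $\Ad_A-\id$), but care is needed to handle Jordan types in which several distinct eigenvalues share the same block-size partition, where one must work on the labeled cover $\widetilde{\cS}_\tau$ and project.
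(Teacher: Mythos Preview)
Your approach is essentially right and leads to the same sets the paper constructs, but there is a mismatch between your \emph{definition} of $\cG(A)$ and what your manifold argument actually produces. You set $\cG(A):=\{B:\sigma(B)=\sigma(A)\}$, which forces the eigenvalues of $B$ to have \emph{exactly} the $\asymp$-pattern of those of $A$. But the set cut out by your pullback argument, namely $\Phi^{-1}\bigl(\{\mu_i/\mu_j=\zeta_{ij}\text{ for }i\sim j\}\bigr)$, only imposes the prescribed ratio constraints and allows additional accidental $\asymp$-relations among the $\mu_i$. Your defined $\cG(A)$ is therefore obtained from this manifold by deleting a countable union of hypersurfaces, and such a set need not be an immersed manifold (already $\R\setminus\{1/k:k\ge 1\}$ fails to be one near $0$). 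The fix is immediate: take $\cG(A)$ to be the constraint-locus pullback itself. This still contains $A$, is determined by the countable datum $(\tau,\sim,(\zeta_{ij}))$, and is a manifold of the right codimension by your argument.

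With that correction your proof is equivalent to the paper's, though the technical execution differs. Rather than invoking the smoothness of the Jordan-type stratum $\cS_\tau$ and the submersivity of the eigenvalue map (the point you flag as the main obstacle), the paper works directly with the conjugation map $\Psi\colon\GL(d,\C)\times U\to\GL(d,\C)$, $\Psi(X,Y)=XYX^{-1}$, where $U$ is the $c(A)$-dimensional family of Jordan-form matrices with the prescribed block sizes and eigenvalue ratios. A short calculation shows that $D\Psi$ has constant rank $d^2-a(A)+c(A)$: the image of $\partial_1\Psi$ is tangent to the conjugacy class and is seen to be transverse to the image of $\partial_2\Psi$ (which moves only the characteristic polynomial), whence the Rank Theorem gives the immersed-manifold conclusion without appealing to any structural facts about $\cS_\tau$.
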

 

\begin{proof}
Fix any $A \in \GL(d,\C)$.
Then $A$ is conjugate to a matrix in Jordan form:
$$
\tilde{A} =
\left(
\begin{array}{ccc}
J_{t_1}(\lambda_1) &        &                   \\[-2mm]
                   & \ddots &                   \\[-2mm]
                   &        & J_{t_n}(\lambda_n)      
\end{array}
\right), 
$$
where $J_\lambda(t)$ denotes Jordan block as in \eqref{e.block}.
Let $U$ be the set of matrices of the form  
$$
\left(
\begin{array}{ccc}
J_{t_1}(\mu_1) &        &                   \\[-2mm]
               & \ddots &                   \\[-2mm]
               &        & J_{t_n}(\mu_n)      
\end{array}
\right),
$$
where $\mu_1$, \dots, $\mu_n$ are nonzero complex numbers such that
$$
\lambda_i = \lambda_j \ \Leftrightarrow \ \mu_i = \mu_j
\qquad \text{and} \qquad
\lambda_i \asymp \lambda_j \ \Leftrightarrow \ \frac{\lambda_i}{\lambda_j} = \frac{\mu_i}{\mu_j} \, .
$$
Then $U$ is an embedded submanifold of $\GL(d,\C)$ of dimension $c(A)$.
Every $Y \in U$ has the same Jordan type as $A$,
and so, by \cref{r.jordan type and acyc}, $a(Y) = a(A)$.
We define the set $\cG(A)$ as the image of the map 
$\Psi = \Psi_A \colon \GL(d,\C) \times U \to \GL(d,\C)$ given by $\Psi(X,Y) = \Ad_X (Y)$.
Notice that $\cG(A)$ does not depend on the choice of $\tilde{A}$.
Actually $\cG(A)$ is characterized by the sizes of the Jordan blocks $t_1$, \dots, $t_n$,
the pairs $(i,j)$ such that $\lambda_i \asymp \lambda_j$ and the corresponding
roots of unity;
in particular there are countably many such sets $\cG(A)$.

Let us check that property~\ref{i.manifold} holds.
Let $\partial_1 \Psi$ and $\partial_2 \Psi$ denote the partial derivatives with
respect to $X$ and $Y$, respectively.
As we have seen in \cref{r.conjugacy_class},
the rank of $\partial_1 \Psi (X,Y)$ is equal to $d^2 - a(Y) = d^2 - a(A)$ 
for every $(X,Y)$.
On the other hand, $\partial_2 \Psi (X,Y)$ is one-to-one and therefore of rank $c(A)$.
We claim that
\begin{equation}\label{e.partials}
(\text{image of } \partial_1 \Psi(X,Y)) \cap (\text{image of } \partial_2 \Psi(X,Y)) = \{0\} ;
\end{equation}
To see this, consider the map $F \colon \Mat_{d\times d}(\C) \to \C^d$ 
that associates to each matrix the coefficients of its characteristic polynomial.
Then $\partial_1 (F \circ \Psi)(X,Y) = 0$,
while $\partial_2 (F \circ \Psi)(X,Y)$ is one-to-one. 
So \eqref{e.partials} follows.
As a result, at every point the rank of the derivative of $\Psi$ is equal to 
the sum of the ranks of the partial derivatives, that is, $d^2 - a(A) + c(A)$.
Therefore, by the Rank Theorem, the image of $\Psi$ is an immersed manifold of codimension 
$a(A) - c(A)$.
\end{proof}


\begin{proof}[Proof of \cref{l.cod rig}]
If $k=1$ then $M_1 = \GL(d,\C)$ (since $d \ge 2$), so there is nothing to prove.
Consider $k \ge 2$.
We have already shown in \cref{ss.poor_set}
that $\cP_k$ is algebraic.
Since $M_k = \{ A \in \GL(d,\C) ; \; \forall \hat{X} \in \gl(d,\C)^{k} , \ (A, \hat{X}) \in \cP_k \}$,
it is evident that $M_k$ is algebraically closed as well. 
We are left to estimate its dimension.

Take a nonsingular point $A_0$ of $M_k$ where the local dimension is maximal.
Let $D$ be the intersection of $M_k$ 
with a small neighborhood of $A_0$; it is an embedded disk. 
Each $A \in D$ has $r(A) \ge 2$;
therefore by (both parts of) \cref{t.rig},
we have $a(A) - c(A) \ge r(A) \ge k$. 
So, in terms of the sets from \cref{l.countable cover},
$$
D \subset \bigcup_{A \text{ s.t.\ } a(A) - c(A) \ge k} \cG(A).
$$
The right hand side is a countable union of immersed manifolds of codimension at least $k$.
It follows (e.g.\ by Baire Theorem)
that $D$ (and hence $M_k$) has codimension at least $k$. 
\end{proof}

\subsection{Proof of \cref{t.cod_data_C,t.cod_data_R}}

Now we apply \cref{l.cod fiber,l.cod rig} to prove one of our major results:

\begin{proof}[Proof of \cref{t.cod_data_C}]
The set $\cP_m \subset \GL(d,\C) \times [\gl(d,\C)]^m$
is homogeneous in the second factor.
Using \cref{l.pret_a_porter} together with \cref{r.homogeneous}, we
obtain that the sets 
\begin{equation}\label{e.C_j}
C_j = \big \{A \in \GL(d,\C) ; \; \codim  \cP_m (A) \le j \big\}
\end{equation}
are algebraically closed in $\GL(d,\C)$, and
$$
\codim \cP_m = \min_{j ; \; C_j \neq \emptyset} \big( j + \codim C_j \big) \, .
$$
By \cref{l.cod fiber}, we have $C_j \subset M_{m+1-j}$.
Therefore, by \cref{l.cod rig},
\begin{equation}\label{e.ultraimportant}
C_j \neq \emptyset \quad \Rightarrow \quad
\codim C_j 
\begin{cases}
\ge 0      &\text{if $j=m$,} \\
\ge m-j+1  &\text{if $j \le m-1$.}
\end{cases}
\end{equation}
So $\codim \cP_m \ge m$, as we wanted to show.
\end{proof}

The proof above only used that $\codim C_j \ge m-j$.
On the other hand, using the full power of \eqref{e.ultraimportant} we obtain: 

\begin{scho}\label{scholium}
The set of poor data in ``fat fibers'', namely
$$
\cF_m := \big\{ (A, B_1, \dots, B_m)  \in \cP_m^{(\C)} ; \; \codim \cP_m(A) \le m-1 \big\},
$$
has codimension at least $m+1$ in $\GL(d,\C) \times [\gl(d,\C)]^m$.
\end{scho}

\begin{proof}
The projection of $\cF_m$ on $\GL(d,\C)$ is $C_{m-1}$.
Use \cref{l.pret_a_porter} (together with \cref{r.homogeneous}) and \eqref{e.ultraimportant}.
\end{proof}

Next, let us consider the real case:

\begin{proof}[Proof of \cref{t.cod_data_R}]
The real part of $\cP^{(\C)}_m$ is a real algebraic set
which, in view of \cref{p.real complex dimension}, has codimension at least $m$.
Recall from \cref{ss.poor_set} that this set contains the semialgebraic set $\cP^{(\R)}_m$,
which therefore has codimension at least $m$.
Since we already knew from \cref{p.cod_data_easy_half} that $\codim \cP^{(\R)}_m \le m$,
the theorem is proved.
\end{proof}

\section{Proof of the main result} \label{s.main proof}
We now use \cref{t.cod_data_R} and transversality theorems to prove our main result.
For precise definitions and statements on the objects used in this section, see \cref{a.strat_trans}.

A \emph{stratification} is a filtration by closed subsets of a smooth manifold $X$
$$
\Si = \Si_n \supset \Si_{n-1} \supset \cdots \supset \Si_0
$$ 
such that for each $i$, the set $\Gamma_i= \Si_i \setminus \Si_{i-1}$ (where $\Sigma_{-1}:=\emptyset$) is a smooth submanifold of $X$ without boundary, and the dimension of $\Gamma_i$ decreases strictly with increasing $i$. 

We say that a $C^1$-map is {\em transverse} to that stratification if it is transverse to each of the submanifolds $\Gamma_i$. 
There are explicit, so-called {\em Whitney conditions} that guarantee that a stratification behaves nicely with respect to transversality, as the next proposition shows. A stratification satisfying those conditions is called a {\em Whiney stratification}.
By the classical~\cref{t.canonical_stratif} stated in \cref{a.strat_trans} (see for instance~\cite{GWPL}), any semi-algebraic subset of an affine space admits a canonical Whitney stratification.

We refer the reader to~\cref{a.strat_trans} for the definitions of jets, jet extensions and for a proof of the following: 
\begin{prop}\label{p.stratifiedtransversality}
Let $X$, $Y$ be $C^\infty$-manifolds without boundary. 
Let $\Si$ be a Whitney stratified closed subset of the set of $1$-jets from $X$ to $Y$. 
Then the set of maps $f \in C^2(X,Y)$ whose $1$-jet extension $j^1f$ is transverse to $\Si$ is $C^2$-open and $C^\infty$-dense in $C^2(X,Y)$ (i.e., its intersection with $C^r(X,Y)$ is $C^r$-dense, for every $2\le r\le \infty$).
\end{prop}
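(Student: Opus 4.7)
The plan is to combine Thom's jet transversality theorem, applied stratum by stratum, with the classical openness-of-transversality property of Whitney stratifications.

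For the density part, I would write the filtration as $\Sigma = \Sigma_n \supset \cdots \supset \Sigma_0$ and set $\Gamma_i = \Sigma_i \setminus \Sigma_{i-1}$, so that the finitely many $\Gamma_i$ are pairwise disjoint smooth submanifolds without boundary of the $1$-jet manifold $J^1(X,Y)$. Thom's jet transversality theorem provides, for each smooth submanifold $W \subset J^1(X,Y)$ and each $r \in [2,\infty]$, a $C^r$-residual subset of $C^r(X,Y)$ consisting of maps $f$ with $j^1 f \pitchfork W$. Applying this to each $\Gamma_i$ and intersecting over the finitely many $i$ would produce a $C^r$-residual subset of $C^r(X,Y)$ whose elements satisfy $j^1 f \pitchfork \Sigma$; residual sets being dense, this gives $C^r$-density for every $r \in [2,\infty]$ simultaneously, which is exactly what the statement calls $C^\infty$-density.

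For $C^2$-openness, the key input is Whitney's regularity condition (b). I would invoke the classical \emph{openness of transversality to a Whitney stratified set}: if $Z$ is a smooth manifold, $\Sigma \subset Z$ is a closed Whitney-stratified subset, and $W$ is a smooth manifold, then the set of $C^1$ maps $g \colon W \to Z$ transverse to every stratum is open in the strong $C^1$ topology (see e.g.\ \cite{GWPL}). The local mechanism is that at a point $w_0$ with $g_0(w_0) \in \Gamma_i$, transversality to $\Gamma_i$ is manifestly an open condition on $g_0$ near $w_0$; Whitney's (b) upgrades this to persistent transversality at nearby points $w$ where $g(w)$ may lie in a higher-codimension stratum $\Gamma_j$ with $\Gamma_i \subset \overline{\Gamma_j}$, because limits of tangent spaces to $\Gamma_j$ along sequences approaching $\Gamma_i$ must contain $T\Gamma_i$. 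Applied with $W = X$, $Z = J^1(X,Y)$, $g = j^1 f$, and using the continuity of the $1$-jet extension $f \mapsto j^1 f$ from the strong $C^2$ topology on $C^2(X,Y)$ to the strong $C^1$ topology on $C^1(X, J^1(X,Y))$, this yields the desired $C^2$-openness.

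The main obstacle I anticipate is purely technical: reconciling the strong topology on $C^2(X,Y)$ over a possibly non-compact $X$ with the compactly-phrased standard formulations of jet transversality and of Whitney openness. One argues locally on a locally finite open cover and then patches — using that $\Sigma$ is closed to discard points where $j^1 f_0$ avoids $\Sigma$ — to assemble global openness in the strong topology. The core conceptual ingredients, Thom's transversality for density and Whitney's condition (b) for openness, are entirely classical; only the bookkeeping needs care.
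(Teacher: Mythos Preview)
Your approach is essentially identical to the paper's: openness comes from the classical openness-of-transversality theorem for Whitney stratifications (the paper's \cref{p.transversality open}) together with continuity of $f\mapsto j^1f$ from $C^2$ to $C^1$, and density comes from applying the jet transversality theorem (\cref{t.jtransversality}) to each stratum $\Gamma_i$ and intersecting the finitely many residual sets. One minor slip: the mechanism you describe for openness---limits of tangent spaces of nearby strata containing the tangent space of the limit stratum---is Whitney condition~(a), not~(b); the paper in fact remarks (citing Trotman) that only~(a) is needed here, but since a Whitney stratification satisfies both, this does not affect your argument.
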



By \cref{t.cod_data_R}, $\cP_m^{(\R)}$ is a closed semialgebraic subset of $\GL(d,\R) \times \gl(d,\R)^m$ of codimension $m$.
The closure $\overline{\cP_m^{(\R)}}$ of $\cP_m^{(\R)}$ in  $[\Mat_{d\times d}(\R)]^{1+m}$ is a closed semialgebraic set of the affine space  $[\Mat_{d\times d}(\R)]^{1+m}$. As mentioned above, it admits a canonical Whitney stratification
$$
\overline{\cP_m^{(\R)}} = \hat \Gamma_n \supset \cdots \supset  \hat \Gamma_0 \, .
$$
The differentiable codimension of that stratification is also $m$. By locality of the Whitney conditions (see \cref{p.whitney_properties} of \cref{a.strat_trans}), this stratification restricts to a Whitney stratification of codimension~$m$:
\begin{equation}\label{e.poor_data_stratif}
\cP_{m}^{(\R)} = \Gamma_n \supset \cdots \supset \Gamma_0 \, .
\end{equation}
Since that stratification of $\overline{\cP_m^{(\R)}}$ is canonical, 
the stratification \eqref{e.poor_data_stratif} is invariant under 
polynomial automorphisms of $\GL(d,\R) \times \gl(d,\R)^m$ that preserve $\cP_{m}^{(\R)}$.

\begin{proof}[Proof of Theorem~\ref{t.main}]
Let $\cU$ be a smooth manifold without boundary and of dimension $m$. 
Given local coordinates on an open set $U\subset \cU$, the set $J^1(U,\GL(d,\R))$ of $1$-jets from $U$ to $\GL(d,\R)$ may be identified with the set $$U \times \GL(d,\R) \times \gl(d,\R)^m.$$ 
Indeed, a jet $\bJ$ represented by a pair $(u,A)$ can be identified with the point 
$$(u,A(u),B_1, \dots ,B_m)\in U \times \GL(d,\R) \times  \gl(d,\R)^m,$$
where $B_i\in \Mat_{d \times d}(\R)$ is the normalized derivative 
of $A$ at $u$, 
along the $i^\text{th}$ coordinate. 
Let us say that the $1$-jet $\bJ$ is \emph{rich}
if the datum $\bA = (A(u),B_1, \dots ,B_m)$ is rich,
or equivalently, if for sufficiently large $N$,
the input $(u,\ldots,u)\in \cU^N$ is universally regular for the system \eqref{e.proj semilin CS}.
If the jet is not rich then it is called \emph{poor}.

Define a filtration 
\begin{equation}\label{e.poor_jets_stratif}
\Sigma_n  \supset \cdots \supset \Sigma_0
\end{equation}
of the set of poor jets from $\cU$ to $\GL(d,\R)$ as follows:
a jet $\bJ$ represented as above in local coordinates by $(u,A(u),B_1, \dots ,B_m)$
belongs to $\Sigma_i$ if and only if $(A(u), B_1, \dots ,B_m)$ belongs to 
the set $\Gamma_i$ in \eqref{e.poor_data_stratif}.
We need to check that this definition does not depend on the choice of the local coordinates.
Indeed, this follows from $\cP_m^{(\R)}$ being a saturated set
(see \cref{ss.poor_set}) and from the invariance of \eqref{e.poor_data_stratif} by polynomial automorphisms.

We claim that the filtration \eqref{e.poor_jets_stratif} is a Whitney stratification 
of codimension $m$.
Indeed, the intersection of the filtration 
with the open subset $J^1(U,\GL(d,\R))$ of $J^1(\cU,\GL(d,\R))$
is identified (through a smooth diffeomorphism) with the filtration 
$$
U \times \Gamma_n \supset \cdots \supset U \times \Gamma_0.
$$
Such a filtration is still a Whitney stratification (see~\cref{p.whitney_properties} of  \cref{a.strat_trans}) of codimension $m$ in $J^1(U,\GL(d,\R))\approx U \times \GL(d,\R) \times \gl(d,\R)^m$. Covering $\cU$ by open sets $U$, we deduce that \eqref{e.poor_jets_stratif} is a Whitney stratification of codimension $m$ in $J^1(\cU,\GL(d,\R))$.


Applying Proposition~\ref{p.stratifiedtransversality}, we obtain a $C^2$-open
$C^\infty$-dense set $\cO \subset C^2(\cU,\GL(d,\C))$ formed by maps $A$ 
that are transverse to the stratification \eqref{e.poor_jets_stratif} 
of the set of poor jets.
Since the codimension of the stratification equals the dimension of $\cU$,
if $A \in \cO$ then the points $u$ for which $j^1 A(u)$ is poor form a $0$-dimensional set.
This proves~\cref{t.main}.
\end{proof}



\appendix  

\section{The case of one-dimensional input}\label{a.dim 1}

As we explained in \cref{ss.overview},
this appendix contains a basically independent discussion of the case where $m = \dim \cU$ equals $1$.
The prerequisites are all contained in \cref{s.prelim_poor,ss.acyclicity}. 

\medskip

\subsection{Elementary constraints}\label{ss.unconstrained}
The material of this subsection is also used in \cref{a.generic singular}.

\medskip

An \emph{elementary constraint} in the variables $\lambda_1$, \dots, $\lambda_d$ 
is a relation $p=0$ where
$p$ is an irreducible factor of a polynomial 
of the form $\lambda_i \lambda_\ell - \lambda_j \lambda_k$.
Every elementary constraint can be written, after a permutation of the indices $1,\dots,d$, 
as one of the following:
\begin{equation}\label{e.canonical_constr}
\lambda_1 \lambda_3 = \lambda_2^2 , \qquad
\lambda_1 \lambda_4 = \lambda_2 \lambda_3, \qquad  
\lambda_1 = -\lambda_2, \qquad 
\lambda_1 = \lambda_2 \, ,
\end{equation}
which will be called the \emph{canonical constraints} respectively of type $1$, $2$, $3$, $4$.
The \emph{type} of elementary constrained is defined as 
the (unique) type of the associated canonical constraint.

We say that a matrix $A\in \GL(d,\R)$ is \emph{unconstrained} if its eigenvalues, counted with multiplicity, satisfy no elementary constraint.
(Equivalently, $\Ad_A$ has the maximal possible number of distinct eigenvalues, namely, $d^2-d+1$.)
 

Let us see that the converse of \cref{l.easy_poor_data} holds for unconstrained matrices:

\begin{lemma}\label{l.easy_fiber}
Suppose that the datum 
$\bA = (A, B_1, \dots, B_m) \in \GL(d,\K) \times \gl(d,\K)^m$ is poor
and that the matrix $A$ is unconstrained.
Then $\bA$ is conspicuously poor.
\end{lemma}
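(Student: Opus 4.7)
The plan is to diagonalize $A$ and exploit the strong separation of the eigenvalues of $\Ad_A$ on off-diagonal positions that unconstrainedness provides. First I would note that the type-$4$ elementary constraint $\lambda_i=\lambda_j$ is ruled out, so $A$ is diagonalizable with simple spectrum; after a change of basis one may assume $A=\Diag(\lambda_1,\dots,\lambda_d)$. Writing $B_k=(b^k_{ij})$, formula \eqref{e.poly entry} says that for any polynomial $f$, the $(i,j)$-entry of $f(\Ad_A)(B_k)$ equals $f(\lambda_i\lambda_j^{-1})\,b^k_{ij}$.

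The central step is to prove that the values in $\{\lambda_i\lambda_j^{-1} : i\neq j\}\cup\{1\}$ are pairwise distinct. This is the only real use of unconstrainedness and is the main (though mild) obstacle: the equation $\lambda_i\lambda_j^{-1}=\lambda_k\lambda_\ell^{-1}$ rewrites as $\lambda_i\lambda_\ell=\lambda_j\lambda_k$, and a short case analysis depending on whether the multisets $\{i,\ell\}$ and $\{j,k\}$ coincide shows that any failure produces one of the canonical elementary constraints \eqref{e.canonical_constr}; combined with the exclusion of type $4$, this yields the required separation. By Lagrange interpolation, for each off-diagonal pair $(i_0,j_0)$ one then obtains a polynomial $f$ with $f(\lambda_{i_0}\lambda_{j_0}^{-1})=1$ and $f$ vanishing at $1$ and at all other $\lambda_i\lambda_j^{-1}$; applying $f(\Ad_A)$ to $B_k$ produces $b^k_{i_0,j_0}\, E_{i_0,j_0}\in\Lambda(\bA)$.

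To finish, I would argue by contrapositive. If $\bA$ is not conspicuously poor, then for every off-diagonal position $(i_0,j_0)$ at least one of the coefficients $b^k_{i_0,j_0}$ is nonzero, so $E_{i_0,j_0}\in\Lambda(\bA)$; combined with $\Id\in\Lambda(\bA)$, this shows that $\Lambda(\bA)$ contains the subspace $\Delta$ spanned by $\Id$ and by all $E_{ij}$ with $i\neq j$. A direct check (splitting on whether $v\in\K^d_*$ is or is not a scalar multiple of a standard basis vector) confirms that $\Delta$ is already transitive, so $\Lambda(\bA)$ is transitive, contradicting the poorness of $\bA$. Hence $\bA$ must indeed be conspicuously poor.
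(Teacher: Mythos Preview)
Your proof is correct and follows essentially the same approach as the paper's: diagonalize $A$, use unconstrainedness to separate the off-diagonal eigenvalues of $\Ad_A$ from each other and from $1$, interpolate to extract individual off-diagonal entries, and argue by contrapositive. The only cosmetic difference is the final transitivity check: the paper observes that the span of $\Id$ and all off-diagonal $E_{ij}$ contains the Toeplitz matrices and invokes \cref{ex.toeplitz}, whereas you verify transitivity of this space directly; both arrive at the same space and the same conclusion.
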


\begin{proof}
Suppose $A$ is unconstrained.
In particular, $A$ has simple spectrum.
With a change of basis we can assume that $A$ is diagonal.

Now suppose that $\bA = (A, B_1, \dots, B_m)$ is not conspicuously poor.
This means that for each off-diagonal position there is at least of the matrices $B_k$
that has a non-zero entry in that position.
(Notice that this fact does not depend on the change of basis chosen before.)

Since $A$ is unconstrained, the values $\lambda_i \lambda_j^{-1}$, 
where $(i,j)$ runs on the matrix positions outside the diagonal, are pairwise different, 
and all different from $1$. 
Recall that one can always (using Lagrange formula) find a polynomial 
whose values at finitely many different points are prescribed.
Restricting to polynomials $f$ such that $f(1)=0$,
it follows from \eqref{e.poly entry}
that the space $\Lambda(\bA)$ contains all matrices $(y_{ij})$ 
with only zeros in the diagonal.
Since, by definition, $\Lambda(\bA)$ also contains the identity matrix,
it contains all Toeplitz matrices.
So $\Lambda(\bA)$ is transitive, i.e., $\bA$ is not poor.
This proves the \lcnamecref{l.easy_fiber}.
\end{proof}

\subsection{Effective richness criteria for the case $m=1$}
We will describe an explicit set of rich data $(A,B)$ whose complement has codimension $1$.
In order to avoid technicalities, we will be sometimes informal, especially regarding questions of transversality.

\medskip


Let us say that a matrix $A\in \GL(d,\R)$ is \emph{$(i)$-constrained}, where $1\leq i\leq 4$, if:
\begin{itemize}
	\item its eigenvalues, counted with multiplicity, satisfy exactly one elementary constraint, which is a type $i$ constraint,
	\item if there is a type $4$ constraint between the eigenvalues, then the matrix $A$ is \emph{not} diagonalizable.
\end{itemize}
Suppose that there is no $i$ for which the matrix $A$ is $(i)$-constrained; then:
\begin{itemize}
\item either $A$ is unconstrained, i.e., its eigenvalues (with multiplicity) satisfy no elementary constraint;
\item or the eigenvalues of $A$ satisfy at least two elementary constraints;
\item or $A$ has a (multiple) eigenvalue corresponding to at least two Jordan blocks.
\end{itemize}
If either of the last two cases hold, we say that $A$ is \emph{multiconstrained}.

\begin{prop}\label{p.cod_constraints}
\begin{enumerate}
\item The complement of the set of unconstrained matrices has codimension $1$ in $\GL(d,\R)$.
\item The set of multiconstrained matrices has codimension $2$ in $\GL(d,\R)$.
\end{enumerate}
\end{prop}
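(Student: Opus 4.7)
The plan is to exploit the characteristic-coefficient map $\chi\colon\GL(d,\R)\to\R^d$, which is a submersion on the dense open set of matrices with simple spectrum; this lets codimension estimates in $\GL(d,\R)$ be pulled back from corresponding estimates on the space of unordered spectra. Although an individual elementary constraint $p(\lambda_1,\dots,\lambda_d)=0$ is not $S_d$-invariant, the symmetrized product $\tilde p:=\prod_{\sigma\in S_d}p(\lambda_{\sigma(1)},\dots,\lambda_{\sigma(d)})$ is a nonzero symmetric polynomial (as each factor is nonzero), hence expressible as a nonzero polynomial in the elementary symmetric functions; its pullback by $\chi$ is thus a nonzero polynomial on $\GL(d,\R)$, cutting out a real algebraic hypersurface. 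Part~(1) follows immediately: the complement of the unconstrained locus is the finite union, over the four canonical types from~\eqref{e.canonical_constr} and their finitely many index-patterns, of these hypersurfaces, hence has codimension~$1$.

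For part~(2), I decompose the multiconstrained set as $\cM_1\cup\cM_2$, where $\cM_1$ is the locus on which the spectrum satisfies two mutually inequivalent elementary constraints and $\cM_2$ is the locus on which some eigenvalue of $A$ has geometric multiplicity at least~$2$. The set $\cM_1$ is a finite union of intersections $\{\tilde p_1=\tilde p_2=0\}$, and for any two nonproportional symmetrized polynomials the corresponding hypersurfaces in $\R^d$ meet in codimension $\ge 2$; pulling back by $\chi$ yields $\codim\cM_1\ge 2$. For $\cM_2$ I split according to whether the repeated eigenvalue is non-real or real. If $\mu$ is non-real, then $\bar\mu$ is also repeated, and one obtains two disjoint type-$4$ constraints $\lambda_i=\lambda_j=\mu$ and $\lambda_k=\lambda_\ell=\bar\mu$, so $A$ already lies in $\cM_1$. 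If the repeated eigenvalue is real, I stratify by Jordan type: the top stratum consists of matrices $\R$-conjugate to $\mu I_2\oplus\mu_3\oplus\cdots\oplus\mu_d$ with pairwise distinct real eigenvalues; its centralizer has real dimension $4+(d-2)=d+2$, so the conjugacy class has dimension $d^2-d-2$, and adding the $d-1$ eigenvalue parameters gives stratum dimension $d^2-3$, i.e.\ codimension~$3$. All deeper Jordan strata (triple eigenvalue, higher geometric multiplicity, or non-scalar Jordan structure at the repeated eigenvalue) have strictly larger codimension, so $\codim\cM_2\ge 3$, and combining with the bound for $\cM_1$ yields $\codim(\cM_1\cup\cM_2)\ge 2$.

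The main technical obstacle will be verifying the coprimality claim used for $\cM_1$, namely that any two inequivalent elementary constraint patterns $p_1,p_2$ chosen from the four canonical types of~\eqref{e.canonical_constr} give nonproportional symmetrized polynomials $\tilde p_1,\tilde p_2$ and therefore cut out distinct hypersurfaces whose intersection truly has codimension~$2$ in $\R^d$. This reduces to a short finite case analysis indexed by ordered pairs of canonical types together with representative choices of index-patterns; in each case, producing explicit eigenvalue tuples satisfying one constraint but not the other separates the two hypersurfaces, completing the argument.
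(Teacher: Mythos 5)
Your part~(1) is fine and is more explicit than the paper's own argument, which is deliberately labelled an ``Informal proof'' and simply notes that one eigenvalue constraint cuts down by one dimension. Your centralizer count giving codimension~$3$ for the top stratum of $\cM_2$ is also correct, and reducing the non-real repeated eigenvalue to $\cM_1$ is a nice observation.

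However, the ``coprimality claim'' you defer to a finite case analysis is actually \emph{false}, so the key step for $\cM_1$ does not go through. Symmetrizing over $S_d$ forgets which index pattern of a given canonical type was used. Take $d\ge4$ and the two inequivalent type-$4$ constraints $p_1=\lambda_1-\lambda_2$ and $p_2=\lambda_3-\lambda_4$. Every factor of $\tilde p_1=\prod_{\sigma\in S_d}(\lambda_{\sigma(1)}-\lambda_{\sigma(2)})$ is some $\lambda_i-\lambda_j$ with $i\neq j$, each occurring exactly $(d-2)!$ times; the same description applies verbatim to $\tilde p_2$, so $\tilde p_1=\tilde p_2$. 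Then $\{\tilde p_1=\tilde p_2=0\}=\{\tilde p_1=0\}$ is a hypersurface, and your argument only yields $\codim\cM_1\ge1$. The same collapse happens whenever the two inequivalent constraints have the same canonical type but different index patterns (type $1$ versus type $1$, etc.), and this is exactly the part of $\cM_1$ that realizes the generic codimension-$2$ stratum (spectra of the form $\{a,a,b,b,\ldots\}$ with two nontrivial Jordan blocks), so it cannot be discarded. A further, smaller issue: even where the symmetrizations are distinct, ``nonproportional'' alone does not give codimension~$2$ for real hypersurfaces --- you would need coprimality, which again fails in the example above.

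To repair the bound for $\cM_1$ you need an argument that remembers the ordering of the eigenvalues rather than passing through symmetrized polynomials. One route consistent with the rest of the paper: over $\C^d$ (ordered spectra) two distinct irreducible elementary constraints are coprime, so their common zero set has complex codimension~$2$; its image under the finite map $\C^d\to\C^d$ given by the elementary symmetric functions still has codimension $\ge2$, and its real trace has real codimension $\ge2$ by \cref{p.real complex dimension}; pulling back by $\chi$ is then a submersion on the open dense locus of simple spectrum, and the complementary non-simple-spectrum locus can be folded into the Jordan-type stratification you already use for $\cM_2$. The paper's proof glosses over all of this, so your attempt to make it precise is worthwhile, but the symmetrization mechanism itself is not the right tool for two same-type constraints.
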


\begin{proof}[Informal proof]
Matrices that are not unconstrained have at least one constraint on their eigenvalues, so the corresponding set has codimension~$1$.

Matrices that are multiconstrained either have at least two constraints on their eigenvalues, or 
are derogatory, i.e., have an eigenvalue corresponding to at least two Jordan blocks.
In both cases, the corresponding set has codimension~$2$.
\end{proof}

Let us define \emph{adapted bases} for matrices $A$ that are not multiconstrained:
\begin{itemize}
\item If $A$ is unconstrained then an adapted basis is a basis of eigenvectors.
\item If $A$ is $(i)$-constrained, for $i = 1$, $2$, or $3$ 
then an adapted basis is an (ordered) basis of eigenvectors such that
the corresponding eigenvectors $\lambda_1,\ldots \lambda_d$ 
satisfy the canonical type $i$ constraint.
\item If $A$ is $(4)$-constrained then an \emph{adapted basis} for $A$ 
is a basis in which $A$ is written in the following \emph{modified Jordan form} 
$$
\left(
\begin{array}{cc|ccc}
\lambda_1 & \lambda_1 &          &       &           \\
0         & \lambda_1 &          &       &           \\
\hline
          &           &\lambda_3 &       &           \\
          &           &          &\ddots &           \\
          &           &          &       &\lambda_d  
\end{array}
\right).
$$
\end{itemize}
Obviously, such adapted bases always exist.

If a matrix $A$ is $(i)$-constrained then we say that a 
$d \times d$ matrix $B$ is a \emph{good match} for $A$, 
if there is an adapted basis for $A$ in which 
it writes as $B = (b_{ij})$, 
where all nondiagonal entries $b_{ij}$ are nonzero and 
if $b_{11}\neq b_{22}$, in the particular case where $A$ is $3$-constrained.


The usefulness of this definition is explained by the following 
\cref{p.rich_pair,p.cod_RH}.
(Actually, the definition of a good match matrix is stronger than necessary for the validity of the propositions below. But in order to avoid complications, we chose a condition that works for all types of constraints.)


\begin{prop}\label{p.rich_pair}
If $A$ is not multiconstrained and $B$ is a good match for $A$ then the pair $(A,B)$ is rich.
\end{prop}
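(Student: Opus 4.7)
My plan is a case analysis according to the type of $A$ (unconstrained, or $(i)$-constrained for some $i\in\{1,2,3,4\}$). Setting $\Lambda(A,B)=\sorb_{\Ad_A}(\Id,B)$, I must show this space is transitive. In every case except the $(4)$-constrained one, $A$ is diagonalizable: in an adapted basis, $A=\Diag(\lambda_1,\dots,\lambda_d)$, and by formula~\eqref{e.poly entry} the $(i,j)$-entry of $f(\Ad_A)(B)$ is $f(\lambda_i/\lambda_j)\,b_{ij}$. I partition the off-diagonal positions into equivalence classes under $(i,j)\sim(i',j') \Leftrightarrow \lambda_i/\lambda_j=\lambda_{i'}/\lambda_{j'}$ and use Lagrange interpolation to produce, for each class $\mathcal{E}$, a polynomial $f_\mathcal{E}$ vanishing at $1$ and at all other ratios but taking value $1$ on the ratio defining $\mathcal{E}$; the resulting matrix $f_\mathcal{E}(\Ad_A)(B)=\sum_{(i,j)\in\mathcal{E}} b_{ij} E_{ij}$ lies in $\Lambda(A,B)$ and is nonzero because $B$ is a good match.

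When $A$ is unconstrained every off-diagonal class is a singleton, giving all off-diagonal matrix units in $\Lambda(A,B)$, whence transitivity by the proof of \cref{l.easy_fiber}. For $(1)$- or $(2)$-constrained $A$, the unique elementary relation produces a few size-two pairings among off-diagonal positions; I would verify transitivity via the duality criterion \cref{l.duality}, checking that the isolated $E_{ij}$'s, the class-sum matrices for paired classes, and $\Id$ together supply some $M\in\Lambda(A,B)$ with $\phi(Mu)\neq 0$ for every nonzero $u$, $\phi$. For $(3)$-constrained $A$ the only off-diagonal pair is $\{(1,2),(2,1)\}$ (with ratio $-1$); the duality check is routine except when both $u$ and $\phi$ are supported on $\{e_1,e_2\}$, and this is precisely where the hypothesis $b_{11}\neq b_{22}$ enters: it guarantees that $\Id$ and the diagonal matrix $\Diag(b_{11},\dots,b_{dd})\in\Lambda(A,B)$ (the latter produced by any $f$ with $f(1)=1$ vanishing on the other ratios) are linearly independent on the $2$-plane $\spa\{e_1,e_2\}$.

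The genuine new work is the $(4)$-constrained case. In the modified Jordan adapted basis, $A$ is block-diagonal with a $2\times 2$ Jordan block $\bigl(\begin{smallmatrix}\lambda_1 & \lambda_1\\0&\lambda_1\end{smallmatrix}\bigr)$ at the top and a diagonal tail $\Diag(\lambda_3,\dots,\lambda_d)$, and $\Ad_A$ is no longer semisimple. My plan is to decompose $\gl(d,\C)$ into the $\Ad_A$-invariant rectangular subspaces $\tJ^\square$ of \cref{ss.ad_geo}, apply \cref{l.sum} to split $\sorb_{\Ad_A}(\Id,B)$ along the invariant pieces according to the disjoint spectra of $\Ad_A$ on the different rectangles, and handle each piece separately. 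On rectangles not touching the Jordan block the diagonal analysis above goes through verbatim; on the Jordan block itself and on the four adjacent rectangles, the non-semisimple part of $\Ad_A$ provides derivation-like operators $\Ad_A-\beta\id$ that, applied to the entries of $B$ guaranteed nonzero by the good-match hypothesis, yield enough matrix units for local transitivity. I then glue the local transitivities into transitivity of $\Lambda(A,B)$ using \cref{l.sudoku}.

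The main obstacle is precisely this last case: the non-diagonalizability of $\Ad_A$ forces explicit computations on the $2\times 2$ Jordan block. The modified Jordan form chosen in the definition of adapted basis is however engineered so that $\Ad_A-\id$ acts as a clean shift on that block, which should keep the calculation manageable; and in each case the good-match hypothesis is used in exactly one way, namely to ensure that the class-sum matrices just constructed do not accidentally vanish.
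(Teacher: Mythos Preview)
Your plan is correct and parallels the paper's case analysis closely. The main difference is in packaging: for cases (1)--(3) the paper does not run the duality check directly but instead exhibits an explicit generalized Toeplitz subspace of $\Lambda(A,B)$ (in case~(3) after left-multiplying by the permutation swapping $e_1$ and $e_2$, rather than via your direct diagonal argument using $b_{11}\neq b_{22}$); for case~(4) it writes down an explicit Jordan basis for $\Ad_A$, uses \cref{l.sum} to show that a suitable projection of $B$ is cyclic, and concludes that $\{\Id\}\vee\sorb_{\Ad_A}(B)$ is again a generalized Toeplitz space. Your \cref{l.sudoku} route in case~(4) needs the same underlying computation on the $2\times2$ block (namely that $\Ad_A$ there has Jordan type $J_3(1)\oplus(1)$ and that the good-match hypothesis makes the relevant component of $B$ cyclic), and reaches the same conclusion. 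One small slip: there are $2(d-2)$ rectangles adjacent to the Jordan block, not four.
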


In other words, $\cP^{(\C)}_1$ is contained in the following set:
\begin{multline}\label{e.explicit}
\cE:=\big\{ (A,B) \in \GL(d,\C) \times \gl(d,\C) ; \; 
\text{either $A$ is multiconstrained} \\
\text{or $A$ is not multiconstrained but $B$ is not a good match for $A$} \big\}.
\end{multline}

\begin{prop}\label{p.cod_RH}
\begin{enumerate}
\item The set $\cE$ has codimension $1$.
\item The set $\{(A,B) \in \cE; \; A \text{ is not unconstrained}\}$ has codimension $2$.
\end{enumerate}
\end{prop}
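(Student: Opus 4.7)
The plan is to stratify $\cE$ according to the constraint type of $A$ and estimate the codimension on each stratum. Using the terminology of \cref{ss.unconstrained}, $\GL(d,\R)$ is partitioned (up to lower-dimensional remainders, which are irrelevant for codimension bounds) into the open dense set $\cU$ of unconstrained matrices, the four sets $\cM_1,\ldots,\cM_4$ of $(i)$-constrained matrices, and the set $\cM_{\mathrm{mc}}$ of multiconstrained matrices. By \cref{p.cod_constraints}, each $\cM_i$ has codimension $1$ in $\GL(d,\R)$ and $\cM_{\mathrm{mc}}$ has codimension $2$.

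The easy stratum is $\cM_{\mathrm{mc}}\times \gl(d,\R)$: by the very definition of $\cE$ every $B$ is admissible here, contributing codimension exactly $2$. For the remaining strata the main step is to produce, locally in $A$, a smoothly varying adapted frame and describe the ``bad $B$'' condition in that frame. For $A\in \cU$, \cref{p.eigen_smooth} provides a smooth local basis of eigenvectors. For $A\in\cM_i$ with $i\in\{1,2,3\}$, the same smooth eigenbasis can be reordered so that the unique elementary constraint takes canonical form. For $A\in\cM_4$ one verifies, by a direct argument at a generic point of $\cM_4$ (exactly two eigenvalues coincide into a single $2\times 2$ Jordan block), that a smooth modified Jordan frame exists locally. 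In each such adapted frame the good match condition is a finite conjunction of nonvanishing conditions on matrix entries of $B$ (plus $b_{11}\neq b_{22}$ in the type $3$ case), so the set of bad $B$'s is a nonempty union of affine hyperplanes, hence of codimension exactly $1$ in $\gl(d,\R)$.

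Adding codimensions stratum by stratum then gives: $\codim\bigl(\cE\cap(\cU\times \gl(d,\R))\bigr)=0+1=1$; $\codim\bigl(\cE\cap(\cM_i\times \gl(d,\R))\bigr)=1+1=2$ for each $i\in\{1,2,3,4\}$; and $\codim\bigl(\cE\cap(\cM_{\mathrm{mc}}\times \gl(d,\R))\bigr)=2$. Taking the minimum over all strata yields $\codim\cE=1$, which is part~(1). For part~(2) one simply discards the $\cU$-stratum, so the minimum becomes $2$.

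The step I expect to be the main obstacle is the smooth construction of the adapted frame on the $(4)$-constrained stratum, since there a generalized eigenspace of dimension two is involved; however, at a generic point of $\cM_4$ the characteristic polynomial has only a simple double root (the discriminant vanishes to order one), which makes the rank structure of $A-\lambda \Id$ locally constant and allows a straightforward smooth normalization. Everything else is routine dimension counting, consistent with the informal tone announced at the start of the appendix.
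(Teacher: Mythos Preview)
Your proposal is correct and follows essentially the same approach as the paper's (explicitly informal) proof: stratify by constraint type of $A$, use \cref{p.cod_constraints} for the base codimensions, observe that for each non-multiconstrained $A$ the set of non-good-match $B$'s is a finite union of hyperplanes, and add codimensions fibrewise. Two minor points: the set $\cE$ is defined over $\C$ in \eqref{e.explicit}, so you should work in $\GL(d,\C)\times\gl(d,\C)$ rather than over $\R$; and avoid reusing the symbol $\cU$ for the unconstrained locus, since the paper reserves $\cU$ for the control manifold.
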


\begin{proof}[Informal proof]
\Cref{p.cod_RH} follows from \cref{p.cod_constraints}
and from the fact that for each matrix $A$ that is not multiconstrained,
the set of $B$'s that are not good matches for $A$
has positive codimension in $\gl(d,\C)$.
\end{proof}

\cref{t.cod_data_C} in the case $m=1$ follows from the propositions above.
Therefore the other main results (\cref{t.cod_data_R,t.main,t.addendum,t.main_C}) in the $m=1$ 
case also follow from  the propositions.
For any of these results, the propositions give extra information of practical value: 
with the explicit definition of the set $\cE$ in \eqref{e.explicit}, we know 
which $1$-jets should be avoided in \cref{t.main}, for example.
The discussion given in \cref{a.generic singular}
also applies; it gives explicit conditions on the $2$-jet extension of the map 
$A \colon \cU \to \GL(d,\R)$ that ensure that $A$ satisfies the conclusions of \cref{t.main,t.addendum}.

\begin{proof}[Proof of \cref{p.rich_pair}]
Let $A$ and $B$ satisfy the hypotheses.
We need to show that $\Lambda(A,B) = \sorb_{\Ad_A}(\Id,B)$ is a transitive subspace of $\gl(d,\C)$.
Let $\Gamma = \sorb_{\Ad_A}(B)$,
so that $\Lambda(A,B) = \{\Id\} \vee \Gamma$.

The matrix $A$ is not multiconstrained and so has an adapted basis as above.
We change the basis so that $A$ and $B$ are ``canonical''.

The proof is divided in cases according to the type of constraint.
Except for the $(4)$-constrained case, the matrix $A$ is diagonal,
and so the space $\Gamma$ is described by \eqref{e.poly entry}.

\medskip\noindent\emph{Unconstrained case:}
It follows from \cref{l.easy_fiber}
that if $A$ is unconstrained and diagonal then the only way for the pair $(A,B)$ to be poor
is that $B$ has an off-diagonal zero entry.
(The reader should review the proof of \cref{l.easy_fiber}.)

\medskip\noindent\emph{$(1)$-constrained case:}
We see that the adjoint $\Ad_A$ has two eigenvalues (different from $1$) 
of multiplicity $2$, namely
$\lambda_1 \lambda_2^{-1} = \lambda_2 \lambda_3^{-1}$ and 
$\lambda_2 \lambda_1^{-1} = \lambda_3 \lambda_2^{-1}$.
By the same reasoning as in the unconstrained case, it follows that $\{\Id\} \vee \Gamma$ contains the space
$$
\big \{ (y_{ij}) \in \gl(d,\C) ; \;  y_{11} = \cdots = y_{dd}\, , \;
b_{12}^{-1} y_{12} = b_{23}^{-1} y_{23}\, , \;  
b_{21}^{-1} y_{21} = b_{32}^{-1} y_{32}
\big \}.
$$
This is a generalized Toeplitz space, 
and so by \cref{ex.gen Toeplitz} it is transitive.

\medskip\noindent\emph{$(2)$-constrained case:}
The reasoning is very similar to that of the $(1)$-constrained case,
but now the adjoint has four eigenvalues (different from $1$) of multiplicity $2$.
The space $\Lambda(A,B)$ contains the following subspace:
\begin{align*}
\big \{ (y_{ij}) \in \gl(d,\C) ; \;	
&y_{11} = \cdots = y_{dd}\, , \; 
 b_{13}^{-1} y_{13} = b_{24}^{-1} y_{24}\, , \\
&b_{12}^{-1} y_{12} = b_{34}^{-1} y_{34}\, , \; 
 b_{21}^{-1} y_{21} = b_{43}^{-1} y_{43}\, , \;
 b_{31}^{-1} y_{31} = b_{34}^{-1} y_{34}
\big \}.
\end{align*}
Again, this is a generalized Toeplitz space, 
and so it is transitive.

\medskip\noindent\emph{$(3)$-constrained case:}
This case is a little different from the two previous ones.
The adjoint has an eigenvalue $-1$ of multiplicity $2$.
Recalling that $b_{11}$ and $b_{22}$ are different, and making use of the identity matrix,
we see that $\Lambda(A,B)$ contains the following subspace:
$$
\tilde \Gamma = 
\big \{ (y_{ij}) \in \gl(d,\C) ; \;  y_{33} = \cdots = y_{dd}\, , \;
b_{12}^{-1} y_{12} = b_{21}^{-1} y_{21}
\big \}.
$$
This is not a generalized Toeplitz space.
However, consider the linear automorphism $S$ that swaps the first two elements of the canonical basis of $\C^n$, and fixes the others.
Then
$$
S \cdot \tilde \Gamma = 
\big \{ (z_{ij}) \in \gl(d,\C) ; \;  z_{33} = \cdots = z_{dd}\, , \;
b_{12}^{-1} z_{22} = b_{21}^{-1} z_{11}
\big \}
$$
is a generalized Toeplitz space.
By \cref{r.transitivity trick}, the space $S \cdot \tilde \Gamma$ is transitive, and so are $\tilde \Gamma$ and $\Lambda(A,B)$.

\medskip\noindent\emph{$(4)$-constrained case:}
This case is more involved because the operator $\Ad_A$ is not diagonalizable.
We will explain its Jordan form.
Let us explain visually how $\Ad_A$ acts: given any matrix, decompose it into blocks $C_{ij}$ as in the following picture $$\left(
\begin{array}{cc|c|c|ccc|c}
C_{22}& &\!\!C_{23}\!\!\! &\!\!C_{24}\!\!\! &  &\hdots & &\!\! C_{2d}\!\!\!\!\\
& & & & & & &\\
\hline C_{32} &&\!\!C_{33}\!\!\!& & &\hdots  &\\
\hline C_{42} & & &\!\!C_{44}\!\!\! & &\hdots  &\\
\hline & & & & & & &\\
\vdots& &\vdots &\vdots & &\ddots & &\\
& & & & & & &\\
\hline C_{d2}& & & & &\hdots & &\!\!C_{dd}\!\!\!\!\\
\end{array}
\right)
$$
where the block $C_{22}$ is a $2\times 2$ matrix, the blocks $C_{2j}$ are $2\times 1$, the blocks $C_{i2}$ are $1\times 2$ and the  others are $1\times 1$. Then, the operator $\Ad_A$ leaves invariant the space $\Gamma_{ij}$ of matrices whose nonzero coefficients lie inside the block $C_{ij}$.

Let us use notations $J_t(\lambda)$ from \eqref{e.block} and  $E_{i,j}$ from \eqref{e.Eij}.
It is easily computed that the operator $\Ad_A$ has the following properties:
\begin{itemize}
	
\item the matrix of ${\Ad_A}{|\Gamma_{11}}$ with respect to the basis formed by 
$M_1 = -2 E_{12}$, $M_2 = E_{11}-E_{12}-E_{22}$, $M_3 = E_{21}$, $M_4 = E_{11}+E_{22}$
is $\begin{pmatrix}J_3(1) & 0 \\ 0 & 1 \end{pmatrix}$.


\item For any $j\geq 3$, the matrix of ${\Ad_A}{|\Gamma_{2j}}$ 
with respect to the basis formed by $\lambda_2 \lambda_j^{-1}E_{1,j}$ and $E_{2,j}$
(where we use the notation $E_{i,j}$ from \eqref{e.Eij})
is $J_2(\lambda_2 \lambda_j^{-1})$.

\item For any $i\geq 3$, the matrix of ${\Ad_A}{|\Gamma_{i2}}$ 
with respect to the basis formed by $-\lambda_i \lambda_2^{-1}E_{i,1}$ and $E_{i,2}$
is $J_2(\lambda_i \lambda_2^{-1})$.

\item For $3\leq i,j\leq d$, the matrix of ${\Ad_A}{|\Gamma_{ij}}$ 
with respect to the basis formed by the single vector $E_{ij}$ is $(\lambda_i\lambda_j^{-1})$.
 
\item The spaces $\Gamma_{ij}$, for $2\leq i,j\leq d$ have  respective spectra $\{\lambda_i\lambda_j^{-1}\}$, which for $i\neq j$ are pairwise disjoint and different from $\{1\}$. 
\end{itemize}

The concatenation of the bases described above gives a Jordan basis for ${\Ad_{A}}$. 
Now take a matrix $B$ that is a good match for $A$,
and consider its expression as a linear combination of the elements of that Jordan basis.
One easily checks that all coefficients in this linear combination are nonzero, except possibly
the coefficients of the vectors $M_1$, $M_2$, $M_4$ and the vectors $E_{ii}$, for all $3\leq i\leq d$.
Consider now the splitting $\Mat_{d \times d}(\C) =  V \oplus \Delta$, where $\Delta$ is the subspace $\C M_4 \oplus E_{33} \oplus \ldots \oplus E_{dd}$ of the space of diagonal matrices, and 
$V$ is the space spanned by all other elements of the above Jordan basis. Note that
\begin{align*}
V=(\C M_1+\C M_2+\C M_3)\oplus \left(\bigoplus_{2\leq i,j\leq d\atop i\neq j}\Gamma_{ij}\right)
\end{align*}
is a decomposition of $V$ into $\Ad_A$-invariant subspaces with pairwise disjoint spectra. 
Let $\pi$ be the projection onto $V$ along $\Delta$.
Using \cref{l.sum}, we see that $\pi(B)$ is a cyclic vector for $\Ad_A | V$.
So, using the $\Ad_A$-invariance of the spaces $V$ and $\Delta$, we have
$$
\pi (\Gamma) = 
\pi \big( \sorb_{\Ad_A} (B) \big) = 
\sorb_{\Ad_A} (\pi(B)) = V. 
$$
Note that $V$ contains the matrices $E_{ij}$, for all $i\neq j$, hence $\{\Id\} \vee V$ is a generalized Toeplitz space. As $\pi$ projects along a subspace of diagonal matrices,  $\{\Id\} \vee \Gamma$ is again a generalized Toeplitz space and in particular is a transitive space.

\medskip

We have considered the four types, and \cref{p.rich_pair} is proved.
\end{proof}

\section{Some general facts on dimensions of algebraic sets}\label{a.algebraic}

In this appendix we prove \cref{l.pret_a_porter,t.schubert},
which were used in \cref{s.cod proof}.
\Cref{l.pret_a_porter} is a simple consequence of standard theorems in algebraic geometry, 
but for the reader's convenience let us spell out the details. 
\Cref{t.schubert} follows from intersection theory of the Grassmannians (``Schubert calculus'').
We tried to make the exposition the least technical as possible, to make it accessible to non-experts (like ourselves).

\subsection{Fiberwise dimension estimate}


\begin{proof}[Proof of \cref{l.pret_a_porter}]
In what follows, all topologies are Zariski.
We will prove the equivalent ``dual form'' of the lemma, namely, that
the sets 
$$
Y_k = \big\{ y \in \pi(X) ; \; \dim \pi^{-1}(y) \ge k \big\}
$$
are algebraically closed in $Y$,
and 
\begin{equation}\label{e.dual_formula}
\dim X = \max_{k ; \;  Y_k \neq \emptyset}
\big( k + \dim Y_k \big).
\end{equation}

First, the sets $X_k = \{x \in X ; \; \dim \pi^{-1}(\pi(x)) \ge k\}$ are closed.
(see \cite[Thrm.~11.12]{Harris}).
So, by \cref{p.projection}, $Y_k = \pi(X_k)$ is closed.

For each $k$ with $X_k \neq \emptyset$, 
let $X_{k,i}$ denote the irreducible components of $X_k$.
Let 
$$
\mu(k,i) = \min_{x \in X_{k,i}} \dim \pi^{-1}(\pi(x)) \, .
$$
Then, by \cite[Thrm.~11.12]{Harris} (and the fact that 
taking closures does not affect dimension) 
we have 
$$
\dim X_{k,i} = \mu(k,i) + \dim \pi(X_{k,i}) \, .
$$
By definition, $\mu(k,i) \ge k$;
moreover equality holds unless $X_{k,i} \subset X_{k+1}$.
So 
$$
X_{k,i} \not \subset X_{k+1} \ \Rightarrow \ 
\dim X_{k,i} 
=  k + \dim \pi(X_{k,i})  
\le k+ \dim Y_k \, .
$$
Since $X = \bigcup_{X_{k,i} \not \subset X_{k+1}} X_{k,i}$,
this proves the $\le$ inequality in \eqref{e.dual_formula}.

To prove the converse inequality, fix any $k$ with $Y_k \neq \emptyset$.
Find $i$ such that $\dim \pi(X_{k,i}) = \dim Y_k$.
Then
$$
\dim X \ge \dim X_{k,i} =  \mu(k,i) + \dim Y_k \ge k + \dim Y_k.
$$
This proves \eqref{e.dual_formula} and hence the \lcnamecref{l.pret_a_porter}.
\end{proof}

\subsection{A particular case of \cref{t.schubert}}\label{ss.particular}

Let us begin the proof of \cref{t.schubert}.
For the reader's convenience we recall the notations and the statement.

If $M \in \Mat_{n \times m}(\C)$, let $\col M \subset \C^n$ denote the column space of $M$.
A set $X \subset \Mat_{n \times m}(\C)$ is called \emph{column-invariant}
if 
$$
\left.
\begin{array}{c}
M \in X \\ 
N \in \Mat_{n \times m}(\C)\\
\col M = \col N
\end{array}
\right\} \ \Rightarrow \ 
N \in X.
$$
So a column-invariant set $X$ is characterized by its set of column spaces.
We enlarge the latter set by including also subspaces, thus defining:
$$
\ldbrack X \rdbrack := \big\{ E \text{ subspace of } \C^n ; \; E \subset \col M \text{ for some } M \in X \big\}.
$$
Then we have:

\begin{repeatedthm} 
Let $X \subset \Mat_{n \times m}(\C)$ be an algebraically closed, column-invariant set.
Suppose $E$ is a vector subspace of $\C^n$
that does not belong to $\ldbrack X \rdbrack$.
Then
$$
\codim X \ge m + 1 - \dim E \, .
$$
\end{repeatedthm}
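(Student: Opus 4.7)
My strategy is to reduce the theorem to a codimension estimate for closed subvarieties of a Grassmannian that are disjoint from a certain Schubert variety, and then to prove this Grassmannian estimate by induction on $k=\dim E$.

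For the reduction, I would first observe that each irreducible component $X^{(i)}$ of $X$ is itself column-invariant: the set $\{M' \in \Mat_{n\times m}(\C) : \col M' = \col M\}$ is an irreducible open subset of an affine subspace (namely $V^m$ where $V = \col M$), entirely contained in $X$ by column-invariance, so any two matrices of $X$ with the same column space lie in a single irreducible component. Let $r^{(i)}$ denote the generic rank on $X^{(i)}$. If $r^{(i)} < k$, then $X^{(i)}$ sits inside the rank locus $\{\rank \le r^{(i)}\}$, whose codimension is $(n-r^{(i)})(m-r^{(i)}) \ge m-r^{(i)} \ge m+1-k$, and the bound is immediate. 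If $r^{(i)} \ge k$, the column-space map sends the rank-$r^{(i)}$ stratum of $X^{(i)}$ onto a subset $\tilde X^{(i)} \subseteq \mathrm{Gr}(r^{(i)},n)$; I would verify that $\tilde X^{(i)}$ is Zariski-closed (closedness of $X$ together with column-invariance allows one to lift any convergent sequence $V_t = \col M_t \to V_\infty$, after choosing converging bases, to a sequence $M_t \to M_\infty \in X^{(i)}$ of rank $r^{(i)}$ with $\col M_\infty = V_\infty$). A standard fibre-dimension computation then yields
\[
\codim_{\Mat} X^{(i)} = (n-r^{(i)})(m-r^{(i)}) + \codim_{\mathrm{Gr}(r^{(i)}, n)} \tilde X^{(i)},
\]
and the hypothesis $E \notin \ldbrack X \rdbrack$ translates to $\tilde X^{(i)} \cap \sigma_E = \emptyset$, where $\sigma_E := \{V \in \mathrm{Gr}(r^{(i)},n) : V \supseteq E\}$. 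Note that $r^{(i)} = n$ is impossible, since then $\col M = \C^n \supseteq E$ contradicts the hypothesis.

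The theorem now reduces to the following key Grassmannian lemma: \emph{if $\tilde X \subseteq \mathrm{Gr}(r,n)$ is closed, disjoint from $\sigma_E$, and $\dim E = k \le r$, then $\codim \tilde X \ge r-k+1$.} Granting this, the identity $(n-r)(m-r) + (r-k+1) = (m+1-k) + (m-r)(n-r-1)$ gives $\codim X^{(i)} \ge m+1-k$ for $k \le r^{(i)} \le m$ and $r^{(i)} < n$. I would prove the lemma by induction on $k$. For the base case $k=1$, the complement $\Sigma_0 = \mathrm{Gr}(r,n) \setminus \sigma_E$ fibers as a $\C^r$-affine bundle over $\mathrm{Gr}(r,n-1)$ via $V \mapsto V \bmod E$, with each fibre compactifying inside $\mathrm{Gr}(r,n)$ to $\mathrm{Gr}(r,r+1) \cong \CP^r$ whose ``hyperplane at infinity'' lies entirely inside $\sigma_E$. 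Since $\tilde X$ is closed in $\mathrm{Gr}(r,n)$ and avoids $\sigma_E$, the intersection $\tilde X \cap \mathrm{Gr}(r,r+1)$ is a closed subvariety of $\CP^r$ lying entirely in the affine chart $\C^r$; being simultaneously projective and affine, it must be a finite set. Hence every fibre of $\tilde X$ over $\mathrm{Gr}(r,n-1)$ is finite, giving $\dim \tilde X \le r(n-r-1)$ and $\codim \tilde X \ge r$.

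For the inductive step ($k>1$), I pick a hyperplane $E_0 \subset E$ of dimension $k-1$. If $\tilde X \cap \sigma_{E_0} = \emptyset$, the inductive hypothesis gives $\codim \tilde X \ge r-k+2$. Otherwise $Y := \tilde X \cap \sigma_{E_0}$ is a nonempty closed subvariety of $\sigma_{E_0}$, and the natural isomorphism $\sigma_{E_0} \cong \mathrm{Gr}(r-k+1, n-k+1)$ given by $V \mapsto V/E_0$ identifies $Y$ with a closed subvariety avoiding $\sigma_{\bar E}$ for the line $\bar E := E/E_0$; the base case applied to $Y$ yields $\codim_{\sigma_{E_0}} Y \ge r-k+1$, i.e., $\dim Y \le (r-k+1)(n-r-1)$. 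Combining this with the classical intersection-dimension inequality in the smooth projective Grassmannian,
\[
\dim Y \ge \dim \tilde X + \dim \sigma_{E_0} - \dim \mathrm{Gr}(r,n) = \dim \tilde X - (k-1)(n-r),
\]
gives $\dim \tilde X \le r(n-r) - r + k - 1$ and hence $\codim \tilde X \ge r-k+1$, completing the induction. The main obstacle I anticipate is the technical verification that $\tilde X^{(i)}$ is Zariski-closed in $\mathrm{Gr}(r^{(i)},n)$ and that Schubert avoidance survives the reduction; this is precisely what column-invariance is designed to ensure, but the lifting of Grassmannian limits back to matrix limits inside $X^{(i)}$ requires care.
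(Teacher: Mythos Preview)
Your proof is correct and takes a genuinely different route from the paper, chiefly in how you prove the Grassmannian lemma (that a closed $\tilde X \subset \mathrm{Gr}(r,n)$ disjoint from $\sigma_E$ has $\codim \tilde X \ge r - \dim E + 1$).

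The paper proves this lemma via Schubert calculus: it expresses $[\tilde X]^*$ in the cohomology ring $H^*(\mathrm{Gr}(r,n))$ as a nonnegative combination of Schubert classes $\sigma_\mu$, invokes Poincar\'e duality to read ``$\tilde X \cap \sigma_E = \emptyset$'' as ``$\sigma_\lambda \cupro \sigma_\mu = 0$'' for the rectangular Young diagram $\lambda$ corresponding to $\sigma_E$, and then uses the combinatorial non-overlap criterion for vanishing cup products to force $|\mu| \ge r-k+1$. Your argument is considerably more elementary: the base case $k=1$ exploits the affine-bundle structure of the complement of the Schubert divisor and the fact that a projective variety lying in an affine chart is finite; the inductive step uses only the standard intersection-dimension inequality $\dim(A\cap B) \ge \dim A + \dim B - \dim G$ in a smooth variety. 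This avoids cohomology and Young tableaux entirely, at the cost of an induction that is specific to this particular Schubert class. The paper's approach, by contrast, would adapt immediately to avoidance of any Schubert variety.

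Your reduction from matrices to the Grassmannian is essentially the same as the paper's (its \cref{l.projection}), though organised slightly differently: the paper stratifies $X$ by rank first, while you pass to irreducible components and use the generic rank of each. The closedness of $\tilde X^{(i)}$, which you flag as a concern, is handled in the paper exactly as you sketch: constructibility plus Euclidean closedness (via a compact set of ``normalised'' representatives) gives Zariski closedness.
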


It is obvious that the algebraicity hypothesis is indispensable. 

\medskip

Define
\begin{equation}\label{e.R_k}
R_k := \big \{ A \in \Mat_{n \times m}(\C) ; \; \rank A \le k \big\} \, .
\end{equation}
We recall (see \cite[Prop.~12.2]{Harris}) that 
this is an irreducible algebraically closed set of codimension
\begin{equation}\label{e.cod_R_k}
\codim R_k = (m-k)(n-k) \qquad \text{if } 0 \le k \le \min(m,n).
\end{equation}

\begin{proof}[Proof of \cref{t.schubert} in the case $E = \C^n$]
If $E = \C^n$ then the hypothesis $\C^n \not\in \ldbrack X \rdbrack$
means that $X \subset R_{n-1}$.
We can assume that $n-1 \le m$, otherwise the conclusion of the 
\lcnamecref{t.schubert} is vacuous.
Thus $\codim X \ge \codim R_{n-1} = m + 1 - n$, as we wanted to show.
\end{proof}


\subsection{Reduction to a property of Grassmannians} \label{ss.reduction}

As we will see, to prove \cref{t.schubert} 
it is sufficient to prove a dimension estimate (\cref{t.schubert2} below) 
for certain subvarieties of a Grassmaniann.

\subsubsection{Grassmannians}

Given integers $n > k \ge 1$, the \emph{Grassmanniann} $G_k(\C^n)$ 
is the set of the vector subspaces of $\C^{n}$ of 
dimension $k$.

The Grassmannian can be interpreted as a subvariety of a higher dimensional complex projective space
using the \emph{Pl\"ucker embedding} $G_k(\C^n) \to P(\bigwedge^k \C^n)$,
which maps each $V \in G_k(\C^n)$ to $[v_1 \wedge \cdots \wedge  v_k]$,
where $\{v_1, \dots, v_k\}$ is any basis of $V$/
This is clearly an one-to-one map.
It can be shown (see e.g.~\cite[p.~61ff]{Harris}) that the image is 
an algebraically closed subset 
of $P(\bigwedge^k \C^n)$.
Its dimension is 
\begin{equation}\label{e.dim_G}
\dim G_k(\C^n) = k(n-k).
\end{equation}

If $E \subset \C^n$ is a vector space with $\dim E = e \le k$ then 
we consider the following subset of $G_k(\C^n)$: 
\begin{equation}\label{e.special schubert}
S_k(E) := \big\{V \in G_k(\C^n) ; \; V \supset E \big\}.
\end{equation}
(This is a Schubert variety of a special type, as we will see later.)
Since any $V \in S_k(E)$ can be written as $E \oplus W$ for some $V \subset W^\perp$,
we see that $S_k(E)$ is homeomorphic to $G_{k-e}(\C^{n-e})$.

We will 
show that an algebraic set that avoids $S_k(E)$ cannot be too large:

\begin{thm}\label{t.schubert2}
Fix integers $1 \le e \le k < n$. 
Suppose that $Y$ is an algebraically closed subset of $G_k(\C^n)$
that is disjoint from $S_k(E)$,
for some $e$-dimensional subspace $E \subset \C^n$.
Then $\codim Y \ge k + 1 - e$.
\end{thm}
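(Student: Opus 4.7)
I propose to prove \cref{t.schubert2} by induction on $e = \dim E$, leveraging a properness argument for the base case and a stratification argument for the inductive step. The base case $e = 1$ follows from the following direct construction: given $Y$ closed in $G_k(\C^n)$ and disjoint from $S_k(\C v)$ for some nonzero $v$, the quotient $\pi : \C^n \to \C^n/\C v$ induces a morphism $\phi : Y \to G_k(\C^{n-1})$, $V \mapsto \pi(V)$, well-defined because $v \notin V$ for every $V \in Y$. Being a morphism between projective varieties, $\phi$ is proper. Moreover, the fiber $\phi^{-1}(\bar V)$, viewed inside $G_k(\C^n)$, parametrises the $k$-planes of the $(k+1)$-dimensional preimage $\pi^{-1}(\bar V)$ that avoid the line $\C v$, and is therefore isomorphic to the affine space $\C^k$. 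Each fibre $Y \cap \phi^{-1}(\bar V)$ is then simultaneously a closed projective subvariety of $Y$ and a closed subvariety of $\C^k$, which forces it to be finite. Hence $\dim Y = \dim \phi(Y) \le \dim G_k(\C^{n-1}) = k(n-k-1)$, giving $\codim Y \ge k = k+1-e$.

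For the inductive step with $e \ge 2$, I would stratify $Y = \bigsqcup_{j=0}^{e-1} Y^{(j)}$ by $j = \dim(V \cap E)$. For each $j \ge 1$ and each $F \in G_j(E)$, the quotient map $V \mapsto V/F$ identifies the substratum $Y^{(j)}_F := \{V \in Y : V \cap E = F\}$ with a subvariety of $G_{k-j}(\C^n/F) = G_{k-j}(\C^{n-j})$ that is disjoint from $S_{k-j}(E/F)$, where $\dim(E/F) = e-j < e$. The inductive hypothesis yields $\codim(Y^{(j)}_F/F) \ge (k-j)+1-(e-j) = k+1-e$, hence $\dim Y^{(j)}_F \le (k-j)(n-k) - (k+1-e)$. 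Fibering over $G_j(E)$ (of dimension $j(e-j)$) and using the identity
\[
j(e-j) + (k-j)(n-k) = k(n-k) - j\bigl((n-k)-(e-j)\bigr),
\]
which is at most $k(n-k)$ whenever $j \ge \max(0, k+e-n)$ (the valid range for nonempty strata), gives the desired bound $\dim Y^{(j)} \le k(n-k) - (k+1-e)$.

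The open stratum $Y^{(0)}$ is the main obstacle, since the inductive reduction with $j=0$ does not decrease the parameter $e$. To handle it I would introduce a secondary induction on $n$, with base case $n = k+1$ handled by classical projective geometry: under $G_k(\C^{k+1}) \cong \CP^k$, the set $S_k(E)$ becomes a linear subspace $\CP^{k-e}$, and disjointness forces $\dim Y < e$. For the inductive step on $n$, I would exploit the affine fibration $\psi : U_E = \{V : V \cap E = 0\} \to G_k(\C^n/E)$, whose fibres have closures isomorphic to $G_k(\tilde V) \cong G_k(\C^{k+e})$ for $\tilde V = \pi_E^{-1}(\bar V)$. The intersection $Y \cap G_k(\tilde V)$ is closed in $G_k(\C^{k+e})$ and disjoint from the corresponding $S_k(E)$ there, so the secondary inductive hypothesis on $n$ applied to $n' = k+e < n$ gives $\dim(Y \cap G_k(\tilde V)) \le (k+1)(e-1)$. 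Integrating over $G_k(\C^n/E)$ then yields the matching estimate
\[
\dim Y^{(0)} \le k(n-k-e) + (k+1)(e-1) = k(n-k) - (k+1-e).
\]
The technical crux will be establishing that this secondary induction closes up, including the remaining base case $n = k+e$; I expect this to be the hardest point, and it may require either a direct Schubert-calculus argument (using the Pieri rule for rectangular classes, together with the observation that $Y$ disjoint from some $S_k(E)$ forces $[Y] \cdot \sigma_{((n-k)^e)} = 0$ via a limit/specialisation argument) or a careful duality reduction to replace the problematic boundary case.
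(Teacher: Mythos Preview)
Your base case $e=1$ is correct and the projection argument is clean. The stratification by $j=\dim(V\cap E)$ and the bound on the $j\ge 1$ strata via induction on $e$ are also fine. But the proposal has a genuine gap, which you yourself flag: the $j=0$ stratum when $n=k+e$. Your secondary induction on $n$ reduces the $j=0$ problem to $n'=k+e$, yet never resolves that case --- the base $n=k+1$ you name is irrelevant, since $Y^{(0)}$ is empty for all $k+1\le n<k+e$, so the real starting point of the secondary induction is $n=k+e$ itself, where no further reduction is available. The fallback you suggest, namely $[Y]\cdot\sigma_{((n-k)^e)}=0$ together with the product rules, is exactly the paper's proof: the paper writes $[Y]^*=\sum n_i\sigma_{\mu_i}$ with all $n_i>0$, observes that $Y\cap S_k(E)=\emptyset$ is a (vacuously transverse) intersection so $\sigma_\lambda\cupro[Y]^*=0$ for $\lambda=\big((n-k)^e,0^{k-e}\big)$, and reads off $|\mu_i|\ge k+1-e$ from the overlap criterion (\cref{l.overlap}) for nonvanishing of $\sigma_\lambda\cupro\sigma_\mu$. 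Once Schubert calculus is admitted for the residual case, the inductive scaffolding is redundant: the paper's argument handles every $(n,k,e)$ in one stroke, and no ``limit/specialisation'' is needed since disjoint closed subvarieties already have zero intersection product.

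Your instinct that induction on $e$ should suffice is nevertheless right; the stratification is just the wrong reduction. A cleaner step: pick any line $L\subset E$ and set $Z:=Y\cap S_k(L)$. If $Z=\emptyset$, your own $e=1$ argument gives $\codim Y\ge k$. Otherwise the intersection inequality in a nonsingular variety (every component of $A\cap B$ has dimension at least $\dim A+\dim B-\dim X$) yields $\dim Z\ge\dim Y-(n-k)$, while $V\mapsto V/L$ identifies $Z$ with a closed subset of $G_{k-1}(\C^{n-1})$ disjoint from $S_{k-1}(E/L)$, so the inductive hypothesis for $e-1$ bounds $\dim Z\le(k-1)(n-k)-(k+1-e)$. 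Combining the two gives $\codim Y\ge k+1-e$ directly, with no secondary induction and no unresolved base case.
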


\subsubsection{Proof of \cref{t.schubert} assuming \cref{t.schubert2}} \label{sss.reduction}

Assuming \cref{t.schubert2} for the while, let us see how it yields \cref{t.schubert}.

Recalling notation \eqref{e.R_k}, define the quasiprojective variety
$$
\hat{R}_k := R_k \setminus R_{k-1} \, .
$$
We define a map $\pi_k \colon \hat{R}_k \to G_k(\C^n)$
by $A \mapsto \col A$.

\begin{lemma}\label{l.projection}
If $X$ is an algebraically closed column-invariant subset of $\hat{R}_k$
then $Y = \pi_k(X)$ is algebraically closed subset of $G_k(\C^n)$,
and the codimension of $Y$ inside $G_k(\C^n)$ is the same as the codimension of $X$ inside $\hat{R}_k$.
\end{lemma}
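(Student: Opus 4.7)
The plan is to reduce everything to the fact that $\pi_k \colon \hat{R}_k \to G_k(\C^n)$ is the quotient map of a $\GL(m,\C)$-action with equidimensional fibers. First I will reinterpret column-invariance as invariance under the right action of $\GL(m,\C)$ on $\hat{R}_k$: right multiplication by $g\in \GL(m,\C)$ preserves the column space, and conversely any two matrices in $\hat{R}_k$ with the same column space $V$ are related by some $g$ (indeed, both represent surjections $\C^m \twoheadrightarrow V$, and any two such surjections differ by an automorphism of $\C^m$). Hence column-invariant subsets of $\hat{R}_k$ are precisely the $\GL(m,\C)$-invariant ones, and in particular $X=\pi_k^{-1}(Y)$.

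To show that $Y$ is algebraically closed in $G_k(\C^n)$, I will work locally on the standard affine charts $U_I\subset G_k(\C^n)$ indexed by $k$-subsets $I\subset\{1,\dots,n\}$: $V\in U_I$ iff the projection of $V$ onto the coordinates in $I$ is an isomorphism. On each such chart there is an explicit polynomial section $\sigma_I\colon U_I\to \pi_k^{-1}(U_I)$, sending $V$ to the $n\times m$ matrix whose first $k$ columns form the graph basis of $V$ (with respect to the decomposition given by $I$) and whose remaining $m-k$ columns are zero. Since $X$ is column-invariant, $V\in U_I$ lies in $Y$ if and only if $\sigma_I(V)\in X$, so $Y\cap U_I=\sigma_I^{-1}(X)$ is closed in $U_I$ as the preimage of a closed set under a morphism. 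Since the $U_I$ cover $G_k(\C^n)$, this gives closedness of $Y$.

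For the codimension part, the key is that every fiber of $\pi_k$ has the same dimension $km$: the fiber over $V$ is the Zariski open subset of $V^m\cong\C^{km}$ consisting of $m$-tuples that span $V$. Combined with the known dimension formulas $\dim G_k(\C^n)=k(n-k)$ and $\dim \hat{R}_k=k(m+n-k)$, this yields $\dim \hat{R}_k = km + \dim G_k(\C^n)$. Applying standard fiber-dimension theory component by component, any irreducible component $X_i$ of $X$ surjects onto an irreducible closed subset of $Y$ through fibers of dimension $km$, so $\dim X_i\le \dim Y + km$; conversely, for an irreducible component $Y_j$ of $Y$ of maximal dimension, $\pi_k^{-1}(Y_j)\subset X$ satisfies $\dim\pi_k^{-1}(Y_j)=\dim Y_j + km=\dim Y+km$. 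Therefore $\dim X=\dim Y+km$, which rearranges to $\codim_{\hat{R}_k} X=\codim_{G_k(\C^n)} Y$.

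The main obstacle I anticipate is making the first part clean: one must exhibit the section $\sigma_I$ as an actual morphism of quasiprojective varieties (via the Pl\"ucker embedding) and verify that $\sigma_I^{-1}(X)$ coincides with $Y\cap U_I$ rather than with some strictly smaller set. Once this local trivialization picture is pinned down, the dimension bookkeeping is routine.
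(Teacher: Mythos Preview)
Your proof is correct, and the codimension argument is essentially the same as the paper's (both use that $X=\pi_k^{-1}(Y)$ by column-invariance, that all fibers of $\pi_k$ have the same dimension $km$, and then invoke the standard fiber-dimension theorem, e.g.\ Harris~11.12).

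The closedness argument, however, is genuinely different. The paper first shows $\pi_k$ is regular via the Pl\"ucker coordinates, then proves $Y$ is closed in the \emph{classical} topology by an ad hoc compactness trick (intersecting $X$ with the compact set of matrices whose first $k$ columns are orthonormal and whose remaining columns vanish, and noting that this still surjects onto $Y$), and finally concludes Zariski-closedness from constructibility of $Y$ (Chevalley) together with classical closedness. Your argument instead exploits the local triviality of $\pi_k$: on each standard chart $U_I$ you write down an explicit regular section $\sigma_I$ and observe that column-invariance gives $Y\cap U_I=\sigma_I^{-1}(X)$, which is manifestly Zariski closed. Your route is cleaner and purely algebraic (it would work over any algebraically closed field, whereas the paper's compactness step is specific to $\C$); the paper's route avoids having to check that $\sigma_I$ is a morphism but pays for it with a mixed classical/Zariski argument. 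The obstacle you flagged (that $\sigma_I$ is a morphism and that $\sigma_I^{-1}(X)=Y\cap U_I$) is standard and you have handled it correctly.
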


\begin{proof}
First, let us see that $\pi_k \colon \hat{R}_k \to G_k(\C^n)$ is a regular map.
We identify $G_k(\C^n)$ with the image of the Pl\"ucker embedding.
In a Zariski neighborhood of each  matrix $A \in \hat{R}_k$, 
the map $\pi_k$ can be defined as $A \mapsto [a_{j_1} \wedge \dots \wedge a_{j_k}]$
for some $j_1 < \dots < j_k$, where $a_j$ is the $j^\text{th}$ column of $A$.
This shows regularity.
	
Next, let us see that $Y = \pi_k (X)$ is closed with respect to the classical (not Zariski)
topology.
Consider the subset $K$ of $X$ formed by the matrices $A \in \hat{R}_k$ 
whose first $k$ columns form an orthonormal set, and whose $m-k$ remaining columns are zero.
Then $K$ is compact (in the classical sense), and thus so is $\pi_k(K)$.
But column-invariance of $X$ implies that $\pi_k(K) = Y$, so $Y$ is closed (in the classical sense).

It follows (see e.g.~\cite[p.39]{Harris}) 
from regularity of $\pi_k$ is regular that the set $Y$ is constructible, i.e., 
it can be written as
$$
Y = \bigcup_{i=1}^{p} Z_i \setminus W_i \, ,
$$
where $Z_i \varsupsetneq W_i$ are algebraically closed subsets of $G_k(\C^n)$.
We can assume that each $Z_i$ is irreducible.
It follows from \cite[Thrm.~2.33]{Mumford} that $\overline{Z_i \setminus W_i} = Z_i$,
where the bar denotes closure in the classical sense.
In particular, $Y = \overline{Y} = \bigcup_{i=1}^{p} Z_i$,
showing that $Y$ is algebraically closed.

We are left to show the equality between codimensions.
Since the codimension of an algebraically closed set equals the minimum of the codimensions of its components, we can assume that $X$ is irreducible.

By column-invariance of $X$,
for each $y\in Y$, the whole fiber $\pi^{-1}(y)$ is contained in $X$.
All those fibers have the same dimension $\mu = km$.
By \cite[Thrm.~11.12]{Harris}, $\dim X = \dim Y + km$.
By \eqref{e.cod_R_k} and \eqref{e.dim_G}, we have $\dim \hat{R}_k - \dim G_k = km$,
so the claim about codimensions follows.
\end{proof}

\begin{proof}[Proof of \cref{t.schubert}]
Let $X \subset \Mat_{n \times m}(\C)$ be a nonempty algebraically closed, column-invariant set.
Suppose $E$ is a vector subspace of $\C^n$ that does not belong to $\ldbrack X \rdbrack$.
Let $e = \dim E$.
We can assume $e > 0$ (otherwise the result is vacuously true), 
and $e<n$ (because the case $e=n$ was already considered in \S~\ref{ss.particular}).

Notice that $X \subset R_{n-1}$.
Let 
$$
X_k := X \cap \hat{R}_k \quad \text{and} \quad 
Y_k := \pi_k(X_k) , \quad \text{for } 0 \le k \le \min(m,n-1).
$$
For every $k$ with $e \le k < n$, the set $Y_k$ is disjoint from 
the set $S_k(E)$ defined by \eqref{e.special schubert}.
In view of \cref{l.projection} and \cref{t.schubert2}, we have
$$
\codim_{\hat{R}_k} X_k =  \codim Y_k \ge k + 1 - e \, .
$$
So the codimension of $X_k$ as a subset of $\Mat_{n\times m}(\C)$ is
\begin{align*}
\codim X_k &=   \codim \hat{R}_k + \codim_{\hat{R}_k} X_k \\
           &\ge (m-k)(n-k) + k + 1 - e =: f(k) \, .
\end{align*}
The function $f(k)$ is decreasing on the interval $0 \le k \le \min(m,n-1)$.
Therefore:
\begin{multline*}
\codim X 
=   \min_{0 \le k \le \min(m,n-1)} \codim X_k  
\ge \min_{0 \le k \le \min(m,n-1)} f(k) \\
= f(\min(m,n-1)) 
= m + 1 - e,
\end{multline*}
as claimed.
This proves \cref{t.schubert} modulo \cref{t.schubert2}.
\end{proof}

The proof of \cref{t.schubert2} will be given in \S~\ref{ss.end},
after we explain the necessary tools in \S\S~\ref{ss.schubert}, \ref{ss.intersection}.

\subsection{Schubert calculus} \label{ss.schubert}

Here we will outline some facts about the intersection of Schubert varieties.
The readable expositions \cite{Blasiak,Vakil} contain more information.

\medskip

A (complete) flag 
in $\C^{n}$ is a sequence of subspaces $F_0 \subset F_1 \subset \cdots \subset F_{n}$
with $\dim F_j = j$. We denote $F_\bullet = \{F_i\}$.

Given $V \in G_k (\C^n)$, 
its \emph{rank table} (with respect to the flag $F_\bullet$)
is the datum $\dim (V \cap F_j)$, $j=0,\dots,n$.
The \emph{jumping numbers} are
the indexes $j \in \{1,\dots,n\}$ such that 
$\dim (V \cap F_j) - \dim (V \cap F_{j-1})$ is positive (and thus equal to $1$).
Of course, if one knows the jumping numbers, one know the rank table and vice-versa.
Let us define a third way to encode this information:
Consider a rectangle of height $m$ and width $n-m$, divided in $1 \times 1$ squares.
We form a path of square edges:
Start in the northeast corner of the rectangle.
In the $j^\text{th}$ step ($1 \le j \le n$),
if $j$ is a jumping number then we move one unit in the south direction,
otherwise we move one unit in the west direction.
Since there are exactly $k$ jumping numbers, 
the path ends at the southwest corner of the rectangle.
The \emph{Young diagram} of $V$ with respect to the flag $F_\bullet$ is 
the set of squares in the rectangle that lie northwest of the path.
We denote a Young diagram by $\lambda = (\lambda_1, \lambda_2, \dots, \lambda_k)$,
where $\lambda_i$ is the number of squares in the $i^\text{th}$ row (from north to south).
Its \emph{area} $\lambda_1+\cdots+\lambda_k$ is denoted by $|\lambda|$.

\begin{example}\label{ex.Young}
Here is a possible rank table with $k=5$, $n=12$;
the jumping numbers are underlined:
\begin{center}
\begin{tabular}{rrrrrrrrrrrrrr}
$j = $                 & 0 & 1 & 2 & \underline{3} & 4 & 5 & \underline{6} & 7 & \underline{8} & \underline{9} & 10 & \underline{11} & 12 \\
$\dim (W \cap F_j)=$ & 0 & 0 & 0 &       1 & 1 & 1 &       2 & 2 &        3 &      4 &  4 &        5 &  5 \\
\end{tabular}
\end{center}
The associated path in the rectangle is:
\setlength{\unitlength}{.4cm}
\begin{center}
\raisebox{-3\unitlength}{
\begin{picture}(7,5)
\thinlines
\put(0,0){\grid(7,5)(1,1)}
\thicklines
\put(7,5){\vector(-1,0){1}} 
\put(6,5){\vector(-1,0){1}} 
\put(5,5){\vector(0,-1){1}} 
\put(5,4){\vector(-1,0){1}} 
\put(4,4){\vector(-1,0){1}} 
\put(3,4){\vector(0,-1){1}} 
\put(3,3){\vector(-1,0){1}} 
\put(2,3){\vector(0,-1){1}} 
\put(2,2){\vector(0,-1){1}} 
\put(2,1){\vector(-1,0){1}} 
\put(1,1){\vector(0,-1){1}} 
\put(1,0){\vector(-1,0){1}} 
\end{picture}
}
\end{center}
and so the Young diagram is $$\lambda=\tiny{\yng(5,3,2,2,1)} = (5,3,2,2,1).$$
\end{example}

In general, we have:
\begin{itemize}
\item $\lambda = (\lambda_1, \dots, \lambda_k)$ is a possible Young diagram if and only if
$n-k \ge \lambda_1 \ge \dots \ge \lambda_k \ge 0$.
\item If $j_1 < \dots < j_k$ are the jumping numbers then $\lambda_i = n-k-j_i+i$.
\end{itemize}

The set of $V \in G_k(\C^n)$ that have a given Young diagram 
$\lambda$ 
is called a 
\emph{Schubert cell}, denoted by $\Omega(\lambda)$ or $\Omega(\lambda,F_\bullet)$.
Each Schubert cell is a topological disk of real codimension $2|\lambda|$.
The Schubert cells (for a fixed flag) give a CW decomposition of the space $G_k(\C^n)$.
The closure of $\Omega(\lambda)$ (in either classical or Zariski topologies) is
the set of $V \in G_k(\C^n)$ such that $\dim (V \cap F_{j_i}) \ge i$ for each $i=1,\ldots,n$
(where $j_1 < \dots < j_k$ are the jumping numbers associated to $\lambda$).
These sets are closed irreducible varieties,
called \emph{Schubert varieties}. (See e.g.~\cite[\S9.4]{Fulton}.)

\begin{example}\label{ex.special schubert}
If $E \subset \C^n$ is a subspace with $\dim E = e \le k$ then 
the set $S_k(E)$ defined by \eqref{e.special schubert} is 
a Schubert variety $\bar\Omega(\lambda,F_\bullet)$,
where $F_\bullet$ is any flag with $F_e = E$ and
\setlength{\unitlength}{.25cm}
\begin{equation}\label{e.special young}
\lambda = \big( \underbrace{n-k,\dots,n-k}_{e \text{ times}}, \underbrace{0,\dots,0}_{k-e \text{ times}} \big) = 
\raisebox{-4\unitlength}{
\begin{picture}(12,8)
\thinlines
\put(0,0){\grid(12,8)(1,1)}
\multiput(0,5)(0,1){3}{\multiput(0,0)(1,0){12}{\lightdiagonalpattern}}
\end{picture}
}
\end{equation}
\end{example}

Let $A^*(k,n)$ denote the set of formal linear combinations with integer coefficients of
Young diagrams in the $k \times (n-k)$ rectangle.
This is by definition an abelian group.

\begin{prop}
There is a second binary operation called the \emph{cup product} and denoted by the symbol $\cupro$ 
that makes $A^*(k,n)$ a commutative ring, and 
is characterized by the following properties:

If $\lambda$ and $\mu$ are Young diagrams with respective areas $r$ and $s$
then their cup product is of the form:
$$
\lambda \cupro \mu = \nu_1 + \cdots + \nu_N \, .
$$
where $\nu_1$, \dots, $\nu_N$ are Young diagrams with area $r+s$
(possibly with repetitions, possibly $N=0$).
Moreover, there are flags $F_\bullet$, $G_\bullet$, $H^{(i)}_\bullet$ such that
the manifolds $\bar\Omega(\lambda,F_\bullet)$ and $\bar\Omega(\mu,G_\bullet)$ are transverse
and their intersection is $\bigcup \bar\Omega(\nu_i,H^{(i)}_\bullet)$. 
\end{prop}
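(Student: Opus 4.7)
The plan is to identify $A^*(k,n)$ with the integral cohomology ring $H^*(G_k(\C^n),\Z)$ and then transfer the cup product from cohomology to the formal group of Young diagrams. The first step is to invoke the fact that the Schubert cells $\Omega(\lambda, F_\bullet)$, as $\lambda$ ranges over Young diagrams in the $k \times (n-k)$ rectangle, form a CW decomposition of $G_k(\C^n)$ with cells only in even real dimension. This implies that the cellular boundary operator vanishes and therefore $H^*(G_k(\C^n),\Z)$ is torsion-free with $\Z$-basis given by the Poincar\'e duals $[\bar\Omega(\lambda)]$ of the Schubert varieties. Mapping the Young diagram $\lambda$ to the class $[\bar\Omega(\lambda,F_\bullet)]$ (which is independent of the reference flag, because $GL(n,\C)$ is connected) yields a group isomorphism between $A^*(k,n)$ and $H^*(G_k(\C^n),\Z)$, and the ring structure is then pulled back.

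Next I would verify the characterization in terms of intersections. Since $G_k(\C^n)$ is a smooth complex projective variety, Poincar\'e duality identifies the cup product of two classes Poincar\'e dual to cycles with the class of their intersection, provided the intersection is transverse. To produce transverse intersections, I would apply Kleiman's transversality theorem: $GL(n,\C)$ acts transitively on $G_k(\C^n)$ (and on the space of flags), so for generic $g \in GL(n,\C)$ the varieties $\bar\Omega(\lambda,F_\bullet)$ and $\bar\Omega(\mu,gF_\bullet) = \bar\Omega(\mu,G_\bullet)$ intersect transversally. Under this transversality, $[\bar\Omega(\lambda,F_\bullet)] \cupro [\bar\Omega(\mu,G_\bullet)]$ equals the class of their intersection.

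The last step is to show that the intersection $\bar\Omega(\lambda,F_\bullet) \cap \bar\Omega(\mu,G_\bullet)$ for generic flags decomposes as a finite union of Schubert varieties $\bar\Omega(\nu_i,H^{(i)}_\bullet)$, each of real codimension $2(|\lambda|+|\mu|)$. For this, I would stratify the intersection by the jumping numbers of the ambient flag intersections, observe that each stratum is itself a Schubert cell with respect to some auxiliary flag adapted to the interaction of $F_\bullet$ and $G_\bullet$, and use the dimension count $\codim \bar\Omega(\lambda) + \codim \bar\Omega(\mu) = 2(|\lambda|+|\mu|)$ forced by transversality. Commutativity of the product is then automatic from the commutativity of cup product on an even-cohomology ring (or directly from the symmetry in $F_\bullet, G_\bullet$), and the uniqueness statement follows because the Schubert classes are a $\Z$-basis, so the structure constants (the coefficients $\nu_i$) are determined.

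The main obstacle I expect is the transversality/decomposition step: spelling out precisely why a generic pair of flags yields an intersection that is set-theoretically (and scheme-theoretically) a disjoint union of Schubert varieties with respect to explicit auxiliary flags. In the $e=1$ (special Schubert) case of diagram~\eqref{e.special young}, which is the only case we actually need for \cref{t.schubert2}, this reduces to Pieri's rule, and the verification is considerably more elementary; I would point the reader to the standard references \cite{Fulton} for the general statement and specialize to Pieri's rule for the application.
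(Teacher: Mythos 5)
The paper does not actually prove this proposition: it is stated in \cref{ss.schubert} as background, with citations to Blasiak, Vakil and Fulton, and the paper only uses \cref{l.overlap} (cited from Fulton) downstream. So your task is genuinely to supply a proof from scratch, and the parts of your outline that are standard (the CW decomposition into Schubert cells, the Poincar\'{e}-duality dictionary between cup product and transverse intersection, and Kleiman transversality) are correct. However, the step you flag as the ``main obstacle'' is not just a technicality you could spell out; as formulated, it is wrong. For a \emph{generic} pair of flags (the pair Kleiman gives you), the intersection $\bar\Omega(\lambda,F_\bullet)\cap\bar\Omega(\mu,G_\bullet)$ is a Richardson variety, and Richardson varieties are \emph{irreducible} when nonempty. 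They are almost never a union of Schubert varieties. For instance, in $G_2(\C^4)$ with $\lambda=\mu=(1)$ and opposite flags, the intersection is the $2$-dimensional irreducible variety of $2$-planes meeting both $F_2$ and $G_2$ (birational to $\P(F_2)\times\P(G_2)$), while $\sigma_{(1)}\cupro\sigma_{(1)}=\sigma_{(2)}+\sigma_{(1,1)}$ has two terms; this Richardson variety is not $\bar\Omega((2),H_\bullet)$, not $\bar\Omega((1,1),H_\bullet)$, and not a union of such for any flag $H_\bullet$. So the stratification-by-jumping-numbers argument you sketch cannot succeed for Kleiman-generic flags.

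The existence statement in the proposition requires choosing \emph{special}, degenerate flags -- this is precisely the content of Vakil's geometric Littlewood--Richardson rule (the reference [Va] in the paper's bibliography), which degenerates from general position through a sequence of checker moves so that at the terminal stage the intersection breaks into Schubert varieties, while preserving the relevant transversality in the paper's sense. That argument is substantial and is the part you would need to supply or cite; ``generic flags + stratify'' does not circumvent it. Also, your closing remark that only the $e=1$ case of diagram~\eqref{e.special young} is needed for \cref{t.schubert2} (hence reducible to Pieri) is not accurate: \cref{t.schubert2} is applied for arbitrary $e=\dim E$, and the rectangular diagram in \eqref{e.special young} is not of Pieri type for $e\ge 2$. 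The paper instead sidesteps the full Littlewood--Richardson decomposition by invoking only the non-vanishing criterion of \cref{l.overlap}, which is what the proof of \cref{t.schubert2} actually uses.
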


\begin{example}
Working in $A^*(2,4)$,
let us compute the products of the Young diagrams 
$\lambda = {\tiny \yng(2)}$ and $\mu={\tiny \yng(1,1)}$.
Fix a flag $F_\bullet$.
Then
$\bar\Omega(\lambda, F_\bullet)$ is the set of $W \in G_2(\C^4)$ that contain $F_1$,
and $\bar\Omega(\mu, F_\bullet)$ is the set of $W \in G_2(\C^4)$ that are contained in $F_3$.
Take another flag $G_\bullet$ which is in general position with respect to $F_\bullet$,
that is $F_i \cap G_{4-i} = \{0\}$.
Then:
\begin{itemize}
\item The set
$\bar\Omega(\lambda, F_\bullet) \cap \bar\Omega(\lambda, G_\bullet)$ 
contains a single element, namely $F_1 \oplus G_1$,
and thus equals $\bar\Omega((2,2),H_\bullet) = \{H_2\}$ for an appropriate flag $H_\bullet$.
This shows that
$\lambda \cupro \lambda = {\tiny \yng(2,2)}$.

\item The space $F_3 \cap G_3$ is $2$-dimensional and thus
is the single element of 
$\bar\Omega(\mu, F_\bullet) \cap \bar\Omega(\mu, G_\bullet)$.
So
$\mu \cupro \mu = {\tiny \yng(2,2)}$.

\item The set 
$\bar\Omega(\lambda, F_\bullet) \cap \bar\Omega(\mu, G_\bullet)$
is empty, thus $\lambda \cupro \mu = 0$.
\end{itemize}
However, if we work in $A^*(4,8)$  
then it can be shown that:
$$
{\tiny \yng(2)}   \cupro {\tiny \yng(2)}   = {\tiny \yng(2,2)} + {\tiny \yng(4)} + {\tiny \yng(3,1)}, \quad
{\tiny \yng(1,1)} \cupro {\tiny \yng(1,1)} = {\tiny \yng(2,2)} + {\tiny \yng(2,1,1)} + {\tiny \yng(1,1,1,1)}, \quad
{\tiny \yng(2)}   \cupro {\tiny \yng(1,1)} = {\tiny \yng(3,1)} + {\tiny \yng(2,1,1)}.
$$
If we drop the terms that do not fit in a $2 \times 2$ rectangle, we reobtain the results for $G_2(\C^4)$. 
\end{example}

The general computation of the product $\lambda \cupro \mu$ is 
not simple and can be done in various ways. 
For our purposes, however, it will be sufficient to know when 
the product is zero or not.
The answer is provided by the following simple \lcnamecref{l.overlap}: 

\begin{lemma}[\cite{Fulton}, p.~148--149]\label{l.overlap}
Let $\lambda$ and $\mu$ be Young diagrams in the $k \times (n-k)$ rectangle.
The following two conditions are equivalent:
\begin{enumerate}

\item\label{i.nonzero} 
$\lambda \cupro \mu \neq 0$.

\item\label{i.nonoverlap} 
If one draws inside the $k \times (n-k)$ rectangle
the Young diagrams of $\lambda$ and $\mu$,
being the later rotated by $180^{\circ}$ and put in the southeast corner,
then the two figures do not overlap
(see \cref{f.nooverlap}). 
Equivalently, $\lambda_i + \mu_{k+1-i} \le n-k$ for every $i=1, \ldots, n$.
\end{enumerate}
\end{lemma}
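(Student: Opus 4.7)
My strategy is to realize $\lambda \cupro \mu$ geometrically as an intersection of Schubert varieties for two opposite flags, and then to verify by elementary dimension bookkeeping that this intersection is nonempty precisely when the two (suitably rotated) diagrams do not overlap in the $k \times (n-k)$ rectangle. Take $F_\bullet$, $G_\bullet$ to be the two opposite standard flags of $\C^n$, namely $F_i = \mathrm{span}(e_1,\dots,e_i)$ and $G_i = \mathrm{span}(e_{n-i+1},\dots,e_n)$, and let $j_1<\cdots<j_k$ and $j'_1<\cdots<j'_k$ be the jumping numbers of $\lambda$ and $\mu$ with respect to these flags, so that $j_i = n-k+i-\lambda_i$ and $j'_i = n-k+i-\mu_i$. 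A direct computation then converts the non-overlap condition $\lambda_i + \mu_{k+1-i} \le n-k$ into the equivalent dual form $j_i + j'_{k+1-i} \ge n+1$ for every $i$. Since the cohomology class of a Schubert variety does not depend on the chosen flag (as the flag variety is $GL_n$-homogeneous and connected), the intersection characterization of the cup product from the excerpt applies also to opposite flags; thus $\lambda \cupro \mu \neq 0$ iff $\bar\Omega(\lambda, F_\bullet) \cap \bar\Omega(\mu, G_\bullet) \neq \emptyset$.

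For the direction $(2) \Rightarrow (1)$, assuming non-overlap, I would exhibit the explicit coordinate subspace $W := \mathrm{span}(e_{j_1}, \dots, e_{j_k})$ as a point of the intersection. The obvious inclusions $e_{j_1}, \dots, e_{j_i} \in W \cap F_{j_i}$ yield $\dim(W \cap F_{j_i}) \ge i$ for all $i$, placing $W$ in $\bar\Omega(\lambda, F_\bullet)$. Applying the dual inequality with the index $k+1-i$ in place of $i$ gives $j_{k+1-i} \ge n - j'_i + 1$, so that $i$ of the basis vectors of $W$ — namely $e_{j_{k+1-i}}, \dots, e_{j_k}$ — lie in $W \cap G_{j'_i}$, placing $W$ in $\bar\Omega(\mu, G_\bullet)$ as well.

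For the contrapositive of $(1) \Rightarrow (2)$, suppose some $\lambda_i + \mu_{k+1-i} > n-k$, equivalently $j_i + j'_{k+1-i} \le n$. Then the coordinate supports of $F_{j_i}$ and $G_{j'_{k+1-i}}$ are disjoint, so $F_{j_i} \cap G_{j'_{k+1-i}} = \{0\}$. Yet any $W$ in the intersection of both Schubert varieties would satisfy $\dim(W \cap F_{j_i}) \ge i$ and $\dim(W \cap G_{j'_{k+1-i}}) \ge k+1-i$; since these dimensions add to $k+1 > \dim W$, the two subspaces of $W$ would have to meet in a nonzero vector, contradicting the triviality of $F_{j_i} \cap G_{j'_{k+1-i}}$. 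The main subtlety I expect is justifying that opposite flags genuinely qualify as ``appropriate flags'' in the cup product characterization recalled from the excerpt; this reduces to a standard Kleiman transversality argument, which I would handle by perturbing opposite flags within their $GL_n$-orbit to achieve the required transverse intersection while preserving emptiness or nonemptiness of the intersection, which is invariant under small perturbations of a transverse configuration.
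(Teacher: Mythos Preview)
The paper does not prove this lemma; it is quoted from Fulton's \textit{Young Tableaux} (pp.~148--149), so there is no in-paper proof to compare against. Your approach is the standard one and is essentially correct: realize the cup product via the intersection of Schubert varieties for a pair of opposite coordinate flags, and reduce the question to the elementary dimension count you carry out.

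Both combinatorial computations are fine. For $(1)\Rightarrow(2)$ your contrapositive is clean: empty intersection of closed subvarieties always forces zero cup product of their Poincar\'e duals, with no transversality needed. The only place that deserves a sharper justification is $(2)\Rightarrow(1)$: exhibiting a single point $W$ in $\bar\Omega(\lambda,F_\bullet)\cap\bar\Omega(\mu,G_\bullet)$ does not by itself give $\sigma_\lambda\cupro\sigma_\mu\neq 0$, since a nonempty excess intersection can still represent the zero class. Your Kleiman sketch is the right idea but slightly circular as written (``invariant under small perturbations of a transverse configuration'' presupposes what you want). A clean fix: the $\GL_n$-action on pairs of flags carries the opposite pair to every pair in the open dense orbit, and nonemptiness of the intersection is $\GL_n$-equivariant; hence nonemptiness for the opposite pair implies nonemptiness for every pair in the open orbit. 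Kleiman then supplies a pair in that orbit for which the intersection is transverse and (by the above) nonempty, forcing $\sigma_\lambda\cupro\sigma_\mu\neq 0$. Alternatively one may invoke directly that opposite Schubert varieties intersect generically transversely (the Richardson variety picture). Either way the gap closes with a one-line citation.
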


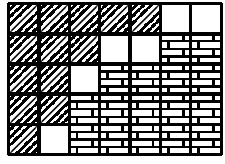
\begin{figure}[hbt]  
\setlength{\unitlength}{.4cm}
\begin{picture}(7,5)

\thicklines
\put(0,0){\grid(7,5)(1,1)}

\thinlines
\put(0,0){\diagonalpattern}
\put(0,1){\diagonalpattern}
\put(1,1){\diagonalpattern}
\put(0,2){\diagonalpattern}
\put(1,2){\diagonalpattern}
\put(0,3){\diagonalpattern}
\put(1,3){\diagonalpattern}
\put(2,3){\diagonalpattern}
\put(0,4){\diagonalpattern}
\put(1,4){\diagonalpattern}
\put(2,4){\diagonalpattern}
\put(3,4){\diagonalpattern}
\put(4,4){\diagonalpattern}

\thinlines
\put(2,0){\brickpattern}
\put(3,0){\brickpattern}
\put(4,0){\brickpattern}
\put(5,0){\brickpattern}
\put(6,0){\brickpattern}
\put(2,1){\brickpattern}
\put(3,1){\brickpattern}
\put(4,1){\brickpattern}
\put(5,1){\brickpattern}
\put(6,1){\brickpattern}
\put(3,2){\brickpattern}
\put(4,2){\brickpattern}
\put(5,2){\brickpattern}
\put(6,2){\brickpattern}
\put(5,3){\brickpattern}
\put(6,3){\brickpattern}

\end{picture}
\caption{{\footnotesize 
Consider $k=5$, $n=12$, $\lambda=(5,3,2,2,1)$, and $\mu = (5,5,4,2,0)$.
The picture shows that the non-overlap condition~(\ref{i.nonoverlap}) from \cref{l.overlap}
is satisfied, and in particular $\lambda \cupro \mu \neq 0$. (This example is reproduced from 
\cite[p.~150]{Fulton}.)}}
\label{f.nooverlap}
\end{figure}


\subsection{Intersection of subvarieties of the Grassmannian}\label{ss.intersection}

Next we explain how the Schubert calculus sketched above can be used to obtain information
about intersection of general subvarieties of the Grassmannian,
by means of cohomology and Poincar\'{e} duality.
See \cite[Appendix~B]{Fulton} and \cite{Hutchings} for further details.

\medskip

Any topological space $X$ has singular homology groups $H_i X$ and cohomology groups $H^i X$ (here taken always with integer coefficients). With the cup product $H^i X \times H^j X \to H^{i+j} X$,
the cohomology $H^* X = \bigoplus H^i X$ has a ring structure.

If $X$ is a real compact oriented manifold of dimension $d$ then 
the homology group $H_d X$ is canonically isomorphic to $\Z$, with a generator $[X]$
called the \emph{fundamental class} of $X.$
In addition, there is \emph{Poincar\'{e} duality isomorphism}
$H^i X \to H_{d-i} X$, which is given by 
$\alpha \mapsto \alpha \smallfrown [X]$
(taking the cap product with the fundamental class).
Let us denote by $\omega \mapsto \omega^*$ the inverse isomorphism.

Next suppose $Y$ and $Z$ are compact oriented submanifolds of $X$, of codimensions $i$ and $j$ respectively.
Also suppose that $Y$ and $Z$ have transverse intersection
$Y \cap Z$, which therefore is either empty or a compact submanifold of codimension $i + j$,
which is oriented in a canonical way.
The images of the fundamental classes of $Y$, $Z$, and $Y\cap Z$ under the inclusions into $X$
define homology classes that we denote (with a slight abuse of notation) by
$[Y] \in H_{d-i} X$, $[Z] \in H_{d-j} X$, $[Y\cap Z] \in H_{d-i-j} X$.
Then their Poincar\'e duals 
$[Y]^*\in H^i X$, $[Z]^* \in H^j X$, and $[Y \cap Z]^* \in H^{i+j} X$
are related by: 
$$
[Y]^* \cupro [Z]^* = [Y \cap Z]^* \, .
$$
That is, \emph{cup product is Poincar\'{e} dual to intersection.}

Now consider the case where $X$ is a projective nonsingular (i.e., smooth) complex variety,
and $Y$ and $Z$ are irreducible  subvarieties of $X$.
Obviously, the fundamental class $[X]$ makes sense, because $X$ is a compact manifold
with a canonical orientation induced from the complex structure.
A deeper fact (see \cite[Appendix~B]{Fulton})
is that fundamental classes $[Y]$ and $[Z]$ can also be canonically associated to 
the (possibly singular) subvarieties $Y$ and $Z$,
and the Poincar\'{e} duality between cup product and intersection 
works in this situation.
More precisely,
suppose that $Y$ and $Z$ are transverse in the algebraic sense:
$Y \cap Z$ is a union of subvarieties $W_1$, \dots, $W_\ell$
whose codimensions are the sum of the codimensions of $Y$ and $Z$,
and for each $i=1,\dots,\ell$, the tangent spaces
$T_w Y$ and $T_w Z$ are transverse 
for all $w$ in a Zariski-open subset of $W_i$.
Then each $W_i$ has its canonical fundamental class, 
and the following duality formula holds:
$$
[Y]^* \cupro [Z]^* = [W_1]^* + \cdots + [W_\ell]^* \, .
$$

\medskip

In our application of this machinery, $X$ will be the Grassmannian $G_k(\C^n)$.
In this case:
\begin{itemize}
\item The fundamental classes of the Schubert varieties $[\bar\Omega(\lambda, F_\bullet)]$
do not depend on the flag $F_\bullet$.
\item Let $\sigma_\lambda$ denote the Poincar\'e dual of $[\bar\Omega(\lambda, F_\bullet)]$.
Then $H^{2r} G_k(\C^n)$ is a free abelian group and the elements $\sigma_\lambda$
with $|\lambda| = r$ form a set of generators.
(The cohomology groups of odd codimension are zero.)
\item The cup product on cohomology agrees with the ``cup'' product of Young diagrams 
explained in the previous section.
\end{itemize}

\subsection{End of the proof}\label{ss.end}

We are now able to prove \cref{t.schubert2}. 

\begin{proof}[Proof of \cref{t.schubert2}]
Let $1 \le e \le k < n$.
Let $E \subset \C^n$ be a subspace of dimension $e$,
and consider the set $S_k(E)$ defined by \eqref{e.special schubert}.
Recall from \cref{ex.special schubert} that 
this is the Schubert variety for the Young diagram $\lambda$ given by \eqref{e.special young}.

Now consider a (nonempty) subvariety $Y \subset G_k(\C^n)$
that is disjoint from $S_k(E)$.
We want to give a lower bound for the codimension $c$ of $Y$.
We can of course assume that $Y$ is irreducible.

Let $[Y]^*$ be the dual of fundamental class of $Y$.
This is a nonzero element of $H^{2c} G_k(\C^n)$.
It can be expressed as  
$\sum n_i \sigma_{\mu_i}$,
where $\mu_i$ are Young diagrams with area $|\mu_i|=c$,
and $n_i$ are nonzero integers.
In fact we have $n_i>0$, because of the canonical
orientations induced by complex structure. 

Since the intersection between $S_k(E)$ and $Y$ is empty (and in particular transverse),
Poincar\'{e} duality gives $[S_k(E)]^* \cupro [Y]^* = 0$.
Therefore we have $\sigma_\lambda \cupro \sigma_{\mu_i} = 0$ for each $i$.

By \cref{l.overlap},
if we draw the Young diagram of $\mu_i$
rotated by $180^{\circ}$ and put in the southeast corner
of the $k \times (n-k)$ rectangle,
then it overlaps the Young diagram $\lambda$ pictured in \eqref{e.special young}.
This is only possible if $c \ge k-e+1$;
indeed the Young diagram $\mu$ with least area such that $\lambda \cupro \mu \neq 0$ is
$$
\mu = \big( \underbrace{1,\dots,1}_{k-e+1 \text{ times}}, \underbrace{0,\dots,0}_{e-1 \text{ times}} \big), 
$$
for which the overlapping picture becomes:
\setlength{\unitlength}{.25cm}
\begin{center}
\begin{picture}(12,8)
\thinlines
\put(0,0){\grid(12,8)(1,1)}
\multiput(0,5)(0,1){3}{\multiput(0,0)(1,0){12}{\lightdiagonalpattern}}
\multiput(11,0)(0,1){6}{\brickpattern}
\thicklines
\put(11,5){\grid(1,1)(1,1)}
\end{picture}
\end{center}
This concludes the proof of  \cref{t.schubert2}.
\end{proof}

As explained in \S~\ref{sss.reduction}, \cref{t.schubert} follows.

%
%
%
%
%
%
%
%

\section{Stratifications and transversality}\label{a.strat_trans}

\subsection{Stratifications}
This appendix contains fundamental for the understanding of \cref{s.main proof}. We recall a few notions about stratifications and transversality, and prove \cref{p.stratifiedtransversality}.
We refer the reader to \cite{GWPL,Mather_71} for more details and proofs.

\medskip

Let $X$ be a smooth (i.e., $C^\infty$) manifold. 
A \emph{smooth stratification} of a closed subset $\Si\subset X$ is a filtration by closed subsets 
$$
\Si = \Si_n \supset \Si_{n-1} \supset \cdots \supset \Si_0
$$ 
such that for each $i$, the set $\Gamma_i = \Si_i \setminus \Si_{i-1}$ (where $\Sigma_{-1}:=\emptyset$) is a smooth submanifold of $X$ without boundary and the dimension of $\Gamma_i$ decreases strictly with increasing $i$. 
Each connected component of $\Gamma_i$ is called a \emph{stratum}. 
The \emph{codimension} in $X$ of a stratification is the codimension of the stratum of largest dimension. 
A stratification of  a set $\Si$ is not unique, but this codimension in $X$ does not depend on the choice of the stratification.

Actually, apart for discrete subsets $\Si\subset X$, if there is one smooth stratification, 
then there are infinitely many others. 
However, the subsets we deal with 
are endowed with certain \emph{canonical} stratifications:


\begin{otherthm}[Existence of canonical stratifications]\label{t.canonical_stratif}
Any algebraic set $\Si\subset \C^N$ admits a canonical smooth stratification
whose strata are complex submanifolds of $\C^N$.
Any closed semialgebraic set $\Si\subset \R^N$ admits a canonical smooth stratification
whose strata are semialgebraic submanifolds of $\R^N$.
\end{otherthm}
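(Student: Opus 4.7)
The plan is to construct the canonical stratification iteratively by peeling off singular loci. In both cases, the key is to identify a proper subset $\operatorname{Sing}(\Sigma)$ of $\Sigma$ (intrinsically defined by the structure, so as to guarantee canonicity) such that $\Sigma \setminus \operatorname{Sing}(\Sigma)$ is a smooth submanifold and $\operatorname{Sing}(\Sigma)$ stays within the category (algebraic or semialgebraic) with strictly smaller dimension. Once this is available, setting $\Sigma^{(0)}=\Sigma$ and $\Sigma^{(k+1)}=\operatorname{Sing}(\Sigma^{(k)})$ yields a descending chain of algebraic (resp.\ semialgebraic) subsets whose dimensions strictly decrease, hence the chain terminates after finitely many steps. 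Reindexing by dimension and setting $\Sigma_i$ equal to the union of the terms of dimension $\le i$ gives the desired filtration, with strata $\Gamma_i = \Sigma_i \setminus \Sigma_{i-1}$ that are smooth submanifolds by construction.

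In the complex algebraic case, I would take $\operatorname{Sing}(\Sigma)$ to be the usual singular locus: the set of points where $\Sigma$ fails to be a complex submanifold, or equivalently where the Jacobian criterion fails on each irreducible component. Standard results (see e.g.\ Shafarevich) state that this is an algebraic subset of $\Sigma$ of strictly smaller dimension on each irreducible component. Since $\Sigma$ has finitely many irreducible components, we can incorporate lower-dimensional components into the singular locus at each stage so that the successive differences are pure-dimensional complex submanifolds. The canonicity is automatic because the singular locus is intrinsically defined.

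In the semialgebraic case, I would define $\operatorname{Sing}(\Sigma)$ as the complement in $\Sigma$ of the set of points $x$ at which $\Sigma$ is locally (in the classical sense) a semialgebraic $C^\infty$-submanifold of $\R^N$ of maximal local dimension. That this complement is semialgebraic of strictly smaller dimension is exactly the content of the semialgebraic stratification theorem (see \cite[\S 3.4]{BR} or \cite[\S 9.1]{BCR}), which ultimately rests on the Tarski--Seidenberg theorem and the cell decomposition theorem. Again, since the definition of $\operatorname{Sing}(\Sigma)$ is intrinsic to the semialgebraic structure (it uses only the local topology and local dimension), the resulting stratification is canonical.

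The main obstacle, or rather the delicate point, is the semialgebraic half: one cannot simply quote the Jacobian criterion as in the algebraic setting, and one must verify simultaneously that (i) the smooth locus of top local dimension is open and dense in $\Sigma$, (ii) its complement is semialgebraic, and (iii) its complement has strictly smaller dimension. All three facts are standard consequences of cell decomposition, but they require invoking that machinery carefully; this is the reason the statement is usually attributed to references like \cite{GWPL} rather than proved from scratch. Once those facts are in place, the iteration is routine and terminates in at most $N+1$ steps, producing the required canonical smooth stratification.
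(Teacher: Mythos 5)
Your iterative peeling of singular loci reproduces the paper's own informal sketch (the text outlines exactly this inductive scheme for irreducible complex algebraic sets), and it does terminate and produce an equivariant filtration into smooth submanifolds, so as a proof of the \emph{literal} wording of the theorem it looks fine. But the literal wording undersells the content, and the gap is exactly where the paper needs the result.

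Immediately after the theorem statement the paper asserts that the canonical stratification satisfies Whitney's conditions~(a) and~(b), and this is the feature on which \cref{p.stratifiedtransversality} and the entire transversality argument in \cref{s.main proof} rest (openness of transversality to a stratification, via Trotman's theorem, requires Whitney regularity). Naive singular-locus peeling does \emph{not} yield a Whitney stratification in general. The classical example is the Whitney umbrella $V=\{(x,y,t): y^{2}=tx^{2}\}$: its singular locus is the $t$-axis, which is itself smooth, so your iteration halts after one step with the two-stratum filtration $V\supset\{\text{$t$-axis}\}$; yet condition~(a) fails at the origin (along suitable sequences in the regular part the limiting tangent planes do not contain the $t$-direction), so the origin must be split off as a separate $0$-dimensional stratum, and the singular-locus criterion cannot detect this since the $t$-axis has no singular points. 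The theorem being quoted from~\cite{GWPL} (or Mather's notes) is precisely that one may enlarge the ``bad set'' at each stage to include the locus where Whitney regularity fails against the already-constructed strata, and that this enlarged set is still a proper algebraic (resp.\ closed semialgebraic) subset of strictly smaller dimension, so the descent still terminates. The minimality characterization mentioned in the paper --- and hence the precise sense of ``canonical'' --- comes from that refined construction, not from the plain singular-locus filtration.

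In short, you have correctly built \emph{some} equivariant smooth stratification; you have not built the canonical Whitney stratification, you never mention Whitney regularity, and the nontrivial dimension bound for the Whitney-failure locus (the real content of the theorem you cite) is missing from your argument.
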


In the case of an irreducible algebraic set $\Si\subset \C^n$, 
the canonical stratification can be obtained as follows:
The connected components of the set of regular (i.e., non-singular) points form the higher-dimensional 
strata; then one decomposes the set of singular points of $\Si$ into irreducible components and proceeds by 
induction.

In any case, those canonical stratifications are uniquely characterized by a certain minimality property.
In particular, the canonical stratifications are equivariant under polynomial automorphisms of 
the ambient space.

Another important property of the canonical stratifications 
is that they satisfy the so-called \emph{Whitney conditions $(a)$ and $(b)$}: 
\medskip

For any  sequence of points $x_n$ in a stratum $\Gamma$ of dimension $i$ converging to a point $y$ in a stratum $\Delta$ of dimension $<i$, if the sequence of tangent spaces $T_{x_n}\Gamma$ converges to an $i$-space $E\subset T_yX$, then we have
\begin{itemize}
\item[(a)] $E$ contains $T_y\Delta$,

\item[(b)] in a local chart, if a sequence $y_n\in \Delta$ converges to $y$ and if the lines $x_ny_n$ converge to a line $L\subset T_y\Delta$,  then $L\subset E$.
\end{itemize}

\medskip

A smooth stratification that satisfies the Whitney conditions is called a \emph{Whitney stratification}. Let us write down some properties.


\begin{prop}[Basic properties of Whitney stratifications]\label{p.whitney_properties} 
Let $X$, $Y$ be smooth manifolds.
Let 
\begin{equation}\label{e.filtration}
\Sigma_n \supset \cdots \supset \Sigma_0
\end{equation}
be a filtration of a set $\Sigma \subset X$.
Then:
\begin{enumerate}
\item \label{i.whitney_local}
Being a Whitney stratification is a local property of a filtration:
So if \eqref{e.filtration} is a Whitney stratification then
$\Sigma_n \cap U \supset \cdots \supset \Sigma_0 \cap U$ 
is a Whitney stratification, 
and conversely if each point in $\Sigma$ has an open neighborhood $U \subset X$ such that
$\Sigma_n \cap U \supset \cdots \supset \Sigma_0 \cap U$ is a Whitney stratification
then \eqref{e.filtration} is a Whitney stratification.
\item \label{i.whitney_product}
If  \eqref{e.filtration} is a Whitney stratification of codimension $m$ in $X$,
then $\Sigma_n \times Y \supset \cdots \supset \Sigma_0 \times Y$  
is a Whitney stratification of codimension $m$ in $X \times Y$.
\item \label{i.whitney_invariance}
If  \eqref{e.filtration} is a Whitney stratification 
and $f \colon X \to Y$ is a smooth diffeomorphism 
then $f(\Sigma_n) \supset \cdots \supset f(\Sigma_0)$ 
is a Whitney stratification in $Y$. 
\end{enumerate}
\end{prop}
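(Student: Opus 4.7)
The three properties are of the same flavor: each reduces the verification of the Whitney conditions for a certain filtration to the Whitney conditions for the original filtration, and each has an easy ``smooth stratification'' part (submanifolds and strictly decreasing dimensions) which poses no difficulty. The plan is to dispatch property~\ref{i.whitney_local} essentially for free, then to establish properties \ref{i.whitney_product} and \ref{i.whitney_invariance} by short direct verifications.

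For property~\ref{i.whitney_local} I would observe that each $\Sigma_i$ is closed iff it is locally closed, and each $\Gamma_i = \Sigma_i \setminus \Sigma_{i-1}$ is a smooth submanifold iff it is one in a neighborhood of each of its points; so the smooth stratification part is local. Conditions (a) and (b) only involve sequences $x_n \in \Gamma$ converging to $y \in \Delta$, the limit of their tangent spaces, and (for (b)) the limit of secant lines from $x_n$ to $y_n \in \Delta$ with $y_n \to y$; all of this data is determined by the germ of the filtration at $y$, so locality is immediate.

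For property~\ref{i.whitney_product}, the strata of the product filtration are $\Gamma_i \times Y$, which are submanifolds of $X \times Y$ with $\codim(\Gamma_i \times Y) = \codim \Gamma_i$, so the codimension of the stratification is unchanged. Now take sequences $(x_n, y_n) \in \Gamma_i \times Y$ converging to $(y_0, z_0) \in \Gamma_j \times Y$ with $j < i$. Tangent spaces split as $T_{(x_n,y_n)}(\Gamma_i \times Y) = T_{x_n}\Gamma_i \oplus T_{y_n}Y$, whose limit (up to extracting subsequences) is $E \oplus T_{z_0}Y$ where $E = \lim T_{x_n}\Gamma_i$. Condition (a) then follows from condition (a) applied to the original stratification, since $T_{y_0}\Gamma_j \oplus T_{z_0}Y \subset E \oplus T_{z_0}Y$. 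For condition (b), given a sequence $(\tilde x_n, \tilde z_n) \in \Gamma_j \times Y$ converging to $(y_0, z_0)$, I would work in a product chart and write the unit direction of the secant line as a limit of $(x_n - \tilde x_n, y_n - \tilde z_n)/\ell_n$ with $\ell_n$ the norm; splitting into $X$- and $Y$-components and using condition (b) for the original stratification on the $X$-component (after renormalizing by $\|x_n-\tilde x_n\|$, with the case $x_n=\tilde x_n$ handled trivially) yields that the limit line lies in $E \oplus T_{z_0}Y$, and in fact in $E \oplus \{\text{limit direction in } Y\}$. Extracting the $X$-part lies in $E$ as required.

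For property~\ref{i.whitney_invariance}, the diffeomorphism $f$ sends each $f(\Gamma_i)$ to a submanifold of $Y$ of the same dimension as $\Gamma_i$, so the smooth stratification structure is preserved. For the Whitney conditions, the derivative $Df_y\colon T_yX \to T_{f(y)}Y$ is a linear isomorphism that depends continuously on $y$, so if $T_{x_n}\Gamma_i \to E$ then $T_{f(x_n)}f(\Gamma_i) = Df_{x_n}(T_{x_n}\Gamma_i) \to Df_{y_0}(E)$; this gives condition (a) at once from the original condition (a). For condition (b), the only mild subtlety is that it is stated ``in a local chart'': I would pick a chart $\psi$ on $Y$ near $f(y_0)$ and pull it back by $f$ to obtain a chart on $X$ near $y_0$, in which the secant lines between $f(x_n)$ and $f(y_n)$ (read through $\psi$) correspond, after rescaling, to the secant lines between $x_n$ and $y_n$ (read through $\psi \circ f$); the limits then correspond via $Df_{y_0}$, so condition (b) transfers. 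The main thing to be careful about is this chart-independence, but it is a routine consequence of the chain rule and the fact that the derivative of a change of charts is a linear isomorphism at each point.
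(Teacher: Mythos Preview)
Your proof is correct. The paper does not actually prove this proposition: it is stated as a collection of basic facts, with the appendix referring the reader to \cite{GWPL,Mather_71} ``for more details and proofs.'' Your direct verification of the three properties from the definitions of the Whitney conditions (a) and (b) is sound and fills in what the paper deliberately omits; the only places one might polish are the handling of the renormalization in condition (b) for the product (the case $x_n=\tilde x_n$ along a subsequence, and the interplay between the two norms), but your sketch already indicates how to treat these.
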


Let us now discuss how stratifications behave with respect to transversality. 
Let $f \colon X \to Y$ be a $C^1$ map.
Let $\Si=\Si_d \supset\cdots \supset\Si_0$ be a stratification of a closed subset $\Si$ of $Y$. 
One says that $f$ is \emph{transverse} to that stratification 
(in symbols, $f\pitchfork \Si$)
if it is transverse to each of its strata.
Transversality to a general stratification is not an open condition.
However, we obtain openness if the stratification is Whitney:


\begin{prop}[Transversality is open]\label{p.transversality open}
Let $X$, $Y$ be $C^\infty$ manifolds without boundary. Let
$\Si=\Si_d \supset\cdots \supset\Si_0$ be a Whitney stratification of a closed subset of $Y$. 
Then the set $\cO = \{f \in C^1(X,Y) ; \;  f\pitchfork \Si\}$ is open in $C^1(X,Y)$ (with respect to the 
strong topology). 
\end{prop}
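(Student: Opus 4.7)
The plan is to show openness in the strong $C^1$ topology by a local-to-global argument. The key local claim is: for each $x \in X$ there exist a compact chart neighbourhood $K_x$ and $\eps_x > 0$ such that every $g \in C^1(X,Y)$ with $\|g - f\|_{C^1(K_x)} < \eps_x$ is transverse to $\Si$ at every point of $K_x$. Granted this, paracompactness of $X$ furnishes a locally finite refinement of $\{\mathrm{int}(K_x)\}$ with inherited constants $\eps_\alpha$, and the corresponding basic strong-$C^1$ neighbourhood of $f$ lies in $\cO$, since transversality is pointwise.

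For the local claim, the case $f(x) \notin \Si$ is handled by closedness of $\Si$ and continuity: shrink $K_x$ so that $f(K_x) \cap \Si = \emptyset$, and any $g$ sufficiently $C^0$-close on $K_x$ still has $g(K_x) \cap \Si = \emptyset$, making transversality vacuous. Suppose now $f(x) \in \Gamma_i$ and the claim fails; then one finds $g_n \to f$ in $C^1$ on neighbourhoods shrinking to $x$ and $x_n \to x$ such that $g_n$ is not transverse to $\Si$ at $x_n$, so in particular $g_n(x_n) \in \Gamma_{i_n}$ for some index $i_n$. Since the filtration has only finitely many levels, one may pass to a subsequence with $i_n \equiv i_\infty$ constant, and then, working in a chart around $f(x)$ and using compactness of the Grassmannian $G_{\dim \Gamma_{i_\infty}}(T_{f(x)}Y)$, pass to a further subsequence so that $T_{g_n(x_n)} \Gamma_{i_\infty}$ converges to some subspace $E \subset T_{f(x)}Y$ of dimension $\dim \Gamma_{i_\infty}$. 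If $i_\infty = i$, then near $f(x)$ the stratum $\Gamma_i$ is the zero set of a local submersion $\pi$, and transversality of $g$ to $\Gamma_i$ at a footpoint $x'$ with $g(x') \in \Gamma_i$ is equivalent to surjectivity of $d(\pi \circ g)(x')$, which is an open condition in $(g, x')$ for the $C^1$ topology; this contradicts non-transversality of $g_n$ at $x_n$ for large $n$. If $i_\infty \neq i$, then $f(x) \in \overline{\Gamma_{i_\infty}} \setminus \Gamma_{i_\infty}$, forcing $\dim \Gamma_{i_\infty} > \dim \Gamma_i$, and the Whitney condition $(a)$ yields $E \supset T_{f(x)} \Gamma_i$. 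The non-transversality of $g_n$ at $x_n$ reads $\dim(\mathrm{Im}\, dg_n(x_n) + T_{g_n(x_n)} \Gamma_{i_\infty}) \leq \dim Y - 1$; by lower semi-continuity of the dimension of a sum of subspaces in a Grassmannian (equivalently, upper semi-continuity of the dimension of the intersection), this inequality passes to the limit, giving $\dim(\mathrm{Im}\, df(x) + E) \leq \dim Y - 1$. Since $E \supset T_{f(x)} \Gamma_i$, also $\mathrm{Im}\, df(x) + T_{f(x)} \Gamma_i \neq T_{f(x)} Y$, contradicting $f \pitchfork \Gamma_i$ at $x$.

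The main obstacle is precisely this last case $i_\infty \neq i$: transporting non-transversality from a neighbouring higher-dimensional stratum back to the stratum of $f(x)$ requires exactly the content of Whitney's condition $(a)$, combined with the semi-continuity of $\dim(V+W)$ in the Grassmannian to push the non-surjectivity inequality through the limit. Once this delicate step is handled, the remaining ingredients (finiteness of the filtration for extracting a constant-index subsequence, openness of surjectivity for Case $i_\infty = i$, closedness of $\Si$ for $f(x) \notin \Si$, and paracompactness for the global assembly) are routine.
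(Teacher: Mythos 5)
Your argument is correct, but note that the paper does not actually prove this proposition: it simply invokes the implication $(1)\Rightarrow(3)$ of Trotman's theorem \cite{Trotman_79} and remarks that only Whitney condition $(a)$ is used. Your proof is the standard direct argument underlying that implication --- a local-to-global reduction by paracompactness, a contradiction sequence extracted to land in a fixed stratum with convergent tangent planes, and a two-case analysis where the same-stratum case is finished by the local submersion description plus openness of surjectivity, and the higher-dimensional-nearby-stratum case by condition $(a)$ --- so it makes the paper's parenthetical remark fully explicit, which is a genuine gain over a bare citation.

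Two minor points worth tightening. The image of $dg_n(x_n)$ does not lie in a fixed Grassmannian (its rank can vary), so rather than appealing to ``semicontinuity of $\dim(V+W)$ in the Grassmannian'' it is cleaner either to perturb a basis of $\Im df(x)+E$ into the approximating sums $\Im dg_n(x_n)+T_{g_n(x_n)}\Gamma_{i_\infty}$ (which shows those sums eventually have at least the limit dimension), or to pass to a limit of unit covectors annihilating both summands. And the inequality $\dim\Gamma_{i_\infty}>\dim\Gamma_i$ you need in order to invoke condition $(a)$ follows from $i_\infty>i$ together with strict monotonicity of stratum dimension along the filtration; be aware that the paper's displayed convention, ``the dimension of $\Gamma_i$ decreases strictly with increasing $i$,'' has the sign reversed relative to what the filtration $\Si_n\supset\cdots\supset\Si_0$ and your own argument require (the top-dimensional piece should be $\Gamma_n=\Si_n\setminus\Si_{n-1}$), so you are using the geometrically correct convention rather than the one as literally printed.
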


Actually, only Whitney condition $(a)$ is necessary here
(use the (1)$\Rightarrow$(3) implication of Trotman's theorem \cite{Trotman_79}).

\subsection{Jets and jet transversality}

We recall the basic notions on jets 
and state the transversality theorems we will need;
see~\cite{Hirsch} for details.

Let $X$, $Y$ be smooth manifolds without boundary. 
If $1 \le r < \infty$, an \emph{$r$-jet from $X$ to $Y$} is an equivalence class of pairs $(x,f)$, where $x\in X$, $f$ is a $C^r$ map from a neighborhood of $x$ to $Y$, and where $(x,f)$ is equivalent to $(x',f')$ if $x=x'$ and $f$ and $f'$ have same derivatives at $x$ up to order $r$. 
We denote by $J^r(X,Y)$ the space of $r$-jets from $X$ to $Y$. 
It is a smooth manifold.

For all $1 \leq s \leq \infty$, we denote by $C^s(X,Y)$ the space of $C^s$-maps from $X$ to
$Y$, endowed with the 
strong topology.

Given $1\leq r < s \leq \infty$ and a map $g\in C^s(X,Y)$, 
the \emph{$r$-jet extension} is the map $j^r g \colon X \to J^r(X,Y)$ that sends $x$ 
to the equivalence class $j^r g(x)$ of $(x,g)$. 
Then the mapping 
$$
j^r\colon C^s(X,Y) \to C^{s-r}\left( X, J^r(X,Y) \right)
$$
is continuous.


\begin{otherthm}[Jet transversality]\label{t.jtransversality}
Let $1\leq r < s \leq \infty$.
Let $X$ and $Y$ be $C^\infty$ manifolds without boundary.
Let $W\subset J^r(X,Y)$ be a $C^\infty$ submanifold without boundary. 
Then the $C^s$-maps
$g\colon X\to Y$ for which the $r$-jet 
extension $j^r g$ is transverse to $W$
form a residual subset of $C^s(X,Y)$.
\end{otherthm}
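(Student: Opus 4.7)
The plan is to deduce \cref{t.jtransversality} from the parametric transversality principle (Sard's theorem applied to a family of maps), via a standard local-to-global argument. The underlying idea is that at any point $x_0\in X$, the $r$-jet $j^r g(x_0)$ can be adjusted arbitrarily by adding to $g$ a polynomial of degree $\le r$, so a finite-dimensional family of such additive perturbations surjects onto the jet space in an open-map fashion.

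First, I would reduce residuality to density. Fix a countable locally finite cover $\{K_i\}$ of $X$ by compact sets and let $\cO_i = \{g\in C^s(X,Y) : j^r g|_{K_i}\pitchfork W\}$. Since $W$ (viewed as a trivial stratification) is a submanifold, \cref{p.transversality open} shows that each $\cO_i$ is open in the strong $C^s$-topology, and the desired set is $\bigcap_i \cO_i$. So it suffices to prove each $\cO_i$ is dense, and Baire's theorem finishes the job.

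Second, density is a local matter. Given $g_0\in C^s(X,Y)$, a compact $K$ and a strong neighborhood $\cN$ of $g_0$, cover $K$ by finitely many coordinate charts and, proceeding inductively one chart at a time, perturb $g_0$ into a map whose jet extension is transverse to $W$ over the portion of $K$ treated so far. At each induction step, the openness of transversality guarantees one may choose the next perturbation small enough in $C^s$ to preserve transversality already achieved and to stay inside $\cN$. Thus it is enough to treat one chart: assume $X$ is an open subset of $\R^m$, $Y=\R^n$, fix a smooth bump $\chi$ supported in the chart and equal to $1$ on a neighborhood $V$ of the relevant piece of $K$, and consider the family
\begin{equation*}
g_a(x) = g_0(x) + \chi(x)\,P_a(x),\qquad a\in A,
\end{equation*}
where $A$ is the finite-dimensional space of $\R^n$-valued polynomials on $\R^m$ of degree $\le r$ and $P_a$ is the polynomial with coefficient vector $a$.

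Third, one checks the key submersion property: on $V$ we have $g_a = g_0 + P_a$, so the evaluation map
\begin{equation*}
\Phi\colon A\times V \longrightarrow J^r(V,\R^n),\qquad \Phi(a,x)=j^r g_a(x)
\end{equation*}
is a submersion, because for fixed $x$ the derivatives up to order $r$ of $P_a$ at $x$ depend linearly and surjectively on $a$. Consequently $\Phi\pitchfork W$ trivially. The parametric transversality theorem (a direct application of Sard's theorem to the projection $\Phi^{-1}(W)\to A$) then yields that the set of $a\in A$ for which $\Phi_a = j^r g_a|_V\pitchfork W$ has full measure in $A$, in particular contains arbitrarily small $a$. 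Choosing $\|a\|$ small makes $g_a$ arbitrarily $C^s$-close to $g_0$ on all of $X$ (the cutoff $\chi$ has fixed compact support), which completes the local density step.

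The main technical obstacle is not any single step but the careful bookkeeping in the inductive local-to-global passage: one must arrange the successive cutoffs and the sizes of the perturbations so that transversality over previously treated compact pieces is not destroyed, while simultaneously keeping the cumulative perturbation inside the prescribed strong neighborhood of $g_0$. This is where openness of transversality (\cref{p.transversality open}) is used essentially, and where the fact that the cover $\{K_i\}$ is locally finite—so that each point is affected by only finitely many perturbations—ensures that the infinite inductive construction produces a genuine $C^s$ map in the strong topology.
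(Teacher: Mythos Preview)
The paper does not prove \cref{t.jtransversality}; it is stated as a classical result with a reference to Hirsch's \textit{Differential Topology} (see the sentence ``see~\cite{Hirsch} for details'' preceding the statement). So there is no in-paper proof to compare against. Your sketch is the standard argument one finds in Hirsch or Golubitsky--Guillemin: reduce to local density via Baire, build a finite-dimensional family of polynomial perturbations that submerges onto the jet fiber, and apply parametric transversality (Sard).

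One small technical point: you invoke \cref{p.transversality open} to conclude that each $\cO_i=\{g: j^r g|_{K_i}\pitchfork W\}$ is open, but that proposition is stated for a Whitney stratification of a \emph{closed} subset, while \cref{t.jtransversality} does not assume $W$ is closed in $J^r(X,Y)$. If $W$ is not closed, $\cO_i$ need not be open (a sequence of jets can converge to a point of $\overline{W}\setminus W$). The standard fix is to also cover $W$ by a countable family of compact pieces $L_j$ and work with the sets $\{g: j^r g\pitchfork W \text{ on } K_i\cap (j^r g)^{-1}(L_j)\}$, each of which is open and dense; the desired residual set is then the countable intersection over both $i$ and $j$. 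With that adjustment your outline is correct.
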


We finally prove the proposition stated in \S~\ref{s.main proof}:

\begin{proof}[Proof of Proposition~\ref{p.stratifiedtransversality}]
By Proposition~\ref{p.transversality open}, the set $\{F\colon X\to j^1(X,Y) ;\; F\pitchfork\Si\}$ is open in $C^1\left(X,J^1(X,Y)\right)$. Hence the set $\cO := \{f\colon X\to Y ;\;j^1f\pitchfork\Si\}$ is open in $C^2(X,Y)$. 
	
Fix $r \ge 2$.
Given a Whitney stratification $\Sigma_n \supset \cdots \supset \Sigma_0$ of $\Sigma$,
let $Z_i = \Sigma_i \setminus \Sigma_{i-1}$ be the corresponding decomposition into smooth submanifolds. By the jet transversality theorem (\cref{t.jtransversality}),
each set $\mathcal{R}_i = \{f\in C^r(X,Y) ; \; j^1 f \pitchfork Z_i\}$ is residual. 
Thus $\cO \cap C^r(X,Y) = \bigcap_i\mathcal{R}_i$ is $C^r$-dense. 
This concludes the proof.
\end{proof}

\section{Proof of the result in the holomorphic setting}\label{a.complex}





\begin{proof}[Proof of~\cref{t.main_C}]
Let $\cU\subset \C^m$ be an open subset. We may identify the set of $1$-jets from $\cU$ to $\GL(d,\C)$ with 
$$
\cU\times \GL(d,\C) \times  \gl(d,\C)^m.
$$
As we did in \cref{s.main proof}, 
and using \cref{t.cod_data_C} instead of \cref{t.cod_data_R}, we obtain that the set of poor $1$-jets from $\cU$ to $\GL(d,\C)$ is the algebraic subset $\cU\times\cP_m^{(\C)}$ of the space of $1$-jets. Hence it admits a stratification
$$
\cU\times\cP_m^{(\C)}=\cU\times \Si_n\supset \cdots \supset\cU\times \Si_0.
$$
Write $\cU\times\cP_m^{(\C)}$ as the disjoint union $\bigsqcup_{0\leq i\leq n} X_i$ where each $X_i$ is a smooth submanifold  of dimension $i$ in the jet space $J^1\left(\cU,\GL(d,\C)\right)$, and $X_n$ has codimension $m$.

Fix now a map $A\in \mathcal{H}(\cU,\GL(d,\C))$. For all $v=(a,b_1,\ldots b_m)\in \C^{m+1}$ and $u=(u_1, \dots ,u_m)\in \C^m$, write 
 $$P_{v}(u)=a+\sum_{i=1}^m b_ku_k.$$
For all $\mathtt{v}= (v_{i,j})_{1\leq i,j\leq d}\in\left(\C^{m+1}\right)^{d^2}$, write $P_\mathtt{v}=\left[P_{v_{i,j}}\right]_{1\leq i,j\leq d}$ and define the map $\Phi_\mathtt{v}= A+P_\mathtt{v}$.
One can write the $1$-jet extension $j^1 A$ at the point $u\in \cU$ as 
$$j^1 A(u)= \left[u,A(u),B_1, \dots ,B_m\right]\in \cU \times \GL(d,\C) \times \left[\mathrm{Mat}_{d \times d}(\C)\right]^m.$$
The same way, if we put $v_{i,j}=(a_{i,j},b_{1,i,j},\ldots,b_{m,i,j})$, we have
$$j^1P_\mathtt{v}(u)=\left[u,P_\mathtt{v}(u),(b_{1,i,j})_{1\leq i,j\leq d},\ldots,(b_{m,i,j})_{1\leq i,j\leq d}\right].$$
Define the map $F\colon \mathtt{v} \mapsto F_\mathtt{v}=j^1\Phi_\mathtt{v}$. The evaluation map of $F$ is:
$$F^\mathrm{ev}\colon \begin{cases} \left(\C^{m+1}\right)^{d^2} \times \cU &\to \; \cU\times \mathrm{Mat}_{d \times d}(\C) \times \left[\mathrm{Mat}_{d \times d}(\C)\right]^m\\
                                                          (\mathtt{v},u) &\mapsto \; F_\mathtt{v}(u)
                                     \end{cases}.$$ 
Hence,
\begin{align*}
F^\mathrm{ev}(\mathtt{v},u)&=j^1(A+P_\mathtt{v})\\
&=\left[u,\left(A+P_\mathtt{v}\right)(u),\left(b_{1,i,j}\right)_{1\leq i,j\leq d},\ldots,\left(b_{m,i,j}\right)_{1\leq i,j\leq d}\right]
\end{align*}

\begin{claim} For all $u$, the map $F^\mathrm{ev}$ restricts to a submersion from the $(\cdot,u)$-fiber to the $\left[u,\cdot\right]$-fiber. 
\end{claim}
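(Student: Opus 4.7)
The plan is to show something slightly stronger, namely that for fixed $u\in\cU$ the restricted map $\mathtt{v}\mapsto F^{\mathrm{ev}}(\mathtt{v},u)$ is an affine isomorphism from $(\C^{m+1})^{d^2}$ onto the $[u,\cdot]$-fiber $\{u\}\times\Mat_{d\times d}(\C)\times[\Mat_{d\times d}(\C)]^m$. Since both source and target have the same complex dimension $(m+1)d^2$, showing surjectivity of the derivative is equivalent to showing injectivity, and this will entail the submersion property.

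First I would observe that the formula
$$
F^{\mathrm{ev}}(\mathtt{v},u)=\bigl[u,\,(A(u)_{i,j}+a_{i,j}+\textstyle\sum_{k=1}^m b_{k,i,j}u_k)_{i,j},\,(b_{1,i,j})_{i,j},\ldots,(b_{m,i,j})_{i,j}\bigr]
$$
decouples completely across matrix entries: the coordinates of the output labelled by a fixed pair $(i,j)$ depend only on the block $v_{i,j}=(a_{i,j},b_{1,i,j},\ldots,b_{m,i,j})\in\C^{m+1}$. Consequently the whole map is a direct sum of $d^2$ affine maps $\Psi_u\colon\C^{m+1}\to\C^{m+1}$, one per entry, given by
$$
\Psi_u(a,b_1,\ldots,b_m)=\bigl(A(u)_{i,j}+a+\textstyle\sum_k b_k u_k,\,b_1,\ldots,b_m\bigr).
$$

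Second, I would compute the linear part of $\Psi_u$, which is represented in the standard basis of $\C^{m+1}$ by the matrix
$$
\begin{pmatrix} 1 & u_1 & u_2 & \cdots & u_m \\ 0 & 1 & 0 & \cdots & 0 \\ 0 & 0 & 1 & \cdots & 0 \\ \vdots & & & \ddots & \vdots \\ 0 & 0 & 0 & \cdots & 1 \end{pmatrix}.
$$
This matrix is upper triangular with unit diagonal, so it is invertible independently of the value of $u$.

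Third, since the derivative of $F^{\mathrm{ev}}(\cdot,u)$ is a block-diagonal direct sum of $d^2$ copies of this invertible linear map, it is itself a linear isomorphism. The restricted map is therefore an affine isomorphism and in particular a submersion onto the $[u,\cdot]$-fiber, as claimed. There is no real obstacle here: the content of the claim is simply that the $1$-jet at $u$ of an affine perturbation $P_{\mathtt{v}}$ can attain any prescribed value and first derivative at $u$, which is immediate from the triangular structure of $\Psi_u$.
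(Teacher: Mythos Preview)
Your proof is correct and follows essentially the same approach as the paper's: both exploit the triangular structure of the map $(a,b_1,\ldots,b_m)\mapsto(a+\sum_k b_k u_k,\,b_1,\ldots,b_m)$. The paper argues more tersely by observing that for fixed $b$-coefficients the map $(a_{i,j})\mapsto P_{\mathtt{v}}(u)$ is already a submersion, whereas you decompose entrywise and write down the explicit upper-triangular Jacobian; your version is more detailed and in fact shows the slightly stronger conclusion that the restricted map is an affine isomorphism.
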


\begin{proof}
We want to prove that $$\mathtt{v}\mapsto \left[(A+P_\mathtt{v})(u),\left(b_{1,i,j}\right)_{1\leq i,j\leq d},\ldots,\left(b_{m,i,j}\right)_{1\leq i,j\leq d}\right]$$
is a submersion, or equivalently that
$$\mathtt{v}\mapsto \left[P_\mathtt{v}(u),\left(b_{1,i,j}\right)_{1\leq i,j\leq d},\ldots,\left(b_{m,i,j}\right)_{1\leq i,j\leq d}\right]$$
is a submersion. Noting that $\mathtt{v}=(a_{i,j},b_{k,i,j})_{1\leq i,j\leq d \atop 1\leq k\leq m}$, this comes easily from the fact that $(a_{i,j})\mapsto P_\mathtt{v}(u)$ is a submersion, for any fixed set of coefficients $(b_{k,i,j})_{1\leq i,j\leq d \atop 1\leq k\leq m}$.
\end{proof}

That claim immediately implies that $F^\mathrm{ev}$ is a submersion. In particular it is transverse to each $X_i$. By the parametric transversality theorem (see \cite[p.~79]{Hirsch}), 
there is a residual subset of parameters $\mathtt{v}$ in $\left(\C^{m+1}\right)^{d^2}$ such that $F_\mathtt{v}=j^1\Phi_\mathtt{v}$ is transverse to $X_i$, for all $i$.

When $\mathtt{v}$ goes to $0$, $\Phi_\mathtt{v}$ tends to $A$ in $\mathcal{H}\left(\cU,\GL(d,\C)\right)$. 
This shows the denseness in $\mathcal{H}\left(\cU,\GL(d,\C)\right)$ of the maps $\hat{A}$ such that $j^1\hat{A}$ is transverse to $X_i$, for all $i$.
Take such a map $\hat{A}$: for all $i$, the image of $j^1\hat{A}$ does not intersect $X_0\sqcup  \dots  \sqcup X_{n-1}$ and intersects $X_n$ (which has codimension $m$) only in a discrete subset.

Fix $K'\subset \cU$ a compact set that contains $K$ in its interior. The image $j^1\hat{A}$ restricted to $K'$ can only intersect $X_n$ in a finite set $\Gamma$: indeed, any accumulation point of that intersection set would have to be in $X_0\sqcup  \dots  \sqcup X_{n-1}$, since $X_0\sqcup\ldots\sqcup X_n$ is closed, and this would contradict the fact that $j^1\hat{A}$ does not intersect $X_0\sqcup  \dots  \sqcup X_{n-1}$.

By the choice of our topology, a small perturbation $\tilde{A}$ of $\hat{A}$ is $C^0$ close to $\hat{A}$ by restriction to $K'$. By Cauchy's formula, the map $\tilde{A}$ is $C^2$ close to $\hat{A}$ over the set $K$. Hence, the (compact) image of $j^1\tilde{A}$ restricted to $K$ is still far from $X_0\sqcup  \dots  \sqcup X_{n-1}$, and intersects $X_n$ transversally in some $\epsilon$-neighborhood of $\Gamma$ inside $X_n$. Thus it also has to intersect $X_n$ only on a finite set. 

So we have found an open and dense subset of holomorphic maps whose $1$-jets above $K$ intersect the set of $N$-poor jets only on a finite number of points. As a consequence, for such maps,
there are only finitely many constant singular inputs  in $K^N$ for the system~\ref{e.proj semilin CS}. 
This concludes the proof of Theorem~\ref{t.main_C}.
\end{proof}

\section{Singular constant inputs of generic type} \label{a.generic singular}

In this appendix we prove \cref{t.addendum}
and the other assertions made at the end of \S~\ref{ss.main_statements}.
We also discuss other control-theoretic properties of generic semilinear systems
that are related to universal regularity.

\subsection{The poor data of generic type}\label{ss.additional}

Recall from \S~\ref{ss.unconstrained} the definition of 
an unconstrained matrix.
Let $(e_1, \dots, e_d)$ denote the canonical basis of $\C^d$.

\begin{lemma}\label{l.good_poor}
Suppose that the datum $\bA = (A, B_1, \dots, B_m) \in \GL(d,\C) \times \gl(d,\C)^m$
has the following properties:
\begin{enumerate}
\item\label{i.good_hyp_1}
$A$ is an unconstrained diagonal matrix;
\item\label{i.good_hyp_2}
there are indices $i_0$, $j_0 \in \{1,\dots,d\}$ with $i_0 \neq j_0$ 
such that for each $k \in \{1,\dots,m\}$, the $(i_0,j_0)$ entry of the matrix $B_k$ vanishes;
\item\label{i.good_hyp_3}
the off-diagonal vanishing entry position $(i_0,j_0)$ above is unique.
\end{enumerate}
Then:
\begin{enumerate}
\item\label{i.good_conclusion_1}
There is a single direction $[v] \in \CP^{d-1}$ such that $\Lambda(\bA) \cdot v \neq \C^d$,
namely~$[e_{j_0}]$.
\item\label{i.good_conclusion_2}
The space $\Lambda(\bA) \cdot e_{j_0}$ has codimension $1$; 
in fact, it equals $\spa \{e_i ; \; i \neq i_0\}$.
\end{enumerate}
\end{lemma}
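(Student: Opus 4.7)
The plan is to describe $\Lambda(\bA)$ very explicitly under hypotheses~\ref{i.good_hyp_1}--\ref{i.good_hyp_3}, and then to read off the two conclusions by direct linear algebra. Since $A = \Diag(\lambda_1,\dots,\lambda_d)$ is unconstrained, the operator $\Ad_A$ acts on the standard basis $\{E_{ij}\}$ of $\gl(d,\C)$ by $E_{ij}\mapsto \lambda_i\lambda_j^{-1} E_{ij}$, and by \cref{ss.unconstrained} the $d^2-d$ off-diagonal eigenvalues $\lambda_i\lambda_j^{-1}$ are pairwise distinct and all distinct from~$1$. Consequently, the Lagrange interpolation argument used in the proof of \cref{l.easy_fiber} (cf.~\eqref{e.poly entry}) provides, for each off-diagonal position $(i,j)$, a polynomial $f$ such that $f(\Ad_A)(M) = M_{ij}E_{ij}$ for every $M\in\gl(d,\C)$. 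Applying this with a $B_k$ whose $(i,j)$-entry is nonzero---which exists whenever $(i,j)\neq(i_0,j_0)$ by hypothesis~\ref{i.good_hyp_3}---I will obtain $E_{ij}\in\Lambda(\bA)$ for every off-diagonal $(i,j)\neq(i_0,j_0)$.

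For the converse direction, formula~\eqref{e.poly entry} together with hypothesis~\ref{i.good_hyp_2} shows that every matrix in $\sorb_{\Ad_A}(B_1,\dots,B_m)$ has a vanishing $(i_0,j_0)$-entry; and $\Id$ too has a zero $(i_0,j_0)$-entry since $i_0\neq j_0$. Hence every $M\in\Lambda(\bA)$ satisfies $M_{i_0,j_0}=0$, which immediately forces $\Lambda(\bA)\cdot e_{j_0}\subset\spa\{e_i;\; i\neq i_0\}$. The reverse inclusion is easy: $E_{i,j_0}\cdot e_{j_0} = e_i$ for every $i\neq i_0,j_0$ (and these matrices lie in $\Lambda(\bA)$ by the previous paragraph), while $\Id\cdot e_{j_0} = e_{j_0}$ supplies the remaining basis vector. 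This will establish part~\ref{i.good_conclusion_2}.

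For part~\ref{i.good_conclusion_1}, I will take $v=\sum_j v_j e_j$ with $[v]\neq[e_{j_0}]$, pick $j^*\neq j_0$ with $v_{j^*}\neq 0$, and prove $e_i\in\Lambda(\bA)\cdot v$ for every~$i$. Whenever there exists $j\in\supp v$ with $j\neq i$ and $(i,j)\neq(i_0,j_0)$, the matrix $E_{i,j}$ lies in $\Lambda(\bA)$ and $E_{i,j}\cdot v = v_j e_i$ delivers $e_i$; otherwise $v$ is itself a nonzero scalar multiple of $e_i$ and $\Id\cdot v$ does the job. The only case requiring genuine care is $i=i_0$ with $\supp v \subseteq\{i_0,j_0\}$ and $v_{i_0}\neq 0$: here I will first produce $e_{j_0}=v_{i_0}^{-1}E_{j_0,i_0}\cdot v$ (note that $E_{j_0,i_0}\in\Lambda(\bA)$ because $(j_0,i_0)\neq(i_0,j_0)$), then isolate $e_{i_0}$ as $v_{i_0}^{-1}(\Id\cdot v-v_{j_0}e_{j_0})$.

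The main obstacle is precisely this last piece of bookkeeping: one has to verify that the single failing direction is really $[e_{j_0}]$ and not its ``dual'' $[e_{i_0}]$, and so the asymmetry between the row index $i_0$ and the column index $j_0$ has to be tracked carefully throughout. All other steps reduce to transparent linear algebra once the explicit description of $\Lambda(\bA)$ sketched in the first paragraph is in place.
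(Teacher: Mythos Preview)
Your proposal is correct and follows the same approach as the paper: you use the Lagrange interpolation argument from the proof of \cref{l.easy_fiber} (via \eqref{e.poly entry}) to show that $\Lambda(\bA)$ contains all off-diagonal elementary matrices $E_{ij}$ with $(i,j)\neq(i_0,j_0)$, which together with $\Id$ is exactly the paper's claimed inclusion $\Lambda(\bA)\supset\{(y_{ij});\; y_{11}=\cdots=y_{dd},\ y_{i_0 j_0}=0\}$. The paper then simply writes ``the conclusions follow easily'', whereas you spell out the verification in full (including the upper bound $M_{i_0,j_0}=0$ for every $M\in\Lambda(\bA)$, which the paper leaves implicit via \cref{l.easy_poor_data}); your case analysis for part~\ref{i.good_conclusion_1} is sound, with the only delicate case $i=i_0$, $\supp v\subseteq\{i_0,j_0\}$ handled correctly.
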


If the datum $\bA$ satisfies the assumptions of the lemma
then it is conspicuously poor (see \cref{ss.cod_data_easy_half})
and thus the constant input $(0,\dots,0)$ of length $d^2$
for the associated bilinear control system on $\CP^{d-1}$
is not universally regular.
However, the conclusions of the lemma say that this universal regularity fails in 
the weakest possible way: there is exactly one non-regular state,
which can be moved in all directions but one.
We will show in \cref{l.bad_set} below that the generic poor data
satisfy the hypotheses of \cref{l.good_poor} after a change of basis.

\begin{proof}[Proof of \cref{l.good_poor}]
By arguments as in the proof of \cref{l.easy_fiber},
we see that 
$$
\Lambda(\bA) \supset
\big\{ (y_{ij}) \in \gl(d,\C) ; \; y_{11} = \cdots = y_{dd}, \ y_{i_0 j_0} = 0 \big\}.
$$	
The conclusions follow easily.
\end{proof}


\medskip

Recall from \cref{ss.poor_set} that a set $\cZ \subset [\Mat_{d\times d}(\K)]^{1+m}$
is called \emph{saturated} if
$(A, B_1, \dots, B_m) \in \cZ$ implies that:
\begin{itemize}
\item 
for all $P \in \GL(d,\K)$ we have $(P^{-1}AP, P^{-1}B_1 P, \dots, P^{-1}B_m P) \in \cZ$;
\item 
for all $Q = (q_{ij}) \in \GL(m,\K)$, letting $B'_i = \sum_j q_{ij} B_j$,
we have $(A, B_1', \dots, B_m') \in \cZ$.
\end{itemize}

\begin{rem}\label{r.saturation_properties}
\begin{enumerate}
\item\label{i.saturation_properties_1}
A subset $[\Mat_{d\times d}(\K)]^{1+m}$ is saturated if and only if it is invariant under a certain action of the group $\GL(d,\K)\times\GL(m,\K)$.
\item\label{i.saturation_properties_2}
The real part of a complex saturated set is saturated (in the real sense).
\end{enumerate}
\end{rem}

\begin{lemma}\label{l.bad_set}
There exists a saturated algebraically closed set 
$\cS_m^{(\C)} \subset \GL(d,\C)\times [\Mat_{d\times d}(\C)]^m$        
of codimension at least $m+1$
such that for all $(A, B_1, \dots, B_m) \in \cP_m^{(\C)} \setminus \cS_m^{(\C)}$, the following properties hold:
\begin{enumerate}
\item\label{i.good_1}
$A$ is unconstrained;
\item\label{i.good_2}
if $P\in \GL(d,\C)$ is such that $P^{-1} A P$ is a diagonal matrix then
there are indices $i_0$, $j_0 \in \{1,\dots,d\}$ with $i_0 \neq j_0$ 
such that for each $k \in \{1,\dots,m\}$, the $(i_0,j_0)$ entry of the matrix $P^{-1} B_k P$ vanishes;
\item\label{i.good_3}
for each choice of $P$ above,
the off-diagonal vanishing entry position $(i_0,j_0)$ is unique.
\end{enumerate}
\end{lemma}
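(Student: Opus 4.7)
The plan is to take $\cS_m^{(\C)} := \cS_1 \cup \cS_2 \cup \cS_3$, where each piece is saturated, algebraically closed, and of codimension at least $m+1$, and together they account for the three ways a poor datum can fail to satisfy conditions \ref{i.good_1}--\ref{i.good_3}. First, I set $\cS_1 := \cF_m$ from \cref{scholium}, handling data over ``fat'' fibers. Second, I let $\cS_2$ be the set of poor data whose matrix $A$ is constrained in the sense of \S~\ref{ss.unconstrained}; this set is algebraically closed (the constrained locus in $\GL(d,\C)$ is a finite union of hypersurfaces cut out by the vanishing of polynomials in the coefficients of the characteristic polynomial of $A$) and saturated (constraints depend only on the spectrum). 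Third, I let $\cS_3$ be (the closure of) the set of poor data admitting at least two distinct common off-diagonal zero positions for the $B_k$ in a basis diagonalizing $A$.

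For the codimension of $\cS_2$, I would apply \cref{l.pret_a_porter} together with \cref{r.homogeneous} to the projection $\cS_2 \to \GL(d,\C)$. Writing $C'_j := \{A \text{ constrained} : \codim \cP_m(A) \le j\} \subseteq C_j$ in the notation of the proof of \cref{t.cod_data_C}, the estimate \eqref{e.ultraimportant} yields $j + \codim C'_j \ge j + \codim C_j \ge m+1$ for every $j \le m-1$; and for $j = m$, the inclusion $C'_m \subseteq \{A \text{ constrained}\}$ forces $\codim C'_m \ge 1$, again yielding $m + \codim C'_m \ge m+1$.

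For $\cS_3$, I would realize the ``two common zeros'' condition via the incidence variety
\[
Z \subseteq \GL(d,\C) \times [\gl(d,\C)]^m \times (\CP^{d-1})^4
\]
consisting of tuples $(A, B_1, \ldots, B_m, [v_1], [v_2], [w_1], [w_2])$ for which $v_i$ is a right and $w_i$ a left eigenvector of $A$ corresponding to distinct eigenvalues (the off-diagonal condition), $([v_1], [w_1]) \neq ([v_2], [w_2])$, and $w_i B_k v_i = 0$ for every $k$ and $i = 1, 2$. Then $\cS_3$ is the Zariski closure of the projection of $Z$ to $\GL(d,\C) \times [\gl(d,\C)]^m$, which is algebraic by \cref{p.projection} and is clearly saturated. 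Over an unconstrained $A$, the eigenvectors are determined up to scalars by the choice of eigenvalue, so the admissible 4-tuples form a finite set indexed by unordered pairs of distinct off-diagonal positions; for each such choice, the conditions $w_i B_k v_i = 0$ impose $2m$ independent linear constraints on $(B_1, \ldots, B_m)$. Hence the fiber of $\cS_3$ over an unconstrained $A$ has codimension at least $2m$ in $[\gl(d,\C)]^m$, giving $\codim \cS_3 \ge 2m \ge m+1$ for $m \ge 1$.

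Finally, for any poor datum $(A, B_1, \ldots, B_m) \in \cP_m^{(\C)} \setminus \cS_m^{(\C)}$: condition \ref{i.good_1} holds by exclusion from $\cS_2$; condition \ref{i.good_2} follows from \cref{l.easy_fiber} applied to the now-unconstrained $A$ (a diagonalizing $P$ is unique up to permutation and column rescaling, so existence of a common off-diagonal zero persists under any change of $P$, only with permuted position labels); and condition \ref{i.good_3} holds by exclusion from $\cS_3$. The main obstacle is the construction of $\cS_3$ and its codimension estimate, since the notion of ``two common zeros after diagonalization'' is basis-dependent and not obviously algebraic; the incidence-variety approach above resolves this, with a minor subtlety being that the closure of the projection might pick up points where the two pairs $([v_1], [w_1])$ and $([v_2], [w_2])$ collide in the limit, but such collisions would force eigenvector degeneracy, which can occur only when $A$ is constrained and hence already lies in $\cS_2$.
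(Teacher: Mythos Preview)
Your outline follows the paper's proof closely: the paper takes $\cS = \pi^{-1}(K) \cup (\cD \cap \cP)$ (with $\pi\colon \cP \to \GL(d,\C)$, $K$ the constrained locus, and $\cD$ the ``double-zero'' locus), and then bounds its codimension via the inclusion $\cS \subset \cF \cup \cF' \cup \cF''$ where $\cF'' = (\cD\cap\cP)\setminus\pi^{-1}(K)$. Your $\cS_2$ is exactly the paper's $\pi^{-1}(K)$ and your estimate for it is the same; your $\cS_1=\cF_m$ is harmless but unnecessary as a separate summand. The one substantive difference is that you build the double-zero set via a direct two-pair incidence variety $Z$, whereas the paper (in \cref{l.double_zero}) realizes it as the \emph{singular locus} of the one-pair variety $Y$; both work, though the paper's closed condition $w^*v=0$ is cleaner than your open ``distinct eigenvalues'' condition.

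There is one real gap. From ``the fiber of $\cS_3$ over each unconstrained $A$ has codimension $2m$'' you cannot conclude $\codim \cS_3 \ge 2m$: your $\cS_3$ is the Zariski closure of $\mathrm{proj}(Z)$, and $Z$ may have irreducible components whose image in $\GL(d,\C)$ lies entirely inside the constrained (for instance derogatory) locus, where eigenvectors vary in positive-dimensional families and the fiber of the projection in the $B$'s can be much larger. Your closing remark about collisions landing in $\cS_2$ does not rescue this, because $\cS_2$ contains only \emph{poor} data with constrained $A$, while $\cS_3$ --- not having been intersected with $\cP$ --- can contain rich data over constrained $A$, and such points lie in none of your three pieces' controlled parts. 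The paper sidesteps this by intersecting with $\cP$: once you replace $\cS_3$ by $\cS_3\cap\cP$, the portion over constrained $A$ is automatically contained in $\cS_2$, and you only need the $2m$ fiber estimate over unconstrained $A$ (applying \cref{l.pret_a_porter} to $(\cS_3\cap\cP)\setminus\pi^{-1}(K)$ inside $(\GL(d,\C)\setminus K)\times[\gl(d,\C)]^m$). With that one-line fix your argument goes through.
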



\medskip

In order to prove the \lcnamecref{l.bad_set},
we begin by checking algebraicity of the constraints:

\begin{lemma}\label{l.alg_constraint}
The set $K \subset \GL(d,\C)$ of constrained matrices
is an algebraically closed subset of codimension $1$.
\end{lemma}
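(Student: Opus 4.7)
The plan is to exhibit $K$ as the zero set of a single non-constant polynomial function on $\GL(d,\C)$. By definition, an elementary constraint is an irreducible factor of some polynomial of the form $\lambda_i\lambda_\ell - \lambda_j\lambda_k$, so $A$ is constrained precisely when there exist (possibly coinciding) indices $i,j,k,\ell$ with $\{i,\ell\}\neq\{j,k\}$ as multisets such that $\lambda_i\lambda_\ell = \lambda_j\lambda_k$, where $\lambda_1,\ldots,\lambda_d$ is any fixed enumeration of the eigenvalues of $A$ with multiplicity.

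The strategy is to form the polynomial
$$
R(\lambda_1,\ldots,\lambda_d) \;:=\; \prod_{(i,j,k,\ell)} \bigl(\lambda_i\lambda_\ell - \lambda_j\lambda_k\bigr),
$$
with the product ranging over all ordered quadruples $(i,j,k,\ell)\in\{1,\ldots,d\}^4$ satisfying $\{i,\ell\}\neq\{j,k\}$. Any permutation of the $\lambda_r$ merely permutes the factors, so $R$ is symmetric in its arguments. By the fundamental theorem of symmetric polynomials, $R = Q(e_1,\ldots,e_d)$ for some polynomial $Q$, where $e_1,\ldots,e_d$ denote the elementary symmetric polynomials. Since the coefficients of the characteristic polynomial of $A$ are, up to sign, polynomial expressions in the entries of $A$ that realize the elementary symmetric polynomials of its eigenvalues, the composition $\tilde Q(A) := Q\bigl(e_1(A),\ldots,e_d(A)\bigr)$ is a polynomial function on $\GL(d,\C)$ whose vanishing locus is exactly $K$. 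This will establish that $K$ is algebraically closed.

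For the codimension: on the one hand, the set of matrices with a repeated eigenvalue lies in $K$ (type $4$ constraint) and coincides with the vanishing locus of the non-zero discriminant polynomial, so $\codim K \leq 1$. On the other hand, a diagonal matrix whose diagonal entries are algebraically independent over $\Q$ is unconstrained, which shows that $K$ is a proper closed subset of the irreducible variety $\GL(d,\C)$, forcing $\codim K \geq 1$. Combining the two bounds gives $\codim K = 1$. The only point requiring care in this plan is verifying that $R$ is not identically zero, so that $\tilde Q$ is a non-trivial polynomial; this however is immediate since each individual factor $\lambda_i\lambda_\ell - \lambda_j\lambda_k$ is non-zero as a polynomial in the independent indeterminates $\lambda_1,\ldots,\lambda_d$.
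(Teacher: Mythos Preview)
Your argument is correct and follows essentially the same route as the paper: form the product of all relevant $\lambda_i\lambda_\ell-\lambda_j\lambda_k$, observe it is symmetric, rewrite it via the elementary symmetric polynomials as a polynomial in the coefficients of the characteristic polynomial of $A$, and conclude. Your treatment of the codimension (bounding it from both sides via the discriminant locus and an explicit unconstrained matrix) is slightly more detailed than the paper's, which simply declares the codimension-$1$ claim obvious.
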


\begin{proof}
Multiply all constraints, obtaining a polynomial in the variables $\lambda_1$, \dots, $\lambda_d$.
This polynomial is symmetric, 
and therefore (see e.g.\ \cite[Thrm.~IV.6.1]{Lang})
can be written as a polynomial function of the elementary symmetric polynomials in the variables 
$\lambda_1$, \dots, $\lambda_d$.
Now substitute each elementary symmetric polynomial in this expression by the corresponding
coefficient of the characteristic polynomial of the matrix $A$.
This gives a polynomial function on the entries of the matrix $A$
that vanishes if and only if $A$ is constrained.
It is obvious that the corresponding algebraic set $K$ has codimension $1$.
\end{proof}

Now we check algebraicity of double vanishing:

\begin{lemma}\label{l.double_zero}
There exists a saturated algebraically closed subset 
$\cD$ of $\GL(d,\C)\times [\Mat_{d\times d}(\C)]^m$ 
such that
if $(A, B_1, \dots, B_m) \in \cD$ and $A$ has simple spectrum then 
property \ref{i.good_2} from \cref{l.bad_set} is satisfied, 
but property \ref{i.good_3} is not.
\end{lemma}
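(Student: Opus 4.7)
The plan is to construct $\cD$ as the projection to $\cM := \GL(d,\C) \times [\Mat_{d\times d}(\C)]^m$ of the Zariski closure of an incidence variety encoding the presence of two distinct off-diagonal positions at which $B_1,\dots,B_m$ vanish simultaneously in an eigenbasis of $A$. Concretely, I would let $\cI^\ast \subset \cM \times (\CP^{d-1})^4$ be the set of tuples $(A, \mathbf{B}, [v_1], [w_1], [v_2], [w_2])$ such that each of $v_1, w_1, v_2, w_2$ is an $A$-eigenvector; for every $k$ and every $i \in \{1,2\}$ one has $B_k w_i \in \mathrm{im}(A - \alpha_i \Id)$, where $\alpha_i$ denotes the eigenvalue of $v_i$; and the three distinctness conditions $[v_1] \neq [w_1]$, $[v_2] \neq [w_2]$, $([v_1],[w_1]) \neq ([v_2],[w_2])$ all hold. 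Setting $\cD := p(\overline{\cI^\ast})$ where $p$ is the projection to $\cM$, algebraic closedness of $\cD$ follows from \cref{p.projection} because $(\CP^{d-1})^4$ is projective.

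The first technical step is to express the condition $B_k w \in \mathrm{im}(A - \alpha\Id)$ algebraically without carrying $\alpha$ as a free variable. I would use the observation that if $v$ is an eigenvector with $j$-th coordinate $v_j$ nonzero then $\alpha = (Av)_j/v_j$, so clearing denominators in the maximal minors of $[A - \alpha\Id \mid B_k w]$ yields, for each $j$ and $k$, a polynomial equation $F_{j,k}(A,v,B_k,w) = 0$. A routine check shows that if $v$ is an eigenvector and $v_j = 0$ then $(Av)_j = 0$ too and $F_{j,k}$ vanishes automatically; so on the locus where $v \wedge Av = 0$ and $v \neq 0$, the simultaneous vanishing of all $F_{j,k}$ is equivalent to the desired image condition. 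Combined with the closed eigenvector equations $v_i \wedge Av_i = 0$, $w_i \wedge Aw_i = 0$ and with the removal of the three closed diagonals in $(\CP^{d-1})^4$, this exhibits $\cI^\ast$ as locally closed, so $\overline{\cI^\ast}$ is algebraic. Saturation is then immediate: all the defining conditions of $\cI^\ast$ are equivariant under the natural $\GL(d,\C) \times \GL(m,\C)$-action recalled in \cref{r.saturation_properties}, once coupled with the diagonal conjugation action on $(\CP^{d-1})^4$; consequently $\cI^\ast$, its Zariski closure, and $\cD$ are all saturated.

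The main obstacle will be to show that $\cD$ does not overshoot at simple-spectrum points, i.e., that every $(A, \mathbf{B}) \in \cD$ with $A$ in the open locus $U \subset \GL(d,\C)$ of simple-spectrum matrices actually lies in $p(\cI^\ast)$ rather than merely in $p(\overline{\cI^\ast})$. This amounts to showing that $\cI^\ast \cap (U \times (\CP^{d-1})^4)$ is itself Zariski closed inside that open set. The argument rests on the observation that over $U$ the locus $\{(A,[v]) : v \wedge Av = 0\}$ is a finite unramified cover of $U$, so the joint eigenvector equations for the four vectors $v_1, w_1, v_2, w_2$ cut out a finite unramified cover of $U$ in $U \times (\CP^{d-1})^4$; adding the linear condition (d) on $\mathbf{B}$ gives a Zariski-closed sub-bundle. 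Each of the three diagonal conditions intersects this cover in an entire union of irreducible components, because over a simple-spectrum $A$ two eigenvectors with different continuous labels are never equal, so each sheet is either entirely in a given diagonal or entirely off it; consequently the complement of the union of these components is again a union of components, hence Zariski closed. This identification yields $\overline{\cI^\ast} \cap (U \times (\CP^{d-1})^4) = \cI^\ast \cap (U \times (\CP^{d-1})^4)$, and so a point $(A, \mathbf{B}) \in \cD$ with $A \in U$ genuinely admits two distinct off-diagonal joint-vanishing positions for $B_1, \dots, B_m$ in any eigenbasis of $A$, which is exactly property~\ref{i.good_2} holding and property~\ref{i.good_3} failing.
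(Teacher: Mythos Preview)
Your approach is correct but takes a different and somewhat more laborious route than the paper's. You encode double vanishing directly via an incidence variety in $\cM \times (\CP^{d-1})^4$, using two pairs of \emph{right} eigenvectors $(v_i, w_i)$ together with the image condition $B_k w_i \in \mathrm{im}(A - \alpha_i \Id)$; this forces you to clear denominators to make the eigenvalue $\alpha_i$ polynomial in $v_i$, and then to argue carefully that Zariski closure adds no spurious points over the simple-spectrum locus (your component argument for the diagonals is sound, since over that locus the eigenvector incidence is a finite \'etale cover and each diagonal condition is monodromy-invariant). The paper instead works in $\cM \times (\CP^{d-1})^2$ with one right eigenvector $v$ and one \emph{left} eigenvector $w$: the vanishing condition is simply $w^* B_k v = 0$, and the off-diagonal condition is $w^* v = 0$, both of which are genuinely closed bihomogeneous equations with no open conditions to remove. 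Projecting along $(\CP^{d-1})^2$ yields a closed set $Y$ capturing exactly the single-vanishing locus over simple-spectrum $A$, whose fiber there is a finite union of affine hyperplanes in $[\gl(d,\C)]^m$; the key observation is that double vanishing corresponds precisely to the singular points of $Y$. Taking the singular locus (automatically algebraic, closed, and saturated since it is preserved by the group action) gives $\cD$ in one stroke, with no closure subtlety. Your construction is more explicit about what $\cD$ parametrizes; the paper's buys a cleaner proof by exploiting the left-eigenvector pairing and the singular-locus trick.
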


\begin{proof}
First, consider the subset $X \subset [\Mat_{d\times d}(\C)]^{1+m} \times (\CP^{d-1})^2$
formed by tuples $(A,B_1,\dots,B_m,[v],[w])$ such that
$$
[Av] = [v], \quad [A^* w] = [w], \quad w^* v = 0, \quad w^* B_k v = 0 \text{ for each $k=1,\dots,m$,}
$$
where $v$ and $w$ are regarded as column-vectors and the star denotes transposition.
The set $X$ is obviously algebraic; thus, by \cref{p.projection}, so is its projection
$Y$ on $[\Mat_{d\times d}(\C)]^{1+m}$.

Let $A$ be a matrix with simple spectrum.
Then $(A, B_1, \dots, B_m)$
belongs to $Y$ if and only if property \ref{i.good_2} from \cref{l.bad_set} is satisfied.
In particular, the fiber of $Y$ over $A$ is 
a union of affine subspaces of $[\Mat_{d\times d}(\C)]^m$.
Intersections of those affine spaces correspond to points where the uniqueness 
property \ref{i.good_3} is not satisfied.
These points of intersection are singular points of $Y$.
Conversely, it is clear that the variety $Y$ is smooth at the points on the fiber over $A$
where property \ref{i.good_3} is satisfied.

So let $Z$ be the (algebraically closed) set of singular points of $Y$.
It is straightforward to see that the set $Y$ is saturated.
Recalling \cref{r.saturation_properties} (part~\ref{i.saturation_properties_1})
and the fact that a group acting on a variety preserves singular points,
we see that the set $Z$ is saturated as well.

We define $\cD$ as the set $Z$ minus the tuples $(A, B_1, \dots, B_m)$ with $\det A = 0$.
Then $\cD$ has all the required properties.
\end{proof}

Now we combine the facts above with \cref{scholium}
to prove \cref{l.bad_set}:

\begin{proof}[Proof of \cref{l.bad_set}]
For simplicity of writing we will omit the $m$ subscripts and the $(\C)$ superscripts.

Let $\pi: \cP \to \GL(d,\C)$ be the projection on the first matrix.
Define
$$
\cS = \pi^{-1}(K) \cup (\cD \cap \cP),
$$
where $K$ and $\cD$ come respectively from \cref{l.alg_constraint,l.double_zero}.
Then $\cS$ is a saturated algebraically closed subset of $\cP$. 
If $\bA = (A, B_1, \dots, B_m) \in \cP \setminus \cS$ then:
\begin{itemize}
\item $A \not\in K$, which is property~\ref{i.good_1};
\item since $\bA \in \cP$, it follows from \cref{l.easy_fiber} that $\bA$ is conspicuously poor,
and so property~\ref{i.good_2} holds;
\item since $\bA \not\in \cD$, property~\ref{i.good_3} also holds.
\end{itemize}

To complete the proof of the \lcnamecref{l.bad_set}, we need to show that $\codim \cS \ge m+1$.
We will use the following inclusion:
\begin{equation}\label{e.union}
\cS \subset \cF \cup \underbrace{\big( \pi^{-1}(K) \setminus \cF \big)}_{\cF'} \cup 
\underbrace{\big( (\cD \cap \cP) \setminus \pi^{-1}(K)\big)}_{\cF''} \, .
\end{equation}
where $\cF$ comes from \cref{scholium}.
Recall that $\cF$ equals $\pi^{-1}(C_{m-1})$, where $C_j$ is given by \eqref{e.C_j},
and it has codimension at least $m+1$.

We apply \cref{l.pret_a_porter,r.homogeneous}
to the set $\cF' \subset Y' \times [\gl(d,\C)]^m$, 
where $Y' = \GL(d,\C) \setminus C_{m-1}$.
Since $K$ has codimension at least $1$ in $Y'$,
and the fibers of $\cF'$ all have codimension at least $m$, 
we conclude that that $\codim \cF' \ge m+1$.

Next, we want to apply \cref{l.pret_a_porter,r.homogeneous}
to the set $\cF'' \subset Y'' \times [\gl(d,\C)]^m$, 
where $Y'' = \GL(d,\C) \setminus K$.
For each $A \in Y''$, it follows from \cref{l.double_zero}
that the fiber of $\cF''$ over $A$ (which is the same as the fiber of $\cD$ over $A$)
has codimension $2m$ in $[\gl(d,\C)]^m$,
corresponding to the $2m$ different matrix entries that must vanish.
We conclude that $\codim \cF'' \ge 2m$.

We have seen that each of the three sets on the right-hand side of \eqref{e.union}
has codimension at least $m+1$.
So the same is true for $\cS$, as we wanted to prove.
\end{proof}

\subsection{Proof of the addendum to the Main \cref{t.main}}

\begin{proof}[Proof of \cref{t.addendum}]
Consider the set $\cS_m^{(\C)}$ given by \cref{l.bad_set},
and let $\cS_m^{(\R)}$ be its real part.
This is an algebraically closed saturated subset of $\GL(d,\R) \times [\gl(d,\R)]^m$
which, by \cref{p.real complex dimension}, has codimension at least $m+1$.

Consider the set $\tilde \Gamma$ of $1$-jets $\bJ \in J^1(\cU,\GL(d,\C))$ 
that have a local expression $(u, A(u),B_1,\dots,B_m)$
with $(A(u),B_1,\dots,B_m) \in \cS_m^{(\R)}$.
This does not depend on the choice of the local coordinates, because $\cS_m^{(\R)}$ is saturated.
By the same arguments as in the proof of \cref{t.main}, 
the set $\tilde \Gamma$ admits a Whitney stratification.
Its codimension is at least $m+1$.
Applying Proposition~\ref{p.stratifiedtransversality}, we obtain a $C^2$-open
$C^\infty$-dense set $\tilde \cO \subset C^2(\cU,\GL(d,\C))$ formed by maps $A$ 
that are transverse to the stratification.

Let $\cO$ be the set provided by \cref{t.main}.
and consider a map $A \in \cO \cap \tilde \cO$.
Then whenever a jet $j^1 A (u)$ is poor,
it does not belong to $\tilde \Gamma$.
Recalling \cref{l.bad_set}, we see that the local expression of 
$j^1 A (u)$ satisfies (after a change of basis) the hypotheses of \cref{l.good_poor}.
Therefore parts \ref{i.addendum_1} and \ref{i.addendum_2} of the theorem
follow respectively from conclusions \ref{i.good_conclusion_1} and \ref{i.good_conclusion_2}
of the \lcnamecref{l.good_poor}.
\end{proof}

\begin{rem}\label{r.addendum_of_addendum}
The proof of \cref{t.addendum} also gives more information
about the $1$-jets that appear generically for singular constant inputs $(u,\ldots,u)$:
any associated matrix datum is conspicuously poor 
and the matrix $A(u)$ is unconstrained. 
\end{rem}

\begin{rem}
Properties \ref{i.addendum_1} and \ref{i.addendum_2} in \cref{t.addendum}
are in fact dual to each other.
If $\bA$ is the datum representing the $1$-jet of $A$ at $u$,
and $\Lambda = \Lambda(\bA)$,
then property \ref{i.addendum_1} means that there is an unique direction 
$[v] \in \RP^{d-1}$ such that $\Lambda \cdot v \neq \C^d$.
Then property \ref{i.addendum_2} means that there is an unique direction 
$[w] \in \RP^{d-1}$ such that $\Lambda^* \cdot w \neq \C^d$,
where $\Lambda$ is the set of the transposes of the matrices in $\Lambda$.
This fact can be proved easily using the dual characterization of \cref{l.duality}. 
\end{rem}

\subsection{Local persistence of singular inputs}

Let $A \in C^r( \cU, \GL(d,\R))$, $r \ge 1$.
We will work upon \cref{l.easy_poor_data} in order to 
obtain a more practical way to detect that the $1$-jet of $A$ at a point
corresponds to a conspicuously poor datum
(which as mentioned in \cref{r.addendum_of_addendum} is the only type of poor data that appear generically).
For example, in the $m=1$, $d=2$ case, we will see that conspicuous poorness means that
the angular velocity of one of the eigendirections vanishes (see \cref{r.speed} below).

\medskip

Suppose that $u_0 \in \cU$ is such that the matrix $A(u_0)$ is diagonalizable over $\R$ 
and with simple eigenvalues only.
By \cref{p.eigen_smooth}, there is a neighborhood $\cU_0$ of $u_0$
 and $C^r$-maps $\lambda_1$, \dots, $\lambda_d\colon \cU_0\to \C$ such that for all $u\in \cU_0$, the complex numbers $\lambda_i(u)$ are all distinct, and form the spectrum of $A(u)$; moreover there exist
a $C^r$ map $P \colon \cU_0 \to \GL(d,\R)$ 
such that for all $u \in \cU_0$,
\begin{equation}\label{e.diagonalize}
A(u) = P(u) \, \Delta(u) \, P^{-1}(u) \text{ , where }
\Delta(u) = \Diag (\lambda_1(u), \dots, \lambda_d(u)).
\end{equation}

For simplicity, 
let us consider first case where $\cU$ is an interval in $\R$ (in particular $m=1$).
Then the normalized derivative of $A$ at a point $u$ 
can be identified with $N(u):= A'(u) \, A^{-1}(u)$.
Consider the expression of $N(u)$ in the basis that diagonalizes $A(u)$, that is,
$B(u) := P^{-1}(u) \, N(u) \, P(u)$.
Since
$\frac{\mathrm{d}}{\mathrm{d}u}P^{-1}(u) = - P^{-1}(u) \, P'(u) \, P^{-1}(u)$, 
we compute that
$$
B(u) = \Delta'(u) \, \Delta^{-1}(u) 
+ Q(u) - \Delta(u) \, Q(u) \, \Delta^{-1}(u) \, , 
$$ 
where
$$
Q(u) := P^{-1}(u) \, P'(u) \, .
$$
So the off-diagonal entries of the matrices $B(u)$ and $Q(u)$ are related by
$$
b_{ij}(u) = \big(1 - \lambda_i(u) / \lambda_j(u)\big) \, q_{ij}(u)
\quad (i\neq j).
$$
In view of \cref{l.easy_poor_data}, we conclude the following:
if for some $u_* \in \cU_0$
\begin{equation}\label{e.easy_poor_cond}
\text{there is an off-diagonal entry position $(i,j)$ such that $q_{ij}(u_*) = 0$}
\end{equation}
then the $1$-jet $j^1 A(u_*)$ is poor.

\begin{rem}\label{r.speed}
Let us give a geometrical interpretation of condition~\eqref{e.easy_poor_cond}.
The columns of $P$ form a basis $(v_1,\dots,v_d)$ of eigenvectors of $A$,
and the rows of $P^{-1}$ form a basis $(f_1,\dots,f_d)$ of eigenfunctionals of $A$
(in the sense that $f_i \circ A = \lambda_i f_i$); 
these two bases are related by $f_i(v_j) = \delta_{ij}$.
So $q_{ij} = f_i \left(\frac{\mathrm{d} v_j}{\mathrm{d} u}\right)$
is the component of the velocity of $v_j$ in the direction of $v_i$.
For example, for $d=2$,  condition~\eqref{e.easy_poor_cond} means 
that one of the eigendirections of $A$ has zero angular speed at instant $u=u_*$.
\end{rem}

It is trivial to adapt the previous calculations to the higher dimensional case and then conclude the following:

\begin{prop}\label{p.easy_poor_derivative}
Let $(u_1, \dots, u_m)$ be coordinates in a chart domain $\cU_0\subset \cU$
where expression~\eqref{e.diagonalize} holds. 
Consider matrices
\begin{equation}\label{e.Q_k}
Q_k (u) := P^{-1}(u) \, \frac{\partial P}{\partial u_k}(u) \, .
\end{equation}
If for some $u_* \in \cU_0$ there is an off-diagonal entry position $(i,j)$
such that
\begin{equation}\label{e.easy_poor_cond_multdim}
\text{for each $k=1,\dots,m$, the $(i,j)$-entry of the matrix $Q_k(u_*)$ vanishes}
\end{equation} 
then the $1$-jet $j^1 A(u_*)$ is poor, that is, 
the constant input $(u_*,\dots,u_*)$ (of any length) is singular.
\end{prop}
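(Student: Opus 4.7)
The plan is to reduce this to \cref{l.easy_poor_data} via a direct computation of the normalized partial derivatives in the basis that diagonalizes $A(u_*)$. I will start from $A = P \Delta P^{-1}$, differentiate with respect to $u_k$, and use the standard identity $\partial_k(P^{-1}) = -P^{-1}(\partial_k P) P^{-1}$ to expand
\[
B_k(u) = \frac{\partial A}{\partial u_k}(u) \cdot A^{-1}(u).
\]
Conjugating the resulting expression by $P$, the products telescope and one finds
\[
P^{-1}(u)\, B_k(u)\, P(u) \;=\; Q_k(u) \;+\; \frac{\partial \Delta}{\partial u_k}(u)\,\Delta^{-1}(u) \;-\; \Delta(u)\, Q_k(u)\, \Delta^{-1}(u).
\]

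Since $\Delta(u) = \Diag(\lambda_1(u),\dots,\lambda_d(u))$ and $\partial_k\Delta \cdot \Delta^{-1}$ is diagonal, reading off the off-diagonal $(i,j)$ entries (with $i\neq j$) gives
\[
\big(P^{-1} B_k P\big)_{ij}(u) \;=\; \left(1 - \frac{\lambda_i(u)}{\lambda_j(u)}\right) (Q_k)_{ij}(u).
\]
Because the eigenvalues $\lambda_1(u_*),\dots,\lambda_d(u_*)$ are pairwise distinct, the scalar factor does not vanish at $u_*$. Consequently, the hypothesis that $(Q_k)_{ij}(u_*) = 0$ for every $k=1,\dots,m$ translates directly into the vanishing of the $(i,j)$-entry of $P^{-1}(u_*) B_k(u_*) P(u_*)$ for every $k$.

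To conclude, observe that $P^{-1}(u_*) A(u_*) P(u_*) = \Delta(u_*)$ is diagonal, so the matrix datum $\bA = (A(u_*), B_1(u_*), \dots, B_m(u_*))$ is conspicuously poor with change of basis $P(u_*)$ and off-diagonal vanishing position $(i,j)$. By \cref{l.easy_poor_data}, $\bA$ is poor, i.e., the stabilized space $\Lambda(\bA)$ is not transitive. Since $\Lambda_N(\bA) \subseteq \Lambda(\bA)$ for every $N$, no $\Lambda_N(\bA)$ is transitive either, and \cref{p.reg transitivity} then implies that the constant input $(u_*,\dots,u_*)$ of every length $N$ is singular. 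There is no real obstacle here: the argument is essentially a bookkeeping computation, with the only delicate point being the correct sign in $\partial_k(P^{-1})$ and the fact that the coefficients $1 - \lambda_i/\lambda_j$ are nonzero thanks to simplicity of the spectrum.
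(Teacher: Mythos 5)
Your proof is correct and follows exactly the route the paper takes: the paper carries out the same computation of $P^{-1}B_k P = Q_k + (\partial_k\Delta)\Delta^{-1} - \Delta Q_k \Delta^{-1}$ (written out for $m=1$, then declared "trivial to adapt"), reads off the off-diagonal entries, and appeals to \cref{l.easy_poor_data} on conspicuously poor data. Your only extraneous remark is that the nonvanishing of $1-\lambda_i/\lambda_j$ is not actually needed for the forward implication, since $(Q_k)_{ij}(u_*)=0$ already forces $(P^{-1}B_kP)_{ij}(u_*)=0$ directly from the formula.
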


In the situation of \cref{p.easy_poor_derivative},
assume additionally that the map
\begin{equation}\label{e.2nd_derivative}
\Phi\colon \begin{cases}
\cU_0& \to \Im \Phi \subset \K^m\\
u&\mapsto \left[ \text{the $(i,j)$-entry of } Q_k (u)
\right]_{1\leq k \leq m}
\end{cases} \mbox{ is a diffeomorphism. } 
\end{equation}
In that case, the existence of a poor jet is persistent in the following way:
If $\tilde A$ is sufficiently $C^2$-close to $A$
then by \cref{p.eigen_smooth} we can express 
$\tilde A (u) = \tilde P(u) \, \tilde \Delta(u) \, \tilde P^{-1}(u)$
for $u$ close to $u_*$, where $\tilde P$ and $\tilde \Delta$ are $C^2$-close to $P$ and $\Delta$ respectively, and $\tilde \Delta$ is diagonal.
The corresponding matrices $\tilde Q_k = \tilde P^{-1} \, \frac{\partial \tilde P}{\partial u_k} $ are $C^1$-close to $Q_k$ and the map 
\begin{align*}
\tilde \Phi \colon u \mapsto \left[ \text{the $(i,j)$-entry of } \tilde Q_k (u)
\right]_{1\leq k \leq m}
\end{align*}
is $C^1$-close to $\Phi$. By~\eqref{e.2nd_derivative} the fact that $\Phi(u_*)=(0,...,0)$,  there is $\tilde u$ close to $u_*$ such that $\tilde \Phi(u)=(0,...,0)$.
In particular the $1$-jet $j^1 \tilde A(\tilde u)$ is poor.

Now, concerning existence:
It is evident that a domain $\cU_0$ and $2$-jets $j^2 P(u_*)$ satisfying conditions \eqref{e.easy_poor_cond_multdim} and \eqref{e.2nd_derivative} actually exist; moreover we can always find a map $P \colon \cU \to \GL(d,\R)$ with a prescribed $2$-jet at a point $u_*$. In view of the discussion above, we conclude the following:

\begin{prop}[Persistence of singular inputs]\label{p.persistent_poorness}
For any $d\geq 1$ and any $d$-dimensional smooth manifold $\cU$,
there exists a $C^2$-open nonempty subset of maps $A\in C^2(\cU,\GL(d,\R))$
with the following property:
there exists $u \in \cU$ such that the constant inputs $(u,\dots,u)$ of any length are all singular for the system~\eqref{e.proj semilin CS}.
\end{prop}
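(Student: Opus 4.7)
The plan is to exhibit a single map $A_0$ whose 2-jet at some point $u_* \in \cU$ satisfies the hypotheses of \cref{p.easy_poor_derivative} together with the nondegeneracy \eqref{e.2nd_derivative}, and then to invoke the persistence argument sketched in the paragraph immediately preceding the proposition. First I would fix any $u_* \in \cU$, choose local coordinates $(u_1,\ldots,u_m)$ around $u_*$ (where $m = \dim \cU$), and fix a diagonal matrix $\Delta_* \in \GL(d,\R)$ with distinct real eigenvalues and an off-diagonal position $(i_0,j_0)$. I would then prescribe a 2-jet of a factorization $A_0(u) = P_0(u)\Delta_0(u)P_0(u)^{-1}$ by requiring $\Delta_0(u_*) = \Delta_*$, $P_0(u_*) = \Id$, $\partial_k P_0(u_*) = 0$ for all $k$, and $\partial_\ell \partial_k P_0(u_*) = \delta_{\ell k}\, E_{i_0,j_0}$. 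Multiplying by a bump function yields a global $C^2$ (in fact $C^\infty$) map $A_0 \in C^2(\cU,\GL(d,\R))$ realizing this 2-jet at $u_*$.

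Second, I would carry out the short computation that verifies that $A_0$ meets the hypotheses. Since $P_0(u_*) = \Id$ and $\partial_k P_0(u_*) = 0$, the matrices $Q_k(u) := P_0^{-1}(u)\,\partial_k P_0(u)$ defined in \eqref{e.Q_k} satisfy $Q_k(u_*) = 0$, and in particular their $(i_0,j_0)$-entries vanish, so \eqref{e.easy_poor_cond_multdim} holds. Differentiating $Q_k = P_0^{-1}\partial_k P_0$ and evaluating at $u_*$ gives
\[
\partial_\ell Q_k(u_*) \;=\; -\partial_\ell P_0(u_*)\,\partial_k P_0(u_*) + \partial_\ell \partial_k P_0(u_*) \;=\; \delta_{\ell k}\,E_{i_0,j_0}.
\]
Hence the Jacobian at $u_*$ of the map $\Phi$ in \eqref{e.2nd_derivative} is the identity matrix, so $\Phi$ is a local diffeomorphism and condition \eqref{e.2nd_derivative} is satisfied. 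By \cref{p.easy_poor_derivative}, the 1-jet $j^1 A_0(u_*)$ is poor, so the constant inputs $(u_*,\ldots,u_*)$ are singular for every length.

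Third, I would establish $C^2$-persistence exactly as outlined in the paragraph after \cref{p.easy_poor_derivative}. If $\tilde A \in C^2(\cU,\GL(d,\R))$ is sufficiently $C^2$-close to $A_0$, then \cref{p.eigen_smooth} allows us to write $\tilde A(u) = \tilde P(u)\tilde \Delta(u)\tilde P(u)^{-1}$ on a neighborhood of $u_*$, with $\tilde P$ and $\tilde \Delta$ respectively $C^2$-close to $P_0$ and $\Delta_0$. Consequently the associated matrices $\tilde Q_k$ are $C^1$-close to $Q_k$ and the map $\tilde \Phi$ is $C^1$-close to $\Phi$. Since $\Phi(u_*) = 0$ and $D\Phi(u_*)$ is invertible, the inverse function theorem (applied to a continuous family of $C^1$-maps) produces a point $\tilde u$ near $u_*$ with $\tilde \Phi(\tilde u) = 0$; applying \cref{p.easy_poor_derivative} to $\tilde A$ at $\tilde u$ shows that the constant inputs $(\tilde u,\ldots,\tilde u)$ are singular for all lengths. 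The set of such $\tilde A$ is $C^2$-open and nonempty (it contains $A_0$), which proves the proposition.

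The only mild obstacle is step one, namely constructing $A_0$ with the prescribed 2-jet; this is essentially a jet-realization exercise for $P_0$, completely decoupled from the constraint $A_0 \in \GL(d,\R)$ since $P_0(u_*) = \Id$ and $\Delta_*$ is invertible, so a bump-function extension works without issue. All remaining steps are direct applications of results already established in the excerpt.
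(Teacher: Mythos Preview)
Your proposal is correct and follows essentially the same approach as the paper: the paper's proof is precisely the discussion preceding the proposition, which asserts (without writing down an explicit example) that a 2-jet of $P$ satisfying \eqref{e.easy_poor_cond_multdim} and \eqref{e.2nd_derivative} exists, and then invokes the same persistence argument via \cref{p.eigen_smooth} and the inverse function theorem. Your contribution is to make the existence step concrete by exhibiting the 2-jet $P_0(u_*)=\Id$, $\partial_k P_0(u_*)=0$, $\partial_\ell\partial_k P_0(u_*)=\delta_{\ell k}E_{i_0,j_0}$, which indeed yields $D\Phi(u_*)=\Id$; this is a welcome clarification of what the paper leaves as ``evident.''
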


That is, one cannot improve \cref{t.main} replacing ``discrete set'' by ``empty set''.

Given any map $A$ such that \eqref{e.easy_poor_cond_multdim} holds at some point,
we can $C^1$-perturb $A$ (by $C^0$-perturbing $P$) in a way such that \eqref{e.easy_poor_cond_multdim} now holds for a non-discrete set of points.
This shows that the statement of \cref{t.main} with ``$C^2$-open'' replaced by ``$C^1$-open'' is not true.
Using the same idea and Baire's theorem, one can also show that the conclusion of \cref{t.main} is not true for $C^1$-generic maps $A$; actually for $C^1$-generic $A$, the points $u \in \cU$ corresponding to singular constant controls form a perfect set.

\subsection{Other control-theoretic properties}

We now introduce a few control-theoretic notions related to accessibility and regularity,
and discuss the validity of statements similar to \cref{t.main} for these notions.

Consider a general control system \eqref{e.general CS}.
Fix a time length $N$, and let $\phi_N$ denote the response map as in \eqref{e.final state}.
We say that a trajectory determined by $(x_0; u_0, \dots, u_{N-1})$ is:
\begin{itemize}
\item \emph{locally accessible} 
if for every neighborhood $V$ of $(u_0, \dots, u_{N-1})$ in $\cU^N$,
the set $\phi_N(\{x_0\} \times V)$ has nonempty interior.
\item \emph{strongly locally accessible} if for every neighborhood $V$ of $(u_0, \dots, u_{N-1})$ in $\cU^N$, the set $\phi_N(\{x_0\} \times V)$ contains in its interior the final state $\phi_N(x_0;u_0,\dots,u_{N-1})$.
\end{itemize}
The following implications are immediate:
$$
\text{regular $\Rightarrow$ strongly locally accessible $\Rightarrow$ locally accessible.}
$$
We say that an input $(u_0, \dots, u_{N-1})$ is \emph{universally locally accessible} 
(resp.\ \emph{universally strongly locally accessible}) if the trajectory determined by  
$(x_0; u_0, \dots, u_{N-1})$ is locally accessible (resp.\ strongly locally accessible). 

Now we come back to the context of 
projective semilinear control systems \eqref{e.proj semilin CS}.
A (relatively weak) corollary of \cref{t.main} is
that for generic maps~$A$, universal local accessibility holds at all 
constant inputs:

\begin{prop}\label{p.local acc}
Let $N \in \N$ and $\cO \subset C^2(\cU,\GL(d,\R))$ be as in \cref{t.main}.
For any $A \in \cO$, every constant input sequence of length $N$ is universally locally accessible.
\end{prop}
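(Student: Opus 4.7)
The plan is to split the argument into two cases according to whether the constant input under consideration is universally regular or singular.

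\textbf{Regular case.} Fix $u \in \cU$ such that the constant input $\hat{u} = (u,\ldots,u)$ of length $N$ is universally regular. For every $\xi_0 \in \RP^{d-1}$, the derivative $\partial_2 \phi_N(\xi_0, \hat{u})$ is surjective by definition. The implicit function theorem then guarantees that for any open neighborhood $V$ of $\hat{u}$ in $\cU^N$, the image $\phi_N(\{\xi_0\} \times V)$ contains an open neighborhood of $\phi_N(\xi_0;\hat{u})$. In particular, it has nonempty interior, so $\hat{u}$ is universally (strongly) locally accessible.

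\textbf{Singular case.} Now suppose $\hat{u} = (u,\ldots,u)$ is a singular constant input. By \cref{t.main}, the set $D \subset \cU$ of points $u'$ for which $(u',\ldots,u')$ is singular is discrete; hence there is a sequence $u_n \in \cU \setminus D$ with $u_n \to u$. For a given neighborhood $V$ of $\hat{u}$ in $\cU^N$, pick $n$ large enough so that $\hat{u}_n := (u_n,\ldots,u_n)$ lies in $V$; this is possible since the diagonal embedding $\cU \hookrightarrow \cU^N$ is continuous. Choose an open neighborhood $V_n \subset V$ of $\hat{u}_n$. Since $\hat{u}_n$ is universally regular, the regular case applied to $\hat{u}_n$ with the neighborhood $V_n$ yields, for every $\xi_0 \in \RP^{d-1}$, a nonempty open subset contained in $\phi_N(\{\xi_0\} \times V_n) \subset \phi_N(\{\xi_0\} \times V)$. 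Therefore $\hat{u}$ is universally locally accessible.

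\textbf{Remark on obstacles.} There is essentially no obstacle: the statement is a direct corollary of \cref{t.main} combined with the openness of surjective linear maps (implicit function theorem). The only point that requires any care is that local accessibility at a singular constant input is not obtained at the input itself via a regularity argument, but rather by transferring the conclusion from a nearby regular constant input inside the given neighborhood $V$; this is precisely what allows us to deduce the weaker property (local accessibility) at the exceptional points of the discrete singular set even though strong local accessibility may fail there.
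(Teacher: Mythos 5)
Your argument is correct and follows essentially the same route as the paper's (one-line) proof, which simply observes that for $A \in \cO$, any constant input sequence of length $N$ has a regular input sequence arbitrarily nearby (by \cref{t.main}, singular constant inputs are discrete), whence local accessibility; you have merely spelled out the regular/singular dichotomy and the implicit-function-theorem step explicitly.
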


\begin{proof}
If $A\in \cO$ then for every constant input sequence of length $N$
we can find a regular input sequence nearby.
\end{proof}

As we have shown in \cref{p.persistent_poorness}, 
it is not possible to improve \cref{p.local acc} by replacing ``local accessible'' by ``regular''.
Neither it is possible to replace ``local accessible'' by ``strongly local accessible'',
as the following simple example (in dimensions $m=1$, $d=2$) shows:

\begin{example}
For $u \in \R$, define
$$
P(u) = \begin{pmatrix} 1 & u \\ u^2 & 1 \end{pmatrix}, \quad \Delta(u) = \Diag(2,1).
$$
Let $\cU$ be an small open interval containing $0$,
and define $A \colon \cU \to \GL(2,\R)$ by \eqref{e.diagonalize}.
Let $\xi_0 \in \R\P^1$ correspond to the direction of the vector $(1,0)$.
Then for any subinterval $V \ni 0$, and any $N>0$, the set 
$$
\phi_N (\{\xi_0\} \times V^N) = \big\{ A(u_{n-1})\cdots A(u_0)\cdot \xi_0 \; u_i \in V \big\}
$$
is an interval of $\R\P^1$ containing $\xi_0 = \phi_N (\xi_0; 0,\dots,0)$ in its boundary.
Therefore the input $(0,\dots,0)$ is not universally strongly locally accessible.
A similar situation occurs for any $C^2$-perturbation of $A$.
\end{example}

\bigskip

\begin{ack}
We are grateful for the hospitality of Institute Mittag--Leffler, 
where this work begun to take form.
We thank R.~Potrie, L.~San~Martin, S.~Tikhomirov, and C.~Tomei for valuable discussions.
We thank the referees for corrections, references to the literature,
and other suggestions that helped to improve the exposition.
\end{ack}


%
%
%
%
%
%
%
%
%
%
%
%
%

\end{document}